\newcommand{\cmark}{\ding{51}}%
\newcommand{\xmark}{\ding{55}}%
\theoremstyle{plain}
\newtheorem{theorem}{Theorem}[section]
\newtheorem{proposition}[theorem]{Proposition}
\newtheorem{lemma}[theorem]{Lemma}
\theoremstyle{definition}
\theoremstyle{remark}
\newtheorem{remark}[theorem]{Remark}
\newcommand{\mR}{\mathbb{R}}
\newcommand{\mE}{\mathbb{E}}
\newcommand{\la}{\langle}
\newcommand{\ra}{\rangle}
\newcommand{\ep}{\epsilon}
\newcommand{\gsi}{g_{s,i}}
\newcommand{\bgsi}{\bar{g}_{s,i}}
\newcommand{\bsi}{b_{s,i}}
\newcommand{\asi}{a_{s,i}}
\newcommand{\msi}{m_{s,i}}
\newcommand{\hmsi}{\hat{m}_{s,i}}
\newcommand{\vsi}{v_{s,i}}
\newcommand{\vx}{\bm{x}}
\newcommand{\vy}{\bm{y}}
\newcommand{\vm}{\bm{m}}
\newcommand{\vv}{\bm{v}}
\newcommand{\vb}{\bm{b}}
\newcommand{\va}{\bm{a}}
\newcommand{\vep}{\boldsymbol{\epsilon}}
\newcommand{\vg}{\bm{g}}
\newcommand{\vz}{\bm{z}}
\newcommand{\vxi}{\boldsymbol{\xi}}
\newcommand{\mG}{\mathcal{G}}
\newcommand{\mLx}{\mathcal{L}^{(x)}}
\newcommand{\mLy}{\mathcal{L}^{(y)}}
\newcommand{\mL}{\mathcal{L}}
\newcommand{\tC}{\tilde{C}}
\newcommand{\mH}{\mathcal{H}}
\newcommand{\mM}{\mathscr{M}}
\newcommand{\tD}{\tilde{D}}
\newcommand{\mO}{\mathcal{O}}
\newcommand{\nn}{\nonumber}
\newcommand{\tvb}{\tilde{\bm b}}
\newcommand{\tva}{\tilde{\bm a}}
\newcommand{\tb}{\tilde{b}}
\newcommand{\ta}{\tilde{a}}
\title{On Convergence of Adam for Stochastic Optimization under Relaxed Assumptions}
\author{Yusu Hong\footnote{
			Center for Data Science and School of Mathematical Sciences, Zhejiang University, Hangzhou 310027, P. R. China; Emails: yusuhong@zju.edu.cn.} \and Junhong Lin \footnote{
			Center for Data Science, Zhejiang University, Hangzhou 310027, P. R. China; Emails: junhong@zju.edu.cn. The corresponding author is J. Lin.} }
\begin{document}

\maketitle

\begin{center}
    V1: Feb 6th, 2024\\
    This Version (V2): Feb 22nd, 2025
\end{center}
\begin{abstract}
In this paper, we study Adam in non-convex smooth scenarios with potential unbounded gradients and affine variance noise. We consider a general noise model which governs affine variance noise, bounded noise, and sub-Gaussian noise. We show that Adam with a specific hyper-parameter setup can find a stationary point with a $\tilde{\mathcal{O}}(1/\sqrt{T})$ rate in high probability under this general noise model where $T$ denotes total number iterations, matching the lower rate of stochastic first-order algorithms up to logarithm factors. Moreover, we show that under the same setup, Adam without corrective terms and RMSProp can find a stationary point with a $\tilde{\mathcal{O}}(1/T+\sigma_0/\sqrt{T})$ rate which is adaptive to the noise level $\sigma_0$.
We also provide a probabilistic convergence result for Adam under a generalized smooth condition which allows unbounded smoothness parameters and have been illustrated empirically to capture the smooth property of many practical objective functions more accurately.  
\end{abstract}

\section{Introduction}

Since its introduction by \cite{robbins1951stochastic}, the Stochastic Gradient Descent (SGD): $\vx_{t+1} = \vx_t - \eta_t \vg_t$
has achieved significant success in solving the unconstrained stochastic optimization problems:
\begin{align}\label{eq:sop}
    \min_{\vx \in \mR^d} f(\vx) , \quad \text{where} \quad f(\vx) = \mE_{\vxi} [f_{\vxi}(\vx,\vxi)],
\end{align}
where $\vxi$ is a random variable,  $\vg_t$ is the stochastic gradient and $\eta_t$ is the step-size.
From then on, numerous literature focused on the convergence behavior of SGD in various scenarios. Several studies focused on the non-convex smooth scenario where the stochastic gradient $g(\vx)$ is an unbiased estimator of gradient $\nabla f(\vx)$ with affine variance noise, i.e., for some constants $\sigma_0,\sigma_1 \ge 0$,
\begin{align}\label{eq:affine}
    &\mE[g(\vx) \mid \vx] = \nabla f(\vx),\quad \mE[\|g(\vx) - \nabla f(\vx) \|^2\mid \vx]\le \sigma_0^2 + \sigma_1^2 \|\nabla f(\vx)\|^2,\forall \vx \in \mR^d.
\end{align}
Under the noise assumption \eqref{eq:affine},
\cite{bertsekas2000gradient} provided an almost-sure convergence bound for SGD. \cite{bottou2018optimization} proved that SGD could reach a stationary point with a $\mathcal{O}(1/\sqrt{T})$ rate when step-sizes are tuned by problem-parameters such as the smooth parameter $L$. The theoretical result also revealed that the analysis of SGD under \eqref{eq:affine} is not essentially different from the bounded noise case \cite{ghadimi2013stochastic}.

 \begin{table*}[t]
 	\caption{Comparison for existing Adam analyses with ours.}
 	\label{table:comparison}
 	% \vskip 0.15in
 	\begin{threeparttable}
 		\resizebox{\linewidth}{!}{
 			\begin{tabular}{cccccccccc}
 				\toprule
 				& FCT & Grad.           & Noise                                                                 & Smooth                                              & $\beta_1,\beta_2$                                        & $\epsilon$                     & Conv. Rate           & Conv. Type   \\ \midrule
 				\cite{zaheer2018adaptive}                &     \xmark         &         Bounded           &         Bounded                                                              &          $L$                                                          &           $1-\beta_2 \le c\epsilon^2$                                               &                 ${\rm poly}({1 \over \epsilon})$               &    $\frac{1}{T}+\sigma^2$                  &        $\mathbb{E}$      \\
 				\cite{chen2018convergence}     & \xmark           & Bounded            & Bounded                                                               & $L$                                                                    & $\beta_{1,t} < \beta_1, \beta_2 = 1-{1 \over T}$         &                 -               & ${1 \over \sqrt{T}}$ & $\mathbb{E}$ \\
 				\cite{zou2019sufficient}      & \xmark           & Bounded   & -                                                                     & $L$                                                                     & $\beta_2 = 1-{c \over T}$                                & ${\rm poly}(\log{1 \over \epsilon})$ & ${1 \over \sqrt{T}}$ & $\mathbb{E}$ \\
 				\cite{shi2020rmsprop}      & \xmark           & -                  & Finite Sum Affine                                                     & $L$                                                                     & $T(\beta_1,\beta_2) \rightarrow 0^{1}$                 &              -                  & -                    & $\mathbb{E}$ \\
 				\cite{defossez2020simple} & \xmark           & Bounded            & Bounded                                                               & $L$                                                                   & $\beta_1 < \beta_2, \beta_2 = 1-{1 \over T}$             & ${\rm poly}(\log{1 \over \epsilon})$ & ${1 \over \sqrt{T}}$ & $\mathbb{E}$ \\
 				\cite{guo2021novel}      & \xmark           & Bounded            & \textbf{Affine}                                                                & $L$                                                                     & $\beta_1 = 1-{c \over \sqrt{T}}$                                &                ${\rm poly}({1 \over \epsilon})$                & ${1 \over \sqrt{T}}$ & $\mathbb{E}$ \\
 				\cite{zhang2022adam}    & \cmark           & -                  & Finite Sum Affine                                                     & $L$                                                                   & $\beta_1 < \sqrt{\beta_2},\beta_2 = 1-{c \over T}^{[3]}$ &                    -            & ${1 \over \sqrt{T}}$ & $\mathbb{E}$ \\
 				\cite{wang2022provable}                   & \xmark           & -                  & Finite Sum Affine                                                     &   $\bm{(L_0,L_1)}$                                                  & $\beta_1 < \sqrt{\beta_2}, \beta_2 > \gamma(\sigma_1)^{[4]}$                               &               -                 & -                    & $\mathbb{E}$ \\
 				\cite{rakhlinconvergence}                  & \cmark         & -                  & Sub-Gaussian                                                          & $\bm {(L_0,L_1)}$                                                           & $\beta_1 = 1-{c \over \sqrt{T}}$                                &           ${1 \over \sqrt{\epsilon}}$                     & ${1 \over \sqrt{T}}$ & \textbf{w.h.p.}       \\
 				\cite{wang2023closing}                & \xmark           & -                  &  Coordinate-wise Affine & $L$                     & $\beta_1 = b\sqrt{\beta_2},\beta_2 = 1-{c \over T}$      &           ${\rm poly}(\log{1 \over \epsilon})$                     & ${1 \over \sqrt{T}}$ & $\mathbb{E}$ \\
                 \cite{yusu2023}                & \cmark           & -                  &  Coordinate-wise Affine & $L$                     & $\beta_1   < \beta_2,\beta_2 = 1-{1 \over T}$      &           ${\rm poly}(\log{1 \over \epsilon})$                     & ${1 \over \sqrt{T}}$ & $\textbf{w.h.p.}$ \\
 				{\bf Thm. \ref{thm:1}}            & \cmark          & -                  & \textbf{Affine}                                                                & $L$                                                                 & $\beta_1 < \beta_2, \beta_2 = 1-{c \over T}$             &         ${\rm poly}(\log{1 \over \epsilon})$                       & ${1 \over \sqrt{T}}$ & \textbf{w.h.p.}       \\
 				{\bf Thm. \ref{thm:general_smooth}}                & \cmark          & -                  & \textbf{Affine}                                                              & $\bm{(L_0,L_1)}$                                                         & $\beta_1 < \beta_2, \beta_2 = 1-{c \over T}$             &               ${\rm poly}(\log{1 \over \epsilon})$                 & ${1 \over \sqrt{T}}$ & \textbf{w.h.p.}       \\
 				\bottomrule
 			\end{tabular}
 		}
 		\begin{tablenotes}
 			\scriptsize
 			\item[1] \cite{shi2020rmsprop} requires $T(\beta_1,\beta_2) = \mathcal{O}\left(\frac{\beta_1}{\beta_2^n}\left(\frac{1-\beta_1}{1-\beta_1^n} +1 \right) \right)\rightarrow 0$, which seems could only achieve when $\beta_1 = 0$ .\\
 			\item[2] Though not explicitly stated, the results in (Zhang et al., 2022) could imply convergence to the stationary point when with some calculations. \\
                \item [3] ``FCT" refers to ``full corrective terms". The ``Conv. rate" column presents the convergence rate omitting logarithm factors.\\
                \item [4] $\gamma(\sigma_1)$ is a function determined by $\sigma_1$.
 		\end{tablenotes}
 	\end{threeparttable}
 \end{table*}

In the popular field of deep learning, a range of variants based on SGD, known as adaptive gradient methods have emerged. These methods employ the past gradients to adaptively tune their step-sizes and are preferred to SGD for minimizing various objective functions due to their efficiency. Among these methods, Adam \cite{kingma2014adam} has been one of the most effective methods empirically. Generally speaking, Adam absorbs some key ideas from previous adaptive methods such as AdaGrad \cite{duchi2011adaptive,streeter2010lesson} and RMSProp \cite{tieleman2012lecture} while adding more unique structures. It combines the exponential moving average mechanism from RMSProp and meanwhile adds the heavy-ball style momentum \cite{polyak1964some} and two unique corrective terms. This unique structure leads to a huge success for Adam in practical applications but at the same time brings more challenges to the theoretical analysis.

Considering the significance of affine variance noise and Adam in both theoretical and empirical fields, it's natural to question whether Adam can find a stationary point at a rate comparable to SGD under the same smooth condition and \eqref{eq:affine}. Earlier researches \cite{faw2022power,wang2023convergence,attia2023sgd} have shown that AdaGrad-Norm, a scalar version of AdaGrad, can find a stationary point at the same rate as SGD, not tuning step-sizes based on problem-parameters. Moreover, they addressed an essential challenge brought by the correlation of adaptive step-sizes and noise from \eqref{eq:affine} which does not appear in SGD's cases. However, since AdaGrad-Norm applies a cumulative step-sizes mechanism which is rather different from the exponential moving average step-sizes in Adam, the analysis for AdaGrad-Norm could not be trivially extended to Adam. Furthermore, the coordinate-wise step-size architecture of Adam, rather than the unified step-size for all coordinates in AdaGrad-Norm,
 brings more challenge when considering \eqref{eq:affine}. In affine variance noise landscape, existing literature could only ensure the Adam's convergence 
with random-reshuffling scheme under certain parameter restrictions \cite{zhang2022adam,wang2022provable}, or deduce the convergence at the expense of requiring bounded gradient assumption and using problem-parameters to tune the step-sizes \cite{guo2021novel}.  Some other works proved convergence to a stationary point by altering the original Adam algorithm such as removing certain corrective terms and modifying \eqref{eq:affine} to a stronger coordinate-wise variant \cite{wang2023closing,yusu2023}. 

To the best of our knowledge, existing research has not yet fully confirmed the convergence of Adam under affine variance noise. To address this gap, we conduct an in-depth analysis
and prove that Adam with the right parameter can find a stationary point in high probability. We assume a milder noise model (detailed in Assumption (A3)), covering almost surely affine variance noise, the bounded noise, and sub-Gaussian noise.
We show that the convergence rate can reach at $\mathcal{O}\left( \text{poly}(\log T)/\sqrt{T}\right)$ matching the lower rate in \cite{arjevani2023lower} up to logarithm factors. Our proof employs the descent lemma over the introduced proxy iterative sequence and adopts techniques related to the new proxy step-sizes and error decomposition. Based on this, we are able to handle the correlation between stochastic gradients and adaptive step-sizes and transform the first-order term from the descent lemma into the gradient norm.

%(related to 
%new proxy step-sizes and some error decompositions as well as some estimations)

%Our proof relies on a novel proxy step-size to decorrelate the stochastic gradients and the adaptive step-sizes and well estimate the inner products related to an adaptive heavy-ball style momentum which is a unique structure of Adam.

Finally, we apply the analysis to the $(L_0,L_q)$-smooth condition \cite{zhang2020why}. Several researchers have found empirical evidence of objective functions satisfying $(L_0,L_q)$-smoothness but out of $L$-smoothness range, especially in large-scale language models \cite{zhang2020improved,vaswani2017attention,devlin2018bert,crawshaw2022robustness}.
Theoretical analysis of  adaptive methods under this relaxed condition is more complicated and needs further nontrivial proof techniques. Also, prior knowledge of 
 problem-parameters to tune step-sizes is needed, as indicated by the counter-examples from \cite{wang2023convergence}
 for the AdaGrad.
 Existing works \cite{faw2023beyond,wang2023convergence} obtained a convergence bound for AdaGrad-Norm with \eqref{eq:affine}, and \cite{rakhlinconvergence} considered Adam with sub-Gaussian noise. In this paper, we provide a probabilistic convergence result for Adam with the affine variance noise and the generalized smoothness condition.

We also refer readers to see the main contributions of our works and comparisons with the existing works in Table \ref{table:comparison}.
 
% \paragraph{Notations}  
{\bf Notations \ } We use $[T]$ to denote the set $\{1,2,\cdots, T\}$ for any positive integer $T$, $\| \cdot \|, \| \cdot \|_1$ and $\| \cdot \|_{\infty}$ to denote $l_2$-norm, $l_1$-norm and $l_\infty$-norm respectively. $a \sim \mathcal{O}(b)$ and $a \le \mathcal{O}(b)$ denote $a = C_1b$ and $a \le C_2b$ for some positive universal constants $C_1, C_2$, and $a \le \tilde{\mathcal{O}}(b)$ denotes $a \le \mathcal{O}(b) \text{poly}(\log b)$. 
$a \lesssim b$ denotes $a \leq {\mathcal{O}}(b).$
 For any vector $\vx \in \mR^d$, $\vx^2$ and $\sqrt{\vx}$ denote coordinate-wise square and square root respectively. $ x_i$ denotes the $i$-th coordinate of $\vx$.
 For any two vectors $\vx,\vy \in \mR^d$, we use $\vx \odot \vy$ and $\vx/\vy$ to denote the coordinate-wise product and quotient respectively. ${\bf 0}_d$ and ${\bf 1}_d$ represent zero and one $d$-dimensional vectors respectively.

\section{Problem set up and algorithm}
We consider unconstrained stochastic optimization \eqref{eq:sop} over $\mR^d$ with $l_2$-norm. The objective function $f: \mR^d \rightarrow \mR$ is differentiable.
Given $\vx \in \mathbb{R}^d$, we assume a gradient oracle that returns a random vector $g(\vx,\vz)\in \mathbb{R}^d$ dependent by the random sample $\vz$. The true gradient of $f$ at $\vx$ is denoted by $\nabla f(\vx) \in \mathbb{R}^d$.  
\paragraph{Assumptions.}
%{\bf Assumptions}
We make the following assumptions throughout the paper.
\begin{itemize}[leftmargin=20pt]
   \item ({\bf A1}) Bounded below: there exists $f^* > -\infty$ such that $f(\vx) \ge f^*, \forall \vx \in \mR^d$; 
    \item ({\bf A2}) Unbiased estimator: the gradient oracle provides an unbiased estimator of $\nabla f(\vx)$, i.e., $\mE_{\vz}\left[g(\vx,\vz) \right]=\nabla f(\vx),\forall \vx \in \mR^d$;
   \item ({\bf A3}) Generalized affine variance noise: the gradient oracle satisfies that there are some constants $\sigma_0,\sigma_1 > 0, p \in [0,4)$, $\mE_{\vz}\left[\exp\left(\frac{\|g(\vx,\vz) - \nabla f(\vx)\|^2}{ \sigma_0^2 + \sigma_1^2 \|\nabla f(\vx)\|^p}\right)\right] \le \exp(1),\forall \vx \in \mR^d$.
\end{itemize}
    The first two assumptions are standard in the stochastic optimization. The third assumption provides a mild noise model that covers the almost surely bounded noise and sub-Gaussian noise. Moreover, it's more general than almost surely affine variance noise as follows
    \begin{align}\label{eq:almost_affine}
        \|g(\vx,\vz) - \nabla f(\vx) \|^2 \le \sigma_0^2 + \sigma_1^2 \|\nabla f(\vx)\|^2, a.s.,
    \end{align} 
    enlarging the range of $p$ to $[0,4)$. Assumption (\textbf{A3}) with $p=2$ and \eqref{eq:almost_affine} are also utilized in \cite{attia2023sgd} to establish high probability results for AdaGrad-Norm. It represents a stronger condition than the expected version of \eqref{eq:affine} that is commonly employed for deriving the expected convergence of algorithms. However, almost surely assumption enables the derivation of stronger high-probability convergence guarantees for algorithms, while still ensuring expected convergence. 
    
    The affine noise variance assumption is important for machine learning applications with feature noise (including missing features) \cite{fuller2009measurement,khani2020feature}, robust linear regression \cite{xu2008robust}, and generally whenever the model parameters are multiplicatively perturbed by noise (e.g., a multilayer network, where noise from a previous layer multiplies the parameters in subsequent layers). We refer interested readers to see e.g., \cite{bertsekas2000gradient,bottou2018optimization,faw2022power,wang2023convergence,attia2023sgd} 
    for more discussions about the affine variance noise.
\begin{algorithm}[tbp]
	\caption{Adam}
	\label{alg:Adam}
	\begin{algorithmic}
		\STATE{ \textbf{Input: }Horizon $T$, $\vx_1 \in \mathbb{R}^d$, $\beta_1, \beta_2 \in [0,1)$, $\vm_0 = \vv_0 = {\bf 0}_d$, $\eta,\ep > 0$, $\vep = \ep  {\bf 1}_d$.}
		\FOR{$s=1,\cdots,T$}
		\STATE{Draw a new sample $\vz_s$ and generate $\vg_s = g(\vx_s,\vz_s) $;}
		\STATE{$\vm_{s}=\beta_1 \vm_{s-1} + (1-\beta_1) \vg_{s}$;}
		\STATE{$\vv_{s} = \beta_2 \vv_{s-1} + (1-\beta_2)\vg_{s}^2 $;}
		\STATE{$\eta_s = \eta \sqrt{1-\beta_2^s}/(1-\beta_1^s),\ \vep_s = \vep\sqrt{1-\beta_2^s} $;}
		\STATE{$\vx_{s+1} = \vx_{s} - \eta_s \cdot  \vm_{s}/\left(\sqrt{\vv_{s}}+\vep_s\right)$. }
		\ENDFOR
	\end{algorithmic}
\end{algorithm}

\paragraph{Adam.}For the stochastic optimization problem, we study Algorithm \ref{alg:Adam}, which is an equivalent form of vanilla Adam \cite{kingma2014adam} with the two corrective terms for $\vm_s$ and $\vv_{s}$ included into $\eta_s$ for notation simplicity.

The iterative relationship in 
Algorithm \ref{alg:Adam} can be also written as for any $s \in [T]$,
% \vspace{-3mm}
\begin{equation}
	\begin{split}
	\vx_{s+1} &= \vx_s - \eta_s(1-\beta_1) \cdot \frac{\vg_s}{\sqrt{\vv_s}+\vep_s}  + \beta_1 \cdot  \frac{\eta_s (\sqrt{\vv_{s-1}}+\vep_{s-1})}{\eta_{s-1} (\sqrt{\vv_s}+\vep_s)} \odot (\vx_s - \vx_{s-1}), \label{eq:iteration}
 \end{split}
\end{equation}
where we let $\vx_0 = \vx_1$ and $\eta_0 =\eta $. \eqref{eq:iteration} plays a key role in the convergence analysis, showing that Adam incorporates a heavy-ball style momentum and dynamically adjusts its momentum through $\beta_1$ and $\beta_2$, along with adaptive step-sizes. This inspires us to learn from some classical analysis methods for algorithms with momentum and provides some new estimations to fit in with the adaptive property.

\section{Convergence of Adam with smooth objective functions}\label{sec:highpro_affine}
In this section, we assume that the objective function $f$ is $L$-smooth satisfying that for any $\vx,\vy \in \mR^d$,
\begin{align}\label{eq:smooth}
    \|\nabla f(\vy) - \nabla f(\vx) \| \le L \|\vy-\vx\|.
\end{align}
We show that Adam has the following high probability results.
\begin{theorem}\label{thm:1}
    Let $T \ge 1$ and $\{\vx_s\}_{s \in [T]}$ be the sequence generated by Algorithm \ref{alg:Adam}. If Assumptions (A1)-(A3) hold, and the hyper-parameters satisfy that 
    \begin{equation}\label{eq:parameter}
        0 \le \beta_1<\beta_2<1, \quad \beta_2 = 1-c/T, \quad \eta= C_0 \sqrt{1-\beta_2}, \quad \ep=\ep_0\sqrt{1-\beta_2},
    \end{equation}
    for some constants $c,C_0> 0$ and $ \ep_0 > 0$, then for any given $\delta \in (0,1/2)$, 
    it holds that with probability at least $1-2\delta$,
     \begin{align*}
        &\quad\frac{1}{T}\sum_{s=1}^T\|\nabla f(\vx_s)\|^2 \le \mathcal{O}\left\{G^2\left(  \sqrt{\frac{\sigma_0^2+\sigma_1^2G^p + G^2}{T}}  + \frac{\ep_0}{T} \right)\sqrt{\log\left(\frac{T}{\delta} \right)} \right\},
    \end{align*}
    where $G^2$ is defined by the following order with respect to $T,\ep_0,\delta$:\footnote{ The detailed expression of $G^2$ could be found in \eqref{eq:define_G} from Appendix.}
    \begin{align}\label{eq:G_simple}
        &G^2 \sim  \mathcal{O}\left(  \log^{\frac{3}{2}\max\{2,\frac{4}{4-p}\}}\left(\frac{T}{\ep_0 \delta} \right)\right).
    \end{align}
\end{theorem}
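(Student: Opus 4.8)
The plan is to apply the smoothness descent lemma to an auxiliary ``proxy'' sequence that absorbs the heavy-ball momentum, control the resulting error terms with high probability, and then close a self-referential bound that produces the quantity $G$. First I would introduce a proxy iterate $\vz_s$, an affine combination of $\vx_s$ and $\vx_{s-1}$ normalized so that $\vz_{s+1}-\vz_s$ is, up to a small preconditioner-drift correction, a single adaptive step $-\eta_s\,\vg_s/(\sqrt{\vv_s}+\vep_s)$. Applying \eqref{eq:smooth} to $f(\vz_{s+1})$ and telescoping over $s\in[T]$ yields, schematically, $\sum_s\la\nabla f(\vx_s),\eta_s\vg_s/(\sqrt{\vv_s}+\vep_s)\ra\le f(\vx_1)-f^*+\tfrac{L}{2}\sum_s\|\vz_{s+1}-\vz_s\|^2+(\text{drift errors})+(\text{errors from }\nabla f(\vz_s)-\nabla f(\vx_s))$; the last two groups are handled by monotonicity of $\eta_s,\vep_s,\vv_s$ and by $\|\vz_s-\vx_s\|\lesssim\frac{\beta_1}{1-\beta_1}\|\vx_s-\vx_{s-1}\|$ together with smoothness.

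\textbf{Extracting the gradient norm.} The left side is not a martingale increment because $\vv_s$ depends on $\vg_s$; this is the first core difficulty, and the paper's ``proxy step-sizes'' are meant for it. I would split $1/(\sqrt{v_{s,i}}+\ep_{s,i})$ into the predictable part $\tilde\eta_{s,i}$ built from $\beta_2 v_{s-1,i}$ (measurable before $\vz_s$ is drawn) plus a mismatch term, noting $|\sqrt{v_{s,i}}-\sqrt{\beta_2 v_{s-1,i}}|\le\sqrt{1-\beta_2}\,|g_{s,i}|$ so the mismatch gains a factor $\sqrt{1-\beta_2}$. After conditioning, the predictable part has mean $\sum_i\tilde\eta_{s,i}(\nabla_i f(\vx_s))^2$; lower-bounding $\tilde\eta_{s,i}\gtrsim C_0\sqrt{1-\beta_2}/(\sqrt{\sigma_0^2+\sigma_1^2 G^p+G^2}+\ep_0\sqrt{1-\beta_2})$ — valid once $\max_{j\le s}\|\nabla f(\vx_j)\|\le G$, so that $v_{s-1,i}\lesssim\sigma_0^2+\sigma_1^2 G^p+G^2$ on the noise event — turns it into a weighted $\|\nabla f(\vx_s)\|^2$. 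The centered residual and the mismatch term become martingale-difference sums in the noise $\vg_s-\nabla f(\vx_s)$.

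\textbf{Residual sums and concentration.} For the quadratic term I would use the Adam-specific estimates $m_{s,i}^2\le(1-\beta_1)\sum_{j\le s}\beta_1^{s-j}g_{j,i}^2$ and $v_{s,i}\ge(1-\beta_2)\beta_2^{s-j}g_{j,i}^2$, together with $\tfrac{(1-\beta_2)g_{s,i}^2}{v_{s,i}}=1-\beta_2\tfrac{v_{s-1,i}}{v_{s,i}}\le\log\tfrac{v_{s,i}}{v_{s-1,i}}+\log(1/\beta_2)$; with $\eta^2=C_0^2(1-\beta_2)$ this produces a bound of the order of $\Delta$ from \eqref{eq:G_simple}, whose $T\log(1/\beta_2)$ piece is $O(1)$ precisely when $1-\beta_2=\Theta(1/T)$. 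For each martingale-difference sum I would invoke a time-uniform concentration inequality for conditionally sub-Gaussian increments: by Assumption (A3), $\la\vp_s,\vg_s-\nabla f(\vx_s)\ra$ is conditionally sub-Gaussian with variance proxy $\lesssim\|\vp_s\|^2(\sigma_0^2+\sigma_1^2\|\nabla f(\vx_s)\|^p)$, so a Freedman/self-normalized argument gives, on an event of probability $\ge1-\delta$, a term of order $\sqrt{\log(dT/\delta)\sum_s\|\vp_s\|^2(\sigma_0^2+\sigma_1^2\|\nabla f(\vx_s)\|^p)}$ plus an additive $\log(dT/\delta)$ (the unions over the $d$ coordinates and over dyadic scales of the unknown variance proxy are the source of the extra logarithms; a second event, bounding $\|\vg_s-\nabla f(\vx_s)\|$ uniformly in $s$, gives the overall $1-2\delta$). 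The $\|\nabla f(\vx_s)\|^2$-content is reabsorbed into the main term by Young's inequality, and the $\|\nabla f(\vx_s)\|^p$-content is bounded using $\|\nabla f(\vx_s)\|\le G$.

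\textbf{Closing the loop; the main obstacle.} Both the step-size lower bound and the noise control need the a priori bound $G$ on $\max_{s\le T}\|\nabla f(\vx_s)\|$, which is itself an output of the inequality just assembled — so the argument is circular. I would break it by induction on $s$ on the intersection of the two high-probability events: assuming $\|\nabla f(\vx_j)\|\le G$ for $j\le s$, the estimates above bound $f(\vx_{s+1})-f^*$, and $\|\nabla f(\vx_{s+1})\|^2\le 2L(f(\vx_{s+1})-f^*)\le G^2$ holds provided $G^2$ dominates $L(f(\vx_1)-f^*)+\Delta(LC_0+\ep_0+\sigma_0)+\Delta^2$ and, after isolating the residual self-dependence of the shape $G^2\lesssim(\cdots)+c\,\sigma_1 G^{p/2}\Delta$, also dominates $(4-p)(p^p\sigma_1^4\Delta^4)^{1/(4-p)}$ — this last term coming from solving $G^{(4-p)/2}\lesssim\sigma_1\Delta$ via Young's inequality, which is where the $p^p$ and $(4-p)$ constants arise. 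This is exactly \eqref{eq:G_simple}. Dividing the telescoped inequality by $T$ and by the step-size lower bound then delivers the stated rate. The hard part is precisely this coupling: the correlation between $\vg_s$ and $\sqrt{\vv_s}+\vep_s$ forces the proxy-step-size decomposition and a careful accounting of its mismatch error, while the a priori unboundedness of the gradients under (A3) makes everything circular — and making the fixed-point inequality for $G$ close uniformly over all $p\in[0,4)$, with the correct powers of $G$ and $\Delta$, is the most delicate bookkeeping of the proof.
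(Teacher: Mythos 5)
Your high-level skeleton matches the paper: an auxiliary sequence absorbing the heavy-ball momentum (the paper's $\vy_s = \vx_s + \tfrac{\beta_1}{1-\beta_1}(\vx_s-\vx_{s-1})$), descent lemma and telescoping, a proxy/predictable step-size to break the $\vg_s$--$\vv_s$ correlation, log-type sums $\sum_s\|\vg_s/\vb_s\|^2 \lesssim d\log(\cdot)/(1-\beta_2)$, martingale concentration, and an induction over $s$ closed by Young's inequality to define $G$. That part is right. The genuine gap is in your choice of proxy step-size, and it is not a small one because the proxy is precisely the paper's technical contribution.

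You take the predictable denominator to be $\sqrt{\beta_2 v_{s-1,i}}+\ep_s$ and control the mismatch via $|\sqrt{v_{s,i}}-\sqrt{\beta_2 v_{s-1,i}}|\le\sqrt{1-\beta_2}\,|g_{s,i}|$. The paper instead uses
\begin{equation*}
\va_s=\sqrt{\beta_2\vv_{s-1}+(1-\beta_2)\bigl(\mG_T(s)\bigr)^2\,{\bf 1}_d}+\vep_s,\qquad \mG_T(s)=\mM_T\sqrt{2\sigma_0^2+2\sigma_1^2 G_s^p+2G_s^2},
\end{equation*}
where $G_s=\max_{j\le s}\|\nabla f(\vx_j)\|$ is still $\mathcal{F}_{s-1}$-measurable. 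The injected term $(1-\beta_2)\mG_T(s)^2$ inside the square root is load-bearing in two places, and your proposal does not have a substitute. First, it gives the uniform upper bound $1/a_{s,i}\le 1/\bigl(\mG_T(s)\sqrt{1-\beta_2}\bigr)\le 1/\bigl(\sqrt{\sigma_0^2+\sigma_1^2\|\nabla f(\vx_s)\|^p}\,\sqrt{1-\beta_2}\bigr)$ (the paper's \eqref{eq:bound_asi}). In the martingale concentration (Lemma \ref{lem:1_bounded}) this is exactly what cancels the $(\sigma_0^2+\sigma_1^2\|\nabla f(\vx_s)\|^p)$ factor in the conditional variance proxy and lets $\tfrac{3\lambda}{4}\sum_s\omega_{s,i}^2$ be reabsorbed, up to $\mG_T(t)/\mG_T$, into $\tfrac14\sum_s\eta_s\|\bar{\vg}_s/\sqrt{\va_s}\|^2$. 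With your $\tilde a_{s,i}=\sqrt{\beta_2 v_{s-1,i}}+\ep_s$, the only deterministic lower bound on the denominator is $\ep_s\sim\ep_0(1-\beta_2)$ (indeed $\tilde a_{1,i}=\ep_1$ since $\vv_0=0$), so the noise scale does not cancel; what you pay is a factor of order $\mG_T(s)/\ep_s\sim\sqrt{T}$ when $\beta_2=1-c/T$, destroying the $\mathrm{poly}(\log T)/\sqrt{T}$ rate. Second, the same lower bound drives the mismatch estimate (Lemma \ref{lem:gap_as_bs_bounded} gives $|1/a_{s,i}-1/b_{s,i}|\le\mG_T(s)\sqrt{1-\beta_2}/(a_{s,i}b_{s,i})$, and then $\mG_T(s)^2/a_{s,i}\le\mG_T(s)/\sqrt{1-\beta_2}$); with your bound you end up, after Young and $g_{s,i}^2/b_{s,i}^2\le 1/(1-\beta_2)$, needing to control $\sum_s\eta_s\sum_i g_{s,i}^2/\tilde a_{s,i}$, which again hits $1/\ep_s$. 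Flooring $\tilde a_{s,i}$ by the global $G$ would be circular; the paper avoids circularity by flooring with the running, $\mathcal{F}_{s-1}$-measurable $\mG_T(s)$ and only comparing it to $\mG_T$ inside the induction. So the missing idea is precisely that the proxy denominator must itself scale like the local noise level $\sqrt{\sigma_0^2+\sigma_1^2\|\nabla f(\vx_s)\|^p}\cdot\sqrt{1-\beta_2}$, not merely be regularized by $\ep$. Everything else in your sketch (the $1-2\delta$ split, the $\Delta$ and $p$-dependent Young step yielding $(4-p)(p^p\sigma_1^4\Delta^4)^{1/(4-p)}$, the final division by the step-size lower bound) matches the paper.
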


Theorem \ref{thm:1} provides the nearly optimal convergence rate $\mathcal{O}\left(\text{poly}(\log T)/\sqrt{T} \right)$ to find a stationary point 
 when setting the parameter probably: $\beta_2 = 1-\mathcal{O}(1/T)$. It's worth noting that the setting requires $\beta_2$ to be closed enough to $1$ when $T$ is sufficiently large, which roughly aligns with the typical setting in \cite{kingma2014adam,zou2019sufficient,defossez2020simple,wang2023closing}. For a more detailed comparison of our results to existing works, including assumptions, convergence rate, and dependency, we refer readers to Table \ref{table:comparison}.
%Since the convergence rate in \eqref{eq:convergence_2} could be bounded by $\mathcal{O}({\rm poly}(G)/\sqrt{T})$, we then deduce the desired order.

\paragraph{Adam without corrective terms and RMSProp.} We also consider a simplified version of Adam that drops two corrective terms as shown in \Cref{alg:Adam_no_correct} in Appendix. \Cref{alg:Adam_no_correct} with $\beta_1=0$ can be directly reduced to RMSProp \cite{tieleman2012lecture}. More importantly, the following result shows that the convergence rate of \Cref{alg:Adam_no_correct} is adaptive to the noise level $\sigma_0$. 
\begin{theorem}[informal version of \Cref{thm:no_corrective}]\label{thm:inform_no_correct_smooth}
    Let $T \ge 1$ and $\{\vx_s\}_{s \in [T]}$ be generated by \Cref{alg:Adam_no_correct} covering RMSProp. Following the assumptions and setup in \Cref{thm:1}, with probability at least $1-2\delta$, $\sum_{s=1}^T\|\nabla f(\vx_s)\|^2/T \le \tilde{\mathcal{O}}(1/T+\sigma_0/\sqrt{T})$.
\end{theorem}

\section{Convergence of Adam with generalized smooth objective functions}
In this section, we study the convergence behavior of Adam in the generalized smooth case. We first provide some necessary introduction to the generalized smooth condition. 
\subsection{Generalized smoothness}
We consider the following $(L_0,L_{q})$-smoothness condition: there exist
 constants $q \in [0,2)$ and $L_0,L_{q} > 0$, satisfying that for any $\vx,\vy \in \mR^d$ with $\|\vx-\vy\| \le 1/L_{q}$, 
\begin{equation}\label{eq:general_smooth_1}
    \|\nabla f(\vy) - \nabla f(\vx) \| \le \left(L_0 + L_{q}\|\nabla f(\vx)\|^{q} \right) \|\vx-\vy\|.
\end{equation}
   The generalized smooth condition was originally put forward by \cite{zhang2020why} for any twice differentiable function $f$ satisfying that
    \begin{align}\label{eq:general_smooth_2}
        \|\nabla^2 f(\vx)\| \le L_0 + L_{1} \|\nabla f(\vx)\|.
    \end{align}
   It has been proved that a lot of objective functions in experimental areas satisfy
    \eqref{eq:general_smooth_2} but out of $L$-smoothness range, especially in training large language models, see e.g., \cite[Figure 1]{zhang2020why} and \cite{crawshaw2022robustness}. 

   To better understand the theoretical significance of the generalized smoothness, \cite{zhang2020improved} provided an alternative form in \eqref{eq:general_smooth_1} with $q=1$, only requiring $f$ to be differentiable. They showed that \eqref{eq:general_smooth_1} is sufficient to elucidate the convergence of gradient-clipping algorithms.
    
    There are three key reasons for opting for \eqref{eq:general_smooth_1}. Firstly, considering our access is limited to first-order stochastic gradients, it's logical to only assume that $f$ is differentiable. Second, as pointed out by \cite[Lemma A.2]{zhang2020improved} and \cite[Proposition 1]{faw2023beyond}, \eqref{eq:general_smooth_1} with $q=1$ and \eqref{eq:general_smooth_2} are equivalent up to constant factors when $f$ is twice differentiable. Thus, \eqref{eq:general_smooth_1} covers a broader range of functions than \eqref{eq:general_smooth_2}.
    Finally, it's easy to verify that \eqref{eq:general_smooth_1} is strictly weaker than $L$-smoothness. A concrete example is that the simple function $f(x)=x^4, x\in \mR$ does not satisfy any global $L$-smoothness but \eqref{eq:general_smooth_1}. Moreover, the expanded range of $q$ to $[0,2)$ is necessary as all univariate rational functions $P(x)/Q(x)$, where $P,Q$ are polynomials and double exponential functions $a^{(b^x)}$ with $a,b > 1$ are $(L_0,L_{q})$-smooth with $1<q<2$ (see \cite[Proposition 3.4]{rakhlinconvergence}). We refer interested readers to see \cite{zhang2020why,zhang2020improved,faw2023beyond,rakhlinconvergence} for more discussions of concrete examples of generalized smoothness.

\subsection{Convergence result} We provide the high probability convergence result of Adam with $(L_0,L_q)$-smoothness condition as follows.
\begin{theorem}\label{thm:general_smooth}
    Let $T \ge 1$ and $\delta \in (0,1/2)$. Suppose that $\{\vx_s\}_{s \in [T]}$ is a sequence generated by Algorithm \ref{alg:Adam}, $f$ is $(L_0,L_{q})$-smooth satisfying \eqref{eq:general_smooth_1}, Assumptions (A1)-(A3) hold, and the parameters satisfy
    \begin{align}\label{eq:general_parameter}
        &0 \le \beta_1 < \beta_2< 1,\quad \beta_2 = 1-c/T, \quad \ep=\ep_0\sqrt{1-\beta_2}, \quad \eta = \tC_{0}\sqrt{1-\beta_2},  \nonumber \\
        & \tC_{0} \le \min\left\{E_0, \frac{E_0}{\mH}, \frac{E_0}{\mL}, \sqrt{\frac{\beta_2(1-\beta_1)^2(1-\beta_1/\beta_2)}{4L_{q}^2d}}\right\},
    \end{align}
    where $c,\ep_0,E_0,\tC_0 > 0$ are constants, $\hat{H}$ is controlled by $\mathcal{O}\left(\log \left(\frac{ T}{\ep_0\delta  } \right)\right)$ \footnote{The specific definition of $\hat{H}$ can be found in \eqref{eq:define_H} from Appendix.},
    and $H,\mH,\mL$ are defined as 
    \begin{align}\label{eq:define_H_L}
        &H:=  L_0/L_{q} + \left(4L_q\hat{H}\right)^q+ \left(4L_{q}\hat{H}\right)^{\frac{q}{2-q}} +\left(4L_0\hat{H}\right)^{\frac{q}{2}} +4L_q\hat{H} + \left(4L_{q}\hat{H}\right)^{\frac{1}{2-q}} + \sqrt{4L_0\hat{H}}, \nonumber \\
        & \mH:= \sqrt{2(\sigma_0^2+\sigma_1^2 H^p + H^2)\log\left(\frac{\mathrm{e}T}{\delta} \right)},  \quad \mL:=L_0 + L_{q} \left(H^{q}+H+\frac{L_0}{L_q}\right)^q.
    \end{align}
    Then, it holds that with probability at least $1-2\delta$,
    \begin{align}
        & \frac{1}{T}\sum_{s=1}^T \|\nabla f(\vx_s)\|^2 \le 
        \mathcal{O}\left\{\frac{\hat{H}}{ \tC_0 }\left(\sqrt{\frac{\sigma_0^2 + \sigma_1^2  H^p + H^2}{T}} + \frac{\ep_0}{T} \right)\sqrt{\log\left(\frac{T}{\delta} \right)}\right\}. \label{eq:paper_general_rate}
    \end{align}
\end{theorem}

Note that in the above theorem, the order of $\log T$ in $\hat{H}$ and the final convergence bound is better than the one in Theorem \ref{thm:1} under the same noise assumption. This better dependency comes from the expense of using problem parameters such as $L_0,L_q$ to tune step-size $\tilde{C}_0$. Since $\hat{H}$ is logarithm order of $T$, $H,\mH,\mL$ are both polynomial logarithm order of $T$ and the final convergence rate in \eqref{eq:paper_general_rate} is $\mathcal{O}(\text{poly}(\log T)/\sqrt{T})$ order.
Note that $\tC_0 \le \mathcal{O}(1/\text{poly}(\log T))$ from \eqref{eq:general_parameter} when $T \gg d$. Hence, when $T$ is large enough, a possible setting is that $\eta=c_1/(\sqrt{T}\text{poly}(\log T))$ for some constant $c_1>0$, which roughly matches the typical setting as mentioned before. 
 
Similarly, we also obtain a convergence bound that is adaptive to the noise level for Adam without corrective terms and RMSProp under generalized smoothness.
\begin{theorem}[informal version of \Cref{thm:tgeneral_smooth}]\label{thm:inform_no_correct_general}
    Let $T \ge 1$ and $\{\vx_s\}_{s \in [T]}$ be generated by \Cref{alg:Adam_no_correct} covering RMSProp. Following the assumptions and the same order of hyper-parameters in \Cref{thm:general_smooth}, with probability at least $1-2\delta$, $\sum_{s=1}^T\|\nabla f(\vx_s)\|^2/T \le   \tilde{\mathcal{O}}(1/T+\sigma_0/\sqrt{T})$.
\end{theorem}
\section{Related works}

There is a large amount of works on stochastic approximations (or online learning algorithms) and adaptive variants, e.g., \cite{cesa2004generalization,smale2006online,ying2006online,nemirovski2009robust,duchi2011adaptive,bottou2018optimization,chen2018closing,liu2019towards,zhou2018convergence} and the references therein. 
In this section, we will discuss the most related works and make a comparison with our main results.  

\subsection{Convergence with affine variance noise and its variants}\label{sec:discuss_affine} We mainly list previous literature considering \eqref{eq:affine} over non-convex smooth scenario.  \cite{bertsekas2000gradient} provided an asymptotic convergence result for SGD with \eqref{eq:affine}. In terms of non-asymptotic results,  \cite{bottou2018optimization} proved the convergence of SGD, illustrating that the analysis was non-essentially different from the bounded noise case from \cite{ghadimi2013stochastic}.

%\cite{ghadimi2013stochastic} implicitly proved the convergence of SGD. Later, this finding was formally confirmed by \cite{bottou2018optimization}, illustrating that the analysis was non-essentially different from the bounded noise case . 

In the adaptive methods field, \cite{faw2022power} studied convergence of AdaGrad-Norm with \eqref{eq:affine}, pointing out that the analysis is more challenging than the bounded noise and bounded gradient case in \cite{ward2020adagrad}. They 
 provided a convergence rate of $\tilde{\mathcal{O}}(1/\sqrt{T})$ without knowledge of problem parameters, and further improved the bound adapting to the noise level: when $\sigma_1 \sim \mathcal{O}(1/\sqrt{T})$,
\begin{align}\label{eq:adaptive}
    \frac{1}{T}\sum_{t=1}^T \mE\|\nabla f(\vx_t)\|^2 \le \tilde{\mathcal{O}}\left(\frac{\sigma_0}{\sqrt{T}} + \frac{1}{T}\right).
\end{align}
\eqref{eq:adaptive} matches exactly with SGD's case \cite{bottou2018optimization} up to logarithm factors, showing a fast rate of $\tilde{\mathcal{O}}(1/T)$ when $\sigma_0$ is sufficiently low.
Later, \cite{wang2023convergence} proposed a deep analysis framework obtaining \eqref{eq:adaptive} with a tighter dependency to $T$ and not requiring any restriction over $\sigma_1$. They further obtained the same rate for AdaGrad under a stronger coordinate-wise version of \eqref{eq:affine}: for all $i \in [d]$, 
\begin{align}
    \mE_{\vz} |\vg(\vx,\vz)_i - \nabla f(\vx)_i |^2 \le \sigma_0^2 + \sigma_1^2 |\nabla f(\vx)_i|^2.
\end{align}
\cite{attia2023sgd} obtained a probabilistic convergence rate for AdaGrad-Norm with \eqref{eq:almost_affine} using a novel induction
argument to estimate the function value gap without any requirement over $\sigma_1$ as well.

In the analysis of Adam, a line of works \cite{shi2020rmsprop,zhang2022adam,wang2022provable} considered Adam for finite-sum objective functions under different regimes while possibly incorporating natural random shuffling technique. They could ensure that this type of Adam converged to a bounded region where 
\begin{equation}
    \min_{t \in [T]}\mE\left[\min\{\|\nabla f(\vx_t)\|, \|\nabla f(\vx_t)\|^2\} \right] 
    \lesssim \frac{\log T}{\sqrt{T}} + C_1\sigma_0  \label{eq:bounded_region}
\end{equation}
under the affine growth condition which is equivalent to \eqref{eq:affine}. Though not explicitly concluded, when setting $\beta_2= 1 - \mathcal{O}(1/T)$, \cite{zhang2022adam} and \cite{wang2022provable} can also ensure a convergence rate of order $\tilde{\mathcal{O}}(1/\sqrt{T})$ under certain settings.
Besides, both \cite{huang2021super} and \cite{guo2021novel} provided convergence bounds allowing for large heavy-ball momentum parameter that aligns more closely with practical settings. However, they relied on the assumption for step-sizes where $C_l \le \|\frac{1}{\sqrt{\vv_t}+\vep_t}\|_{\infty} \le C_u ,\forall t \in [T]$. \cite{wang2023closing} and \cite{yusu2023} used distinct methods to derive convergence bounds in expectation and high probability respectively, without relying on bounded gradients. Both studies achieved a convergence rate of the form in \eqref{eq:adaptive} for Adam ignoring the corrective terms. However, the two works only studied coordinate-wise affine variance noise.

In this paper, we derive a stronger high probability convergence rate for Adam with original corrective terms, relying on an almost surely noise assumption. The noise model is general enough to cover bounded noise, sub-Gaussian noise, and (coordinate-wise) affine variance noise. Although we consider a stronger almost sure assumption, our probabilistic convergence result is also stronger than the expected convergence.

\subsection{Convergence with generalized smoothness} \label{sec:discuss_general}
The generalized smooth condition was first proposed for twice differentiable functions by \cite{zhang2020why} (see \eqref{eq:general_smooth_2}) to explain the acceleration mechanism of gradient-clipping. This assumption was extensively confirmed in experiments of large-scale language models \cite{zhang2020why}. Later, \cite{zhang2020improved} further relaxed it to a more general form in \eqref{eq:general_smooth_1} allowing for first-order differentiable functions. Subsequently, a series of works \cite{qian2021understanding,zhao2021convergence,reisizadeh2023variance} studied different algorithms' convergence under this condition. 

In the field of adaptive methods, \cite{faw2023beyond} provided a convergence bound for AdaGrad-Norm assuming \eqref{eq:affine} and \eqref{eq:general_smooth_1} with $q=1$, albeit requiring $\sigma_1 < 1$. Based on the same conditions, \cite{wang2023convergence} improved the convergence rate to the form in \eqref{eq:adaptive} without any restriction on $\sigma_1$. \cite{wang2022provable} explored how randomly reshuffled Adam behaves under generalized smoothness with $q=1$ and \eqref{eq:affine}, showing the convergence when $\beta_2$ exceeds a threshold. \cite{crawshaw2022robustness} showed that an Adam-type algorithm converges to a stationary point under a stronger coordinate-wise generalized smooth condition.
Recently, \cite{rakhlinconvergence} provided a novel framework to derive high probability convergence bound for Adam under the generalized smooth and sub-Gaussian noise case.  

In this paper, we consider a more general noise setup and investigate Adam's convergence under the generalized smooth landscape. We prove that Adam is powerful enough to find a stationary point with properly tuned step-sizes even under these relaxed assumptions. Moreover, the convergence rate is not essentially harmed by the relaxation of noise and smoothness, matching the optimal $\mathcal{O}(1/\sqrt{T})$ rate up to logarithm factors.

\subsection{Convergence of Adam} Adam was first proposed by \cite{kingma2014adam} with empirical studies and theoretical results on online convex learning. The original proof of convergence in \cite{kingma2014adam} was later shown by \cite{reddi2019convergence} to contain gaps. \cite{reddi2019convergence} and the subsequent work \cite{wang2022divergence} also showed that for a range of momentum parameters chosen independently with the problem instance, Adam does not necessarily converge even for convex objectives. Many works have focused on its convergence behavior in non-convex smooth fields. A series of works studied Adam ignoring corrective terms, all requiring a uniform bound for gradients' norm. Among these works, \cite{zaheer2018adaptive} demonstrated that Adam can converge within a specific region if step-sizes and decay parameters are determined properly by the smooth parameter. \cite{de2018convergence} proposed a convergence result to a stationary point and required all stochastic gradients to keep the same sign. To circumvent this requirement, 
\cite{zou2019sufficient} introduced a convergence bound only requiring hyper-parameters to satisfy specific conditions. \cite{defossez2020simple} conducted a simple proof and further improved the dependency on the heavy-ball momentum parameter. Recently, \cite{zhou2023win} introduced Nesterov-like acceleration into Adam and AdamW \cite{loshchilov2018decoupled} indicating their superiority in convergence over the non-accelerated versions. For Adam-related works under \eqref{eq:affine} or generalized smoothness, we refer readers to Sections \ref{sec:discuss_affine} and \ref{sec:discuss_general}.

We also want to highlight that a series of works \cite{kunstner2023noise,xie2024implicit,zhang2024the} investigated the geometry of Adam from an $l_{\infty}$-norm perspective. \cite{kunstner2023noise} and \cite{xie2024implicit} studied the geometry of Adam by regarding it as a variant of SignSGD and \cite{zhang2024the} showed that full-batch Adam converges towards a linear classifier that
achieves the maximum $l_{\infty}$-margin when the training data are linearly separable.  

% \subsection{Convergence of other adaptive methods} The convergence behavior of some adaptive methods have been well studied in the past decade. In terms of AdaGrad-Norm, \cite{li2019convergence} established a convergence bound with delayed step-sizes that remained independent from the current stochastic gradient and required to adjust the learning rate. \cite{ward2020adagrad} derived a similar rate using bounded stochastic gradients without requiring prior knowledge of problem-parameters. \cite{kavis2022high} demonstrated that AdaGrad-Norm converges with a probabilistic bound that adapts to the sub-Gaussian noise level, demanding uniform bounded gradients. Recently, \cite{alina2023high} eliminated the bounded gradients assumption while achieving the same outcome under sub-Gaussian noise. Other works mentioned before include \cite{faw2022power,wang2023convergence,attia2023sgd}.

\section{Proof sketch under the smooth case}\label{sec:proof_sketch_1}
In this section, we provide a proof sketch of Theorem \ref{thm:1} with some insights and proof novelty. Our proof borrows some ideas from \cite{ward2020adagrad,defossez2020simple,faw2022power,attia2023sgd,wang2023closing,yusu2023}. The detailed proof is in Appendix \ref{sec:appendix_thm1}. 
\paragraph{Preliminary.}To start with, we let the stochastic gradient $\vg_s = (\gsi)_i$, the true gradient $\nabla f(\vx_s) = \bar{\vg}_s = (\bar{g}_{s,i})_i $ and the noise $\vxi_s = (\xi_{s,i})_i = \vg_s - \bar{\vg}_s$. We also let $\ep_s = \ep\sqrt{1-\beta_2^s}$ and thus $\vep_s = \ep_s {\bf 1}_d$. For any positive integer $T$ and $\delta \in (0,1)$, we define $\mM_T = \sqrt{\log\left(\mathrm{e}T/\delta \right)}$. We denote the adaptive part of the step-size as  
\begin{align}
     \vb_{s} := \sqrt{\vv_{s}} + \vep_s = \sqrt{\beta_2 \vv_{s-1}+(1-\beta_2)\vg_s^2} + \vep_s. \label{eq:bsi}
\end{align}
We define two auxiliary sequences $\{{\bm p}_s\}_{s \ge 1}$ and $\{\vy_s \}_{ s \ge 1}$,
\begin{align}
	& {\bm p}_1 = {\bm 0}_d,\quad \vy_1 = \vx_1,  
 \quad {\bm p}_s= \frac{\beta_1}{1-\beta_1}(\vx_s - \vx_{s-1}),\quad  \vy_s = {\bm p}_s  + \vx_s, \forall s \ge 2. \label{eq:define_y_s}
\end{align}
%following from \cite{ghadimi2015global} which was used to prove the convergence of SGD with momentum and later applied to handle many variants of momentum-based algorithms.
We follow from proof ideas in \cite{ghadimi2015global,yang2018unified} which were used for the convergence of SGD with momentum and later applied to handle many variants of momentum-based algorithms. Recalling the iteration of $\vx_s$ in \eqref{eq:iteration}, we reveal that $\vy_s$ satisfies 
\begin{align}\label{eq:y_iterative}
    \vy_{s+1} &= \vy_s -\eta_s \cdot \frac{\vg_s}{\vb_s} + \frac{\beta_1}{1-\beta_1}  \left( \frac{\eta_{s}\vb_{s-1}}{\eta_{s-1}\vb_s} - \bm{1}_d \right) \odot (\vx_s - \vx_{s-1}).
\end{align}
In addition, given $T \ge 1$, we define that $\forall s \in [T],$
\begin{align}\label{eq:define_G_t}
    &G_s = \max_{j \in [s]} \|\bar{\vg}_j\|,  \mG_T(s)  = \mM_T \sqrt{2\sigma_0^2 + 2\sigma_1^2 G_s^p + 2 G_s^2}, \mG_T = \mM_T \sqrt{2\sigma_0^2 + 2\sigma_1^2 G^p + 2 G^2},
\end{align}
where $G$ is as in Theorem \ref{thm:1}. Both $G_s$ and $\mG_T(s)$ will serve as upper bounds for gradients' norm before time $s$. We will verify their importance in the later argument.

\paragraph{Starting from the descent lemma.} We fix the horizon $T$ and start from the standard descent lemma of $L$-smoothness.
Then, for any given $t \in [T]$, combining with \eqref{eq:y_iterative} and summing over $s \in [t]$,
\begin{align}
        f(\vy_{t+1})
               &\le f(\vx_1)+  \underbrace{ \sum_{s=1}^t -\eta_s \left\la \nabla f(\vy_s), \frac{\vg_s}{\vb_s} \right\ra }_{\textbf{A}}  + \underbrace{\frac{\beta_1}{1-\beta_1}\sum_{s=1}^t \left\langle \Delta_s \odot (\vx_s-\vx_{s-1}),  \nabla f(\vy_s) \right\rangle}_{\textbf{B}}  \nonumber \\
               &+ \underbrace{\frac{L}{2}\sum_{s=1}^t\left\| \eta_s \cdot \frac{\vg_s}{\vb_s} - \frac{\beta_1}{1-\beta_1} (\Delta_s \odot (\vx_s-\vx_{s-1})) \right\|^2}_{\textbf{C}},  \label{eq:A+B+C}
\end{align}
where we let $\Delta_s  = \frac{\eta_{s}\vb_{s-1}}{\eta_{s-1}\vb_s} - \bm{1}_d $ and use $\vy_1 = \vx_1$ from \eqref{eq:define_y_s}.
In what follows, we will estimate \textbf{A}, \textbf{B}, and \textbf{C} respectively. 
\paragraph{Probabilistic estimations.}
To proceed with the analysis, we next introduce two probabilistic estimations
showing that the norm of the noises and a related summation of martingale difference sequence could be well controlled with high probability. We show that with probability at least $1-2\delta$, the following two inequalities hold simultaneously for all $t \in [T]$:
\begin{align}\label{eq:paper_highpro_affine_noise}
	\|\vxi_{t} \|^2 \le  \mM_T^2 \left(\sigma_0^2 + \sigma_{1}^2 \|\bar{\vg}_t\|^p \right), \quad \mbox{and}
\end{align}
\begin{align}\label{eq:ProEstMar}
	-\sum_{s=1}^t\eta_s\left\langle \bar{\vg}_s, \frac{\vxi_s}{\va_s}\right\rangle\le \frac{\mG_T(t)}{4\mG_T} \sum_{s=1}^t \eta_s \left\|\frac{\bar{\vg}_s}{\sqrt{\va_s}}\right\|^2+ D_1 \mG_T,
\end{align}
where $D_1$ is a constant defined in Lemma \ref{lem:1_bounded} and $\va_s$ will be introduced later.
In what follows, we always assume that \eqref{eq:paper_highpro_affine_noise}
and \eqref{eq:ProEstMar} hold for all $t \in [T]$ and carry out our subsequent analysis with some deterministic estimations. 

\paragraph{Estimating A.}
We first decompose  \textbf{A} as
\begin{align*}
	\textbf{A} 
	&=  \underbrace{ \sum_{s=1}^t-\eta_s\left\langle \bar{\vg}_s, \frac{\vg_s}{\vb_s}\right\rangle }_{\textbf{A.1}} + \underbrace{\sum_{s=1}^t\eta_s \left\langle \bar{\vg}_s - \nabla f(\vy_s), \frac{\vg_s}{\vb_s}\right\rangle}_{\textbf{A.2}}.
\end{align*}
%\paragraph{Decorrelation of $\vg_s$ and $\vb_s$} 
Due to the correlation of the stochastic gradient $\vg_s$ and the step-size $\eta_s/\vb_s$, the estimation
of \textbf{A.1} is challenging, as also noted in the analysis for other adaptive gradient methods, e.g., \cite{ward2020adagrad,defossez2020simple,faw2022power,attia2023sgd,wang2023closing,yusu2023}. To break this correlation, the so-called proxy step-size technique is introduced and variants of proxy step-size have been introduced
in the related literature.  However, to our best knowledge, none of these proxy step-sizes could be used in our analysis for Adam considering potential unbounded gradients under the noise model in Assumption (A3). 
In this paper, we construct a  proxy step-size $\eta_s/\va_s$, with 
$\va_s$ relying on $\mG_T(s)$ in \eqref{eq:define_G_t}, defined as for any $s \in [T]$,
\begin{align}\label{eq:proxy_stepsize}
    \va_s = \sqrt{\beta_2 \vv_{s-1}+ (1-\beta_2)\left(\mG_T(s){\bf 1}_d\right)^2} + \vep_s.
\end{align}
With the so-called proxy step-size technique over $\eta_s/\va_s$ and $\vxi_s = \vg_s - \bar{\vg}_s$, we decompose {\bf A.1} as
\begin{align*}
    &{\bf A.1} =  - \sum_{s=1}^t \eta_s \left\|\frac{\bar{\vg}_s}{\sqrt{\va_s}}\right\|^2  \underbrace{-  \sum_{s=1}^t \eta_s \left\la  \bar{\vg}_s, \frac{\vxi_s}{\va_s} \right\ra }_{{\bf A.1.1}} + \underbrace{\sum_{s=1}^t \eta_s \left\la \bar{\vg}_s,  \left( \frac{1}{\va_s} - \frac{1}{\vb_s} \right)\vg_s 
        \right\ra}_{\textbf{A.1.2}}.  
\end{align*}
In the above decomposition, the first term serves as a descent term. {\bf A.1.1} is now a summation of a martingale difference sequence which could be estimated by \eqref{eq:ProEstMar}. 
%\begin{lemma}\label{lem:1_bounded}
%    Given $T \ge 1$ and $\delta \in (0,1)$. If Assumptions (A2) and (A3) hold, then for all $t \in [T]$, it holds that with probability at least $1-\delta$, 
%    \begin{align*}
%        {\bf A.1.1} \le \frac{\mG_T(t)}{4\mG_T} \sum_{s=1}^t \eta_s \left\|\frac{\bar{\vg}_s}{\sqrt{\va_s}}\right\|^2+ D_1 \mG_T,
%    \end{align*}
%    where $D_1 =  \frac{3\eta  d}{(1-\beta_1)\sqrt{1-\beta_2}} \log\left( \frac{dT}{\delta} \right)$.
%\end{lemma}
{\bf A.1.2} is regarded as an error term when introducing $\va_s$. However, due to the delicate construction of $\va_s$, the definition of local gradients' bound $\mG_T(t)$, and using some basic inequalities, we show that
 \begin{align*}
	\textbf{A.1.2} \le \frac{1}{4}\sum_{s=1}^t  \eta_s \left\|\frac{\bar{\vg}_s}{\sqrt{\va_s}}\right\|^2  + \frac{\eta \mG_T(t) \sqrt{1-\beta_2}}{1-\beta_1}  \sum_{s=1}^t  \left\|\frac{\vg_s}{\vb_s}\right\|^2. 
\end{align*}
The first RHS term can be eliminated with the descent term while the summation of the last term can be bounded by
\begin{align}
 &\sum_{s=1}^t \left\|\frac{\vg_s}{\vb_s} \right\|^2 \vee \sum_{s=1}^{t}\left\|\frac{\vm_s}{\vb_{s}} \right\|^2 \vee \sum_{s=1}^{t}\left\|\frac{\vm_s}{\vb_{s+1}} \right\|^2 \vee  \sum_{s=1}^{t} \left\|\frac{\hat{\vm}_s}{\vb_s} \right\|^2    
 \lesssim \frac{d}{1-\beta_2}\log\left( \frac{T}{\beta_2^T} \right), \label{eq:sum}
 \end{align}
%\begin{align*}
%\sum_{s=1}^{t}\left\|\frac{\vm_s}{\vb_s}\right\|^2 \le \frac{1-\beta_1}{(1-\beta_2)(1-\beta_1/\beta_2)}\sum_{i=1}^d\log\left( \frac{\mathcal{F}_i(t)}{\beta_2^t} \right), \\
%\sum_{s=1}^{t}\left\|\frac{\vm_s}{\vb_{s+1}} \right\|^2 \le \frac{1-\beta_1}{\beta_2(1-\beta_2)(1-\beta_1/\beta_2)}\sum_{i=1}^d\log\left( \frac{\mathcal{F}_i(t)}{\beta_2^t} \right), \\
%\sum_{s=1}^{t} \left\|\frac{\hat{\vm}_s}{\vb_s} \right\|\le \frac{1}{(1-\beta_2)(1-\beta_1/\beta_2)}\sum_{i=1}^d\log\left( \frac{\mathcal{F}_i(t)}{\beta_2^t} \right).
%\end{align*}
due to the step-size's adaptivity, the iterative relationship of the algorithm,
 the smoothness of the objective function, as well as \eqref{eq:paper_highpro_affine_noise}. Here, $	\hat{\vm}_{s} = \frac{\vm_s}{1-\beta_1^s}.$

%\begin{lemma}\label{lem:1.1_bounded}
%    Given $T \ge 1$, suppose that Assumption (A3) holds, then for all $t \in [T]$,
%    \begin{align*}
%        {\bf A.1.2}
%        &\le \frac{1}{4}  \sum_{s=1}^t \eta_s \left\|\frac{\bar{\vg}_s}{\sqrt{\va_s}}\right\|^2 +  D_2 \mG_T(t)\sum_{s=1}^t \left\|\frac{\vg_s}{\vb_s} \right\|^2, 
%    \end{align*}
%    where $D_2 = \frac{\eta\sqrt{1-\beta_2}}{1-\beta_1}$.
%\end{lemma}

\paragraph{Estimating B and C.} 
The key to estimate \textbf{B} is to decompose \textbf{B} as
\begin{align*}
	\textbf{B} 
	=& \underbrace{\frac{\beta_1}{1-\beta_1} \sum_{s=1}^t\left\langle \Delta_s  \odot (\vx_s - \vx_{s-1}),  \bar{\vg}_s \right\rangle }_{\textbf{B.1}}    + \underbrace{\frac{\beta_1}{1-\beta_1} \sum_{s=1}^t\left\langle \Delta_s  \odot (\vx_s - \vx_{s-1}),  \nabla f(\vy_s)- \bar{\vg}_s \right\rangle }_{\textbf{B.2}}.
\end{align*}
To estimate {\bf B.1}, we use the updated rule and further decompose $\Delta_s \odot (\vx_s - \vx_{s-1})$ as the following terms
\begin{align*}
-\left( \frac{\eta_{s}}{\vb_s} - \frac{\eta_{s}}{\va_{s}} \right) \odot \vm_{s-1} &-
\left( \frac{\eta_{s}}{\va_s} - \frac{\eta_{s}}{\vb_{s-1}}  \right) \odot \vm_{s-1}
 - (\eta_s -\eta_{s-1}) \frac{\vm_{s-1}}{\vb_{s-1}},
\end{align*}
and upper bound the three related inner products. 
Using some basic inequalities, the smoothness, \eqref{eq:sum}, and some delicate computations, one can estimate the three related inner products, \textbf{B.2} and \textbf{C}, and thus get that
\begin{align*}
    &{\bf B} + {\bf C}\le \frac{1}{4}\sum_{s=1}^t\eta_s\left\|\frac{\bar{\vg}_s}{\sqrt{\va_s}} \right\|^2 + (b_1\mG_T(t)+b_2)\log \left({T \over \beta_2^T}\right),
    %\\
    %&  + (b_1\mG_T(t)+b_2)\left(\log T - T\log\beta_2\right),
\end{align*}
where $b_1$ and $b_2$ are positive constants determined by $\beta_1,\beta_2,d,L,\eta$.
\paragraph{Bounding gradients through induction.} The last challenge comes from the potential unbounded gradients' norm. Plugging the above estimations into \eqref{eq:A+B+C}, we obtain that 
\begin{align}
    &f(\vy_{t+1})  \le f(\vx_1)  + \left(\frac{\mG_T(t)}{4\mG_T} - \frac{1}{2} \right)\sum_{s=1}^t\eta_s\left\|\frac{\bar{\vg}_s}{\sqrt{\va_s}} \right\|^2  + c_1\mG_T+ (c_2\mG_T(t)+c_3) \log\left(\frac{T}{\beta_2^T}\right) ,\label{eq:paper_final_2}
\end{align}
where $c_1,c_2,c_3$ are constants determined by $\beta_1,\beta_2,d,L,\eta$. Then, we will first show that $G_1 \le G$ and suppose that for some $t \in [T]$,  \begin{align}
     G_s \le G,\quad \forall s \in [t] \quad \text{thus} \quad \mG_T(s)  \le \mG_T,  \quad \forall s \in [t]. \label{eq:paper_induct_assump}
\end{align}
It's then clear to reveal from \eqref{eq:paper_final_2} and the induction assumption that $f(\vy_{t+1})$ is restricted by the first-order of $\mG_T$. 
Moreover, $f(\vy_{t+1})-f^*$ could be served as the upper bound of $\|\bar{\vg}_{t+1}\|^2$ since
\begin{align}
    \|\bar{\vg}_{t+1}\|^2 
    &\le 2\|\nabla f(\vy_{t+1})\|^2 +2 \|\bar{\vg}_{t+1} -  \nabla f(\vy_{t+1})\|^2  \nonumber\\
    &\le 4L (f(\vy_{t+1}) - f^*)+2\|\bar{\vg}_{t+1} -  \nabla f(\vy_{t+1})\|^2,\label{eq:paper_gt_gyt}
\end{align}
where we use a standard result $\|\nabla f(\vx)\|^2 \le 2L(f(\vx)-f^*)$ in smooth-based optimization.
We also use the smoothness to control $\|\bar{\vg}_{t+1} -  \nabla f(\vy_{t+1})\|^2$ and combine with \eqref{eq:paper_induct_assump} and \eqref{eq:paper_gt_gyt} to derive that
\begin{align*}
    \|\bar{\vg}_{t+1}\|^2 \le  \tilde{d}_1 +  \tilde{d}_2 (\sigma_1G^{p/2}+G),
\end{align*}
where $\tilde{d}_1,\tilde{d}_2$ are constants that are also determined by hyper-parameters and restricted by $\mO(\log T- T\log \beta_2) $ with respect to $T$.
Then, using Young's inequality, 
\begin{align*}
    \|\bar{\vg}_{t+1}\|^2 &\le \frac{G^2}{2}   + \tilde{d}_1+ \frac{4-p}{4}\cdot p^{\frac{p}{4-p}}\left(\sigma_1\tilde{d}_2\right)^{\frac{4}{4-p}} +\left(\tilde{d}_2\right)^2.
\end{align*}
Thus, combining with a proper construction $G^2$ (detailed in \eqref{eq:define_G}), we could prove that 
\begin{align*}
    G^2 = 2\tilde{d}_1+ \frac{4-p}{2}\cdot p^{\frac{p}{4-p}}\left(\sigma_1\tilde{d}_2\right)^{\frac{4}{4-p}} +2\left(\tilde{d}_2\right)^2,
\end{align*}
which leads to $\|\bar{\vg}_{t+1}\|^2 \le G^2$. 
Combining with the induction argument, we deduce that $\|\bar{\vg}_{t}\|^2 \le G^2,\forall t \in [T+1]$.

\paragraph{Final estimation.} 
Following the induction step for upper bounding the gradients' norm, we also prove the following result in high probability: 
\begin{align*}
        L\sum_{s=1}^T \frac{\eta_s}{\| \va_s  \|_{\infty}}\left\| \bar{\vg}_s  \right\|^2 \le L\sum_{s=1}^T \eta_s\left\|\frac{\bar{\vg}_s}{\sqrt{\va_s}} \right\|^2  \le G^2.
\end{align*}
We could rely on $\mG_T$ to prove that $ \|\va_s\|_{\infty} \le  \mG_T\sqrt{1-\beta_2^s} + \ep_s,\forall s \in [T]$, and then combine with $\eta_s$ in Algorithm \ref{alg:Adam} to further deduce the desired guarantee for $\sum_{s=1}^T \|\bar{\vg}_s\|^2/T$.

\section{Conclusion}

In this paper, we investigate the convergence of the Adam optimization algorithm on non-convex smooth problems under certain relaxed conditions. We begin by considering a mild noise assumption that encompasses several noise types, particularly the almost surely affine variance noise. Under this noise condition, we demonstrate that Adam can find a stationary point at a rate of $\mathcal{O}(\text{poly}(\log T)/\sqrt{T})$ with high probability. Within our framework, we introduce a novel proxy step-size to manage the entanglement of stochastic gradients and adaptive step-sizes, and we employ a new decomposition method to estimate the errors introduced by the proxy step-size, the momentum, and the corrective terms in Adam.

We also extend our analysis to the convergence of Adam when the objective function is generalized smooth. This relaxed assumption is empirically validated to be more realistic in practical applications. Our results indicate that, with appropriate hyper-parameter tuning, Adam can find a stationary point at the same order of convergence rate as in the smooth case.

\paragraph{Limitations.} Our study has several limitations that warrant further exploration. First, it would be advantageous to provide experimental results to validate the hyper-parameter settings in our results. Second, the convergence bound is not strictly tight compared to the lower bound,  leaving a gap involving logarithmic factors, which may be improved in future work.

\section*{Acknowledgement}
This work was supported in part by the NSFC under grant number 12471096, and the National Key Research and Development Program of China under grant number 2021YFA1003500.

\bibliography{ref}
\bibliographystyle{plain}

% \nocite{langley00}

%%%%%%%%%%%%%%%%%%%%%%%%%%%%%%%%%%%%%%%%%%%%%%%%%%%%%%%%%%%%%%%%%%%%%%%%%%%%%%%
%%%%%%%%%%%%%%%%%%%%%%%%%%%%%%%%%%%%%%%%%%%%%%%%%%%%%%%%%%%%%%%%%%%%%%%%%%%%%%%
% APPENDIX
%%%%%%%%%%%%%%%%%%%%%%%%%%%%%%%%%%%%%%%%%%%%%%%%%%%%%%%%%%%%%%%%%%%%%%%%%%%%%%%
%%%%%%%%%%%%%%%%%%%%%%%%%%%%%%%%%%%%%%%%%%%%%%%%%%%%%%%%%%%%%%%%%%%%%%%%%%%%%%%
\newpage
\appendix
The appendix is organized as follows. The next section presents some necessary technical lemmas, some of which have appeared in the previous literature. In Appendix \ref{sec:appendix_thm1} and Appendix \ref{sec:appendix_thm2}, the detailed proofs for Theorem \ref{thm:1} and Theorem \ref{thm:general_smooth} are presented respectively. In \Cref{appen:no_correct_smooth} and \Cref{appen:no_correct_general}, we provide detailed results and proofs for \Cref{thm:inform_no_correct_smooth} and \Cref{thm:inform_no_correct_general}, respectively. Finally, Appendix \ref{sec:appendix_thm1_lemma} and Appendix \ref{sec:appendix_thm2_lemma} provide all the omitted proofs in previous sections. 
\section{Complementary lemmas}
We first provide some necessary technical lemmas as follows. 
\begin{lemma}\label{lem:logT_1}
    Suppose that $\{\alpha_s \}_{s \ge 1}$ is a non-negative sequence. Given $\beta_2 \in (0,1]$ and $\varepsilon > 0$, we define $\theta_s = \sum_{j=1}^s \beta_2^{s-j} \alpha_j$. Then, for any $t \ge  1$,
    \begin{align*}
        \sum_{s=1}^t \frac{\alpha_j}{\varepsilon + \theta_j} \le \log \left(1 + \frac{\theta_t}{\varepsilon} \right) - t \log \beta_2.
    \end{align*}
\end{lemma}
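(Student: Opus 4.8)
\textbf{Proof proposal for Lemma \ref{lem:logT_1}.}

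The plan is to bound each summand $\alpha_j/(\varepsilon+\theta_j)$ by an increment of a logarithm and then telescope. First I would observe the key recursion: since $\theta_s = \sum_{j=1}^s \beta_2^{s-j}\alpha_j = \beta_2 \theta_{s-1} + \alpha_s$ (with the convention $\theta_0 = 0$), we have $\alpha_s = \theta_s - \beta_2 \theta_{s-1}$. Therefore
\begin{align*}
    \frac{\alpha_s}{\varepsilon + \theta_s} = \frac{\theta_s - \beta_2\theta_{s-1}}{\varepsilon+\theta_s} \le \frac{(\varepsilon+\theta_s) - \beta_2(\varepsilon+\theta_{s-1})}{\varepsilon+\theta_s} = 1 - \beta_2\cdot\frac{\varepsilon+\theta_{s-1}}{\varepsilon+\theta_s},
\end{align*}
where the inequality uses $\varepsilon \ge \beta_2\varepsilon$, valid because $\beta_2 \in (0,1]$ and $\varepsilon>0$.

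Next I would use the elementary inequality $1 - x \le -\log x$ for $x > 0$, applied with $x = \beta_2(\varepsilon+\theta_{s-1})/(\varepsilon+\theta_s)$, to get
\begin{align*}
    \frac{\alpha_s}{\varepsilon+\theta_s} \le -\log\!\left(\beta_2\cdot\frac{\varepsilon+\theta_{s-1}}{\varepsilon+\theta_s}\right) = -\log\beta_2 + \log(\varepsilon+\theta_s) - \log(\varepsilon+\theta_{s-1}).
\end{align*}
Summing this over $s = 1,\dots,t$, the logarithmic terms telescope, leaving $\log(\varepsilon+\theta_t) - \log(\varepsilon+\theta_0) - t\log\beta_2 = \log(1 + \theta_t/\varepsilon) - t\log\beta_2$, since $\theta_0 = 0$. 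This matches the claimed bound (noting the statement's left-hand side indexes the summand as $\alpha_j$ at position $j=s$, i.e. $\sum_{s=1}^t \alpha_s/(\varepsilon+\theta_s)$).

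The only genuine subtlety — and the step I would be most careful about — is the boundary/degenerate cases: when $\beta_2 = 1$ the factor $-\log\beta_2$ vanishes and the argument is unaffected; when some $\alpha_j = 0$ the corresponding summand is $0$ and the bound is trivially respected; and one must check that $\varepsilon + \theta_{s-1} > 0$ so the logarithm is well-defined, which holds because $\varepsilon > 0$ and $\theta_{s-1}\ge 0$. No case requires $\theta_t > 0$. Everything else is routine algebra, so I do not expect any real obstacle here; the lemma is a standard ``logarithmic trick'' adapted to the exponentially-weighted running sum $\theta_s$ that arises from the $\beta_2$-EMA in Adam's second moment.
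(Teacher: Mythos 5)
Your proof is correct. The paper does not give its own argument but simply cites Lemma 5.2 of \cite{defossez2020simple}, and the proof there is precisely the telescoping you describe: use $\alpha_s = \theta_s - \beta_2\theta_{s-1}$, replace the ratio by a logarithmic increment via $1-x \le -\log x$, and sum; so your proposal matches the cited approach in essentially every detail.
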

\begin{proof}
    See the proof of \cite[Lemma 5.2]{defossez2020simple}.
\end{proof}
\begin{lemma}\label{lem:logT_2} 
    Suppose that $\{\alpha_s \}_{s \ge 1}$ is a real number sequence. Given $0 \le \beta_1 < \beta_2 \le 1$ and $\varepsilon > 0$, we define $\zeta_s = \sum_{j=1}^s \beta_1^{s-j} \alpha_j$, 
    $\gamma_s = \frac{1}{1-\beta_1^s}\sum_{j=1}^s \beta_1^{s-j} \alpha_j$ and  $\theta_s = \sum_{j=1}^s \beta_2^{s-j} \alpha_j^2$, then 
    \begin{align*}
        &\sum_{s=1}^t \frac{\zeta_s^2}{\varepsilon + \theta_s} \le \frac{1}{(1-\beta_1)(1-\beta_1/\beta_2)}\left(\log \left(1 + \frac{\theta_t}{\varepsilon} \right) - t \log \beta_2 \right), \quad \forall t \ge 1,\\
        &\sum_{s=1}^t \frac{\gamma_s^2}{\varepsilon + \theta_s} \le \frac{1}{(1-\beta_1)^2(1-\beta_1/\beta_2)}\left(\log \left(1 + \frac{\theta_t}{\varepsilon} \right) - t \log \beta_2 \right),\quad \forall t \ge 1.
    \end{align*}
\end{lemma}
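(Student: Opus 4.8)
The plan is to prove Lemma \ref{lem:logT_2} by reducing both inequalities to Lemma \ref{lem:logT_1}. The first observation is that $\theta_s$ has exactly the form appearing in Lemma \ref{lem:logT_1} (with the non-negative sequence there taken to be $\{\alpha_j^2\}$), so the target on the right-hand side, $\log(1+\theta_t/\varepsilon) - t\log\beta_2$, is precisely $\sum_{j=1}^t \alpha_j^2/(\varepsilon + \theta_j)$. Hence it suffices to bound the left-hand sums termwise (or after summation) by a constant multiple of $\sum_{j=1}^t \alpha_j^2/(\varepsilon+\theta_j)$, where the constant is $\tfrac{1}{(1-\beta_1)(1-\beta_1/\beta_2)}$ in the first case and $\tfrac{1}{(1-\beta_1)^2(1-\beta_1/\beta_2)}$ in the second.

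For the first inequality, I would expand $\zeta_s^2 = \left(\sum_{j=1}^s \beta_1^{s-j}\alpha_j\right)^2$ and apply Cauchy–Schwarz in the form $\left(\sum_{j=1}^s \beta_1^{s-j}\alpha_j\right)^2 \le \left(\sum_{j=1}^s \beta_1^{s-j}\right)\left(\sum_{j=1}^s \beta_1^{s-j}\alpha_j^2\right) \le \tfrac{1}{1-\beta_1}\sum_{j=1}^s \beta_1^{s-j}\alpha_j^2$. The key comparison is $\beta_1^{s-j} \le \beta_2^{s-j}$ for $j \le s$ (since $\beta_1 < \beta_2$), but more usefully $\beta_1^{s-j} = (\beta_1/\beta_2)^{s-j}\beta_2^{s-j} \le (\beta_1/\beta_2)^{s-j}(\theta_j/\alpha_j^2)$... the cleaner route is: since $\theta_j = \sum_{i=1}^j \beta_2^{j-i}\alpha_i^2 \ge \beta_2^{j-i}\alpha_i^2$ for each $i\le j$, we get $\varepsilon + \theta_j \ge \varepsilon + \theta_i \beta_2^{j-i}$ is not quite monotone, so instead I would bound $\dfrac{\zeta_s^2}{\varepsilon+\theta_s}$ by swapping the order of summation. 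Writing $\dfrac{\zeta_s^2}{\varepsilon+\theta_s} \le \dfrac{1}{1-\beta_1}\sum_{j=1}^s \dfrac{\beta_1^{s-j}\alpha_j^2}{\varepsilon + \theta_s}$ and then using $\varepsilon + \theta_s \ge \beta_2^{s-j}(\varepsilon + \theta_j)$ (which holds because $\theta_s \ge \beta_2^{s-j}\theta_j$ and $\varepsilon \ge \beta_2^{s-j}\varepsilon$), we obtain $\dfrac{\beta_1^{s-j}}{\varepsilon+\theta_s} \le \dfrac{(\beta_1/\beta_2)^{s-j}}{\varepsilon+\theta_j}$. Summing over $s$ from $j$ to $t$ gives a geometric factor $\sum_{s\ge j}(\beta_1/\beta_2)^{s-j} = \tfrac{1}{1-\beta_1/\beta_2}$, and summing over $j$ yields the claimed constant $\tfrac{1}{(1-\beta_1)(1-\beta_1/\beta_2)}$ times $\sum_{j=1}^t \alpha_j^2/(\varepsilon+\theta_j)$, which Lemma \ref{lem:logT_1} bounds by $\log(1+\theta_t/\varepsilon) - t\log\beta_2$.

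For the second inequality, the only change is that $\gamma_s = \zeta_s/(1-\beta_1^s)$, so $\gamma_s^2 = \zeta_s^2/(1-\beta_1^s)^2 \le \zeta_s^2/(1-\beta_1)^2$ since $1-\beta_1^s \ge 1-\beta_1$ for $s \ge 1$. Applying the bound already derived for $\sum \zeta_s^2/(\varepsilon+\theta_s)$ and pulling out the extra $1/(1-\beta_1)^2$... wait, that would give $1/(1-\beta_1)^3$; the correct accounting is that $\gamma_s^2 \le \tfrac{1}{(1-\beta_1)^2}\left(\sum_{j=1}^s\beta_1^{s-j}\right)\left(\sum_{j=1}^s \beta_1^{s-j}\alpha_j^2\right)\cdot\tfrac{(1-\beta_1)^2}{(1-\beta_1^s)^2}$, and since $\sum_{j=1}^s \beta_1^{s-j} = \tfrac{1-\beta_1^s}{1-\beta_1}$, one factor of $(1-\beta_1^s)$ cancels, leaving $\gamma_s^2 \le \tfrac{1}{(1-\beta_1)(1-\beta_1^s)}\sum_{j=1}^s\beta_1^{s-j}\alpha_j^2 \le \tfrac{1}{(1-\beta_1)^2}\sum_{j=1}^s\beta_1^{s-j}\alpha_j^2$. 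Then the same order-swapping argument as above produces the constant $\tfrac{1}{(1-\beta_1)^2(1-\beta_1/\beta_2)}$.

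The main obstacle — really the only subtlety — is the inequality $\varepsilon + \theta_s \ge \beta_2^{s-j}(\varepsilon+\theta_j)$ used to transfer the denominator from index $s$ to index $j$, together with getting the geometric-series bookkeeping exactly right so the powers of $\beta_1/\beta_2$ collect cleanly; everything else is Cauchy–Schwarz and a direct appeal to Lemma \ref{lem:logT_1}. One should also note the degenerate case $\beta_1 = 0$, where $\zeta_s = \alpha_s$, $\gamma_s = \alpha_s$, $\beta_1/\beta_2 = 0$, and both inequalities reduce immediately to Lemma \ref{lem:logT_1}; this is consistent with the stated hypotheses $0 \le \beta_1 < \beta_2 \le 1$.
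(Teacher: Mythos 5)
Your proof is correct and, after the mid-stream self-correction, follows exactly the route the paper takes: Cauchy--Schwarz (the paper phrases it via Jensen's inequality) to open the square, the denominator shift $\varepsilon+\theta_s \ge \beta_2^{s-j}(\varepsilon+\theta_j)$, a swap of the summation order that collapses to a geometric series in $\beta_1/\beta_2$, and then a direct appeal to Lemma \ref{lem:logT_1}. The only cosmetic difference is that the paper simply cites \cite{defossez2020simple} for the first inequality, whereas you spell out a direct argument using the same machinery.
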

\begin{proof}
    The proof for the first inequality can be found in the proof of \cite[Lemma A.2]{defossez2020simple}. For the second result, let $\hat{M} = \sum_{j=1}^{s} \beta_1^{s-j}$. Then using Jensen's inequality, we have
    \begin{align}
        \left(\sum_{j=1}^s \beta_1^{s-j}\alpha_j\right)^2 = \left(\hat{M}\sum_{j=1}^s \frac{\beta_1^{s-j}}{\hat{M}}\alpha_j\right)^2 \le \hat{M}^2 \sum_{j=1}^s \frac{\beta_1^{s-j}}{\hat{M}}\alpha_j^2 = \hat{M}\sum_{j=1}^s \beta_1^{s-j}\alpha_j^2. \label{eq:Jensen}
    \end{align}
    Hence, we further have
    \begin{align*}
        \frac{\gamma_s^2}{\varepsilon + \theta_s} \le \frac{\hat{M}}{(1-\beta_1^s)^2} \sum_{j=1}^{s} \beta_1^{s-j} \frac{\alpha_j^2}{\varepsilon + \theta_s} 
        = \frac{1}{(1-\beta_1)(1-\beta_1^s)} \sum_{j=1}^{s} \beta_1^{s-j} \frac{\alpha_j^2}{\varepsilon + \theta_s}.
    \end{align*}
    Recalling the definition of $\theta_s$, we have $\varepsilon + \theta_s \ge \varepsilon + \beta_2^{s-j} \theta_j \ge \beta_2^{s-j}(\varepsilon+\theta_j) $. Hence, combining with $1-\beta_1 \le 1-\beta_1^s$,
    \begin{align*}
        \frac{\gamma_s^2}{\varepsilon + \theta_s} \le \frac{1}{(1-\beta_1)(1-\beta_1^s)} \sum_{j=1}^{s} \left(\frac{\beta_1}{\beta_2}\right)^{s-j} \frac{\alpha_j^2}{\varepsilon + \theta_j} \le \frac{1}{(1-\beta_1)^2} \sum_{j=1}^{s} \left(\frac{\beta_1}{\beta_2}\right)^{s-j} \frac{\alpha_j^2}{\varepsilon + \theta_j}.
    \end{align*}
    Summing up both sides over $s \in [t]$, and noting that $\beta_1 < \beta_2$,
    \begin{align*}
        \sum_{s=1}^t \frac{\gamma_s^2}{\varepsilon + \theta_s} 
        &\le \frac{1}{(1-\beta_1)^2} \sum_{s=1}^t \sum_{j=1}^{s} \left(\frac{\beta_1}{\beta_2}\right)^{s-j} \frac{\alpha_j^2}{\varepsilon + \theta_j}   \le \frac{1}{(1-\beta_1)^2(1-\beta_1/\beta_2)} \sum_{j=1}^t \frac{\alpha_j^2}{\varepsilon + \theta_j}.
    \end{align*}
    Finally applying Lemma \ref{lem:logT_1}, we obtain the desired result. 
\end{proof}

Then, we introduce a standard concentration inequality for the martingale difference sequence that is useful for achieving the high probability bounds, see \cite{li2020high} for a proof.
\begin{lemma} \label{lem:Azuma}
    Suppose $\{Z_s\}_{s \in [T]}$ is a martingale difference sequence with respect to $\zeta_1,\cdots,\zeta_T$. Assume that for each $s \in [T]$, $\sigma_s$ is a random variable only dependent by $\zeta_1,\cdots,\zeta_{s-1}$ and satisfies that
    \begin{align*}
        \mE\left[ \exp(Z_s^2/\sigma_s^2) \mid \zeta_1,\cdots,\zeta_{s-1} \right] \le \mathrm{e},
    \end{align*}
    then for any $\lambda > 0$, and for any $\delta \in (0,1)$, it holds that
    \begin{align*}
        \mathbb{P}\left(\sum_{s=1}^T Z_s > \frac{1}{\lambda}\log \left({1\over \delta}\right) + \frac{3}{4}\lambda \sum_{s=1}^T \sigma_s^2 \right) \le \delta.
    \end{align*}
\end{lemma}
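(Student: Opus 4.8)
The plan is to prove Lemma~\ref{lem:Azuma} by the classical exponential-moment (Chernoff) method: from the partial sums of $\{Z_s\}$ I would build a nonnegative supermartingale whose expectation is at most $1$, and then read off the tail bound from Markov's inequality. Throughout, write $\mathcal{F}_{s-1}$ for the $\sigma$-algebra generated by $\zeta_1,\dots,\zeta_{s-1}$, so that $\mE[Z_s\mid\mathcal{F}_{s-1}]=0$ (the martingale-difference property) and $\sigma_s$ is $\mathcal{F}_{s-1}$-measurable.

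\textbf{Step 1: a one-step conditional moment-generating-function bound.} The technical core is to show
\begin{align}\label{eq:mgf-step}
    \mE\!\left[\exp(\lambda Z_s)\mid\mathcal{F}_{s-1}\right]\le \exp\!\left(\tfrac{3}{4}\lambda^2\sigma_s^2\right),\qquad\forall\,\lambda>0,\ \forall\,s\in[T].
\end{align}
I would start from the elementary pointwise inequality $\mathrm{e}^{t}\le t+\mathrm{e}^{3t^2/4}$, valid for all $t\in\mR$; this follows since $h(t):=t+\mathrm{e}^{3t^2/4}-\mathrm{e}^{t}$ satisfies $h''>0$ everywhere, hence is convex with minimum value $h(0)=0$. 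Applying it with $t=\lambda Z_s$ and taking $\mE[\cdot\mid\mathcal{F}_{s-1}]$, the linear term drops out because $\mE[Z_s\mid\mathcal{F}_{s-1}]=0$, leaving $\mE[\mathrm{e}^{\lambda Z_s}\mid\mathcal{F}_{s-1}]\le\mE[\mathrm{e}^{(3\lambda^2/4)Z_s^2}\mid\mathcal{F}_{s-1}]$. Since $\sigma_s$ is $\mathcal{F}_{s-1}$-measurable it may be treated as a constant; then on the range $\tfrac{3}{4}\lambda^2\sigma_s^2\le1$ the conditional power-mean (Jensen) inequality together with the hypothesis $\mE[\mathrm{e}^{Z_s^2/\sigma_s^2}\mid\mathcal{F}_{s-1}]\le\mathrm{e}$ yields $\mE[\mathrm{e}^{(3\lambda^2/4)Z_s^2}\mid\mathcal{F}_{s-1}]\le\mathrm{e}^{3\lambda^2\sigma_s^2/4}$, which is \eqref{eq:mgf-step}; for larger $\lambda$ I would instead bound $\lambda Z_s$ directly by Young's inequality $\lambda Z_s\le\tfrac{1}{4}\lambda^2\sigma_s^2+Z_s^2/\sigma_s^2$, so that $\mE[\mathrm{e}^{\lambda Z_s}\mid\mathcal{F}_{s-1}]\le\mathrm{e}^{\lambda^2\sigma_s^2/4}\,\mE[\mathrm{e}^{Z_s^2/\sigma_s^2}\mid\mathcal{F}_{s-1}]\le\mathrm{e}^{\lambda^2\sigma_s^2/4+1}$, the additive constant being absorbed into the $\tfrac{3}{4}\lambda^2\sigma_s^2$ budget once $\lambda^2\sigma_s^2$ is large. \emph{This is where I expect the real work to be}: patching the two regimes so that the clean constant $\tfrac{3}{4}$ holds uniformly in $\lambda$ may require a finer case split or a slightly sharper elementary estimate; everything else is routine bookkeeping.

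\textbf{Step 2: supermartingale construction and Markov's inequality.} Given \eqref{eq:mgf-step}, set $M_0=1$ and, for $t\in[T]$,
\begin{align*}
    M_t:=\exp\!\left(\lambda\sum_{s=1}^{t}Z_s-\tfrac{3}{4}\lambda^2\sum_{s=1}^{t}\sigma_s^2\right).
\end{align*}
Since $\sum_{s=1}^{t-1}Z_s$ and $\sum_{s=1}^{t}\sigma_s^2$ are both $\mathcal{F}_{t-1}$-measurable (the latter because $\sigma_t$ is), \eqref{eq:mgf-step} gives $\mE[M_t\mid\mathcal{F}_{t-1}]=M_{t-1}\,\mathrm{e}^{-3\lambda^2\sigma_t^2/4}\,\mE[\mathrm{e}^{\lambda Z_t}\mid\mathcal{F}_{t-1}]\le M_{t-1}$, so $\{M_t\}_{t=0}^{T}$ is a nonnegative supermartingale and $\mE[M_T]\le\mE[M_0]=1$. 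Markov's inequality then gives $\mathbb{P}(M_T\ge1/\delta)\le\delta\,\mE[M_T]\le\delta$; unwinding the event, $M_T\ge1/\delta$ is equivalent (take logarithms and divide by $\lambda>0$) to $\sum_{s=1}^{T}Z_s\ge\tfrac{1}{\lambda}\log(1/\delta)+\tfrac{3}{4}\lambda\sum_{s=1}^{T}\sigma_s^2$, and since the strict inequality in the statement only shrinks this event, the claim follows. One feature worth emphasizing is that $\sum_s\sigma_s^2$ is itself random; this causes no trouble precisely because $\lambda$ is kept as a fixed deterministic parameter and each $\sigma_s^2$ enters $M_t$ multiplicatively through an $\mathcal{F}_{s-1}$-measurable factor, so no post hoc optimization of $\lambda$ over a random quantity is ever attempted.
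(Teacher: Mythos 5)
The paper does not prove Lemma~\ref{lem:Azuma} itself; it only points to \cite{li2020high}, so there is no in-paper argument to line your proposal up against. Your overall plan --- a one-step conditional moment-generating-function bound followed by a nonnegative supermartingale and Markov's inequality --- is the standard route for exactly this kind of statement, and your Step~2 is correct and cleanly executed: once \eqref{eq:mgf-step} is in hand, the measurability bookkeeping, the supermartingale property, and the Markov step all go through, including the treatment of the random quadratic variation $\sum_s \sigma_s^2$.

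The genuine gap is in Step~1, and it is larger than ``routine bookkeeping.'' Your Jensen branch uses the pointwise inequality $\mathrm{e}^t \le t + \mathrm{e}^{3t^2/4}$ and the concavity of $x\mapsto x^a$ for $a\le 1$, which requires $a=\tfrac{3}{4}\lambda^2\sigma_s^2\le 1$, i.e.\ $\lambda^2\sigma_s^2\le 4/3$. Your Young branch, with the split $\lambda Z_s\le\tfrac14\lambda^2\sigma_s^2+Z_s^2/\sigma_s^2$, gives $\mE[\mathrm{e}^{\lambda Z_s}\mid\mathcal F_{s-1}]\le\mathrm{e}^{\lambda^2\sigma_s^2/4+1}$, which is $\le\mathrm{e}^{3\lambda^2\sigma_s^2/4}$ only when $\lambda^2\sigma_s^2\ge 2$. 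Even optimizing the Young split $\lambda Z_s\le a\lambda^2\sigma_s^2+\tfrac{1}{4a}Z_s^2/\sigma_s^2$ over admissible $a\ge 1/4$ (so that Jensen still applies to the second factor) gives $\mE[\mathrm{e}^{\lambda Z_s}\mid\mathcal F_{s-1}]\le\mathrm{e}^{\lambda\sigma_s}$, and $\lambda\sigma_s\le\tfrac34\lambda^2\sigma_s^2$ only when $\lambda^2\sigma_s^2\ge 16/9$. So the window $\lambda^2\sigma_s^2\in(4/3,16/9)$ is not covered by either branch, and since $\lambda$ is a free positive parameter and the $\sigma_s$ vary with $s$, you cannot exclude this window. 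You flag the issue but leave it open; as written, \eqref{eq:mgf-step} is not established, and without it the supermartingale construction has no base to stand on.

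To close it you genuinely need either a sharper pointwise inequality or a third argument for the intermediate range. Sharpening the constant in $\mathrm{e}^t\le t+\mathrm{e}^{ct^2}$ from $c=3/4$ down toward $c=9/16$ would, if valid, extend the Jensen branch to $\lambda^2\sigma_s^2\le 1/c=16/9$ and make the two regimes meet at $16/9$; but $c=9/16$ is at the very edge of where the pointwise inequality holds (the function $t\mapsto t+\mathrm{e}^{ct^2}-\mathrm{e}^t$ loses global convexity and sits within a few parts in a thousand of zero near $t\approx 0.3$), so this requires a careful verification rather than a one-line convexity argument. Until one of these fixes is actually carried out, the proof has a hole precisely where the two regimes are supposed to patch.
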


\section{Proof of Theorem \ref{thm:1}}\label{sec:appendix_thm1}
The detailed proof of Theorem \ref{thm:1} corresponds to the proof sketch in Section \ref{sec:proof_sketch_1}. 
\subsection{Preliminary}\label{sub:pre_thm1}
To start with, we introduce the following two notations, 
\begin{align}
	\hat{\vm}_{s} = \frac{\vm_s}{1-\beta_1^s}, \quad \hat{\vv}_{s} = \frac{\vv_{s}}{1-\beta_2^s}, \label{eq:hat_msi}
\end{align}
which include two corrective terms for $\vm_s$ and $\vv_{s}$.
It is easy to see that $\eta_s$ satisfies
\begin{align}
	\eta_s &=  \frac{\eta \sqrt{1-\beta_2^s}}{1-\beta_1^s} \le \frac{\eta}{1-\beta_1^s} \le \frac{\eta}{1-\beta_1}. \label{eq:eta_s}
\end{align}
We follow all the notations in Section \ref{sec:proof_sketch_1}, which we present here for the convenience of reading, 
\begin{align}
    &\mM_T = \sqrt{\log\left(\frac{{\rm e}T}{\delta} \right)}, \quad G_s = \max_{j \in [s]} \|\bar{\vg}_j\|,\nn \\
    &\mG_T(s)  = \mM_T \sqrt{2\sigma_0^2 + 2\sigma_1^2 G_s^p + 2 G_s^2},  \quad  \mG_T = \mM_T \sqrt{2\sigma_0^2 + 2\sigma_1^2 G^p + 2 G^2},\nn\\
    &\vb_s = \sqrt{\beta_2 \vv_{s-1}+ (1-\beta_2)\vg_s^2} + \vep_s, \nn\\
    &\va_s = \sqrt{\beta_2 \vv_{s-1}+ (1-\beta_2)\left(\mG_T(s){\bf 1}_d\right)^2} + \vep_s. \label{eq:notation_appendix}
\end{align}
The following lemmas provide some estimations for the algorithm-dependent terms, which play vital roles in the proof of Theorem \ref{thm:1}. The detailed proofs could be found in Appendix \ref{sec:appendix_thm1_lem1}.
\begin{lemma}\label{lem:Sigmax}
    Let $\eta_s, \vb_s$ be given in Algorithm \ref{alg:Adam} and \eqref{eq:bsi}, then
    \begin{align*}
        \left\| \frac{\eta_{s}\vb_{s-1}}{\eta_{s-1}\vb_s} - \bm{1}_d \right\|_{\infty} \le \Sigma_{\max},\quad  \Sigma_{\max}:= \frac{1}{\sqrt{\beta_2}},\quad \forall s \ge 2.
    \end{align*}
\end{lemma}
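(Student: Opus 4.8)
The plan is to estimate each coordinate of $\frac{\eta_s\vb_{s-1}}{\eta_{s-1}\vb_s}-\bm{1}_d$ separately, splitting on whether the $i$-th entry of $\frac{\eta_s\vb_{s-1}}{\eta_{s-1}\vb_s}$ is at least $1$ or strictly less than $1$. For $s\ge 2$ all of $\eta_s,\eta_{s-1},(\vb_{s-1})_i,(\vb_s)_i$ are strictly positive (here $s\ge 2$ is essential, so that $1-\beta_2^{s-1}>0$ and $\vb_{s-1}\neq\bm{0}_d$), so when that entry is below $1$ its distance to $1$ is at most $1\le\Sigma_{\max}$ and nothing needs to be done. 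The whole burden is therefore a one-sided upper bound $\frac{\eta_s(\vb_{s-1})_i}{\eta_{s-1}(\vb_s)_i}\le \sqrt{(1+\beta_2)/\beta_2}$, which in the ``above $1$'' case yields distance at most $\sqrt{(1+\beta_2)/\beta_2}-1\le\Sigma_{\max}$.

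First I would control the scalar ratio $\eta_s/\eta_{s-1}$. Since $\eta_s=\eta\sqrt{1-\beta_2^s}/(1-\beta_1^s)$, one has $\eta_s/\eta_{s-1}=\sqrt{\tfrac{1-\beta_2^s}{1-\beta_2^{s-1}}}\cdot\tfrac{1-\beta_1^{s-1}}{1-\beta_1^s}$, and the last factor is $\le 1$ because $\beta_1^s\le\beta_1^{s-1}$. For the first factor, writing $x=\beta_2^{s-1}$ it equals $\tfrac{1-\beta_2 x}{1-x}$, and the map $x\mapsto\tfrac{1-\beta_2 x}{1-x}$ is increasing on $(0,1)$ since its derivative is $\tfrac{1-\beta_2}{(1-x)^2}>0$; evaluating at $x=\beta_2^{s-1}\le\beta_2$ (valid precisely because $s\ge 2$) gives $\tfrac{1-\beta_2 x}{1-x}\le\tfrac{1-\beta_2^2}{1-\beta_2}=1+\beta_2$. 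Hence $\eta_s/\eta_{s-1}\le\sqrt{1+\beta_2}$.

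Next I would bound $(\vb_{s-1})_i/(\vb_s)_i$ coordinate-wise. From $\vv_s=\beta_2\vv_{s-1}+(1-\beta_2)\vg_s^2\ge\beta_2\vv_{s-1}$ we get $\sqrt{(\vv_s)_i}\ge\sqrt{\beta_2}\sqrt{(\vv_{s-1})_i}$, and since $1-\beta_2^s\ge\beta_2(1-\beta_2^{s-1})$ (equivalent to $1\ge\beta_2$) we get $\ep_s\ge\sqrt{\beta_2}\,\ep_{s-1}$. Adding these, $(\vb_s)_i=\sqrt{(\vv_s)_i}+\ep_s\ge\sqrt{\beta_2}\bigl(\sqrt{(\vv_{s-1})_i}+\ep_{s-1}\bigr)=\sqrt{\beta_2}\,(\vb_{s-1})_i$, that is $(\vb_{s-1})_i/(\vb_s)_i\le 1/\sqrt{\beta_2}$. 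Multiplying the two estimates gives $\frac{\eta_s(\vb_{s-1})_i}{\eta_{s-1}(\vb_s)_i}\le\sqrt{(1+\beta_2)/\beta_2}$, and combined with the case split this proves the claim for every coordinate, hence for the $\ell_\infty$-norm.

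I do not anticipate a genuine obstacle here; the only mild subtlety is keeping track of why $s\ge 2$ is needed (so that $\vb_{s-1}\neq\bm{0}_d$ and $1-\beta_2^{s-1}>0$) and making sure the monotone-function evaluation is carried out at $\beta_2^{s-1}\le\beta_2$ rather than merely at $\beta_2^{s-1}<1$.
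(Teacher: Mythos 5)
Your proof is correct and takes essentially the same route as the paper's: the same case split on whether the coordinate ratio is above or below $1$, the same monotonicity argument to get $\eta_s/\eta_{s-1}\le\sqrt{1+\beta_2}$ (maximized at $s=2$), and the same $1/\sqrt{\beta_2}$ bound on $b_{s-1,i}/b_{s,i}$. The only cosmetic difference is that you establish $(\vb_s)_i\ge\sqrt{\beta_2}\,(\vb_{s-1})_i$ directly by noting $\ep_s\ge\sqrt{\beta_2}\,\ep_{s-1}$, whereas the paper uses $\ep_{s-1}\le\ep_s$ together with a mediant-type comparison; both are equivalent.
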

The following lemma could be found similarly in \cite[Lemma A.2]{zou2021understanding}.
\begin{lemma}\label{lem:OrderT_bound_gradient}
    Let $\vm_s,\vb_s$ be given in Algorithm \ref{alg:Adam} and \eqref{eq:bsi} with $0\le \beta_1 < \beta_2 < 1$, respectively. Then,
   \begin{align*}
       \left\|\frac{\vm_{s}}{\vb_s} \right\|_{\infty} \le \sqrt{\frac{(1-\beta_1)(1-\beta_1^{s})}{(1-\beta_2)(1-\beta_1/\beta_2)} },\quad \forall s \ge 1.
   \end{align*}
   Consequently, if $f$ is $L$-smooth and we set $\eta = C_0\sqrt{1-\beta_2}$ for some constant $C_0 > 0$, then 
   \begin{align*}
         \|\bar{\vg}_s\|  \le  \|\bar{\vg}_1\| + LC_{0}s\sqrt{\frac{d}{1-\beta_1/\beta_2} } , \quad \forall s\ge 1.
   \end{align*}
\end{lemma}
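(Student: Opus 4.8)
\textbf{Proof plan for Lemma \ref{lem:OrderT_bound_gradient}.}

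The plan is to prove the two claims in sequence, the second following from the first by a telescoping argument combined with the $L$-smoothness of $f$.

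\emph{Step 1: bounding $\|\vm_s/\vb_s\|_\infty$.} I would work coordinate-wise. Fix a coordinate $i$. Unrolling the momentum recursion gives $m_{s,i} = (1-\beta_1)\sum_{j=1}^s \beta_1^{s-j} g_{j,i}$, and similarly $v_{s,i} = (1-\beta_2)\sum_{j=1}^s \beta_2^{s-j} g_{j,i}^2$, so that $b_{s,i} = \sqrt{v_{s,i}} + \ep_s \ge \sqrt{(1-\beta_2)\sum_{j=1}^s \beta_2^{s-j} g_{j,i}^2}$. Discarding $\ep_s \ge 0$, it suffices to show
\begin{align*}
\frac{\left((1-\beta_1)\sum_{j=1}^s \beta_1^{s-j} g_{j,i}\right)^2}{(1-\beta_2)\sum_{j=1}^s \beta_2^{s-j} g_{j,i}^2} \le \frac{(1-\beta_1)(1-\beta_1^s)}{(1-\beta_2)(1-\beta_1/\beta_2)}.
\end{align*}
For the numerator I would apply the Jensen/Cauchy--Schwarz step exactly as in \eqref{eq:Jensen}: with $\hat M = \sum_{j=1}^s \beta_1^{s-j} = (1-\beta_1^s)/(1-\beta_1)$, one gets $\left(\sum_j \beta_1^{s-j} g_{j,i}\right)^2 \le \hat M \sum_j \beta_1^{s-j} g_{j,i}^2$. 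Then bound $\beta_1^{s-j} = (\beta_1/\beta_2)^{s-j}\beta_2^{s-j} \le$ (the ratio factor) times $\beta_2^{s-j}$, and compare term by term against the denominator; summing the geometric factor $\sum_{s-j\ge 0}(\beta_1/\beta_2)^{s-j} \le 1/(1-\beta_1/\beta_2)$ is not even needed term-wise — rather, I would bound $\sum_j (\beta_1/\beta_2)^{s-j}\beta_2^{s-j}g_{j,i}^2 \le \frac{1}{1-\beta_1/\beta_2}\sum_j \beta_2^{s-j}g_{j,i}^2$ only if a single ratio is insufficient; the cleanest route is $\frac{\sum_j \beta_1^{s-j}g_{j,i}^2}{\sum_j \beta_2^{s-j}g_{j,i}^2} \le \max_j (\beta_1/\beta_2)^{s-j} \le 1$, which together with $(1-\beta_1)\hat M = 1-\beta_1^s$ already yields the bound with $(1-\beta_1/\beta_2)$ replaced by $1$; to recover the stated $(1-\beta_1/\beta_2)$ denominator I would instead keep the geometric-sum form. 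I will settle the exact arithmetic in the appendix; the mechanism is the Jensen inequality plus the elementary comparison of $\beta_1$- and $\beta_2$-weighted sums.

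\emph{Step 2: bounding $\|\bar{\vg}_s\|$.} With $\eta = C_0\sqrt{1-\beta_2}$, the first part gives $\|\vm_s/\vb_s\|_\infty \le \sqrt{(1-\beta_1)/((1-\beta_2)(1-\beta_1/\beta_2))}$ (using $1-\beta_1^s \le 1-\beta_1 \le 1$... more precisely $\le 1$), hence each update satisfies, coordinate-wise, $|x_{s+1,i}-x_{s,i}| = \eta_s |m_{s,i}/b_{s,i}| \le \frac{\eta}{1-\beta_1}\cdot\|\vm_s/\vb_s\|_\infty$ by \eqref{eq:eta_s}, so $\|\vx_{s+1}-\vx_s\| \le \sqrt{d}\,\frac{\eta}{1-\beta_1}\sqrt{\frac{1-\beta_1}{(1-\beta_2)(1-\beta_1/\beta_2)}} = C_0\sqrt{d/(1-\beta_1/\beta_2)}$ after the $\sqrt{1-\beta_2}$ cancels. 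Then $L$-smoothness \eqref{eq:smooth} gives $\|\bar{\vg}_{s+1}\| - \|\bar{\vg}_s\| \le \|\nabla f(\vx_{s+1})-\nabla f(\vx_s)\| \le L\|\vx_{s+1}-\vx_s\| \le L C_0\sqrt{d/(1-\beta_1/\beta_2)}$; telescoping from $1$ to $s$ yields $\|\bar{\vg}_s\| \le \|\bar{\vg}_1\| + L C_0 (s-1)\sqrt{d/(1-\beta_1/\beta_2)} \le \|\bar{\vg}_1\| + L C_0 s\sqrt{d/(1-\beta_1/\beta_2)}$, as claimed.

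\emph{Main obstacle.} The only delicate point is matching the precise constant in Step 1 — getting the factor $(1-\beta_1/\beta_2)$ in the denominator rather than a weaker $1$ — which requires carrying the geometric sum $\sum_{k\ge 0}(\beta_1/\beta_2)^k$ through the coordinate-wise comparison rather than bounding each ratio by $1$; I expect this to be the one spot needing care, while everything else (the Jensen step, the cancellation of $\sqrt{1-\beta_2}$, and the telescoping via smoothness) is routine. Everything is deterministic, so no probabilistic tools enter here.
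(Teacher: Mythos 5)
Your overall approach is the same as the paper's (Jensen step plus comparison of $\beta_1$- and $\beta_2$-weighted sums in Step 1; telescoping via $L$-smoothness in Step 2), but Step 2 contains an arithmetic slip that prevents you from reaching the stated constant. You bound $\eta_s \le \eta/(1-\beta_1)$ and, separately, $\|\vm_s/\vb_s\|_\infty \le \sqrt{(1-\beta_1)/((1-\beta_2)(1-\beta_1/\beta_2))}$ by discarding the $(1-\beta_1^s)$ factor; multiplying these gives
\[
\frac{\eta}{1-\beta_1}\sqrt{\frac{1-\beta_1}{(1-\beta_2)(1-\beta_1/\beta_2)}}
= \frac{\eta}{\sqrt{1-\beta_1}\,\sqrt{(1-\beta_2)(1-\beta_1/\beta_2)}}
= \frac{C_0}{\sqrt{1-\beta_1}\,\sqrt{1-\beta_1/\beta_2}},
\]
so your claimed equality ``$= C_0\sqrt{1/(1-\beta_1/\beta_2)}$'' is off by a factor $1/\sqrt{1-\beta_1}$, and telescoping would yield $\|\bar{\vg}_s\| \le \|\bar{\vg}_1\| + L C_0 s\sqrt{d/((1-\beta_1)(1-\beta_1/\beta_2))}$, not the lemma's bound. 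The reason the paper's constant comes out clean is that you must not bound $\eta_s$ and $\|\vm_s/\vb_s\|_\infty$ independently: keep $\eta_s = \eta\sqrt{1-\beta_2^s}/(1-\beta_1^s)$ exactly and the $\sqrt{1-\beta_1^s}$ in the numerator of the $\|\vm_s/\vb_s\|_\infty$ bound, so the product is
\[
\eta_s\left\|\frac{\vm_s}{\vb_s}\right\|_\infty \le \eta\,\sqrt{\frac{(1-\beta_2^s)(1-\beta_1)}{(1-\beta_1^s)(1-\beta_2)(1-\beta_1/\beta_2)}} \le \frac{\eta}{\sqrt{(1-\beta_2)(1-\beta_1/\beta_2)}},
\]
using $1-\beta_2^s \le 1$ and $1-\beta_1 \le 1-\beta_1^s$ (note the direction: $\beta_1^s \le \beta_1$). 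Multiplying by $\sqrt{d}$ and substituting $\eta = C_0\sqrt{1-\beta_2}$ then gives $C_0\sqrt{d/(1-\beta_1/\beta_2)}$ with no extra factor.

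On Step 1, your ``max-of-ratios'' route is fine and in fact gives a slightly tighter bound than stated (denominator $1$ in place of $1-\beta_1/\beta_2 \le 1$), which trivially implies the stated inequality since $1-\beta_1/\beta_2 \le 1$; the worry about needing the geometric-sum form ``to recover'' the constant is unnecessary. Also, your parenthetical ``$1-\beta_1^s \le 1-\beta_1$'' is reversed (it should be $\ge$), though you then correctly fall back to ``$\le 1$''. The paper's own proof uses the geometric-sum comparison $\sum_j \beta_1^{s-j}g_{j,i}^2 \le \left(\sum_j (\beta_1/\beta_2)^{s-j}\right)\sum_k \beta_2^{s-k}g_{k,i}^2$, which lands exactly on the stated constant; either route works.
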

The following lemma is necessary for deriving \eqref{eq:sum} in the proof sketch.
\begin{lemma}\label{lem:sum_1}
    Let $\vg_s,\vm_s$ be given in Algorithm \ref{alg:Adam} and $\hat{\vm}_s,\vb_s$ be defined in \eqref{eq:hat_msi} and \eqref{eq:bsi}. If $0 \le \beta_1 < \beta_2 < 1$ and $\mathcal{F}_{i}(t)=1+ \frac{1}{\ep^2} \sum_{s=1}^{t} g_{s,i}^2 $, then for any $t \ge 1$, 
    \begin{align*}
        &\sum_{s=1}^t \left\|\frac{\vg_s}{\vb_s} \right\|^2 \le \frac{1}{1-\beta_2}\sum_{i=1}^d\log\left( \frac{\mathcal{F}_i(t)}{\beta_2^t} \right),\\
        &\sum_{s=1}^{t}\left\|\frac{\vm_s}{\vb_s}\right\|^2 \le \frac{1-\beta_1}{(1-\beta_2)(1-\beta_1/\beta_2)}\sum_{i=1}^d\log\left( \frac{\mathcal{F}_i(t)}{\beta_2^t} \right), \\
        &\sum_{s=1}^{t}\left\|\frac{\vm_s}{\vb_{s+1}} \right\|^2 \le \frac{1-\beta_1}{\beta_2(1-\beta_2)(1-\beta_1/\beta_2)}\sum_{i=1}^d\log\left( \frac{\mathcal{F}_i(t)}{\beta_2^t} \right), \\
        &\sum_{s=1}^{t} \left\|\frac{\hat{\vm}_s}{\vb_s} \right\|^2\le \frac{1}{(1-\beta_2)(1-\beta_1/\beta_2)}\sum_{i=1}^d\log\left( \frac{\mathcal{F}_i(t)}{\beta_2^t} \right).
    \end{align*}
\end{lemma}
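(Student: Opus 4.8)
\textbf{Proof plan for Lemma \ref{lem:sum_1}.}

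The plan is to treat all four inequalities through a single coordinate-wise mechanism and then reduce everything to Lemma \ref{lem:logT_1}. I would first observe that by definition $\vv_s = (1-\beta_2)\sum_{j=1}^s \beta_2^{s-j}\vg_j^2$ coordinate-wise, so $v_{s,i} = (1-\beta_2)\theta_{s,i}$ where $\theta_{s,i} := \sum_{j=1}^s \beta_2^{s-j} g_{j,i}^2$. Since $b_{s,i} = \sqrt{v_{s,i}} + \ep_s \ge \sqrt{v_{s,i}}$ and also $b_{s,i}^2 \ge v_{s,i} + \ep_s^2 \ge (1-\beta_2)\theta_{s,i} + (1-\beta_2)\ep^2$ (using $\ep_s = \ep\sqrt{1-\beta_2^s} \ge \ep\sqrt{1-\beta_2}$ is not quite it — actually I need $\ep_s^2 = \ep^2(1-\beta_2^s) \ge \ep^2(1-\beta_2)$ which holds since $\beta_2^s \le \beta_2$ for $s\ge 1$), we get $b_{s,i}^2 \ge (1-\beta_2)(\ep^2 + \theta_{s,i})$. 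Hence for the first inequality,
\begin{align*}
 \sum_{s=1}^t \left\|\frac{\vg_s}{\vb_s}\right\|^2 = \sum_{i=1}^d \sum_{s=1}^t \frac{g_{s,i}^2}{b_{s,i}^2} \le \frac{1}{1-\beta_2}\sum_{i=1}^d \sum_{s=1}^t \frac{g_{s,i}^2}{\ep^2 + \theta_{s,i}},
\end{align*}
and applying Lemma \ref{lem:logT_1} with $\alpha_s = g_{s,i}^2$, $\varepsilon = \ep^2$ gives $\sum_{s=1}^t \frac{g_{s,i}^2}{\ep^2+\theta_{s,i}} \le \log(1 + \theta_{t,i}/\ep^2) - t\log\beta_2$. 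Bounding $\theta_{t,i} = \sum_{j=1}^t \beta_2^{t-j} g_{j,i}^2 \le \sum_{j=1}^t g_{j,i}^2$ turns $1 + \theta_{t,i}/\ep^2$ into $\mathcal{F}_i(t)$, yielding the first claim.

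For the $\vm_s$ terms I would write $m_{s,i} = (1-\beta_1)\sum_{j=1}^s \beta_1^{s-j} g_{j,i}$, so $m_{s,i} = (1-\beta_1)\zeta_{s,i}$ with $\zeta_{s,i} := \sum_{j=1}^s \beta_1^{s-j} g_{j,i}$, and $\hat m_{s,i} = \frac{1-\beta_1}{1-\beta_1^s}\zeta_{s,i} = (1-\beta_1)\gamma_{s,i}$ in the notation of Lemma \ref{lem:logT_2}. Then $\sum_{s=1}^t \|\vm_s/\vb_s\|^2 = (1-\beta_1)^2 \sum_{i=1}^d \sum_{s=1}^t \zeta_{s,i}^2/b_{s,i}^2 \le \frac{(1-\beta_1)^2}{1-\beta_2}\sum_{i=1}^d \sum_{s=1}^t \frac{\zeta_{s,i}^2}{\ep^2+\theta_{s,i}}$, and the first bound of Lemma \ref{lem:logT_2} (with $\alpha_j = g_{j,i}$, $\varepsilon = \ep^2$) gives $\sum_{s=1}^t \frac{\zeta_{s,i}^2}{\ep^2+\theta_{s,i}} \le \frac{1}{(1-\beta_1)(1-\beta_1/\beta_2)}(\log(1+\theta_{t,i}/\ep^2) - t\log\beta_2)$; multiplying through, the $(1-\beta_1)^2$ cancels one factor to leave $\frac{1-\beta_1}{(1-\beta_2)(1-\beta_1/\beta_2)}$, as claimed. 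The $\vm_s/\vb_{s+1}$ term is identical except $b_{s+1,i}^2 \ge (1-\beta_2)(\ep^2 + \theta_{s+1,i}) \ge (1-\beta_2)\beta_2(\ep^2/\beta_2 + \theta_{s,i}) \ge (1-\beta_2)\beta_2(\ep^2+\theta_{s,i})$ — more carefully, $\theta_{s+1,i} = g_{s+1,i}^2 + \beta_2\theta_{s,i} \ge \beta_2\theta_{s,i}$, so $\ep^2 + \theta_{s+1,i} \ge \ep^2 + \beta_2\theta_{s,i} \ge \beta_2(\ep^2+\theta_{s,i})$ since $\beta_2<1$ — giving the extra $1/\beta_2$ factor. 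For the fourth inequality, $\sum_{s=1}^t \|\hat{\vm}_s/\vb_s\| = (1-\beta_1)\sum_{i=1}^d \sum_{s=1}^t |\gamma_{s,i}|/b_{s,i}$; here I would first use $|\gamma_{s,i}|/b_{s,i} \le \frac12(\gamma_{s,i}^2/(\ep^2+\theta_{s,i})\cdot c + 1/(c\,b_{s,i}^2/(\ep^2+\theta_{s,i})))$ — actually cleaner: bound $|\gamma_{s,i}| \le \frac{1}{1-\beta_1^s}\sum_j \beta_1^{s-j}|g_{j,i}|$ is awkward; instead note $\|\hat{\vm}_s/\vb_s\|$ is an $\ell_2$ norm, so $\|\hat{\vm}_s/\vb_s\| \le \sum_i |\hat m_{s,i}|/b_{s,i}$ and then $|\hat m_{s,i}|/b_{s,i} \le \frac12(\hat m_{s,i}^2/(b_{s,i}\cdot s') + s'/b_{s,i})$ — this is getting complicated. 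The likely intended route is: $|\hat m_{s,i}| \le \sqrt{\hat v_{s,i}}$ via Jensen (since $\hat m_{s,i}$ is a weighted average of $g_{j,i}$ and $\hat v_{s,i}$ the corresponding weighted average of $g_{j,i}^2$... but weights differ, $\beta_1$ vs $\beta_2$), so more precisely use $|\hat m_{s,i}| = \frac{1}{1-\beta_1^s}|\sum_j \beta_1^{s-j}g_{j,i}| \le \frac{1}{1-\beta_1^s}\sqrt{(\sum_j\beta_1^{s-j})(\sum_j \beta_1^{s-j}g_{j,i}^2)} = \sqrt{\frac{1}{(1-\beta_1)(1-\beta_1^s)}\sum_j\beta_1^{s-j}g_{j,i}^2}$, whence $|\hat m_{s,i}|/b_{s,i} \le \frac{1}{\sqrt{(1-\beta_1)(1-\beta_1^s)(1-\beta_2)}}\sqrt{\sum_j\beta_1^{s-j}g_{j,i}^2/(\ep^2+\theta_{s,i})}$; then Cauchy–Schwarz in $s$ against a telescoping count, or directly bound $\sum_j \beta_1^{s-j}g_{j,i}^2 \le \sum_j \beta_2^{s-j} \cdot (\beta_1/\beta_2)^{s-j} \cdots$ — I would instead follow the cleaner argument that since each summand is nonnegative, dividing before summing and using $\ep^2 + \theta_{s,i} \ge \beta_2^{s-j}(\ep^2+\theta_{j,i})$ reduces the double sum over $(s,j)$ to $\frac{1}{1-\beta_1/\beta_2}\sum_j g_{j,i}^2/(\ep^2+\theta_{j,i})$ and then Lemma \ref{lem:logT_1} finishes, with the $\frac{1}{(1-\beta_2)(1-\beta_1/\beta_2)}$ prefactor emerging after the $(1-\beta_1)$ from $\hat m$ cancels.

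The main obstacle is the fourth inequality: unlike the first three it is an $\ell_2$-norm summed to the \emph{first} power rather than squared, so the clean ``divide each term by $\varepsilon+\theta_{s,i}$ then apply Lemma \ref{lem:logT_1}'' recipe does not apply verbatim and one must insert a Jensen/Cauchy–Schwarz step to pass from $|\hat m_{s,i}|$ to a square-root of an exponentially-weighted sum of $g_{j,i}^2$, then carefully exchange the order of the $s$- and $j$-summations using $\ep^2+\theta_{s,i} \ge \beta_2^{s-j}(\ep^2+\theta_{j,i})$ and the geometric decay $\beta_1/\beta_2 < 1$. Everything else is bookkeeping: the three squared quantities all reduce directly via Lemmas \ref{lem:logT_1} and \ref{lem:logT_2}, with the only subtleties being the elementary inequalities $b_{s,i}^2 \ge (1-\beta_2)(\ep^2+\theta_{s,i})$ (using $\ep_s^2 = \ep^2(1-\beta_2^s)\ge \ep^2(1-\beta_2)$) and $\ep^2 + \theta_{s+1,i} \ge \beta_2(\ep^2+\theta_{s,i})$ for the shifted-denominator term.
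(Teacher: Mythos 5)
Your treatment of the first three inequalities is correct and matches the paper's argument: the key observation $b_{s,i}^2 \ge (1-\beta_2)\bigl(\ep^2 + \sum_{j\le s}\beta_2^{s-j}g_{j,i}^2\bigr)$ (using $\ep_s^2 = \ep^2(1-\beta_2^s)\ge\ep^2(1-\beta_2)$), followed by Lemma \ref{lem:logT_1} for the $\vg_s$ sum and the first bound of Lemma \ref{lem:logT_2} for the two $\vm_s$ sums, with the extra $1/\beta_2$ for the shifted denominator obtained exactly as you describe. For the third sum you pull the $\beta_2$ out of $\ep^2 + \theta_{s+1,i}$ rather than rescaling $\varepsilon\to\ep^2/\beta_2$ as the paper does, but these are equivalent.

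Where you go astray is the fourth inequality, and the root cause is that the stated inequality contains a typo: it should read $\sum_{s=1}^t\|\hat{\vm}_s/\vb_s\|^2$, not $\sum_{s=1}^t\|\hat{\vm}_s/\vb_s\|$. You sensed something was off (``unlike the first three it is an $\ell_2$-norm summed to the first power'') but instead of diagnosing the typo you tried to prove the unsquared statement, and none of your proposed routes close. In fact the unsquared statement cannot be bounded by $\mathcal{O}(\log(\cdot))$ in general: by Lemma \ref{lem:OrderT_bound_gradient}, $\|\vm_s/\vb_s\|_\infty$ is bounded by an absolute constant, so $\|\hat{\vm}_s/\vb_s\|$ is $\Theta(1)$ per step and the sum grows like $t$, not $\log t$. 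Your Cauchy--Schwarz-in-$s$ idea yields $\mathcal{O}(\sqrt{t\log t})$ at best, and the ``divide-then-exchange-sums'' route you sketch implicitly drops the square root and is really an argument for the squared version. The evidence the statement is meant to be squared is overwhelming: the constant $\frac{1}{(1-\beta_2)(1-\beta_1/\beta_2)}$ is precisely what the second inequality of Lemma \ref{lem:logT_2} produces for a squared sum (with the $(1-\beta_1)^2$ factors canceling), and every downstream use of this quantity (e.g.\ Proposition \ref{pro:determine} and equations \eqref{eq:A.2}, \eqref{eq:B.2}, \eqref{eq:C}) involves $\|\hat{\vm}_{s-1}/\vb_{s-1}\|^2$. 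Once you fix the exponent, the proof is a one-liner: write $\hat m_{s,i} = \frac{1}{1-\beta_1^s}\sum_j\beta_1^{s-j}g_{j,i}$, bound $b_{s,i}^2$ as above, and apply the \emph{second} bound of Lemma \ref{lem:logT_2} (the $\gamma_s$-form, which you correctly identified as $\hat m_{s,i}$ up to the $1-\beta_1$ factor) to get $\sum_s\hat m_{s,i}^2/b_{s,i}^2\le\frac{1}{(1-\beta_2)(1-\beta_1/\beta_2)}\log(\mathcal{F}_i(t)/\beta_2^t)$; then sum over $i$.
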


The following lemmas are based on the smooth condition.
\begin{lemma}\label{lem:gradient_delta_s}
	Suppose that $f$ is $L$-smooth and Assumption (A1) holds, then for any $\vx\in \mR^d$,
	\begin{align*}
	\|\nabla f(\vx) \|^2 \le 2L(f(\vx)-f^*).
	\end{align*}
\end{lemma}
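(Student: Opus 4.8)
\textbf{Proof plan for Lemma \ref{lem:gradient_delta_s}.}

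The plan is to establish the standard ``gradient domination by suboptimality'' inequality that holds for any $L$-smooth function that is bounded below. First I would recall the descent-lemma consequence of $L$-smoothness: for any $\vx, \vy \in \mR^d$, integrating $\nabla f$ along the segment from $\vx$ to $\vy$ and using \eqref{eq:smooth} gives
\begin{align*}
    f(\vy) \le f(\vx) + \la \nabla f(\vx), \vy - \vx \ra + \frac{L}{2}\|\vy - \vx\|^2.
\end{align*}
This is the only structural fact about $f$ that I will use besides Assumption (A1).

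Next, for a fixed $\vx$, I would choose $\vy$ so as to make the right-hand side as small as possible. Treating the right-hand side as a quadratic in the direction $\nabla f(\vx)$, the minimizing choice is $\vy = \vx - \frac{1}{L}\nabla f(\vx)$. Plugging this in, the linear term contributes $-\frac{1}{L}\|\nabla f(\vx)\|^2$ and the quadratic term contributes $+\frac{1}{2L}\|\nabla f(\vx)\|^2$, so that
\begin{align*}
    f\!\left(\vx - \tfrac{1}{L}\nabla f(\vx)\right) \le f(\vx) - \frac{1}{2L}\|\nabla f(\vx)\|^2.
\end{align*}

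Finally I would invoke Assumption (A1): since $f(\vy) \ge f^*$ for every $\vy$, the left-hand side is at least $f^*$, hence $f^* \le f(\vx) - \frac{1}{2L}\|\nabla f(\vx)\|^2$. Rearranging yields $\|\nabla f(\vx)\|^2 \le 2L(f(\vx) - f^*)$, which is the claim. There is no real obstacle here — the only point requiring a modicum of care is justifying the descent lemma itself from the Lipschitz-gradient hypothesis \eqref{eq:smooth} via the fundamental theorem of calculus applied to $t \mapsto f(\vx + t(\vy-\vx))$, but this is entirely routine and standard in smooth optimization, so I would state it without belaboring the computation.
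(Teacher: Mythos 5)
Your proposal is correct and matches the paper's proof exactly: both take $\hat{\vx}=\vx-\tfrac1L\nabla f(\vx)$, apply the descent lemma to get $f(\hat{\vx})\le f(\vx)-\tfrac{1}{2L}\|\nabla f(\vx)\|^2$, and then invoke $f(\hat{\vx})\ge f^*$ and rearrange. No differences of substance.
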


\begin{lemma}\label{lem:gradient_xs_ys}
   Let $\vx_s$ be given in Algorithm \ref{alg:Adam} and $\vy_s$ be defined in \eqref{eq:define_y_s}. If $f$ is $L$-smooth, $\eta = C_0\sqrt{1-\beta_2}$ and $0 \le \beta_1 < \beta_2 < 1$, then 
   \begin{align*}
       \|\nabla f(\vx_s) \| \le \|\nabla f(\vy_s) \| +M,\quad M:= \frac{LC_0\sqrt{d}}{(1-\beta_1)\sqrt{1-\beta_1/\beta_2}}, \quad \forall s \ge 1.
   \end{align*}
\end{lemma}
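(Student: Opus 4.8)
\textbf{Proof plan for Lemma \ref{lem:gradient_xs_ys}.}
The plan is to control the discrepancy $\|\nabla f(\vx_s) - \nabla f(\vy_s)\|$ by the $L$-smoothness of $f$ and then bound $\|\vx_s - \vy_s\|$ using the explicit form of the auxiliary sequence $\vy_s$ from \eqref{eq:define_y_s}. First I would recall that $\vy_s = \vx_s + \vp_s$ with $\vp_s = \frac{\beta_1}{1-\beta_1}(\vx_s - \vx_{s-1})$ for $s \ge 2$ and $\vy_1 = \vx_1$, so that $\|\vx_s - \vy_s\| = \|\vp_s\| = \frac{\beta_1}{1-\beta_1}\|\vx_s - \vx_{s-1}\|$. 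Applying $L$-smoothness \eqref{eq:smooth} immediately gives $\|\nabla f(\vx_s) - \nabla f(\vy_s)\| \le L\|\vx_s - \vy_s\| = \frac{L\beta_1}{1-\beta_1}\|\vx_s - \vx_{s-1}\|$, and then the triangle inequality yields $\|\nabla f(\vx_s)\| \le \|\nabla f(\vy_s)\| + \frac{L\beta_1}{1-\beta_1}\|\vx_s - \vx_{s-1}\|$. The remaining task is to bound $\|\vx_s - \vx_{s-1}\|$ uniformly in $s$.

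For this I would use the update rule of Algorithm \ref{alg:Adam}, namely $\vx_s - \vx_{s-1} = -\eta_{s-1}\, \vm_{s-1}/\vb_{s-1}$ (with the convention $\vx_0 = \vx_1$ so the $s=1$ case is trivial with zero increment). Then $\|\vx_s - \vx_{s-1}\| \le \sqrt{d}\,\eta_{s-1}\,\|\vm_{s-1}/\vb_{s-1}\|_\infty$. Now I invoke Lemma \ref{lem:OrderT_bound_gradient}, which gives $\|\vm_{s-1}/\vb_{s-1}\|_\infty \le \sqrt{(1-\beta_1)(1-\beta_1^{s-1})/((1-\beta_2)(1-\beta_1/\beta_2))} \le \sqrt{(1-\beta_1)/((1-\beta_2)(1-\beta_1/\beta_2))}$, together with the bound $\eta_{s-1} \le \eta/(1-\beta_1)$ from \eqref{eq:eta_s} and the choice $\eta = C_0\sqrt{1-\beta_2}$. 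Multiplying these estimates, the $\sqrt{1-\beta_2}$ factor in $\eta$ cancels the $1/\sqrt{1-\beta_2}$ from the $\vm/\vb$ bound, and after simplifying the $(1-\beta_1)$ powers one obtains $\|\vx_s - \vx_{s-1}\| \le C_0\sqrt{d}/\bigl((1-\beta_1)^{1/2}\sqrt{1-\beta_1/\beta_2}\bigr)$ up to the $\sqrt{1-\beta_1^{s-1}} \le 1$ slack. Combining with the factor $\frac{L\beta_1}{1-\beta_1} \le \frac{L}{1-\beta_1}$ in front (using $\beta_1 < 1$) gives the claimed $M = LC_0\sqrt{d}/\bigl((1-\beta_1)\sqrt{1-\beta_1/\beta_2}\bigr)$.

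There is no genuine obstacle here; the lemma is a routine consequence of $L$-smoothness and the already-established Lemma \ref{lem:OrderT_bound_gradient}. The only point requiring a little care is bookkeeping of the $(1-\beta_1)$ and $(1-\beta_2)$ exponents so that the final constant $M$ is dimensionally consistent and the $\sqrt{1-\beta_2}$ cancellation is transparent; one must also handle the boundary case $s=1$ separately (where $\vy_1 = \vx_1$ makes the inequality trivial) and note $\|\vm_0/\vb_0\|$ is vacuous since $\vx_0 = \vx_1$. Everything else is triangle inequality plus substitution.
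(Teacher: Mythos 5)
Your overall strategy matches the paper's: triangle inequality plus $L$-smoothness to reduce to bounding $\|\vx_s - \vy_s\| = \frac{\beta_1}{1-\beta_1}\|\vx_s-\vx_{s-1}\|$, then bound $\|\vx_s-\vx_{s-1}\|$ via the update rule and Lemma~\ref{lem:OrderT_bound_gradient}. However, the specific chain you use to bound $\|\vx_s-\vx_{s-1}\|$ is too lossy, and the final arithmetic does not give the claimed constant $M$. You upper-bound $\|\vm_{s-1}/\vb_{s-1}\|_\infty \le \sqrt{(1-\beta_1)/((1-\beta_2)(1-\beta_1/\beta_2))}$ by dropping the factor $\sqrt{1-\beta_1^{s-1}}$, and separately bound $\eta_{s-1} \le \eta/(1-\beta_1)$. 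Multiplying these (with $\eta = C_0\sqrt{1-\beta_2}$) yields
\begin{align*}
\|\vx_s - \vx_{s-1}\| \le \sqrt{d}\cdot\frac{\eta}{1-\beta_1}\cdot\sqrt{\frac{1-\beta_1}{(1-\beta_2)(1-\beta_1/\beta_2)}} = \frac{C_0\sqrt{d}}{(1-\beta_1)^{1/2}\sqrt{1-\beta_1/\beta_2}},
\end{align*}
which is indeed what you write, but it carries an extra $(1-\beta_1)^{-1/2}$ compared to the paper's bound $\|\vx_s-\vx_{s-1}\| \le C_0\sqrt{d}/\sqrt{1-\beta_1/\beta_2}$ from~\eqref{eq:bound_ms-1/bs-1_middle}. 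Multiplying by $\frac{L\beta_1}{1-\beta_1}\le \frac{L}{1-\beta_1}$ then gives $\frac{LC_0\sqrt{d}}{(1-\beta_1)^{3/2}\sqrt{1-\beta_1/\beta_2}}$, which is \emph{not} $M = \frac{LC_0\sqrt{d}}{(1-\beta_1)\sqrt{1-\beta_1/\beta_2}}$; your ``simplifying the $(1-\beta_1)$ powers'' step is an arithmetic slip, and $\frac{\beta_1}{(1-\beta_1)^{1/2}}\le 1$ is false for $\beta_1 > (\sqrt{5}-1)/2$.

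The fix is to not decouple $\eta_{s-1}$ from $\|\vm_{s-1}/\vb_{s-1}\|_\infty$ prematurely. Writing $\eta_{s-1} = \eta\sqrt{1-\beta_2^{s-1}}/(1-\beta_1^{s-1})$ and keeping the $\sqrt{1-\beta_1^{s-1}}$ in the $\vm/\vb$ bound, the product is
\begin{align*}
\eta_{s-1}\left\|\frac{\vm_{s-1}}{\vb_{s-1}}\right\|_\infty \le \frac{\eta\sqrt{1-\beta_2^{s-1}}}{1-\beta_1^{s-1}}\sqrt{\frac{(1-\beta_1)(1-\beta_1^{s-1})}{(1-\beta_2)(1-\beta_1/\beta_2)}} = \eta\sqrt{1-\beta_2^{s-1}}\sqrt{\frac{1-\beta_1}{(1-\beta_1^{s-1})(1-\beta_2)(1-\beta_1/\beta_2)}},
\end{align*}
and then using $\sqrt{1-\beta_2^{s-1}}\le 1$ and $1-\beta_1^{s-1}\ge 1-\beta_1$ the $(1-\beta_1)$-dependence cancels entirely, giving $\eta/\sqrt{(1-\beta_2)(1-\beta_1/\beta_2)} = C_0/\sqrt{1-\beta_1/\beta_2}$. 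With this tighter bound the remaining $\frac{L\beta_1}{1-\beta_1}\le\frac{L}{1-\beta_1}$ step lands exactly on $M$. This cancellation of $1-\beta_1^{s-1}$ between the bias-correction in $\eta_{s-1}$ and the momentum-over-second-moment ratio is precisely what makes the stated $M$ achievable, and it is the one non-routine observation in this lemma.
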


\subsection{Start point and decomposition}\label{sec:decomp}
Specifically, we fix the horizon $T$ and start from the descent lemma of $L$-smoothness, 
\begin{align}\label{eq:start_descent}
    f(\vy_{s+1})  &\le f(\vy_s)   +   \left\la \nabla f(\vy_s) ,\vy_{s+1} - \vy_s \right\ra  + \frac{L}{2}\|\vy_{s+1}-\vy_s\|^2, \quad \forall s \in [T].
\end{align}
For any given $t \in [T]$, combining with \eqref{eq:y_iterative} and \eqref{eq:start_descent} and then summing over $s \in [t]$, we obtain the same inequality in \eqref{eq:A+B+C},
\begin{align}\label{eq:A+B+C_appendix}
    f(\vy_{t+1})
               &\le f(\vx_1)+  \underbrace{ \sum_{s=1}^t -\eta_s \left\la \nabla f(\vy_s), \frac{\vg_s}{\vb_s} \right\ra }_{\textbf{A}} + \underbrace{\frac{\beta_1}{1-\beta_1}\sum_{s=1}^t \left\langle \Delta_s \odot (\vx_s-\vx_{s-1}),  \nabla f(\vy_s) \right\rangle}_{\textbf{B}}  \nonumber \\
               &\quad+ \underbrace{\frac{L}{2}\sum_{s=1}^t\left\| \eta_s \cdot \frac{\vg_s}{\vb_s} - \frac{\beta_1}{1-\beta_1} (\Delta_s \odot (\vx_s-\vx_{s-1})) \right\|^2}_{\textbf{C}},
\end{align}
where we use $\Delta_s$ in \eqref{eq:A+B+C} and $\vy_1= \vx_1$.
We then further make a decomposition by introducing $\bar{\vg}_s$ into \textbf{A} and \textbf{B}
\begin{align}\label{eq:A_decomp}
    \textbf{A} 
    &=  \underbrace{ \sum_{s=1}^t-\eta_s\left\langle \bar{\vg}_s, \frac{\vg_s}{\vb_s}\right\rangle }_{\textbf{A.1}} + \underbrace{\sum_{s=1}^t\eta_s \left\langle \bar{\vg}_s - \nabla f(\vy_s), \frac{\vg_s}{\vb_s}\right\rangle}_{\textbf{A.2}},
\end{align}
and
\begin{align}\label{eq:B_decomp}
    \textbf{B} 
    &= \underbrace{\frac{\beta_1}{1-\beta_1} \sum_{s=1}^t\left\langle \Delta_s  \odot (\vx_s - \vx_{s-1}),  \bar{\vg}_s \right\rangle }_{\textbf{B.1}}  + \underbrace{\frac{\beta_1}{1-\beta_1} \sum_{s=1}^t\left\langle \Delta_s  \odot (\vx_s - \vx_{s-1}),  \nabla f(\vy_s)- \bar{\vg}_s \right\rangle }_{\textbf{B.2}}.
\end{align}
\subsection{Probabilistic estimations}\label{sub:probability_thm1}
We will provide two probabilistic inequalities with the detailed proofs given in Appendix \ref{sec:appendix_thm1_lem2}. The first one establishes an upper bound for the noise norm, which we have already informally presented in \eqref{eq:paper_highpro_affine_noise}.
\begin{lemma}\label{lem:highpro_affine_noise}
    Given $T \ge 1$, suppose that for any $s \in [T]$, $\vxi_{s}=\vg_s - \bar{\vg}_s$ satisfies Assumption (A3). Then for any given $\delta \in (0,1)$, it holds that with probability at least $1-\delta$,
    \begin{align}\label{eq:highpro_affine_noise}
        \|\vxi_{s} \|^2 \le  \mM_T^2 \left(\sigma_0^2 + \sigma_{1}^2 \|\bar{\vg}_s\|^p \right), \quad \forall s \in [T] .
    \end{align}
\end{lemma}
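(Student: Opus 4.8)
The plan is to prove Lemma~\ref{lem:highpro_affine_noise} by a straightforward union bound over the $T$ time steps, using Assumption (A3) together with Markov's inequality for the exponential moment. The key observation is that Assumption (A3) states that for every fixed $\vx\in\mR^d$, $\mE_{\vz}\left[\exp\left(\|g(\vx,\vz)-\nabla f(\vx)\|^2/(\sigma_0^2+\sigma_1^2\|\nabla f(\vx)\|^p)\right)\right]\le \mathrm{e}$. Applying this at $\vx=\vx_s$ (conditionally on the randomness $\vz_1,\dots,\vz_{s-1}$ that determines $\vx_s$, which is legitimate since $\vx_s$ is $\mathcal{F}_{s-1}$-measurable while $\vxi_s$ depends on the fresh sample $\vz_s$), we get $\mE\left[\exp\left(\|\vxi_s\|^2/(\sigma_0^2+\sigma_1^2\|\bar\vg_s\|^p)\right)\mid \mathcal{F}_{s-1}\right]\le \mathrm{e}$.

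The main steps, in order, are as follows. First, fix $s\in[T]$ and define the nonnegative random variable $W_s := \|\vxi_s\|^2/(\sigma_0^2+\sigma_1^2\|\bar\vg_s\|^p)$; note $\sigma_0>0$ guarantees the denominator is strictly positive so $W_s$ is well defined. Second, by the conditional exponential bound above and Markov's inequality, for any threshold $\lambda>0$,
\begin{align*}
    \mathbb{P}\left(W_s > \lambda \right) = \mathbb{P}\left(\exp(W_s) > \mathrm{e}^{\lambda}\right) \le \mathrm{e}^{-\lambda}\,\mE\left[\exp(W_s)\right] \le \mathrm{e}^{1-\lambda}.
\end{align*}
Third, choose $\lambda = \mM_T^2 = \log(\mathrm{e}T/\delta)$, so that $\mathrm{e}^{1-\lambda} = \mathrm{e}\cdot\delta/(\mathrm{e}T) = \delta/T$; hence $\mathbb{P}(W_s > \mM_T^2)\le \delta/T$, i.e., with probability at least $1-\delta/T$ we have $\|\vxi_s\|^2\le \mM_T^2(\sigma_0^2+\sigma_1^2\|\bar\vg_s\|^p)$. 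Fourth, take a union bound over $s=1,\dots,T$: the event that the displayed inequality fails for some $s\in[T]$ has probability at most $T\cdot(\delta/T)=\delta$, which gives the claim.

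There is really no substantial obstacle here — this is a routine concentration argument — but the one point that needs a little care is the conditioning/measurability structure: one must verify that $\vx_s$ (hence $\bar\vg_s=\nabla f(\vx_s)$) is measurable with respect to the $\sigma$-algebra generated by $\vz_1,\dots,\vz_{s-1}$, so that Assumption (A3), which is a statement about the conditional law of the oracle at a deterministic point, can be invoked conditionally with $\vx=\vx_s$ treated as fixed. This follows directly from the update rule in Algorithm~\ref{alg:Adam}, since $\vx_s$ is a deterministic function of $\vx_1$ and $\vz_1,\dots,\vz_{s-1}$. Once this is noted, the tower property turns the conditional Markov bound into the unconditional tail bound used above, and the union bound finishes the proof. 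One could alternatively phrase the whole argument unconditionally by appealing to (A3) pointwise and integrating, but the conditional formulation is cleaner and is what the later martingale lemma (Lemma~\ref{lem:Azuma}) will also need.
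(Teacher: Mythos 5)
Your proposal is correct and follows essentially the same route as the paper: the paper also applies Markov's inequality to $\exp(\gamma_s)$ with $\gamma_s=\|\vxi_s\|^2/(\sigma_0^2+\sigma_1^2\|\bar\vg_s\|^p)$, and its step $\mE[\exp(\max_s\gamma_s)]\le\mE[\sum_s\exp(\gamma_s)]\le T\mathrm{e}$ is precisely the union bound you state explicitly. Your extra remark on the $\mathcal{F}_{s-1}$-measurability of $\vx_s$ is a correct and useful clarification of why (A3) can be invoked conditionally, but does not change the substance of the argument.
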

We next provide a probabilistic upper bound as shown in \eqref{eq:ProEstMar} for a summation of the inner product, where we rely on the property of the martingale difference sequence and the proxy step-size $\va_s$ in \eqref{eq:proxy_stepsize}.
\begin{lemma}\label{lem:1_bounded}
    Given $T \ge 1$ and $\delta \in (0,1)$. If Assumptions (A2) and (A3) hold, then for any $\lambda > 0$, with probability at least $1-\delta$, 
    \begin{align}\label{eq:1}
        -\sum_{s=1}^t\eta_s\left\langle \bar{\vg}_s, \frac{\vxi_s}{\va_s}\right\rangle
        &\le \frac{3\lambda\eta\mG_T(t)}{4(1-\beta_1)\sqrt{1-\beta_2}} \sum_{s=1}^t \eta_s \left\|\frac{\bar{\vg}_s}{\sqrt{\va_s}}\right\|^2+ \frac{1}{\lambda} \log\left( \frac{ T}{\delta} \right), \quad \forall t \in [T]. 
    \end{align}
    As a consequence, when setting $\lambda = (1-\beta_1)\sqrt{1-\beta_2}/(3\eta \mG_T)$, it holds that with probability at least $1-\delta$, 
    \begin{align}
        -\sum_{s=1}^t\eta_s\left\langle \bar{\vg}_s, \frac{\vxi_s}{\va_s}\right\rangle\le \frac{\mG_T(t)}{4\mG_T} \sum_{s=1}^t \eta_s \left\|\frac{\bar{\vg}_s}{\sqrt{\va_s}}\right\|^2+ D_1 \mG_T , \quad \forall t \in [T], \label{eq:1_bounded}
    \end{align}
    where $D_1 =  \frac{3\eta }{(1-\beta_1)\sqrt{1-\beta_2}} \log\left( \frac{ T}{\delta} \right)$.
\end{lemma}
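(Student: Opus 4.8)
\textbf{Proof proposal for Lemma \ref{lem:1_bounded}.}

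The plan is to recognize the left-hand side as a sum of a martingale difference sequence and apply the concentration inequality of Lemma \ref{lem:Azuma}, where the crucial point is that the proxy step-size $\va_s$ in \eqref{eq:proxy_stepsize} is measurable with respect to the information available before drawing $\vz_s$. Concretely, I would set $\zeta_s = \vz_s$ and define $Z_s = -\eta_s\la \bar{\vg}_s, \vxi_s/\va_s\ra$. Since $\vv_{s-1}$, $G_s$ (hence $\mG_T(s)$ and $\va_s$), $\eta_s$, and $\bar{\vg}_s$ all depend only on $\vz_1,\dots,\vz_{s-1}$, while $\mE_{\vz_s}[\vxi_s] = \bm{0}$ by Assumption (A2), it follows that $\mE[Z_s \mid \zeta_1,\dots,\zeta_{s-1}] = 0$, so $\{Z_s\}$ is indeed a martingale difference sequence. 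Note one subtlety: $G_s = \max_{j\le s}\|\bar{\vg}_j\|$ involves $\bar{\vg}_s = \nabla f(\vx_s)$, which is fine since $\vx_s$ is determined by $\vz_1,\dots,\vz_{s-1}$; so $\va_s$ is genuinely ``predictable,'' and this is exactly why the proxy step-size is introduced rather than using $\vb_s$ directly.

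Next I would produce the conditional sub-Gaussian bound required by Lemma \ref{lem:Azuma}. By Cauchy--Schwarz applied coordinate-wise (or just $|\la u, v\ra| \le \|u/\sqrt{\va_s}\|\cdot\|v\sqrt{\va_s}\|$-type manipulation), one gets $|Z_s| \le \eta_s \|\bar{\vg}_s/\sqrt{\va_s}\|\cdot\|\vxi_s\|/\sqrt{\min_i \va_{s,i}}$; since every coordinate of $\va_s$ is at least $\ep_s = \ep\sqrt{1-\beta_2^s} \ge \ep\sqrt{1-\beta_2}$ (using $s\ge 1$), we have $1/\sqrt{\min_i\va_{s,i}} \le 1/\sqrt{\ep\sqrt{1-\beta_2}}$... actually more carefully I want a bound of the form $Z_s^2 \le \sigma_s^2$-proportional-to-$(\sigma_0^2 + \sigma_1^2\|\bar{\vg}_s\|^p)$ so that Assumption (A3) gives $\mE[\exp(Z_s^2/\sigma_s^2)\mid\cdot]\le\mathrm e$. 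Taking $\sigma_s^2 = \eta_s^2 \|\bar{\vg}_s/\sqrt{\va_s}\|^2 \cdot \mM_T^{-2}\cdot(\text{something})^{-1}$ won't quite work directly; instead I expect the right choice is to absorb $\|\bar{\vg}_s\|^p$ into $\mG_T(s)$ via its definition. The cleaner route: bound $|Z_s| \le \eta_s\|\bar{\vg}_s/\sqrt{\va_s}\| \cdot \|\vxi_s\| \cdot \|1/\sqrt{\va_s}\|_\infty$, and observe $\|\vxi_s\|$ is conditionally sub-Gaussian with parameter $\sigma_0^2 + \sigma_1^2\|\bar{\vg}_s\|^p \le \tfrac12\mG_T(s)^2/\mM_T^2$ by \eqref{eq:define_G_t}; then one can set $\sigma_s^2 = \eta_s^2\|\bar{\vg}_s/\sqrt{\va_s}\|^2 \cdot \|1/\sqrt{\va_s}\|_\infty^2 \cdot (\sigma_0^2+\sigma_1^2\|\bar{\vg}_s\|^p)$ so that $\mE[\exp(Z_s^2/\sigma_s^2)\mid\cdot]\le\mathrm e$ holds by (A3). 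Further, $\|1/\sqrt{\va_s}\|_\infty^2 = 1/\min_i\va_{s,i}$, and since $\va_{s,i}^2 \ge (1-\beta_2)\mG_T(s)^2$... I need $\min_i\va_{s,i} \ge \sqrt{1-\beta_2}\,\mG_T(s)/(\text{const})$; indeed $\va_{s,i} \ge \sqrt{(1-\beta_2)(\mG_T(s))^2} = \sqrt{1-\beta_2}\,\mG_T(s)$ directly from \eqref{eq:proxy_stepsize}. Hence $\sigma_s^2 \le \eta_s^2\|\bar{\vg}_s/\sqrt{\va_s}\|^2 \cdot \tfrac{\sigma_0^2+\sigma_1^2\|\bar{\vg}_s\|^p}{(1-\beta_2)\mG_T(s)^2} \le \eta_s^2\|\bar{\vg}_s/\sqrt{\va_s}\|^2\cdot\tfrac{1}{2\mM_T^2(1-\beta_2)}$, again using $2\sigma_0^2+2\sigma_1^2 G_s^p \le \mG_T(s)^2/\mM_T^2$ from \eqref{eq:define_G_t} and $\|\bar{\vg}_s\|\le G_s$.

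Then I would invoke Lemma \ref{lem:Azuma}: with probability at least $1-\delta/d$ (applied per coordinate, or directly in the vector form — here I'd use the vector form and the union bound only to get the $\log(dT/\delta)$ factor, or perhaps apply it once getting $\log(1/\delta)$ and the $d$ appears from $\sum_s\sigma_s^2$), for any $\lambda>0$,
\begin{align*}
\sum_{s=1}^t Z_s \le \frac1\lambda\log\!\Big(\frac{1}{\delta'}\Big) + \frac34\lambda\sum_{s=1}^t\sigma_s^2 \le \frac1\lambda\log\!\Big(\frac{1}{\delta'}\Big) + \frac{3\lambda}{8\mM_T^2(1-\beta_2)}\sum_{s=1}^t\eta_s^2\Big\|\frac{\bar{\vg}_s}{\sqrt{\va_s}}\Big\|^2.
\end{align*}
Using $\eta_s \le \eta/(1-\beta_1)$ from \eqref{eq:eta_s} to pull one factor of $\eta_s$ out of $\eta_s^2$, bounding $\mM_T^2(1-\beta_2)$ appropriately, and comparing with $\mG_T(t)$ via monotonicity ($\mG_T(s)\le\mG_T(t)$ for $s\le t$, since $G_s$ is nondecreasing — wait, I need $\mG_T(t)$ not $\mG_T(s)$ on the RHS, which is fine because $\mG_T$ only helps shrink things; actually the RHS has $\mG_T(t)$ which dominates), I expect to land exactly on \eqref{eq:1} after identifying the constant $3/(4(1-\beta_1)\sqrt{1-\beta_2})$ and taking $\delta' = \delta/(dT)$ so the union bound over $t\in[T]$ (and over $d$ coordinates, if done coordinate-wise) yields the stated $\log(dT/\delta)$. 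Finally, substituting $\lambda = (1-\beta_1)\sqrt{1-\beta_2}/(3\eta\mG_T)$ into \eqref{eq:1} and using $\mG_T(t)\le\mG_T$ when $t$ is such that gradients are bounded — no wait, at this stage $\mG_T(t)$ need not be $\le\mG_T$; re-reading, \eqref{eq:1_bounded} just substitutes the value of $\lambda$ mechanically, turning the first coefficient into $\tfrac{\mG_T(t)}{4\mG_T}$ and the second into $D_1\mG_T$, which is pure algebra. The main obstacle I anticipate is getting the measurability/predictability argument for $\va_s$ airtight — specifically verifying that $\mG_T(s)$, which appears inside $\va_s$, depends only on $\vz_1,\dots,\vz_{s-1}$ — together with carefully matching the concentration-inequality constants (the factors $\tfrac34$, $\tfrac12$ from (A3) normalization, and the $\mM_T^2$) so the bookkeeping produces precisely the claimed coefficients rather than something off by a universal constant.
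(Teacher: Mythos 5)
Your approach is essentially the same as the paper's: exploit that the proxy step-size $\va_s$ is $\mathcal{F}_{s-1}$-measurable (because $\vv_{s-1}$, $\bar{\vg}_j=\nabla f(\vx_j)$ for $j\le s$, $G_s$, and $\mG_T(s)$ all depend only on $\vz_1,\dots,\vz_{s-1}$), turn the sum into a martingale difference sequence, and invoke the concentration bound of Lemma~\ref{lem:Azuma}. The one structural difference is that you apply Lemma~\ref{lem:Azuma} once to the scalar $Z_s = -\eta_s\la \bar{\vg}_s,\vxi_s/\va_s\ra$, whereas the paper applies it per coordinate to $X_{s,i}=-\eta_s\bgsi\xisi/\asi$, sums over $i$, and union-bounds over $i\in[d]$ and $t\in[T]$ to pick up $\frac{d}{\lambda}\log(dT/\delta)$. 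Your vector-level route avoids the coordinate union bound and gives the tighter tail term $\frac{1}{\lambda}\log(T/\delta)$, which still implies \eqref{eq:1} since $d\ge 1$. Your side remark that ``the $d$ appears from $\sum_s\sigma_s^2$'' does not reflect what happens — in the vector version the $d$ simply never arises — but this is a confusion of exposition, not a gap in the logic.

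There is, however, a concrete arithmetic slip in the middle of your estimate that you should be careful of. You write $\|1/\sqrt{\va_s}\|_\infty^2\cdot(\sigma_0^2+\sigma_1^2\|\bar{\vg}_s\|^p) \le \frac{\sigma_0^2+\sigma_1^2\|\bar{\vg}_s\|^p}{(1-\beta_2)\mG_T(s)^2}$. But $\|1/\sqrt{\va_s}\|_\infty^2 = 1/\min_i\asi$, not $1/(\min_i\asi)^2$, so the correct denominator is $\sqrt{1-\beta_2}\,\mG_T(s)$, not $(1-\beta_2)\mG_T(s)^2$. Your subsequent intermediate bound $\sigma_s^2\le \eta_s^2\|\bar{\vg}_s/\sqrt{\va_s}\|^2\cdot\frac{1}{2\mM_T^2(1-\beta_2)}$ has dropped the $\mG_T$-scale entirely, and plugging it into Lemma~\ref{lem:Azuma} would not reproduce the coefficient $\frac{3\lambda\eta\mG_T(t)}{4(1-\beta_1)\sqrt{1-\beta_2}}$ that appears in \eqref{eq:1}. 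With the slip corrected, the estimate closes cleanly:
\begin{align*}
\sigma_s^2 \le \eta_s^2\Big\|\frac{\bar{\vg}_s}{\sqrt{\va_s}}\Big\|^2\cdot\frac{\sigma_0^2+\sigma_1^2\|\bar{\vg}_s\|^p}{\sqrt{1-\beta_2}\,\mG_T(s)} \le \eta_s^2\Big\|\frac{\bar{\vg}_s}{\sqrt{\va_s}}\Big\|^2\cdot\frac{\sqrt{\sigma_0^2+\sigma_1^2\|\bar{\vg}_s\|^p}}{\sqrt{1-\beta_2}},
\end{align*}
using $\sqrt{\sigma_0^2+\sigma_1^2\|\bar{\vg}_s\|^p}\le\mG_T(s)$; then bound $\sqrt{\sigma_0^2+\sigma_1^2\|\bar{\vg}_s\|^p}\le\mG_T(t)$ for $s\le t$, pull one factor of $\eta_s$ out via $\eta_s\le\eta/(1-\beta_1)$, and multiply by $3\lambda/4$ to land exactly on the claimed coefficient. (This is the same chain of inequalities as \eqref{eq:bound_asi} in the paper.) The final substitution $\lambda=(1-\beta_1)\sqrt{1-\beta_2}/(3\eta\mG_T)$ to get \eqref{eq:1_bounded} is mechanical, as you say.
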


\subsection{Deterministic estimations}\label{sub:deterministic_thm1}
In this section, we shall assume that \eqref{eq:highpro_affine_noise} or/and \eqref{eq:1_bounded} hold whenever the related estimation is needed. Then we obtain the following key lemmas with the detailed proofs given in Appendix \ref{sec:appendix_thm1_lem3}. 

\begin{lemma}\label{lem:bound_para_term}
    Given $T \ge 1$. If \eqref{eq:highpro_affine_noise} holds, then we have 
    \begin{align*}
        \max_{j \in [s]}\|\vxi_j\| \le \mG_T(s),\quad \max_{j \in [s]}\|\vg_{j}\| \le \mG_T(s), \quad \max_{j \in [s]}\|\vv_{j}\|_{\infty} \le  \left(\mG_T(s)\right)^2, \quad \forall s \in [T].
    \end{align*}
\end{lemma}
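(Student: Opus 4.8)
The plan is to chain together three elementary estimates, using \eqref{eq:highpro_affine_noise} as the sole probabilistic input and then working deterministically. First I would bound $\max_{j\in[s]}\|\vxi_j\|$. By \eqref{eq:highpro_affine_noise}, for each $j\in[s]$ we have $\|\vxi_j\|^2 \le \mM_T^2(\sigma_0^2 + \sigma_1^2\|\bar\vg_j\|^p)$. Since $\|\bar\vg_j\| \le G_j \le G_s$ for all $j \le s$ (by the definition $G_s = \max_{j\in[s]}\|\bar\vg_j\|$ in \eqref{eq:define_G_t}), this gives $\|\vxi_j\|^2 \le \mM_T^2(\sigma_0^2 + \sigma_1^2 G_s^p) \le 2\sigma_0^2\mM_T^2 + 2\sigma_1^2 G_s^p\mM_T^2 + 2G_s^2\mM_T^2 = \mG_T(s)^2$, recalling the definition of $\mG_T(s)$ in \eqref{eq:define_G_t}. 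Taking the max over $j\in[s]$ yields the first claim.

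Second, for $\max_{j\in[s]}\|\vg_j\|$, I would use the triangle inequality $\|\vg_j\| \le \|\bar\vg_j\| + \|\vxi_j\|$. A crude bound would give a factor $2$ too much; instead the cleaner route is to observe from $\mG_T(s)^2 = \mM_T^2(2\sigma_0^2 + 2\sigma_1^2 G_s^p + 2G_s^2)$ that $\mG_T(s) \ge \sqrt{2}\,\mM_T\,G_s \ge \sqrt{2}\,G_s \ge G_s$ (since $\mM_T = \sqrt{\log(\mathrm{e}T/\delta)} \ge 1$) and also $\mG_T(s) \ge \mM_T\sqrt{\sigma_0^2+\sigma_1^2 G_s^p} \ge \max_{j\in[s]}\|\vxi_j\|$ by the first part; but to get $\|\vg_j\|\le \mG_T(s)$ directly I would instead note $\|\vg_j\|^2 \le 2\|\bar\vg_j\|^2 + 2\|\vxi_j\|^2 \le 2G_s^2 + 2\mM_T^2(\sigma_0^2+\sigma_1^2G_s^p) \le \mM_T^2(2\sigma_0^2 + 2\sigma_1^2 G_s^p + 2G_s^2) = \mG_T(s)^2$, where the last inequality uses $\mM_T \ge 1$ so that $2G_s^2 \le 2\mM_T^2 G_s^2$. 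Taking the max over $j\in[s]$ gives the second claim.

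Third, for $\max_{j\in[s]}\|\vv_j\|_\infty$, I would unroll the recursion $\vv_j = \beta_2\vv_{j-1} + (1-\beta_2)\vg_j^2$ with $\vv_0 = {\bf 0}_d$ to get $\vv_j = (1-\beta_2)\sum_{k=1}^j \beta_2^{j-k}\vg_k^2$ coordinatewise, hence $\|\vv_j\|_\infty \le (1-\beta_2)\sum_{k=1}^j \beta_2^{j-k}\|\vg_k\|_\infty^2 \le (1-\beta_2)\sum_{k=1}^j\beta_2^{j-k}\|\vg_k\|^2 \le (1-\beta_2)\sum_{k=1}^j\beta_2^{j-k}\mG_T(s)^2 \le \mG_T(s)^2$, using the second claim for $k\le j\le s$ and $(1-\beta_2)\sum_{k=1}^j\beta_2^{j-k} = 1-\beta_2^j \le 1$. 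Taking the max over $j\in[s]$ completes the proof. The only mild subtlety — and the step I would be most careful with — is keeping the constants consistent so that everything collapses exactly into $\mG_T(s)$ rather than a constant multiple of it; this is handled by exploiting $\mM_T \ge 1$ at each step, which absorbs the factor-of-$2$ losses from the triangle/Young inequalities into the $2\sigma_0^2, 2\sigma_1^2 G_s^p, 2G_s^2$ already present inside $\mG_T(s)^2$.
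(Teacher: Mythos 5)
Your proof is correct and follows essentially the same approach as the paper: both bound $\|\vxi_j\|$ and $\|\vg_j\|$ by $\mG_T(s)$ directly from \eqref{eq:highpro_affine_noise} together with $\mM_T\ge 1$, and then push this through the update for $\vv_j$. The only cosmetic difference is in the third claim, where the paper runs a short induction on the recursion $\vv_{j}=\beta_2\vv_{j-1}+(1-\beta_2)\vg_j^2$ while you unroll it to a geometric average $\vv_j=(1-\beta_2)\sum_{k\le j}\beta_2^{j-k}\vg_k^2$; the two are interchangeable.
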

\begin{lemma}\label{lem:gap_as_bs_bounded}
    Given $T \ge 1$. If $\vb_s=(b_{s,i})_i$ and $\va_s=(a_{s,i})_i$ follow the definitions in \eqref{eq:bsi} and \eqref{eq:proxy_stepsize} respectively, and \eqref{eq:highpro_affine_noise} holds, then for all $s \in [T], i \in [d]$,
    \begin{align*}
        \left| \frac{1}{\asi} - \frac{1}{\bsi} \right| \le \frac{\mG_T(s) \sqrt{1-\beta_2}}{\asi\bsi}\quad \text{and} \quad
        \left| \frac{1}{\asi} - \frac{1}{b_{s-1,i}} \right| \le \frac{\left( \mG_T(s) + \ep\right)\sqrt{1-\beta_2}}{\asi b_{s-1,i}}.
    \end{align*}
\end{lemma}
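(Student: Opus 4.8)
The statement to prove is Lemma~\ref{lem:gap_as_bs_bounded}, which compares the proxy step-size denominator $\va_s$ with the true denominator $\vb_s$ (and the shifted $\vb_{s-1}$), under the noise bound \eqref{eq:highpro_affine_noise}. The plan is to work coordinate-wise and reduce everything to controlling $|a_{s,i}^2 - b_{s,i}^2|$, exploiting the common structure: both $\asi$ and $\bsi$ are of the form $\sqrt{\beta_2 v_{s-1,i} + (1-\beta_2)(\cdot)^2} + \ep_s$, differing only in whether the last summand uses $g_{s,i}^2$ or $(\mG_T(s))^2$. First I would write
\[
\frac{1}{\asi} - \frac{1}{\bsi} = \frac{\bsi - \asi}{\asi\bsi} = \frac{b_{s,i}^2 - a_{s,i}^2}{\asi\bsi(\asi+\bsi-2\ep_s)}\cdot\frac{\asi+\bsi-2\ep_s}{\asi+\bsi},
\]
or more directly use that for $u,v \ge 0$, $|\sqrt{u}-\sqrt{v}| \le |u-v|/(\sqrt{u}+\sqrt{v}) \le |u-v|/\max\{\sqrt u,\sqrt v\}$, so that $|\asi - \bsi| = |\sqrt{u_{s,i}} - \sqrt{w_{s,i}}|$ where $u_{s,i} = \beta_2 v_{s-1,i}+(1-\beta_2)g_{s,i}^2$ and $w_{s,i} = \beta_2 v_{s-1,i}+(1-\beta_2)(\mG_T(s))^2$. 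The difference $u_{s,i} - w_{s,i} = (1-\beta_2)(g_{s,i}^2 - (\mG_T(s))^2)$, and by Lemma~\ref{lem:bound_para_term} we have $|g_{s,i}| \le \|\vg_s\| \le \mG_T(s)$, so $|u_{s,i}-w_{s,i}| \le (1-\beta_2)(\mG_T(s))^2$.

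The key remaining estimate is a denominator bound. Dividing $|u_{s,i}-w_{s,i}|$ by $\sqrt{u_{s,i}}+\sqrt{w_{s,i}} \ge \sqrt{w_{s,i}} \ge \sqrt{(1-\beta_2)}\,\mG_T(s)$ gives $|\asi - \bsi| \le \sqrt{1-\beta_2}\,\mG_T(s)$. Then
\[
\left|\frac{1}{\asi} - \frac{1}{\bsi}\right| = \frac{|\asi - \bsi|}{\asi\bsi} \le \frac{\sqrt{1-\beta_2}\,\mG_T(s)}{\asi\bsi},
\]
which is exactly the first claim. For the second claim involving $b_{s-1,i} = \sqrt{v_{s-1,i}} + \ep_{s-1}$, I would similarly write $|\asi - b_{s-1,i}| \le |\sqrt{u_{s,i}} - \sqrt{v_{s-1,i}}| + |\ep_s - \ep_{s-1}|$. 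For the first piece, $u_{s,i} - v_{s-1,i} = -(1-\beta_2)v_{s-1,i} + (1-\beta_2)(\mG_T(s))^2$, and since $v_{s-1,i} \le \|\vv_{s-1}\|_\infty \le (\mG_T(s-1))^2 \le (\mG_T(s))^2$ (monotonicity of $G_s$, hence of $\mG_T(\cdot)$, from \eqref{eq:define_G_t}), we get $|u_{s,i} - v_{s-1,i}| \le (1-\beta_2)(\mG_T(s))^2$; dividing by $\sqrt{u_{s,i}} \ge \sqrt{(1-\beta_2)}\,\mG_T(s)$ bounds this by $\sqrt{1-\beta_2}\,\mG_T(s)$. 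For the second piece, $\ep_s - \ep_{s-1} = \ep(\sqrt{1-\beta_2^s} - \sqrt{1-\beta_2^{s-1}}) \le \ep\sqrt{1-\beta_2^s - (1-\beta_2^{s-1})} = \ep\sqrt{\beta_2^{s-1}(1-\beta_2)}\cdot$ — more simply, $|\ep_s - \ep_{s-1}| \le \ep\sqrt{1-\beta_2}$ using $\sqrt{a}-\sqrt{b} \le \sqrt{a-b}$ with $a = 1-\beta_2^s$, $b=1-\beta_2^{s-1}$ and $a - b = \beta_2^{s-1}-\beta_2^s \le 1-\beta_2$ (valid since $\beta_2 \in (0,1)$). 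Combining, $|\asi - b_{s-1,i}| \le (\mG_T(s)+\ep)\sqrt{1-\beta_2}$, and dividing by $\asi b_{s-1,i}$ yields the second claim.

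The main obstacle is purely bookkeeping: ensuring every lower bound on the denominators $\sqrt{u_{s,i}}, \sqrt{w_{s,i}}, \sqrt{v_{s-1,i}}$ is legitimate and that the right summand (with $\mG_T(s)^2$ or $g_{s,i}^2$) is correctly identified in each of $\asi, \bsi, b_{s-1,i}$ — in particular that $\va_s$ uses $(\mG_T(s)\mathbf{1}_d)^2$ coordinate-wise as in \eqref{eq:proxy_stepsize}, so $w_{s,i}$ has a genuine $(1-\beta_2)(\mG_T(s))^2$ term giving the needed $\sqrt{1-\beta_2}\,\mG_T(s)$ floor. The only genuinely non-trivial inputs are Lemma~\ref{lem:bound_para_term} (to get $|g_{s,i}| \le \mG_T(s)$ and $v_{s-1,i} \le (\mG_T(s))^2$) and the monotonicity of $s \mapsto \mG_T(s)$, both of which are available. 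I expect the proof to be short, three or four lines per inequality after setting up the $|\sqrt u - \sqrt v| \le |u-v|/(\sqrt u + \sqrt v)$ reduction.
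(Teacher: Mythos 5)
Your proof is correct and follows essentially the same route as the paper: cancel the common $\ep_s$ (or split it off via the triangle inequality for the $b_{s-1,i}$ case), apply $|\sqrt u - \sqrt v| \le |u-v|/(\sqrt u + \sqrt v)$, bound the numerator via $g_{s,i}^2 \le (\mG_T(s))^2$ and $v_{s-1,i} \le (\mG_T(s))^2$ from Lemma~\ref{lem:bound_para_term}, lower-bound the denominator by the floor $\sqrt{(1-\beta_2)}\,\mG_T(s)$ coming from the $(\mG_T(s))^2$ term inside $\va_s$, and control $|\ep_s - \ep_{s-1}|$ by $\sqrt a - \sqrt b \le \sqrt{a-b}$. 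Only a small notational slip: in the second half you write $u_{s,i}$ (defined earlier with $g_{s,i}^2$) where you clearly mean $w_{s,i}$ (with $(\mG_T(s))^2$), since $\asi = \sqrt{w_{s,i}} + \ep_s$; the arithmetic you carry out is the right one.
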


\begin{lemma}\label{lem:sum_2}
    Given $T \ge 1$. Under the conditions in Lemma \ref{lem:sum_1} and Lemma \ref{lem:gradient_xs_ys}, if \eqref{eq:highpro_affine_noise} holds, then the following inequality holds,
    \begin{align*}
         \mathcal{F}_i(t) \le   \mathcal{F}(T) , \quad \forall t \in [T], i \in [d],
    \end{align*}
    where $\hat{M}=M(1-\beta_1)$ and $M$ follows the definition in Lemma \ref{lem:gradient_xs_ys}, $\mathcal{F}(T)$ is define by
    \begin{align}\label{eq:define_F(t)}
        \mathcal{F}(T):= 1+ \frac{2\mM_T^2 }{\ep^2}\left[\sigma_0^2T + \sigma_1^2T\left( \|\bar{\vg}_1\| +   T\hat{M}   \right)^p + T\left( \|\bar{\vg}_1\| + T\hat{M}  \right)^2 \right].
    \end{align}
\end{lemma}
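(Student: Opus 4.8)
The plan is to bound $\mathcal{F}_i(t) = 1 + \frac{1}{\ep^2}\sum_{s=1}^t g_{s,i}^2$ by controlling each $g_{s,i}^2$ uniformly over $s \in [T]$ and $i \in [d]$, and then summing. First I would observe that $g_{s,i}^2 \le \|\vg_s\|^2 \le 2\|\bar{\vg}_s\|^2 + 2\|\vxi_s\|^2$, so it suffices to bound $\|\bar{\vg}_s\|$ and then invoke \eqref{eq:highpro_affine_noise} for the noise term. Crucially, at this stage we do not yet have the \emph{a posteriori} bound $\|\bar{\vg}_s\| \le G$ (that comes later via induction), so I must use the cruder \emph{a priori} growth bound already available: Lemma \ref{lem:OrderT_bound_gradient} gives $\|\bar{\vg}_s\| \le \|\bar{\vg}_1\| + LC_0 s\sqrt{d/(1-\beta_1/\beta_2)}$ for all $s \ge 1$. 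Writing $\hat M = M(1-\beta_1) = LC_0\sqrt{d}/\sqrt{1-\beta_1/\beta_2}$ as in the statement, this reads $\|\bar{\vg}_s\| \le \|\bar{\vg}_1\| + s\hat M \le \|\bar{\vg}_1\| + T\hat M$ for every $s \in [T]$.

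Next I would feed this into the affine-variance noise bound. On the event where \eqref{eq:highpro_affine_noise} holds, $\|\vxi_s\|^2 \le \mM_T^2(\sigma_0^2 + \sigma_1^2\|\bar{\vg}_s\|^p)$. Since $p \in [0,4)$ and $t \mapsto t^p$ is nondecreasing on $[0,\infty)$, monotonicity gives $\|\vxi_s\|^2 \le \mM_T^2(\sigma_0^2 + \sigma_1^2(\|\bar{\vg}_1\|+T\hat M)^p)$. Combining with $g_{s,i}^2 \le 2\|\bar{\vg}_s\|^2 + 2\|\vxi_s\|^2 \le 2(\|\bar{\vg}_1\|+T\hat M)^2 + 2\mM_T^2(\sigma_0^2 + \sigma_1^2(\|\bar{\vg}_1\|+T\hat M)^p)$, and using $\mM_T \ge 1$ so that the $2(\|\bar{\vg}_1\|+T\hat M)^2$ term can be absorbed into $2\mM_T^2(\cdots)$ (or simply kept as a separate summand — either organizing choice matches the form of $\mathcal{F}(T)$), I get a uniform per-term bound. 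Summing over $s \in [t]$ with $t \le T$ contributes a factor $T$, and dividing by $\ep^2$ and adding the leading $1$ yields exactly
\begin{align*}
    \mathcal{F}_i(t) \le 1 + \frac{2\mM_T^2}{\ep^2}\left[\sigma_0^2 T + \sigma_1^2 T\left(\|\bar{\vg}_1\| + T\hat M\right)^p + T\left(\|\bar{\vg}_1\| + T\hat M\right)^2\right] = \mathcal{F}(T),
\end{align*}
which is independent of both $t$ and $i$, as required.

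I do not expect a serious obstacle here — this lemma is essentially a bookkeeping step that upgrades the linear-in-$s$ gradient growth bound (Lemma \ref{lem:OrderT_bound_gradient}) plus the probabilistic noise bound (Lemma \ref{lem:highpro_affine_noise}) into a uniform bound on $\mathcal{F}_i$. The only mild subtlety is making sure the monotonicity of $t^p$ is legitimately applied (valid since $p \ge 0$) and that the constant $\hat M = M(1-\beta_1)$ is tracked consistently with the definition of $M$ in Lemma \ref{lem:gradient_xs_ys}; the factor $(1-\beta_1)$ cancellation between $M$ and $\hat M$ is exactly what makes the bound from Lemma \ref{lem:OrderT_bound_gradient} line up with the $T\hat M$ appearing in $\mathcal{F}(T)$. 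One should also note that this bound is deliberately crude (polynomial in $T$), which is acceptable because $\mathcal{F}_i(t)$ only ever enters inside a logarithm in the subsequent estimates (via Lemma \ref{lem:sum_1} and the $\log(\mathcal{F}_i(t)/\beta_2^t)$ terms), so $\log \mathcal{F}(T) \lesssim \log(T/\delta)$ up to constants and the final rate is unaffected.
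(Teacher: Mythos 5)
Your proof is correct and follows essentially the same route as the paper: decompose $g_{s,i}^2 \le 2\|\bar{\vg}_s\|^2 + 2\|\vxi_s\|^2$, bound the noise via \eqref{eq:highpro_affine_noise}, bound $\|\bar{\vg}_s\|$ by the a priori linear-growth estimate from Lemma \ref{lem:OrderT_bound_gradient}, and absorb the $2\sum\|\bar{\vg}_s\|^2$ term using $\mM_T \ge 1$. The only cosmetic difference is that you bound each summand uniformly by $\|\bar{\vg}_1\|+T\hat M$ before summing, whereas the paper keeps the $s$-dependent bound $\|\bar{\vg}_1\|+s\hat M$ inside the sum, bounds by the largest term, and then uses monotonicity of $\mathcal{F}(t)$ in $t$; the two are mathematically identical.
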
  
We move to estimate all the related terms in Appendix \ref{sec:decomp}. First, the estimation for \textbf{A.1} relies on both the two probabilistic estimations in Appendix \ref{sub:probability_thm1}. 
\begin{lemma}\label{lem:1.1_bounded}
    Given $T \ge 1$, suppose that \eqref{eq:highpro_affine_noise} and \eqref{eq:1_bounded} hold. Then for all $t \in [T]$,
    \begin{align}
        {\bf A.1}
        &\le \left(\frac{\mG_T(t)}{4\mG} - \frac{3}{4} \right) \sum_{s=1}^t \eta_s \left\|\frac{\bar{\vg}_s}{\sqrt{\va_s}}\right\|^2 + D_1 \mG_T+  D_2 \mG_T(t)\sum_{s=1}^t \left\|\frac{\vg_s}{\vb_s} \right\|^2, \label{eq:1.1_bounded}
    \end{align}
    where $D_1$ is given as in Lemma \ref*{lem:1_bounded} and $D_2 = \frac{\eta\sqrt{1-\beta_2}}{1-\beta_1}$.
\end{lemma}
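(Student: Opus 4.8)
The plan is to carry out the proxy-step-size decomposition sketched in Section~\ref{sec:proof_sketch_1} and then control the two resulting error terms separately, working throughout on the event where both \eqref{eq:highpro_affine_noise} and \eqref{eq:1_bounded} hold (the hypotheses of the lemma). Writing $\vg_s=\bar{\vg}_s+\vxi_s$ and inserting the proxy adaptive vector $\va_s$ from \eqref{eq:proxy_stepsize}, one splits
\begin{align*}
  {\bf A.1}=-\sum_{s=1}^t\eta_s\left\|\frac{\bar{\vg}_s}{\sqrt{\va_s}}\right\|^2
  \;\underbrace{-\sum_{s=1}^t\eta_s\left\la\bar{\vg}_s,\frac{\vxi_s}{\va_s}\right\ra}_{{\bf A.1.1}}
  \;+\;\underbrace{\sum_{s=1}^t\eta_s\left\la\bar{\vg}_s,\left(\frac{1}{\va_s}-\frac{1}{\vb_s}\right)\vg_s\right\ra}_{{\bf A.1.2}},
\end{align*}
keeping the first sum as the descent term. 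For ${\bf A.1.1}$ I would simply invoke the probabilistic estimate \eqref{eq:1_bounded} of Lemma~\ref{lem:1_bounded}, which gives ${\bf A.1.1}\le\frac{\mG_T(t)}{4\mG_T}\sum_{s=1}^t\eta_s\|\bar{\vg}_s/\sqrt{\va_s}\|^2+D_1\mG_T$.

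The heart of the argument is bounding ${\bf A.1.2}$, and this is where I expect the only real work. First I would expand it coordinate-wise and apply Lemma~\ref{lem:gap_as_bs_bounded} to replace $1/\asi-1/\bsi$ by its bound $\mG_T(s)\sqrt{1-\beta_2}/(\asi\bsi)$, so that the $s$-th summand is at most $\eta_s\mG_T(s)\sqrt{1-\beta_2}\sum_{i=1}^d|\bgsi|\,|\gsi|/(\asi\bsi)$. The structural ingredient, immediate from the definition \eqref{eq:proxy_stepsize}, is the lower bound $\asi\ge\sqrt{1-\beta_2}\,\mG_T(s)$. Applying Young's inequality in the weighted form
\begin{align*}
  \frac{|\bgsi|}{\sqrt{\asi}}\cdot\frac{|\gsi|}{\sqrt{\asi}\,\bsi}
  \;\le\;\frac{1}{4\mG_T(s)\sqrt{1-\beta_2}}\cdot\frac{\bgsi^2}{\asi}
  \;+\;\mG_T(s)\sqrt{1-\beta_2}\cdot\frac{\gsi^2}{\asi\bsi^2},
\end{align*}
multiplying through by $\eta_s\mG_T(s)\sqrt{1-\beta_2}$ and then using $\mG_T(s)^2(1-\beta_2)/\asi\le\mG_T(s)\sqrt{1-\beta_2}$ on the residual term, gives after summing over $i$ the per-step bound
\begin{align*}
  \eta_s\left\la\bar{\vg}_s,\left(\frac{1}{\va_s}-\frac{1}{\vb_s}\right)\vg_s\right\ra
  \;\le\;\frac14\,\eta_s\left\|\frac{\bar{\vg}_s}{\sqrt{\va_s}}\right\|^2
  +\eta_s\,\mG_T(s)\sqrt{1-\beta_2}\left\|\frac{\vg_s}{\vb_s}\right\|^2 .
\end{align*}
Summing over $s\in[t]$, invoking $\eta_s\le\eta/(1-\beta_1)$ from \eqref{eq:eta_s} and the monotonicity $\mG_T(s)\le\mG_T(t)$, yields ${\bf A.1.2}\le\frac14\sum_{s=1}^t\eta_s\|\bar{\vg}_s/\sqrt{\va_s}\|^2+D_2\,\mG_T(t)\sum_{s=1}^t\|\vg_s/\vb_s\|^2$ with $D_2=\eta\sqrt{1-\beta_2}/(1-\beta_1)$.

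Finally I would add the three pieces: the descent term carries coefficient $-1$ on $\sum_{s=1}^t\eta_s\|\bar{\vg}_s/\sqrt{\va_s}\|^2$, the ${\bf A.1.1}$ bound contributes $\frac{\mG_T(t)}{4\mG_T}$, and the ${\bf A.1.2}$ bound contributes $\frac14$; together these give $\frac{\mG_T(t)}{4\mG_T}-\frac34$, while the remaining terms assemble into $D_1\mG_T+D_2\mG_T(t)\sum_{s=1}^t\|\vg_s/\vb_s\|^2$, which is precisely \eqref{eq:1.1_bounded}. The delicate point is the calibration of the Young's-inequality weight in ${\bf A.1.2}$: it must be chosen so that the extracted descent term has coefficient exactly $\frac14$ and the residual collapses cleanly to $\|\vg_s/\vb_s\|^2$, and this is exactly what forces the lower bound $\asi\ge\sqrt{1-\beta_2}\,\mG_T(s)$ into the design of $\va_s$. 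In particular, no inductive control on the gradient norms is needed here — that is deferred to the later induction step where $\mG_T(t)\le\mG_T$ will make the coefficient $\frac{\mG_T(t)}{4\mG_T}-\frac34$ negative.
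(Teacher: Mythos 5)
Your proof is correct and matches the paper's argument step for step: the same proxy-step-size decomposition of ${\bf A.1}$ into the descent term, ${\bf A.1.1}$ and ${\bf A.1.2}$, the same invocation of \eqref{eq:1_bounded} for ${\bf A.1.1}$, and the same estimate for ${\bf A.1.2}$ (Lemma~\ref{lem:gap_as_bs_bounded}, Young's inequality calibrated to extract a $\tfrac14$-weighted descent term, then $\asi \ge \mG_T(s)\sqrt{1-\beta_2}$ together with $\eta_s \le \eta/(1-\beta_1)$ and the monotonicity of $\mG_T(\cdot)$ to collapse the residual to $D_2\,\mG_T(t)\sum_{s}\|\vg_s/\vb_s\|^2$). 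Nothing is missing; your explicit writing of the weighted Young step and your remark that gradient-norm control is deferred to the later induction are both accurate.
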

We also obtain the following lemma to estimate the adaptive momentum part \textbf{B.1}.
\begin{lemma}\label{lem:2_bounded}
    Given $T \ge 1$, if \eqref{eq:highpro_affine_noise} holds, then for all $t \in [T]$,
\begin{align}
    {\bf B.1} \le \frac{1}{4}\sum_{s=1}^t\eta_s\left\|\frac{\bar{\vg}_s}{\sqrt{\va_s}} \right\|^2  + (D_3\mG_T(t)+D_4)\sum_{s=1}^t   \left(\left\|\frac{\vm_{s-1}}{\vb_s}\right\|^2 + \left\|\frac{\vm_{s-1}}{\vb_{s-1}}\right\|^2\right)+D_5G_t, \label{eq:2_bounded}
\end{align}
where
\begin{align}
    D_3 = \frac{2\eta \sqrt{1-\beta_2}}{(1-\beta_1)^3},\quad D_4=\ep D_3, \quad D_5=\frac{ 2\eta \sqrt{d}}{\sqrt{(1-\beta_1)^3(1-\beta_2)(1-\beta_1/\beta_2)}}. \label{eq:D3D4}
\end{align}
\end{lemma}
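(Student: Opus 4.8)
The plan is to estimate \textbf{B.1} by expanding $\Delta_s \odot (\vx_s - \vx_{s-1})$ along the decomposition already announced in the proof sketch, namely
\[
\Delta_s \odot (\vx_s-\vx_{s-1}) = \left(\frac{\eta_s}{\vb_s}-\frac{\eta_s}{\va_s}\right)\odot\vm_{s-1} + \left(\frac{\eta_s}{\va_s}-\frac{\eta_s}{\vb_{s-1}}\right)\odot\vm_{s-1} + (\eta_s-\eta_{s-1})\frac{\vm_{s-1}}{\vb_{s-1}},
\]
which follows from $\vx_s - \vx_{s-1} = -\eta_{s-1}\vm_{s-1}/\vb_{s-1}$ and a direct rearrangement. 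Substituting this into \textbf{B.1} splits it into three inner products against $\bar{\vg}_s$; I would bound each one separately, pulling out $\|\bar{\vg}_s\|$ and using $\|\bar{\vg}_s\|\le G_t$ for $s\le t$ where a crude bound suffices, but keeping the $\eta_s\|\bar{\vg}_s/\sqrt{\va_s}\|^2$ structure where Young's inequality is needed to later absorb a term into the descent term.

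For the first two terms I would apply Lemma \ref{lem:gap_as_bs_bounded} to control $|1/\asi - 1/\bsi|$ and $|1/\asi - 1/b_{s-1,i}|$ by $\mG_T(s)\sqrt{1-\beta_2}/(\asi\bsi)$ and $(\mG_T(s)+\ep)\sqrt{1-\beta_2}/(\asi b_{s-1,i})$ respectively. This converts the inner products into sums of the form $\sum_s \eta_s \mG_T(s)\sqrt{1-\beta_2}\,|\bar g_{s,i}|\,|m_{s-1,i}|/(\asi b_{s,i})$ (and similarly with $b_{s-1,i}$). Then I would split $|\bar g_{s,i}|/\sqrt{\asi}\cdot |m_{s-1,i}|/\sqrt{\asi}$-style products via $ab\le \tfrac14 a^2 \cdot(\text{small})^{-1} + (\text{small})b^2$, calibrating the constant so the $\bar g_s$ piece becomes $\le \tfrac14\sum_s\eta_s\|\bar{\vg}_s/\sqrt{\va_s}\|^2$ while the momentum piece becomes a constant multiple of $\mG_T(t)$ (using monotonicity $\mG_T(s)\le\mG_T(t)$) times $\sum_s(\|\vm_{s-1}/\vb_s\|^2 + \|\vm_{s-1}/\vb_{s-1}\|^2)$; here I also use $\eta_s\le \eta/(1-\beta_1)$ from \eqref{eq:eta_s} and $\asi\ge \ep_s$, $b_{s-1,i}\ge\ep_{s-1}$ so that $\sqrt{1-\beta_2}\,\eta_s/\asi$ and similar ratios are controlled by the stated constants $D_3=2\eta\sqrt{1-\beta_2}/(1-\beta_1)^3$ and $D_4=\ep D_3$.

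For the third term, $(\eta_s-\eta_{s-1})\la \vm_{s-1}/\vb_{s-1},\bar{\vg}_s\ra$, I would note $|\eta_s - \eta_{s-1}|$ is small — bounded using the explicit form $\eta_s = \eta\sqrt{1-\beta_2^s}/(1-\beta_1^s)$, whose increments telescope — and then bound $\|\vm_{s-1}/\vb_{s-1}\|\le \sqrt{d}\,\|\vm_{s-1}/\vb_{s-1}\|_\infty$ via Lemma \ref{lem:OrderT_bound_gradient}, yielding something like $\sqrt{d(1-\beta_1)/((1-\beta_2)(1-\beta_1/\beta_2))}$; combined with $\sum_s|\eta_s-\eta_{s-1}|$ being $\mathcal{O}(\eta)$ this produces the additive $D_5 G_t$ term with $D_5 = 2\eta\sqrt{d}/\sqrt{(1-\beta_1)^3(1-\beta_2)(1-\beta_1/\beta_2)}$ after crudely bounding $\|\bar{\vg}_s\|\le G_t$.

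The main obstacle I anticipate is the bookkeeping in the Young's-inequality calibration: I must choose the split parameters so that the total coefficient on $\sum_s\eta_s\|\bar{\vg}_s/\sqrt{\va_s}\|^2$ coming from \emph{both} of the first two terms is exactly $\tfrac14$ (not, say, $\tfrac12$), since later in \eqref{eq:paper_final_2} the budget for the descent term is tight — \textbf{A.1.2} already consumes $\tfrac14$, \textbf{A.1.1} consumes $\tfrac14\mG_T(t)/\mG_T$, and \textbf{B}+\textbf{C} together must fit in the remaining $\tfrac14$. Ensuring the constants $D_3,D_4$ emerge with exactly the powers of $(1-\beta_1)$ claimed, while simultaneously pinning the $\bar{\vg}_s$-coefficient to $\tfrac14$, requires tracking the factor $\eta_s\cdot\sqrt{1-\beta_2}/\min\{\asi,b_{s-1,i}\}$ carefully against $\eta_s/\asi$ appearing in $\|\bar{\vg}_s/\sqrt{\va_s}\|^2$ — this is where one uses that $\va_s$ and $\vb_{s-1}$ are comparable up to the $\mG_T(s)\sqrt{1-\beta_2}$ perturbation, closing the loop with Lemma \ref{lem:gap_as_bs_bounded} again rather than naive lower bounds.
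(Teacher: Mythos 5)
Your proposal matches the paper's proof essentially step for step: same three-way decomposition of $\Delta_s\odot(\vx_s-\vx_{s-1})$, same use of Lemma \ref{lem:gap_as_bs_bounded} on the first two pieces, same Young's-inequality calibration to a total coefficient of $1/4$ (the paper takes $\eta_s/8$ from each of the two $\va_s$-comparison terms), and the same Cauchy--Schwarz/Lemma \ref{lem:OrderT_bound_gradient}/telescoping treatment of the $(\eta_s-\eta_{s-1})$ term, which the paper carries out by splitting $\eta_{s-1}-\eta_s$ into a $1/(1-\beta_1^s)$ increment and a $\sqrt{1-\beta_2^s}$ increment, each monotone.

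One small but substantive imprecision: the lower bound $\asi\ge\ep_s$ (and $b_{s-1,i}\ge\ep_{s-1}$) that you invoke mid-proposal would \emph{not} produce $D_3,D_4$ — it yields a $1/\ep$ dependence and a factor that is quadratic rather than linear in $\mG_T(s)$. What actually makes the constants come out right is the lower bound $\asi\ge\mG_T(s)\sqrt{1-\beta_2}$, which follows directly from the $(1-\beta_2)(\mG_T(s))^2$ term built into the proxy $\va_s$ in \eqref{eq:proxy_stepsize} (this is exactly \eqref{eq:bound_asi} in the paper), so that $(1-\beta_2)(\mG_T(s))^2/\asi\le\mG_T(s)\sqrt{1-\beta_2}$ cancels one power of $\mG_T(s)$. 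Your closing remark correctly flags that a naive lower bound is insufficient, but you attribute the fix to Lemma \ref{lem:gap_as_bs_bounded} (which controls the \emph{difference} of reciprocals, not the size of $1/\asi$); the needed ingredient is the constructed floor of $\va_s$, not that lemma.
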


\begin{proposition}\label{pro:determine}
    Given $T \ge 1$. If $f$ is $L$-smooth, then the following inequality holds,
    \begin{align*}
        f(\vy_{t+1}) \le &f(\vx_1) + {\bf A.1} + {\bf B.1} +D_6\sum_{s=1}^{t-1} \left\|\frac{\hat{\vm}_{s}}{\vb_{s}} \right\|^2   + D_7\sum_{s=1}^t\left\| \frac{\vg_s}{\vb_s} \right\|^2, \quad \forall t \in [T],
    \end{align*}
    where $\Sigma_{\max}$ is as in Lemma \ref{lem:Sigmax} and 
    \begin{align}\label{eq:D6D7}
        D_6 = \frac{L \eta^2 (1+4\Sigma_{\max}^2) }{2(1-\beta_1)^2}, \quad D_7 = \frac{3L\eta^2}{2(1-\beta_1)^2}.
    \end{align}
\end{proposition}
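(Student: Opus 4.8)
The plan is to start from the summed descent inequality \eqref{eq:A+B+C_appendix} and the decompositions \eqref{eq:A_decomp}, \eqref{eq:B_decomp}, so that
\[
f(\vy_{t+1}) \le f(\vx_1) + {\bf A.1} + {\bf A.2} + {\bf B.1} + {\bf B.2} + {\bf C}.
\]
Since Lemma \ref{lem:1.1_bounded} and Lemma \ref{lem:2_bounded} already control ${\bf A.1}$ and ${\bf B.1}$ in the form stated in the proposition, it remains to bound the three leftover terms ${\bf A.2}$, ${\bf B.2}$ and ${\bf C}$ by expressions of the type $D_6\sum_{s=1}^{t-1}\|\hat{\vm}_s/\vb_s\|^2 + D_7\sum_{s=1}^t\|\vg_s/\vb_s\|^2$ (possibly after absorbing constants into $D_6, D_7$ as defined in \eqref{eq:D6D7}). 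First I would handle ${\bf C}$: by the elementary inequality $\|u-v\|^2 \le 2\|u\|^2 + 2\|v\|^2$ and \eqref{eq:eta_s}, together with $\Delta_s \odot (\vx_s - \vx_{s-1}) = \eta_{s-1}\beta_1 \Delta_s \odot \vm_{s-1}/((1-\beta_1)\vb_{s-1})$ and $\|\Delta_s\|_\infty \le \Sigma_{\max}$ from Lemma \ref{lem:Sigmax}, one gets ${\bf C} \le L\eta^2(1-\beta_1)^{-2}\sum_{s=1}^t\|\vg_s/\vb_s\|^2 + 2L\eta^2\Sigma_{\max}^2(1-\beta_1)^{-2}\sum_{s=1}^{t-1}\|\vm_s/\vb_s\|^2$, where I would rewrite $\vm_{s-1}/\vb_{s-1}$ as $\hat{\vm}_{s-1}/\vb_{s-1}$ up to the factor $(1-\beta_1^{s-1}) \le 1$ to match the $\hat{\vm}$-form, and shift the index. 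This is essentially where the constants $\tfrac{3}{2}$ and $\tfrac{1+4\Sigma_{\max}^2}{2}$ in \eqref{eq:D6D7} come from once ${\bf A.2}$ and ${\bf B.2}$ each contribute a further $\tfrac12 L\eta^2(1-\beta_1)^{-2}\sum\|\vg_s/\vb_s\|^2$ and $L\eta^2\Sigma_{\max}^2(1-\beta_1)^{-2}\sum\|\hat{\vm}_s/\vb_s\|^2$ respectively.

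Next I would bound ${\bf A.2} = \sum_{s=1}^t \eta_s\langle \bar{\vg}_s - \nabla f(\vy_s), \vg_s/\vb_s\rangle$. Using $L$-smoothness, $\|\bar{\vg}_s - \nabla f(\vy_s)\| = \|\nabla f(\vx_s) - \nabla f(\vy_s)\| \le L\|\vx_s - \vy_s\| = L\|\vp_s\| = \tfrac{L\beta_1}{1-\beta_1}\|\vx_s - \vx_{s-1}\| = \tfrac{L\beta_1\eta_{s-1}}{1-\beta_1}\|\vm_{s-1}/\vb_{s-1}\|$ by \eqref{eq:define_y_s} and the update rule. Then Cauchy–Schwarz and Young's inequality $ab \le \tfrac12 a^2 + \tfrac12 b^2$ split this into a $\sum\|\vm_{s-1}/\vb_{s-1}\|^2$ piece and a $\sum\|\vg_s/\vb_s\|^2$ piece, each with the correct $L\eta^2(1-\beta_1)^{-2}$-order constant; again $\vm_{s-1}/\vb_{s-1}$ is replaced by $\hat{\vm}_{s-1}/\vb_{s-1}$ at the cost of $1-\beta_1^{s-1}\le 1$. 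The term ${\bf B.2} = \tfrac{\beta_1}{1-\beta_1}\sum_{s=1}^t\langle \Delta_s\odot(\vx_s-\vx_{s-1}), \nabla f(\vy_s)-\bar{\vg}_s\rangle$ is handled similarly: bound $\|\nabla f(\vy_s) - \bar{\vg}_s\| \le \tfrac{L\beta_1\eta_{s-1}}{1-\beta_1}\|\vm_{s-1}/\vb_{s-1}\|$ as above, bound $\|\Delta_s\odot(\vx_s-\vx_{s-1})\| \le \Sigma_{\max}\eta_{s-1}\tfrac{\beta_1}{1-\beta_1}\|\vm_{s-1}/\vb_{s-1}\|$, multiply, and use $\eta_{s-1}\le \eta/(1-\beta_1)$; this produces a pure $\sum\|\hat{\vm}_{s-1}/\vb_{s-1}\|^2$ contribution of order $L\eta^2\Sigma_{\max}^2(1-\beta_1)^{-2}$ and no new $\vg$-term.

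Finally I would collect all the $\sum\|\hat{\vm}_s/\vb_s\|^2$ contributions (from the $\vm$-pieces of ${\bf A.2}$, ${\bf B.2}$ and ${\bf C}$) and all the $\sum\|\vg_s/\vb_s\|^2$ contributions (from ${\bf A.2}$ and ${\bf C}$), check that the totals are dominated by $D_6$ and $D_7$ as defined in \eqref{eq:D6D7}, reindex the $\hat{\vm}$-sums to run over $s\in[t-1]$, and add back ${\bf A.1}$ and ${\bf B.1}$ unchanged to obtain the claimed inequality. The main obstacle I anticipate is purely bookkeeping: keeping the factors of $\beta_1/(1-\beta_1)$, $\eta_{s-1}$ versus $\eta_s$, and $\vb_{s-1}$ versus $\vb_s$ straight while converting every $\vm$-term into the $\hat{\vm}$-normalized form, and making sure the several $\tfrac12 L\eta^2(1-\beta_1)^{-2}$-type pieces sum exactly to the stated $D_6, D_7$ rather than something slightly larger — so I would track each constant explicitly as I go. There is no deep idea beyond repeated use of $\|u-v\|^2\le 2\|u\|^2+2\|v\|^2$, Young's inequality, $L$-smoothness, Lemma \ref{lem:Sigmax}, and \eqref{eq:eta_s}.
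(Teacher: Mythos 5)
Your proposal follows the paper's proof essentially verbatim: decompose the summed descent inequality into A.1, A.2, B.1, B.2, C; keep A.1 and B.1 intact; control A.2 by $L$-smoothness plus Young's inequality; control B.2 by Cauchy–Schwarz, Lemma \ref{lem:Sigmax} (using $\Sigma_{\max}\le\Sigma_{\max}^2$), and $\|\vx_s-\vx_{s-1}\|^2=\eta_{s-1}^2\|\vm_{s-1}/\vb_{s-1}\|^2\le\eta^2\|\hat\vm_{s-1}/\vb_{s-1}\|^2$; control C by $\|u-v\|^2\le 2\|u\|^2+2\|v\|^2$; then collect constants. One small arithmetic slip to watch: your stated bound on C carries an extraneous factor of $2$ on the $\Sigma_{\max}^2$ momentum term (the $L/2$ in front of C and the $2$ from the elementary inequality cancel exactly, giving $L\eta^2\Sigma_{\max}^2(1-\beta_1)^{-2}$, not $2L\eta^2\Sigma_{\max}^2(1-\beta_1)^{-2}$), and if carried forward this would give $\tfrac{1+6\Sigma_{\max}^2}{2}$ instead of the stated $\tfrac{1+4\Sigma_{\max}^2}{2}$ in $D_6$ — but this is exactly the kind of bookkeeping you flag as needing care, and once fixed the totals match $D_6,D_7$ as claimed.
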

\begin{proof}
    Recalling the decomposition in Appendix \ref{sec:decomp}. We first estimate \textbf{A.2} in \eqref{eq:A_decomp}. Using the smoothness of $f$ and \eqref{eq:define_y_s}, we have
    \begin{align}
        \|\nabla f(\vy_s)-\bar{\vg}_s\| \le L\|\vy_s - \vx_s\| =  \frac{L\beta_1}{1-\beta_1}\|\vx_s - \vx_{s-1}\|. \label{eq:gradient_gap}
    \end{align}
    Hence, applying Young's inequality, \eqref{eq:gradient_gap} and \eqref{eq:eta_s},
    \begin{align}\label{eq:A.2_middle}
        &\eta_s \left\langle \bar{\vg}_s - \nabla f(\vy_s), \frac{\vg_s}{\vb_s}\right\rangle
         \le \eta_s   \|\bar{\vg}_s - \nabla f(\vy_s)\| \cdot \left\|\frac{\vg_s}{\vb_s} \right\| \nonumber \\
          \le \enspace &\frac{1}{2L} \|\bar{\vg}_s - \nabla f(\vy_s)\|^2 + \frac{L\eta_s^2}{2}  \left\|\frac{\vg_s}{\vb_s} \right\|^2  \le \frac{L\beta_1^2 }{2(1-\beta_1)^2} \|\vx_s - \vx_{s-1}\|^2 + \frac{L\eta^2}{2(1-\beta_1)^2}  \left\|\frac{\vg_s}{\vb_s} \right\|^2 .
    \end{align}
    Recalling the updated rule in Algorithm \ref{alg:Adam} and applying \eqref{eq:hat_msi}  as well as \eqref{eq:eta_s}, 
    \begin{align}\label{eq:xs-xs-1}
        \|\vx_s - \vx_{s-1}\|^2 = \eta_{s-1}^2 \left\|\frac{\vm_{s-1}}{\vb_{s-1}} \right\|^2 \le \eta^2 \left\|\frac{\hat{\vm}_{s-1}}{\vb_{s-1}} \right\|^2.
    \end{align}
    Therefore, applying \eqref{eq:A.2_middle}, \eqref{eq:xs-xs-1} and $\beta_1 \in [0,1)$, and then summing over $s \in [t]$
    \begin{align}
        \textbf{A.2}
        &\le \frac{L \eta^2 }{2(1-\beta_1)^2} \sum_{s=1}^t\left\|\frac{\hat{\vm}_{s-1}}{\vb_{s-1}} \right\|^2 + \frac{L\eta^2}{2(1-\beta_1)^2} \sum_{s=1}^t\left\|\frac{\vg_s}{\vb_s} \right\|^2. \label{eq:A.2}
    \end{align}
    Applying Cauchy-Schwarz inequality, Lemma \ref{lem:Sigmax}, and combining with \eqref{eq:gradient_gap}, \eqref{eq:xs-xs-1}, $\Sigma_{\max} \ge 1$, and $\beta_1 \in [0,1)$
    \begin{align}
        \textbf{B.2 } 
        &\le \frac{\beta_1}{1-\beta_1} \sum_{s=1}^t \left\|\Delta_s\right\|_{\infty}\|  \vx_s - \vx_{s-1} \| \| \nabla f(\vy_s)- \bar{\vg}_s \|\nonumber \\
        &\le \frac{L \beta_1^2\Sigma_{\max} }{(1-\beta_1)^2} \sum_{s=1}^t \|\vx_s - \vx_{s-1}\|^2 \le \frac{L\Sigma_{\max}^2 \eta^2  }{(1-\beta_1)^2}\sum_{s=1}^t\left\|\frac{\hat{\vm}_{s-1}}{\vb_{s-1}} \right\|^2. \label{eq:B.2}
    \end{align}
    Finally, applying the basic inequality, Lemma \ref{lem:Sigmax} and \eqref{eq:xs-xs-1}, 
    \begin{align}
         \textbf{C} 
        &\le L \sum_{s=1}^t \eta_s^2 \left\| \frac{\vg_s}{\vb_s} \right\|^2 + \frac{L\beta_1^2}{(1-\beta_1)^2}\sum_{s=1}^t \left\| \Delta_s \right\|_{\infty}^2 \|\vx_s - \vx_{s-1}\|^2 \nonumber\\
        &\le \frac{L\eta^2}{(1-\beta_1)^2}\sum_{s=1}^t \left\| \frac{\vg_s}{\vb_s} \right\|^2+ \frac{L \eta^2 \Sigma_{\max}^2   }{(1-\beta_1)^2}\sum_{s=1}^t \left\|\frac{\hat{\vm}_{s-1}}{\vb_{s-1}} \right\|^2\label{eq:C}.
    \end{align}
    Recalling the decomposition in \eqref{eq:A_decomp} and \eqref{eq:B_decomp}, then 
    plugging \eqref{eq:A.2}, \eqref{eq:B.2} and \eqref{eq:C} into \eqref{eq:A+B+C_appendix}, we obtain the desired result.
\end{proof}

% Finally, we will further provide probabilistic estimations for the four summations in Lemma \ref{lem:sum_1} since they emerge frequently in all the above propositions and lemmas.

\subsection{Bounding gradients} 
Based on all the results in Appendix \ref{sub:probability_thm1} and Appendix \ref{sub:deterministic_thm1}, we are now ready to provide a global upper bound for gradients' norm along the optimization trajectory. 
\begin{proposition}\label{pro:delta_s_1}
    Under the same conditions in Theorem \ref{thm:1}, for any given $\delta \in (0,1/2)$, it holds that with probability at least $1-2\delta$, 
    \begin{align}\label{eq:delta_s_final_2}
        \|\bar{\vg}_t\|^2 \le G_t^2 \le G^2, \quad \|\vg_t\|^2 \le (\mG_T(t))^2 \le \mG_T^2,\quad \forall t \in [T+1],
    \end{align}
    and
    \begin{align}\label{eq:delta_s_final_3}
        \|\bar{\vg}_{t+1}\|^2 \le G^2 - L\sum_{s=1}^t\eta_s\left\|\frac{\bar{\vg}_s}{\sqrt{\va_s}} \right\|^2, \quad \forall t \in [T],
    \end{align}
    where $G^2$ is as in Theorem \ref{thm:1} and $G_t,G,\mG_T$ are given by \eqref{eq:define_G_t}.
\end{proposition}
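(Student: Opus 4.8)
\textbf{Proof proposal for Proposition \ref{pro:delta_s_1}.}

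The plan is to run a single induction on $t$ that simultaneously establishes the uniform gradient bound \eqref{eq:delta_s_final_2} and the sharper ``descent-compensated'' bound \eqref{eq:delta_s_final_3}. Throughout I would condition on the event (of probability at least $1-2\delta$) on which both probabilistic estimates, Lemma \ref{lem:highpro_affine_noise} (i.e. \eqref{eq:highpro_affine_noise}) and Lemma \ref{lem:1_bounded} (i.e. \eqref{eq:1_bounded} with $\lambda$ chosen as in the lemma), hold for all $t\in[T]$; all subsequent steps are deterministic. The base case $t=1$ follows because $\vx_0=\vx_1$ forces the momentum terms to vanish, so $G_1=\|\bar{\vg}_1\|$, and the explicit expression for $G^2$ in \eqref{eq:G_simple} is constructed precisely so that $\|\bar{\vg}_1\|^2\le G^2$; the bound on $\|\vg_1\|$ then comes from Lemma \ref{lem:bound_para_term}. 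For the inductive step, I assume $G_s\le G$ for all $s\in[t]$, which by monotonicity of $\mG_T(\cdot)$ gives $\mG_T(s)\le\mG_T$ for all $s\in[t]$, and I must deduce $\|\bar{\vg}_{t+1}\|^2\le G^2$ together with \eqref{eq:delta_s_final_3}.

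The core of the argument assembles the deterministic estimates of Appendix \ref{sub:deterministic_thm1} into the descent inequality. Starting from Proposition \ref{pro:determine}, I would substitute the bound on $\mathbf{A.1}$ from Lemma \ref{lem:1.1_bounded} and the bound on $\mathbf{B.1}$ from Lemma \ref{lem:2_bounded}, then collect all the $\sum_s\|\vg_s/\vb_s\|^2$, $\sum_s\|\vm_{s-1}/\vb_s\|^2$, $\sum_s\|\vm_{s-1}/\vb_{s-1}\|^2$ and $\sum_s\|\hat{\vm}_s/\vb_s\|^2$ terms and control each of them, via Lemma \ref{lem:sum_1} together with Lemma \ref{lem:sum_2} (which converts $\mathcal{F}_i(t)$ into the explicit $\mathcal{F}(T)$), by something of order $\frac{d}{1-\beta_2}\log\!\big(\frac{dT}{\delta\beta_2^T}\big)$. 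Using the induction hypothesis $\mG_T(s)\le\mG_T$ inside these bounds, the first-order descent coefficient $\frac{\mG_T(t)}{4\mG_T}-\frac12-\frac14$ (from $\mathbf{A.1}$ plus $\mathbf{B.1}$) is at most $-\tfrac12$, so after moving the surviving $-\tfrac12\sum_s\eta_s\|\bar{\vg}_s/\sqrt{\va_s}\|^2$ to the left we arrive at the analogue of \eqref{eq:paper_final_2}: $f(\vy_{t+1})$ plus $\tfrac12\sum_{s=1}^t\eta_s\|\bar{\vg}_s/\sqrt{\va_s}\|^2$ is bounded by $f(\vx_1)-f^*$ plus a constant-times-$\mG_T$ term plus a $(\text{const}\cdot\mG_T+\text{const})\log(dT/(\delta\beta_2^T))$ term — i.e. a quantity that is \emph{first order} in $\mG_T$, hence in $\sigma_1 G^{p/2}+G$.

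To close the induction I would convert this function-value bound into a gradient bound at step $t+1$. Using $\|\bar{\vg}_{t+1}\|^2\le 2\|\nabla f(\vy_{t+1})\|^2+2\|\bar{\vg}_{t+1}-\nabla f(\vy_{t+1})\|^2$, then Lemma \ref{lem:gradient_delta_s} for the first piece and $L$-smoothness plus $\|\vy_{t+1}-\vx_{t+1}\|=\tfrac{\beta_1}{1-\beta_1}\|\vx_{t+1}-\vx_t\|$ controlled through \eqref{eq:xs-xs-1} and Lemma \ref{lem:OrderT_bound_gradient} for the second, I get $\|\bar{\vg}_{t+1}\|^2\le \tilde d_1+\tilde d_2(\sigma_1 G^{p/2}+G)-L\sum_{s=1}^t\eta_s\|\bar{\vg}_s/\sqrt{\va_s}\|^2$, with $\tilde d_1,\tilde d_2$ explicit and of order $\log(dT/(\delta\beta_2^T))$ (times the appropriate parameter factors). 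Young's inequality applied to $\tilde d_2\,\sigma_1 G^{p/2}$ with exponents $\tfrac{4}{p}$ and $\tfrac{4}{4-p}$, and to $\tilde d_2\,G$ with exponents $2,2$, splits off a $\tfrac{G^2}{2}$ term; the definition of $G^2$ in \eqref{eq:G_simple} is calibrated so that $\tfrac{G^2}{2}\ge 2\tilde d_1+\tfrac{4-p}{2}p^{p/(4-p)}\tilde d_2^{4/(4-p)}+2\tilde d_2^2$, giving $\|\bar{\vg}_{t+1}\|^2\le G^2-L\sum_{s=1}^t\eta_s\|\bar{\vg}_s/\sqrt{\va_s}\|^2$, which is \eqref{eq:delta_s_final_3} and in particular implies $G_{t+1}\le G$. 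The bound on $\|\vg_{t+1}\|$ then follows from Lemma \ref{lem:bound_para_term}, completing the induction.

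The main obstacle I anticipate is the bookkeeping in the previous paragraph: one has to verify that every constant appearing in $\tilde d_1,\tilde d_2$ (hence in the required lower bound for $G^2$) really is dominated by the corresponding term in the stated definition of $G^2$ — in particular that the $(4-p)(p^p\sigma_1^4\Delta^4)^{1/(4-p)}$ term in \eqref{eq:G_simple} absorbs the Young's-inequality output uniformly for all $p\in[0,4)$, and that the logarithmic factor $\log^2(dT/(\delta\beta_2^T))$ packaged into $\Delta$ correctly bounds $\mathcal{F}(T)$-type quantities. A secondary subtlety is the circularity inherent in $\va_s$ depending on $\mG_T(s)$, which depends on $G_s$: this is exactly why the argument must be phrased as an induction, and why one must be careful that the estimates of Lemmas \ref{lem:1.1_bounded}, \ref{lem:2_bounded} and \ref{lem:sum_2} are invoked only for indices $s\le t$ where the induction hypothesis is already available.
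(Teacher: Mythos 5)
Your proposal follows essentially the same route as the paper's proof: condition on the event where \eqref{eq:highpro_affine_noise} and \eqref{eq:1_bounded} hold, plug Lemma \ref{lem:1.1_bounded} and Lemma \ref{lem:2_bounded} into Proposition \ref{pro:determine}, control the four algorithmic summations via Lemmas \ref{lem:sum_1} and \ref{lem:sum_2}, run the induction on $G_s\le G$, pass from $f(\vy_{t+1})-f^*$ to $\|\bar{\vg}_{t+1}\|^2$ through Lemma \ref{lem:gradient_delta_s} and Lemma \ref{lem:gradient_xs_ys}, and close via Young's inequality and the calibrated definition of $G^2$ in \eqref{eq:define_G}. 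One small arithmetic slip: the coefficient of $\sum_s\eta_s\|\bar{\vg}_s/\sqrt{\va_s}\|^2$ from $\mathbf{A.1}$ plus $\mathbf{B.1}$ is $\tfrac{\mG_T(t)}{4\mG_T}-\tfrac{3}{4}+\tfrac{1}{4}=\tfrac{\mG_T(t)}{4\mG_T}-\tfrac{1}{2}$ (note Lemma \ref{lem:2_bounded} contributes $+\tfrac14$, not $-\tfrac14$), so after the inductive bound $\mG_T(t)\le\mG_T$ the surviving descent coefficient is $-\tfrac14$, not $-\tfrac12$; this is also what makes the factor in \eqref{eq:delta_s_final_3} come out as $L$ (namely $4L\cdot\tfrac14$) rather than $2L$.
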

\begin{proof}
Applying Lemma \ref{lem:highpro_affine_noise} and Lemma \ref{lem:1_bounded}, we know that \eqref{eq:highpro_affine_noise} and \eqref{eq:1_bounded} hold together with probability at least $1-2\delta$. With these two inequalities, we could deduce the desired inequalities \eqref{eq:delta_s_final_2} and \eqref{eq:delta_s_final_3}. Therefore, \eqref{eq:delta_s_final_2} and \eqref{eq:delta_s_final_3} hold with probability at least $1-2\delta$. 
 We first plug \eqref{eq:1.1_bounded} and \eqref{eq:2_bounded} into the result in Proposition \ref{pro:determine}, which leads to that for all $t\in [T]$,
\begin{align}
    f(\vy_{t+1})  &\le f(\vx_1)  + \left(\frac{\mG_T(t)}{4\mG_T} - \frac{1}{2} \right)\sum_{s=1}^t\eta_s\left\|\frac{\bar{\vg}_s}{\sqrt{\va_s}} \right\|^2 + D_1\mG_T + (D_2\mG_T(t)+D_7)\sum_{s=1}^t \left\|\frac{\vg_s}{\vb_s}\right\|^2 \nonumber \\
    & + (D_3\mG_T(t)+D_4)\sum_{s=1}^t   \left(\left\|\frac{\vm_{s-1}}{\vb_s}\right\|^2 + \left\|\frac{\vm_{s-1}}{\vb_{s-1}}\right\|^2\right)+D_5G_t+D_6\sum_{s=1}^{t-1} \left\|\frac{\hat{\vm}_{s}}{\vb_{s}} \right\|^2. \label{eq:final_2}
\end{align}
Next, we will introduce the induction argument based on \eqref{eq:final_2}.                               We first provide the specific definition of $G^2$ as follows which is a constant determined by the horizon $T$ and other hyper-parameters but not relying on $t$,\footnote{We further deduce \eqref{eq:G_simple} in Theorem \ref{thm:1} based on \eqref{eq:define_G}.}
% \begin{align}\label{eq:define_G}
%     G^2 = 8L(\tD_1 + \tD_2) + 32L^2 \tD_3^2 + \frac{4-p}{2}\cdot p^{\frac{p}{4-p}}\left(4L\sigma_1 \tD_3 \right)^{\frac{4}{4-p}} + 4M^2.
% \end{align}
% Relying on $\eta=C_0\sqrt{1-\beta_2},\ep\le \ep_0, \Sigma_{\max} \le 2/\beta_2$ and $0 \le \beta_1 < \beta_2 <1$, we also provide an upper bound for $G^2$ as follows
\begin{align}\label{eq:define_G}
    G^2&:= 8L( f(\vx_1)-f^*) +\frac{48\mM_T LC_0 \sigma_0 }{1-\beta_1} \log\left( \frac{ T}{\delta} \right) + \frac{16\mM_T LC_0 \sigma_0 d}{1-\beta_1} \log\left( \frac{\mathcal{F}(T)}{\beta_2^T} \right)  \nonumber\\
    &\quad + 8\left(\frac{3LC_0 + 8(\mM_T\sigma_0+ \ep_0)}{\beta_2} \right) \frac{LC_0 d}{(1-\beta_1)^2(1-\beta_1/\beta_2)}\log\left(\frac{\mathcal{F}(T)}{\beta_2^T} \right) \nonumber \\
    &\quad+ \frac{4-p}{2}\cdot p^{\frac{p}{4-p}}\left[\frac{72\mM_TL\sigma_1 C_0d}{\beta_2 (1-\beta_1)^2(1-\beta_1/\beta_2)}\log\left(\frac{ T+\mathcal{F}(T)}{\delta \beta_2^T} \right)\right]^{\frac{4}{4-p}} \nonumber\\
    &\quad+32\left[\frac{18\mM_TLC_0d}{\beta_2 (1-\beta_1)^2(1-\beta_1/\beta_2)}\log\left( \frac{ T+\mathcal{F}(T)}{\delta\beta_2^T} \right) \right]^2 + \frac{4L^2C_0^2d}{(1-\beta_1)^2(1-\beta_1/\beta_2)}.
\end{align}
The induction then begins by noting that $G_1^2 = \|\bar{\vg}_1\|^2 \le 2L(f(\vx_1)-f^*) \le G^2$ from Lemma \ref{lem:gradient_delta_s} and \eqref{eq:define_G}. Then we assume that for some $t \in [T]$,
\begin{align}\label{eq:induction_assum}
     G_s \le G,\quad \forall s \in [t] \quad \text{consequently} \quad \mG_T(s)  \le \mG_T,  \quad \forall s \in [t].
\end{align}
Using this induction assumption over \eqref{eq:final_2} and subtracting with $f^*$ on both sides,
\begin{align}
    f(\vy_{t+1}) - f^*  &\le f(\vx_1)-f^* - \frac{1}{4} \sum_{s=1}^t\eta_s\left\|\frac{\bar{\vg}_s}{\sqrt{\va_s}} \right\|^2 + D_1\mG_T + (D_2\mG_T+D_7)\sum_{s=1}^t \left\|\frac{\vg_s}{\vb_s}\right\|^2 \nonumber \\
    & + (D_3\mG_T+D_4)\sum_{s=1}^t   \left(\left\|\frac{\vm_{s-1}}{\vb_s}\right\|^2 + \left\|\frac{\vm_{s-1}}{\vb_{s-1}}\right\|^2\right)+D_5G+D_6\sum_{s=1}^{t-1} \left\|\frac{\hat{\vm}_{s}}{\vb_{s}} \right\|^2. \label{eq:final_3}
\end{align}
Further, we combine with Lemma \ref{lem:sum_1} and Lemma \ref{lem:sum_2} to estimate the four summations defined in Lemma \ref{lem:sum_1}, and then use $G \le \mG_T \le 2\mM_T\left(\sigma_0 + \sigma_1 G^{p/2} + G\right)$ to control the RHS of \eqref{eq:final_3},
\begin{align}
    f(\vy_{t+1})-f^* \le & - \frac{1}{4}\sum_{s=1}^t\eta_s\left\|\frac{\bar{\vg}_s}{\sqrt{\va_s}} \right\|^2 +\tD_1  + \tD_2 + \tD_3 \mH(G), \label{eq:final_4}
\end{align}
where $\mH(G) = \sigma_1 G^{p/2} +G$ and $\tD_1,\tD_2, \tD_3$ are defined as
\begin{align*}
    &\tD_1 = f(\vx_1)-f^*+ 2\mM_T\sigma_0D_1,\\
    &\tD_2 = \left[\frac{2\mM_T \sigma_0 D_2 + D_7}{1-\beta_2} +\frac{4\left(\mM_T \sigma_0 D_3 + D_4\right)(1-\beta_1)}{\beta_2(1-\beta_2)(1-\beta_1/\beta_2)}+ \frac{D_6}{(1-\beta_2)(1-\beta_1/\beta_2)}\right]d \log\left(\frac{\mathcal{F}(T)}{\beta_2^T}\right), \\
    &\tD_3 =2\mM_T \left[D_1  +\left(\frac{D_2d}{1-\beta_2}+ \frac{2D_3(1-\beta_1)d}{\beta_2(1-\beta_2)(1-\beta_1/\beta_2)} \right) \log\left(\frac{\mathcal{F}(T)}{\beta_2^T}\right)\right] + D_5.
\end{align*}
Applying Lemma \ref{lem:gradient_xs_ys} and Lemma \ref{lem:gradient_delta_s},
\begin{align}
    \|\bar{\vg}_{t+1}\|^2 \le 2\|\nabla f(\vy_{t+1}) \|^2 + 2M^2 \le 4L(f(\vy_{t+1})-f^*) + 2M^2. \label{eq:lower_LHS}
\end{align}
Then combining \eqref{eq:final_4} with \eqref{eq:lower_LHS},
\begin{align*}
    \|\bar{\vg}_{t+1}\|^2 \le - L\sum_{s=1}^t\eta_s\left\|\frac{\bar{\vg}_s}{\sqrt{\va_s}} \right\|^2 + 4L(\tD_1+\tD_2) +4L \tD_3  \mH(G) + 2M^2.
\end{align*}
Applying two Young's inequalities where $ab \le \frac{a^2}{2} + \frac{b^2}{2}$ and $ab^{\frac{p}{2}} \le \frac{4-p}{4}\cdot a^{\frac{4}{4-p}}+\frac{p}{4}\cdot b^2,\forall a,b \ge 0$,
\begin{align}
    \|\bar{\vg}_{t+1}\|^2 &\le \frac{G^2}{4} + \frac{G^2}{4} - L\sum_{s=1}^t\eta_s\left\|\frac{\bar{\vg}_s}{\sqrt{\va_s}} \right\|^2 + 4L(\tD_1+\tD_2) \nonumber\\
    &+16L^2\tD_3^2   +  \frac{4-p}{4}\cdot p^{\frac{p}{4-p}}\left(4 L \sigma_1\tD_3\right)^{\frac{4}{4-p}} +2M^2. \label{eq:final_4.1}
\end{align}
Recalling the definitions of $D_i,i \in [7]$ in \eqref{eq:1_bounded}, \eqref{eq:1.1_bounded}, \eqref{eq:D3D4}, and \eqref{eq:D6D7}.
With a simple calculation relying on $\eta=C_0\sqrt{1-\beta_2},\ep\le \ep_0, \Sigma_{\max} \le 1/\sqrt{\beta_2}$ and $0 \le \beta_1 < \beta_2 <1$, we could deduce that $G^2$ given in \eqref{eq:define_G} satisfies 
\begin{align}\label{eq:final_4.2}
    G^2 = 8L(\tD_1+\tD_2) + 32L^2\tD_3^2   +  \frac{4-p}{2}\cdot p^{\frac{p}{4-p}}\left(4 L \sigma_1\tD_3\right)^{\frac{4}{4-p}} + 4M^2.
\end{align}
Based on \eqref{eq:final_4.1} and \eqref{eq:final_4.2}, we then deduce that $\|\bar{\vg}_{t+1}\|^2 \le G^2$. Further combining with $G_{t+1}$ in \eqref{eq:define_G_t} and the induction assumption in \eqref{eq:induction_assum}, 
\begin{align*}
    G_{t+1} \le \max\{\|\bar{\vg}_{t+1}\|, G_t\} \le G.
\end{align*}
Hence, the induction is complete and we obtain the desired result in \eqref{eq:delta_s_final_2}. Furthermore, as a consequence of \eqref{eq:final_4.1}, we also prove that \eqref{eq:delta_s_final_3} holds.
\end{proof}

\subsection{Proof of the main result}
Now we are ready to prove the main convergence result. 
\begin{proof}[Proof of Theorem \ref{thm:1}]
    We set $t=T$ in \eqref{eq:delta_s_final_3} to obtain that with probability at least $1-2\delta$,
    \begin{align}
        L\sum_{s=1}^T \frac{\eta_s}{\| \va_s  \|_{\infty}}\left\| \bar{\vg}_s  \right\|^2 \le L\sum_{s=1}^T \eta_s\left\|\frac{\bar{\vg}_s}{\sqrt{\va_s}} \right\|^2 \le G^2 - \|\bar{\vg}_{T+1}\|^2 \le G^2. \label{eq:final_5}
    \end{align}
    Then, in what follows, we will assume that both \eqref{eq:delta_s_final_2} and \eqref{eq:final_5} hold. Based on these two inequalities, we could derive the final convergence bound. Since \eqref{eq:delta_s_final_2} and \eqref{eq:final_5} hold with probability at least $1-2\delta$, the final convergence bound also holds with probability at least $1-2\delta$.
    Applying $\va_s$ in \eqref{eq:proxy_stepsize} and \eqref{eq:delta_s_final_2}, we have
    \begin{align}
        \|\va_s\|_{\infty} 
        &= \max_{i \in [d]}\sqrt{\beta_2 v_{s-1,i}+(1-\beta_2)(\mG_T(s))^2} + \ep_s \nonumber\\
        &\le \max_{i \in [d]}\sqrt{(1-\beta_2)\left[ \sum_{j=1}^{s-1} \beta_2^{s-j} g_{j,i}^2 + (\mG_T(s))^2 \right]} + \ep_s \nonumber  \\
        &\le \sqrt{(1-\beta_2)\sum_{j=1}^s \beta_2^{s-j}\mG_T^2} + \ep_s = \mG_T \sqrt{1-\beta_2^{s}} + \ep_s, \quad \forall s \in [T]. \label{eq:upper_bound_asi}
    \end{align}
    Then combining with the setting $\eta_s$ and $\ep_s $ in \eqref{eq:parameter}, we have for any $s \in [T]$,
    \begin{align*}
        \frac{\eta_s}{\|\va_s \|_{\infty}} 
        \ge \frac{C_0 \sqrt{(1-\beta_2^s)(1-\beta_2)}}{\mG_T \sqrt{1-\beta_2^{s}} + \ep_0\sqrt{(1-\beta_2^s)(1-\beta_2)}}\cdot \frac{1}{1-\beta_1^s}\ge \frac{C_0\sqrt{1-\beta_2}}{\mG_T + \ep_0\sqrt{1-\beta_2}}.
    \end{align*}
    We therefore combine with \eqref{eq:final_5} to obtain that with probability at least $1-2\delta$,
    \begin{align}
        \frac{1}{T}\sum_{s=1}^T\|\bar{\vg}_s\|^2 \le  \frac{G^2}{TLC_0 }\left( \frac{\sqrt{2\sigma_0^2+ 2\sigma_1^2 G^p + 2G^2}}{\sqrt{1-\beta_2}} + \ep_0 \right)\sqrt{\log\left( \frac{\mathrm{e}T}{\delta}\right)}. \label{eq:final_1}
    \end{align}
    Since $\beta_2 \in (0,1)$, we have
    \begin{align*}
        -\log \beta_2 = \log\left( \frac{1}{\beta_2} \right) \le \frac{1-\beta_2}{\beta_2} = \frac{c}{T\beta_2},
    \end{align*}
    where we apply $\log(1/a)\le (1-a)/a,\forall a \in (0,1)$.
    With both sides multiplying $T$, we obtain that $\log\left(1/\beta_2^T\right) \le c/\beta_2$.
    Then, we further have that when $\beta_2 = 1-c/T$, 
    \begin{align}
        &\log \left( \frac{T}{\beta_2^T} \right) \le \log T + \frac{c}{\beta_2}. \label{eq:logbeta2_T}
    \end{align}
    Since $0 \le \beta_1 < \beta_2 < 1$, there exists some constants $\varepsilon_1,\varepsilon_2 > 0$ such that 
    \begin{align}\label{eq:coro_1}
        \frac{1}{\beta_2} \le \frac{1}{\varepsilon_1},\quad \frac{1}{1-\beta_1/\beta_2} \le \frac{1}{\varepsilon_2}.
    \end{align} 
     Therefore combining \eqref{eq:logbeta2_T}, \eqref{eq:coro_1} and \eqref{eq:define_G}, 
    we could verify that $G^2 \sim \mathcal{O}\left( {\rm poly}(\log T) \right)$ with respect to $T$. Finally, using the convergence result in \eqref{eq:final_1}, we obtain the desired result.
    \end{proof}

\section{Convergence of Adam without corrective terms and RMSProp under smoothness}\label{appen:no_correct_smooth}
In this section, we will prove the convergence bounds in \Cref{thm:inform_no_correct_smooth} for Adam without corrective terms and RMSProp, which are adaptive to the noise level $\sigma_0$. We first present the algorithm.
\begin{algorithm}[htbp]
	\caption{Adam without corrective terms}
	\label{alg:Adam_no_correct}
	\begin{algorithmic}
		\STATE{ \textbf{Input: }Horizon $T$, $\vx_1 \in \mathbb{R}^d$, $\beta_1, \beta_2 \in [0,1)$, $\vm_0 = \vv_0 = {\bf 0}_d$, $\eta,\ep > 0$, $\vep = \ep  {\bf 1}_d$.}
		\FOR{$s=1,\cdots,T$}
		\STATE{Draw a new sample $\vz_s$ and generate $\vg_s = g(\vx_s,\vz_s) $;}
		\STATE{$\vm_{s}=\beta_1 \vm_{s-1} + (1-\beta_1) \vg_{s}$;}
		\STATE{$\vv_{s} = \beta_2 \vv_{s-1} + (1-\beta_2)\vg_{s}^2 $;}
		\STATE{$\vx_{s+1} = \vx_{s} - \eta  \cdot  \vm_{s}/\left(\sqrt{\vv_{s}}+\vep \right)$. }
		\ENDFOR
	\end{algorithmic}
\end{algorithm}
\begin{remark}
    Setting $\beta_1 = 0$, \Cref{alg:Adam_no_correct} reduces to RMSProp.
\end{remark}
Then, we will state the formal version for \Cref{thm:inform_no_correct_smooth}.
\begin{theorem}[Adam without corrective terms/RMSProp]\label{thm:no_corrective}
    Let $T \ge 1$ and $\{\vx_s\}_{s \in [T]}$ be the sequence generated by Algorithm \ref{alg:Adam_no_correct}. If Assumptions (A1)-(A3) hold, and the hyper-parameters are given in \eqref{eq:parameter}, then for any given $\delta \in (0,1/2)$, 
    it holds that with probability at least $1-2\delta$,
     \begin{align}\label{eq:convergence_bound_no_correct}
        & \frac{1}{T}\sum_{s=1}^T\|\nabla f(\vx_s)\|^2 \le \tilde{\mathcal{O}}\left(\frac{\tilde{G}^4(1+\sigma_1^2)+\tilde{G}^2(\sigma_0+\sigma_1\tilde{G}^{p/2}+\tilde{G}+\ep_0)}{T} + \frac{\tilde{G}^2\sigma_0}{\sqrt{T}} \right),
    \end{align}
    where $\tilde{G}^2$ is defined by the following order with respect to $T,\ep_0,\delta$:\footnote{The detailed expression of $\tilde{G}^2$ could be found in \eqref{eq:detail_def_tG}.}
    \begin{align}\label{eq:tG_simple}
        &\tilde{G}^2 \sim  \mathcal{O}\left[\left(  \frac{\log^{3/2}\left(T/(\ep_0\delta)\right)}{1-\beta_1}  \right)^{\max\{2,\frac{4}{4-p}\}} \right].
    \end{align}
\end{theorem}
\begin{remark}
    (1). Setting $\beta_1=0$, \Cref{thm:no_corrective} then shows that RMSProp with the hyper-parameter setup in \eqref{eq:parameter} can also find a stationary point with the convergence rate given in \eqref{eq:convergence_bound_no_correct}.\\
    (2). The convergence rate in \Cref{thm:no_corrective} is of order $\tilde{\mathcal{O}}\left(1/T+\sigma_0/\sqrt{T}\right)$, which can be accelerated to $\tilde{\mathcal{O}}(1/T)$ rate when $\sigma_0$ is sufficiently low. This form matches the ones for SGD \cite{bottou2018optimization} and AdaGrad \cite{attia2023sgd,hongrevisiting}, as well as the lower bound for first-order gradient method \cite{arjevani2023lower} on non-convex smooth optimization with affine variance noise up to logarithm factors. 
\end{remark}
\subsection{Proof details}
To start with, we follow the same notations of $\mM_T,G_s,\mG_T(s)$ in \eqref{eq:notation_appendix} and define   
\begin{align}\label{eq:t_vbs_vas}
    &\tvb_s=(\tb_{s,i})_i = \sqrt{\beta_2 \vv_{s-1}+ (1-\beta_2)\vg_s^2} + \vep , \nn \\
    &\tva_s =(\ta_{s,i})_i= \sqrt{\beta_2 \vv_{s-1}+ (1-\beta_2)\left(\mG_T(s){\bf 1}_d\right)^2} + \vep .
\end{align}
We also follow the definition of $\vy_s$ in \eqref{eq:define_y_s} and obtain from \Cref{alg:Adam_no_correct} that 
\begin{align}\label{eq:ty_iterative}
    \vy_{s+1} &= \vy_s -\eta \cdot \frac{\vg_s}{\tvb_s} + \frac{\beta_1}{1-\beta_1}  \left( \frac{ \tvb_{s-1}}{ \tvb_s} - \bm{1}_d \right) \odot (\vx_s - \vx_{s-1}).
\end{align}
Then, we have the following claims.
\begin{proposition}
    Setting $\eta_s=\eta$ and $\eta=C_0\sqrt{1-\beta_2}$, the results in \Cref{lem:Sigmax}, \Cref{lem:OrderT_bound_gradient}, \Cref{lem:sum_1}, \Cref{lem:gradient_xs_ys}, \Cref{lem:highpro_affine_noise}, \Cref{lem:bound_para_term} and \Cref{lem:sum_2} remain unchanged.
\end{proposition}
\begin{proof}
    Using $\eta_s=\eta$ and following the proof for the above lemmas, it's easy to verify that these lemmas still hold.
\end{proof}
The result in \Cref{lem:gap_as_bs_bounded} can be switched to the following one.
\begin{lemma}\label{lem:tgap_as_bs_bounded}
    Given $T \ge 1$. If $\tvb_s=(\tb_{s,i})_i$ and $\tva_s=(\ta_{s,i})_i$ are defined in \eqref{eq:t_vbs_vas}, and \eqref{eq:highpro_affine_noise} holds, then for all $s \in [T], i \in [d]$,
    \begin{align*}
        \left| \frac{1}{\ta_{s,i}} - \frac{1}{\tb_{s,i}} \right| \le \frac{\mG_T(s) \sqrt{1-\beta_2}}{\ta_{s,i}\tb_{s,i}}\quad \text{and} \quad
        \left| \frac{1}{\ta_{s,i}} - \frac{1}{\tb_{s-1,i}} \right| \le \frac{  \mG_T(s)  \sqrt{1-\beta_2}}{\ta_{s,i} \tb_{s-1,i}}.
    \end{align*}
\end{lemma}
Based on these results, we are able to show that the gradient norm is bounded along the training process generated by \Cref{alg:Adam_no_correct}.
\begin{proposition}\label{pro:tbound_grad}
    Under the same conditions in \Cref{thm:no_corrective}, for any given $\delta \in (0,1/2)$, it holds that with probability at least $1-2\delta$, 
    \begin{align}\label{eq:tdelta_s_final_1}
        \|\bar{\vg}_t\|^2 \le \tilde{G}^2, \quad \|\vg_t\|^2 \le\mG_T(t) \le \tilde{\mG}_T^2,\quad \forall t \in [T+1],
    \end{align}
    and
    \begin{align}\label{eq:tdelta_s_final_2}
        \|\bar{\vg}_{t+1}\|^2 \le \tilde{G}^2 - L\eta \sum_{s=1}^t \left\|\frac{\bar{\vg}_s}{\sqrt{\tva_s}} \right\|^2, \quad \forall t \in [T],
    \end{align}
    where $\tilde{G}^2,\tilde{\mG}_T$ are as in \eqref{eq:detail_def_tG}.
\end{proposition}
\begin{proof}
Let $\tilde{\Delta}_s=\frac{\tvb_{s-1}}{\tvb_s}-{\bf 1}_d$. We also start from the descent lemma and use \eqref{eq:ty_iterative}, 
\begin{align}\label{eq:tA+B+C_appendix}
    f(\vy_{t+1})
    &\le f(\vx_1)+  \underbrace{ \sum_{s=1}^t -\eta \left\la \nabla f(\vy_s), \frac{\vg_s}{\tvb_s} \right\ra }_{\bf \tilde{A}} + \underbrace{\frac{\beta_1}{1-\beta_1}\sum_{s=1}^t \left\langle \tilde{\Delta}_s \odot (\vx_s-\vx_{s-1}),  \nabla f(\vy_s) \right\rangle}_{\bf \tilde{B}}  \nonumber \\
    &\quad+ \underbrace{\frac{L}{2}\sum_{s=1}^t\left\| \eta \cdot \frac{\vg_s}{\tvb_s} - \frac{\beta_1}{1-\beta_1} (\tilde{\Delta}_s  \odot (\vx_s-\vx_{s-1})) \right\|^2}_{\bf \tilde{C}}.
\end{align}
The following estimation is based on the probability event in \Cref{lem:highpro_affine_noise}.
\paragraph{Bounding ${\bf \tilde{A}}$.} We first have
\begin{align}\label{eq:tA_decomp}
    {\bf \tilde{A}} 
    &=  \underbrace{ -\eta \sum_{s=1}^t\left\langle \bar{\vg}_s, \frac{\vg_s}{\tvb_s}\right\rangle }_{{\bf \tilde{A}.1}} + \underbrace{\eta \sum_{s=1}^t \left\langle \bar{\vg}_s - \nabla f(\vy_s), \frac{\vg_s}{\tvb_s}\right\rangle}_{{\bf \tilde{A}.2}}.
\end{align}
Introducing $\tva_s$ defined in \eqref{eq:t_vbs_vas} into ${\bf \tilde{A}.1}$
\begin{align}\label{eq:tA.1_decomp}
    {\bf \tilde{A}.1} =  - \eta \sum_{s=1}^t \left\|\frac{\bar{\vg}_s}{\sqrt{\tva_s}}\right\|^2 \underbrace{-   \eta \sum_{s=1}^t\left\la  \bar{\vg}_s, \frac{\vxi_s}{\tva_s} \right\ra }_{{\bf \tilde{A}.1.1}} + \underbrace{ \eta \sum_{s=1}^t\left\la \bar{\vg}_s,  \left( \frac{1}{\tva_s} - \frac{1}{\tvb_s} \right)\vg_s 
    \right\ra}_{{\bf \tilde{A}.1.2}}.  
\end{align}
Following the proof for \Cref{lem:1_bounded} with $\eta_s = \eta$, we obtain that with probability at least $1-\delta$,
\begin{align}\label{eq:tA.1.1_middle}
    {\bf \tilde{A}.1.1}
     \le \frac{3\lambda\eta^2\mG_T(t)}{4\sqrt{1-\beta_2}}\sum_{s=1}^t  \left\|\frac{\bar{\vg}_s}{\sqrt{\tva_s}} \right\|^2  + \frac{1}{\lambda} \log\left( \frac{T}{\delta} \right), \forall t \in [T].
\end{align}
Setting $\lambda = \sqrt{1-\beta_2}/(3\eta \tilde{\mG}_T)$, we obtain that with probability at least $1-\delta$,
\begin{align}\label{eq:tA.1.1}
    {\bf \tilde{A}.1.1}
     \le \frac{ \eta \mG_T(t)}{4\tilde{\mG}_T}\sum_{s=1}^t  \left\|\frac{\bar{\vg}_s}{\sqrt{\tva_s}} \right\|^2 + \frac{3\eta\tilde{\mG}_T}{\sqrt{1-\beta_2}} \log\left( \frac{T}{\delta} \right),\forall t \in [T].
\end{align}
Using \Cref{lem:tgap_as_bs_bounded} and following the similar deduction for bounding {\bf  A.1.2 } in \Cref{lem:1.1_bounded}, we have 
 \begin{align}\label{eq:tA.1.2}
    {\bf \tilde{A}.1.2} \le  \frac{\eta}{4}\sum_{s=1}^t  \left\|\frac{\bar{\vg}_s}{\sqrt{\tva_s}}\right\|^2  + \eta\sqrt{1-\beta_2} \mG_T(t)   \sum_{s=1}^t  \left\|\frac{\vg_s}{\tvb_s}\right\|^2. 
\end{align}
Plugging \eqref{eq:tA.1.1} and \eqref{eq:tA.1.2} into \eqref{eq:tA.1_decomp}, we obtain that 
\begin{align}
    {\bf \tilde{A}.1}
    &\le \left(\frac{\mG_T(t)}{4\tilde{\mG}_T} - \frac{3}{4} \right) \sum_{s=1}^t \eta \left\|\frac{\bar{\vg}_s}{\sqrt{\tva_s}}\right\|^2 + \frac{3\eta\tilde{\mG}_T}{\sqrt{1-\beta_2}} \log\left( \frac{T}{\delta} \right)+  \eta\sqrt{1-\beta_2}\mG_T(t)   \sum_{s=1}^t  \left\|\frac{\vg_s}{\tvb_s}\right\|^2. \label{eq:tA.1}
\end{align}
The estimation for ${\bf \tilde{A}.2}$ is similar to \eqref{eq:A.2_middle} and \eqref{eq:A.2}. Using \eqref{eq:gradient_gap} and Young's inequality, 
\begin{align*} 
    \eta \left\langle \bar{\vg}_s - \nabla f(\vy_s), \frac{\vg_s}{\tvb_s}\right\rangle
      &\le  \frac{1}{2L} \|\bar{\vg}_s - \nabla f(\vy_s)\|^2 + \frac{L\eta^2}{2 }  \left\|\frac{\vg_s}{\tvb_s} \right\|^2 \nn \\
      &\le \frac{L }{2(1-\beta_1)^2 } \|\vx_s - \vx_{s-1}\|^2 + \frac{L\eta^2}{2 }  \left\|\frac{\vg_s}{\tvb_s} \right\|^2 .
\end{align*} 
Thereby,
\begin{align} 
    {\bf \tilde{A}.2}
    &\le \frac{L\eta^2 }{2(1-\beta_1)^2 }  \sum_{s=1}^t\left\|\frac{\vm_{s-1}}{\tvb_{s-1}} \right\|^2 +   \frac{L\eta^2}{2}\sum_{s=1}^t\left\|\frac{\vg_s}{\tvb_s} \right\|^2 . \label{eq:tA.2}
\end{align}
\paragraph{Bounding ${\bf \tilde{B},\tilde{C}}$.} Following the same decomposition in \eqref{eq:B_decomp} with $\Delta_s$ replaced by $\tilde{\Delta}_s$,
\begin{align}\label{eq:tB_decomp}
    {\bf \tilde{B}} 
    &= \underbrace{\frac{\beta_1}{1-\beta_1} \sum_{s=1}^t\left\langle \tilde{\Delta}_s  \odot (\vx_s - \vx_{s-1}),  \bar{\vg}_s \right\rangle }_{\bf \tilde{B}.1}  + \underbrace{\frac{\beta_1}{1-\beta_1} \sum_{s=1}^t\left\langle \tilde{\Delta}_s \odot (\vx_s - \vx_{s-1}),  \nabla f(\vy_s)- \bar{\vg}_s \right\rangle }_{\bf \tilde{B}.2}.
\end{align}
We define $\tilde{\Sigma}:=
\frac{\beta_1}{1-\beta_1} \left\langle  \tilde{\Delta}_s \odot (\vx_s - \vx_{s-1}),  \bar{\vg}_s \right\rangle $. Since $\eta_s=\eta$, we have the following decomposition
\begin{align}
    \tilde{\Sigma}
    \le& \underbrace{\frac{\eta \beta_1}{1-\beta_1} \cdot \left|\left\langle \left( \frac{1}{\tvb_s} - \frac{1}{\tva_{s}} \right) \odot \vm_{s-1},  \bar{\vg}_s \right\rangle  \right|}_{\tilde{\Sigma}_{1} } + \underbrace{\frac{\eta\beta_1}{1-\beta_1} \cdot \left|\left\langle \left( \frac{1}{\tva_s} - \frac{1}{\tvb_{s-1}} \right) \odot \vm_{s-1},  \bar{\vg}_s \right\rangle  \right|}_{\tilde{\Sigma}_{2} }. \label{eq:tB.1_decom_bounded}
\end{align}
The additional $\Sigma_3$ in \eqref{eq:B.1_decom_bounded} that appears in the decomposition of $\Sigma$ vanishes. Then, using \Cref{lem:tgap_as_bs_bounded} and Young's inequality
\begin{align}
    \tilde{\Sigma}_{1} 
    & \le \sum_{i=1}^d\frac{\beta_1}{1-\beta_1}\cdot \frac{\eta\mG_T(s)  \sqrt{1-\beta_2}}{\tilde{a}_{s,i}\tilde{b}_{s,i}} \cdot|\bgsi   m_{s-1,i}| \nonumber\\
    &\le \sum_{i=1}^d\frac{\eta}{8} \cdot \frac{\bgsi^2}{\tilde{a}_{s,i}} +  \frac{2 \eta  \beta_1^2 (1-\beta_2)}{(1-\beta_1)^2} \sum_{i=1}^d\frac{\left(\mG_T(s)\right)^2}{\tilde{a}_{s,i}} \cdot \frac{m_{s-1,i}^2}{\tilde{b}_{s,i}^2}\nonumber  \\
    &\le  \frac{\eta}{8}\left\|\frac{\bar{\vg}_s}{\sqrt{\tva_s}}\right\|^2 + \frac{2\eta   \mG_T(t)   \sqrt{1-\beta_2}}{(1-\beta_1)^2}  \left\|\frac{\vm_{s-1}}{\tvb_s}\right\|^2.\label{eq:tB.1.1_bounded}
\end{align} 
Similarly, using \Cref{lem:tgap_as_bs_bounded} and Young's inequality
\begin{align}
    \tilde{\Sigma}_{2}
    &\le \frac{\eta }{8}\left\|\frac{\bar{\vg}_s}{\sqrt{\tva_s}}\right\|^2 + \frac{2\eta  \mG_T(t)  \sqrt{1-\beta_2}}{(1-\beta_1)^2}  \left\|\frac{\vm_{s-1}}{\tvb_{s-1}}\right\|^2. \label{eq:tB.1.2_bounded}
\end{align}
Then, summing up \eqref{eq:tB.1.1_bounded} and \eqref{eq:tB.1.2_bounded} over $s \in [t]$ and combining with \eqref{eq:tB.1_decom_bounded},
\begin{align}\label{eq:tB.1}
    {\bf \tilde{B}.1} \le \frac{\eta }{4}\sum_{s=1}^t \left\|\frac{\bar{\vg}_s}{\sqrt{\tva_s}}\right\|^2 +\frac{2\eta  \mG_T(t) \sqrt{1-\beta_2}}{(1-\beta_1)^2}\sum_{s=1}^t \left(\left\|\frac{\vm_{s-1}}{\tvb_s}\right\|^2 + \left\|\frac{\vm_{s-1}}{\tvb_{s-1}}\right\|^2 \right).
\end{align}
Following the similar deduction in \eqref{eq:B.2} and using \Cref{lem:Sigmax}, we have
\begin{align}
    {\bf \tilde{B}.2} 
    \le \frac{L\Sigma_{\max}^2 \eta^2  }{(1-\beta_1)^2}\sum_{s=1}^t\left\|\frac{\vm_{s-1}}{\tvb_{s-1}} \right\|^2 \le \frac{L  \eta^2  }{\beta_2(1-\beta_1)^2}\sum_{s=1}^t\left\|\frac{\vm_{s-1}}{\tvb_{s-1}} \right\|^2. \label{eq:tB.2}
\end{align}
Using Young's inequality and \Cref{lem:Sigmax},
\begin{align}
    {\bf \tilde{C}} 
    &\le L \eta^2\sum_{s=1}^t  \left\| \frac{\vg_s}{\tvb_s} \right\|^2 + \frac{L\beta_1^2}{(1-\beta_1)^2}\sum_{s=1}^t \left\| \tilde{\Delta}_s \right\|_{\infty}^2 \|\vx_s - \vx_{s-1}\|^2 \nn \\
   &\le L\eta^2 \sum_{s=1}^t \left\| \frac{\vg_s}{\tvb_s} \right\|^2+ \frac{L \eta^2 \Sigma_{\max}^2   }{(1-\beta_1)^2}\sum_{s=1}^t \left\|\frac{\vm_{s-1}}{\tvb_{s-1}} \right\|^2 \nn \\
   &\le L\eta^2 \sum_{s=1}^t \left\| \frac{\vg_s}{\tvb_s} \right\|^2+ \frac{L \eta^2   }{\beta_2(1-\beta_1)^2}\sum_{s=1}^t \left\|\frac{\vm_{s-1}}{\tvb_{s-1}} \right\|^2 \label{eq:tC}.
\end{align}
\paragraph{Putting together.} Noting that the above results are established based on probability events in \Cref{lem:highpro_affine_noise} and \eqref{eq:tA.1.1}. Hence, plugging \eqref{eq:tA.1}, \eqref{eq:tA.2}, \eqref{eq:tB.1}, \eqref{eq:tB.2} and \eqref{eq:tC} into \eqref{eq:tA+B+C_appendix} and using $\beta_2 < 1$, it holds that with probability at least $1-2\delta$, for all $t\in [T]$,
\begin{align*}
    f(\vy_{t+1})  &\le f(\vx_1)  + \eta \left(\frac{\mG_T(t)}{ 4\tilde{\mG}_T} - \frac{1}{2} \right)\sum_{s=1}^t\left\|\frac{\bar{\vg}_s}{\sqrt{\tva_s}} \right\|^2 + \frac{3\eta\tilde{\mG}_T}{\sqrt{1-\beta_2}} \log\left( \frac{T}{\delta} \right)   \nonumber \\
    & + \left(\frac{2\eta\sqrt{1-\beta_2}\mG_T(t)}{(1-\beta_1)^2}+\frac{5L\eta^2}{2\beta_2(1-\beta_1)^2} \right)\sum_{s=1}^t    \left\|\frac{\vm_{s-1}}{\tvb_{s-1}}\right\|^2 \\
    &+  \frac{2\eta\sqrt{1-\beta_2}\mG_T(t)}{(1-\beta_1)^2}\sum_{s=1}^t \left\|\frac{\vm_{s-1}}{\tvb_{s }}\right\|^2 + \left(\eta\sqrt{1-\beta_2}\mG_T(t)+\frac{3L\eta^2}{2}\right)\sum_{s=1}^t \left\|\frac{\vg_s}{\tvb_s}\right\|^2  .  
\end{align*}
Then, combining with \Cref{lem:sum_1} and \Cref{lem:sum_2} and using $\eta=C_0\sqrt{1-\beta_2}$, we obtain that with probability at least $1-2\delta$, for all $t\in [T]$,
\begin{align}
    f(\vy_{t+1})  &\le f(\vx_1)  + \eta \left(\frac{\mG_T(t)}{4\tilde{\mG}_T} - \frac{1}{2} \right)\sum_{s=1}^t \left\|\frac{\bar{\vg}_s}{\sqrt{\tva_s}} \right\|^2 +  E_1\tilde{\mG}_T + E_2 \mG_T(t) + E_3. \label{eq:tfinal_1}
\end{align}
where the coefficients are defined as
\begin{align}\label{eq:tdef_E123}
    &E_1 =3C_0\log\left(\frac{T}{\delta} \right), E_2 =  d\left(C_0 + \frac{4C_0}{\beta_2(1-\beta_1)(1-\beta_1/\beta_2)}\right)\log\left(\frac{\mathcal{F}(T)}{\beta_2^T} \right), \nn\\
    &E_3= d\left(\frac{5 LC_0^2 }{2\beta_2(1-\beta_1)(1-\beta_1/\beta_2)}+\frac{3LC_0^2}{2} \right)\log\left(\frac{\mathcal{F}(T)}{\beta_2^T} \right).
\end{align}
\paragraph{An induction argument.} Noting that \eqref{eq:tfinal_1} holds with probability at least $1-2\delta$ for all $t\in [T]$, we will provide an induction to prove the desired result based on \eqref{eq:tfinal_1}, which also holds with probability at least $1-2\delta$. We first provide the detailed expression of $\tilde{G}^2$ and $\tilde{\mG}_T$ as follows:
\begin{align}\label{eq:detail_def_tG}
    \tilde{G}^2 
    &= 8L(f(\vx_1)-f^*) + 128L^2\mM_T^2(E_1+E_2)^2+\frac{(4-p)}{2}p^{\frac{p}{4-p}}\left(8L\sigma_1\mM_T(E_1+E_2) \right)^{\frac{4}{4-p}}\nn \\
    & + 16L\mM_T(E_1+E_2)\sigma_0 + 8LE_3 + 4M^2, \nn\\
     \tilde{\mG}_T &= \mM_T\sqrt{2\sigma_0^2 + 2\sigma_1^2 \tilde{G}^p + 2\tilde{G}^2},
\end{align}
where $E_1,E_2,E_3$ are defined in \eqref{eq:tdef_E123} and $M$ is as in \Cref{lem:gradient_xs_ys}.
The induction then begins by noting that $G_1^2 = \|\bar{\vg}_1\|^2 \le 2L(f(\vx_1)-f^*) \le \tilde{G}^2$ from Lemma \ref{lem:gradient_delta_s}. Then, we assume that for some $t \in [T]$,
\begin{align*} 
     G_s \le \tilde{G},\quad \forall s \in [t] \quad \text{consequently} \quad \mG_T(s)  \le \tilde{\mG}_T,  \quad \forall s \in [t].
\end{align*}
Using this induction assumption over \eqref{eq:tfinal_1} and $ \tilde{\mG}_T \le 2\mM_T\left(\sigma_0 + \sigma_1 \tilde{G}^{p/2} + \tilde{G}\right)$, 
\begin{align}
    f(\vy_{t+1})  &\le f(\vx_1)   - \frac{\eta}{4}  \sum_{s=1}^t \left\|\frac{\bar{\vg}_s}{\sqrt{\tva_s}} \right\|^2 +  2\mM_T(E_1+ E_2)\left( \sigma_0+ \sigma_1 \tilde{G}^{p/2} + \tilde{G}\right) + E_3. \label{eq:tfinal_2}
\end{align}
Following the same result in \eqref{eq:lower_LHS}, we have 
\begin{align}
    \|\bar{\vg}_{t+1}\|^2   \le 4L(f(\vy_{t+1})-f^*) + 2M^2. \label{eq:tlower_LHS}
\end{align}
Combining with \eqref{eq:tfinal_2} and \eqref{eq:tlower_LHS},
\begin{align}
    \|\bar{\vg}_{t+1}\|^2  &\le 4L(f(\vx_1)-f^*)   -  L\eta  \sum_{s=1}^t \left\|\frac{\bar{\vg}_s}{\sqrt{\tva_s}} \right\|^2 \nn\\
    &+  8\mM_TL(E_1+ E_2)\left( \sigma_0+ \sigma_1 \tilde{G}^{p/2} + \tilde{G}\right) + 4LE_3 + 2M^2. \label{eq:tfinal_3}
\end{align}
Using two Young's inequalities where $ab \le \frac{a^2}{2} + \frac{b^2}{2}$ and $ab^{\frac{p}{2}} \le \frac{4-p}{4}\cdot a^{\frac{4}{4-p}}+\frac{p}{4}\cdot b^2,\forall a,b \ge 0$,
\begin{align}
    \|\bar{\vg}_{t+1}\|^2  &\le \frac{\tilde{G}^2}{2}+ 4L(f(\vx_1)-f^*)  - L\eta \sum_{s=1}^t \left\|\frac{\bar{\vg}_s}{\sqrt{\tva_s}} \right\|^2 + 64L^2\mM_T^2(E_1+E_2)^2 \nn \\
    &+\frac{(4-p)}{4}p^{\frac{p}{4-p}}\left(8L\sigma_1\mM_T(E_1+E_2) \right)^{\frac{4}{4-p}} + 8L\mM_T(E_1+E_2)\sigma_0 + 4LE_3 + 2M^2 \nn \\
    &\le \tilde{G}^2, \label{eq:tfinal_4}
\end{align}
 where the last inequality comes from the definition of $\tilde{G}$ in \eqref{eq:detail_def_tG}. Hence, the induction is complete and we obtain the desired result in \eqref{eq:tdelta_s_final_1}. Furthermore, as a consequence of \eqref{eq:tfinal_4}, we also prove that \eqref{eq:tdelta_s_final_2} holds.
\end{proof}
 \subsection{Proof of the main result}
Now we are ready to prove the main convergence result. 
\begin{proof}[Proof of Theorem \ref{thm:no_corrective}]
    In what follows, we will assume that \Cref{pro:tbound_grad} holds. Hence, the final convergence bound also holds with probability at least $1-2\delta$.
    We set $t=T$ in \eqref{eq:tdelta_s_final_2} to obtain that with probability at least $1-2\delta$,
    \begin{align}
        L\eta \sum_{s=1}^T \frac{\left\| \bar{\vg}_s  \right\|^2}{\| \tva_s  \|_{\infty}} \le L\eta \sum_{s=1}^T  \left\|\frac{\bar{\vg}_s}{\sqrt{\tva_s}} \right\|^2 \le \tilde{G}^2 - \|\bar{\vg}_{T+1}\|^2 \le \tilde{G}^2. \label{eq:tfinal_5}
    \end{align}
    Using \Cref{lem:highpro_affine_noise}, we derive that $\forall s \in [T]$,
    \begin{align}\label{eq:tbound_gs}
        \|\vg_s\|^2 \le 2\|\vxi_s\|^2 + 2\|\bar{\vg}_s\|^2 \le 2\mM_T^2\sigma_0^2 + 2\mM_T^2(1+\sigma_1^2)\|\bar{\vg}_s\|^2.
    \end{align}
    Applying $\tva_s$ in \eqref{eq:t_vbs_vas}, \Cref{pro:tbound_grad}, \eqref{eq:tbound_gs} and $\eta=C_0\sqrt{1-\beta_2},\ep=\ep_0\sqrt{1-\beta_2}$, we have for any $ s\in [T]$,
    \begin{align}
        \frac{\|\tva_s\|_{\infty}}{\eta} 
        &= \frac{1}{\eta}\left(\max_{i \in [d]}\sqrt{\beta_2 v_{s-1,i}+(1-\beta_2)(\mG_T(s))^2} + \ep\right) \nonumber\\
        &\le \frac{1}{\eta}\left(\sqrt{(1-\beta_2)\sum_{j=1}^T \beta_2^{s-j}\|\vg_j\|^2} + \sqrt{1-\beta_2}\tilde{\mG}_T + \ep\right) \nn  \\
        &\le \frac{\mM_T}{C_0}\sqrt{2 (1+\sigma_1^2)\sum_{j=1}^T  \|\bar{\vg}_j\|^2} + \frac{\mM_T\sigma_0\sqrt{2T}+\tilde{\mG}_T + \ep_0}{C_0} . \label{eq:tupper_bound_asi}
    \end{align}
    We therefore combine with \eqref{eq:tfinal_5} to obtain that  
    \begin{align*}
         \sum_{s=1}^T\|\bar{\vg}_s\|^2 &\le  \frac{\tilde{G}^2}{L C_0}\left( \mM_T\sqrt{2 (1+\sigma_1^2)\sum_{j=1}^T  \|\bar{\vg}_j\|^2} +  \mM_T\sigma_0\sqrt{2T}+  \tilde{\mG}_T + \ep_0 \right)\nn \\
         &\le \frac{1}{2}\sum_{s=1}^T \|\bar{\vg}_s\|^2 + \frac{\tilde{G}^4 \mM_T^2(1+\sigma_1^2)}{L^2C_0^2} + \frac{\tilde{G}^2}{LC_0 }\left(   \mM_T\sigma_0\sqrt{2T}+ \tilde{\mG}_T + \ep_0\right),
    \end{align*}
    which leads to 
    \begin{align}\label{eq:tfinal_6}
        \frac{1}{T}\sum_{s=1}^T\|\bar{\vg}_s\|^2 
         \le  \frac{1}{T}\left(\frac{2\tilde{G}^4 \mM_T^2(1+\sigma_1^2)}{L^2C_0^2}+ \frac{2\tilde{G}^2(\tilde{\mG}_T + \ep_0)}{LC_0 }\right) + \frac{1}{\sqrt{T}}\cdot \frac{2\sqrt{2}\tilde{G}^2 \mM_T\sigma_0 }{LC_0 }.
    \end{align}
    Following the result in \eqref{eq:logbeta2_T} and \eqref{eq:coro_1}, we show that when $\beta_2 = 1-c/T$, $\tilde{G}^2, \tilde{\mG}_T \sim \mathcal{O}\left( {\rm poly}(\log T) \right)$ with respect to $T$. Finally, using the convergence result in \eqref{eq:tfinal_6}, we obtain the desired result.
    \end{proof}
\section{Proof of Theorem \ref{thm:general_smooth}}\label{sec:appendix_thm2}
In this section, we shall follow all the notations defined in Section \ref{sec:proof_sketch_1}. Further, we will add two non-decreasing sequences $\{\mLx_s\}_{s \ge 1}$ and $\{\mLy_s\}_{s \ge 1}$ as follows
\begin{align}\label{eq:define_L}
    \mLx_s = L_0 + L_{q} G_s^{q}, \quad \mLy_s = L_0 +L_{q} (G_s + G_s^{q} + L_0/L_q )^q  , \quad \forall s \ge 1.
\end{align}

\subsection{Preliminary}
We first mention that Lemma \ref{lem:Sigmax}, Lemma \ref{lem:OrderT_bound_gradient}, and Lemma \ref{lem:sum_1} in Appendix \ref{sub:pre_thm1} remain unchanged since they are independent of the smooth condition. Then the first essential challenge is that we need to properly tune $\eta$ to restrict the distance between $\vx_{s+1}$ and $\vx_s$, $\vy_{s+1}$ and $\vy_s$ within $1/L_{q}$ for all $s \ge 1$. The following two lemmas then ensure this point. The detailed proofs could be found in Appendix \ref{sec:appendix_thm2_lemma}.
\begin{lemma}\label{lem:gap_xs_ys}
    Let $\vx_s,\vy_s$ be defined in Algorithm \ref{alg:Adam} and \eqref{eq:define_y_s}. If $0 \le \beta_1 < \beta_2 < 1$, then for any $s \ge 1$,
    \begin{align}
        \max\{\|\vx_{s+1} - \vx_{s}\|,\|\vy_s - \vx_s\|,\|\vy_{s+1} - \vy_s\| \} \le \eta\sqrt{\frac{4d}{\beta_2(1-\beta_1)^2(1-\beta_2)(1-\beta_1/\beta_2)}}. \label{eq:gap_xs_ys}
    \end{align}
    As a consequence, when
    \begin{align}\label{eq:eta_local_smooth}
        \eta \le \frac{1}{L_{q}F}, \quad F:= \sqrt{\frac{4d}{\beta_2(1-\beta_1)^2(1-\beta_2)(1-\beta_1/\beta_2)}},
    \end{align}
    then for any $s \ge 1$, all the three gaps in \eqref{eq:gap_xs_ys} are smaller than $1/L_{q}$.
\end{lemma}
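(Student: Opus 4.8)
The plan is to bound all three quantities by first establishing a single uniform-in-$s$ estimate for the one-step displacement $\|\vx_{s+1}-\vx_s\|$, and then deriving the bounds on $\|\vy_s-\vx_s\|$ and $\|\vy_{s+1}-\vy_s\|$ from the definitions in \eqref{eq:define_y_s} via the triangle inequality. The ``as a consequence'' part is then immediate once \eqref{eq:gap_xs_ys} is proved.

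\textbf{Step 1 (displacement bound).} From the update in Algorithm~\ref{alg:Adam}, $\vx_{s+1}-\vx_s=-\eta_s\,\vm_s/\vb_s$, so $\|\vx_{s+1}-\vx_s\|\le\sqrt d\,\eta_s\,\|\vm_s/\vb_s\|_\infty$. I would plug in $\eta_s=\eta\sqrt{1-\beta_2^s}/(1-\beta_1^s)$ together with the estimate $\|\vm_s/\vb_s\|_\infty\le\sqrt{(1-\beta_1)(1-\beta_1^s)/((1-\beta_2)(1-\beta_1/\beta_2))}$ from Lemma~\ref{lem:OrderT_bound_gradient}. The factor $1-\beta_1^s$ then partially cancels, and after using $1-\beta_2^s\le 1$ and $1-\beta_1\le 1-\beta_1^s$ one gets, for every $s\ge 1$, \[ \|\vx_{s+1}-\vx_s\|\;\le\;\eta\sqrt{\tfrac{d}{(1-\beta_2)(1-\beta_1/\beta_2)}}. \] (Alternatively one can simply use $\eta_s\le\eta/(1-\beta_1)$ from \eqref{eq:eta_s} and $\|\vm_s/\vb_s\|_\infty\le\sqrt{1/((1-\beta_2)(1-\beta_1/\beta_2))}$, which costs an extra $1/(1-\beta_1)$ but is still within the claimed bound.)

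\textbf{Step 2 (the other two gaps).} Since $\vy_s-\vx_s=\vp_s$, with $\vp_1={\bm 0}_d$ and $\vp_s=\tfrac{\beta_1}{1-\beta_1}(\vx_s-\vx_{s-1})$ for $s\ge 2$, Step~1 gives $\|\vy_s-\vx_s\|\le\tfrac{1}{1-\beta_1}\|\vx_s-\vx_{s-1}\|\le\eta\sqrt{d/((1-\beta_1)^2(1-\beta_2)(1-\beta_1/\beta_2))}$. For the last gap, setting $\vx_0=\vx_1$ (which makes the formula for $\vp_s$ valid also at $s=1$), one computes directly that $\vy_{s+1}-\vy_s=\tfrac{1}{1-\beta_1}(\vx_{s+1}-\vx_s)-\tfrac{\beta_1}{1-\beta_1}(\vx_s-\vx_{s-1})$ for all $s\ge 1$, hence $\|\vy_{s+1}-\vy_s\|\le\tfrac{2}{1-\beta_1}\,\eta\sqrt{d/((1-\beta_2)(1-\beta_1/\beta_2))}=\eta\sqrt{4d/((1-\beta_1)^2(1-\beta_2)(1-\beta_1/\beta_2))}$. (Equivalently, one could route $\|\vy_{s+1}-\vy_s\|$ through the $\vy$-recursion \eqref{eq:y_iterative}, the elementary bound $\|\vg_s/\vb_s\|_\infty\le 1/\sqrt{1-\beta_2}$ coming from $b_{s,i}\ge\sqrt{1-\beta_2}\,|g_{s,i}|$, and Lemma~\ref{lem:Sigmax}, which is where a factor $\Sigma_{\max}\le 1/\sqrt{\beta_2}$ enters.) Taking the maximum of the three estimates and using $\beta_2\in(0,1)$ (so $\beta_2(1-\beta_1)^2\le 1$), all of them are at most $\eta\sqrt{4d/(\beta_2(1-\beta_1)^2(1-\beta_2)(1-\beta_1/\beta_2))}=\eta F$, which is \eqref{eq:gap_xs_ys}; then $\eta\le 1/(L_{q}F)$ as in \eqref{eq:eta_local_smooth} forces all three gaps below $1/L_{q}$.

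\textbf{Main obstacle.} The computation is essentially bookkeeping; the one genuine ingredient is the uniform-in-$s$ control of $\|\vm_s/\vb_s\|$, i.e.\ Lemma~\ref{lem:OrderT_bound_gradient}, which must be paired with the decay of $\eta_s$ so that no quantity grows with $s$. The minor care points are the edge case $s=1$ (where $\vp_1={\bm 0}_d$ and $\vx_0=\vx_1$ degenerate the displacement identities but remain consistent) and tracking constants so that the three bounds all collapse into exactly the stated $F$; the slack factor $1/\beta_2$ in $F$ is what absorbs the discrepancies between the three estimates (or, equivalently, the $\Sigma_{\max}$ in the recursion-based route).
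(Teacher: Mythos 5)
Your proof is correct, and Step 1 and the bound on $\|\vy_s-\vx_s\|$ are essentially the paper's. The genuine difference is in how you handle $\|\vy_{s+1}-\vy_s\|$. The paper routes it through the $\vy$-recursion \eqref{eq:y_iterative}, then applies Young's inequality to the two summands, the elementary bound $\|\vg_s/\vb_s\|_\infty\le 1/\sqrt{1-\beta_2}$, and Lemma~\ref{lem:Sigmax} (with $\Sigma_{\max}\le 1/\sqrt{\beta_2}$); this is exactly your parenthetical alternative. You instead expand $\vp_{s+1}+\vx_{s+1}-\vp_s-\vx_s$ directly to get the clean identity $\vy_{s+1}-\vy_s=\tfrac{1}{1-\beta_1}(\vx_{s+1}-\vx_s)-\tfrac{\beta_1}{1-\beta_1}(\vx_s-\vx_{s-1})$ and apply the triangle inequality twice. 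Your route is shorter, does not need Lemma~\ref{lem:Sigmax} at all, and yields the slightly sharper constant $\eta\sqrt{4d/((1-\beta_1)^2(1-\beta_2)(1-\beta_1/\beta_2))}$; the $1/\beta_2$ in the stated $F$ is then pure slack rather than something you pay. The paper's route has the minor advantage of reusing machinery (the $\vy$-recursion and $\Sigma_{\max}$ bound) that it deploys repeatedly later in estimating $\mathbf{B}$ and $\mathbf{C}$, but for this lemma in isolation your telescoping identity is the cleaner argument. Your handling of the $s=1$ edge case via the convention $\vx_0=\vx_1$ is also correct.
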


\begin{lemma}\label{lem:local_smooth}
    Let $\eta \le 1/(L_qF)$ where $F$ is as in Lemma \ref{lem:gap_xs_ys}. If $f$ is $(L_0,L_q)$-smooth, then for any $s \ge 1$, 
    \begin{align*}
        &\|\nabla f(\vy_s) \| \le L_0/L_{q} + \|\nabla f(\vx_s)\|^{q}+\|\nabla f(\vx_s)\|,\\
        &\|\nabla f(\vx_s) \| \le L_0/L_{q} + \|\nabla f(\vy_s)\|^{q}+\|\nabla f(\vy_s)\|.
    \end{align*}
    As a consequence, for any $s \ge 1$,
    \begin{align}
        &\|\nabla f(\vy_s)- \nabla f(\vx_s)\|  \le \mLx_s\|\vy_s - \vx_s\| , \quad \|\nabla f(\vy_{s+1})- \nabla f(\vy_s)\|  \le \mLy_s \|\vy_{s+1} - \vy_s\|, \label{eq:consequence_1} \\
        &\quad\quad\quad \quad \quad f(\vy_{s+1}) - f(\vy_s) - \la \nabla f(\vy_s),\vy_{s+1} - \vy_s \ra \le \frac{\mLy_s}{2}\|\vy_{s+1} - \vy_s\|.\label{eq:consequence}
    \end{align}
\end{lemma}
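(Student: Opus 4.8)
The plan is to run everything off Lemma~\ref{lem:gap_xs_ys}: under the step-size restriction \eqref{eq:eta_local_smooth} it guarantees that each of $\|\vx_{s+1}-\vx_s\|$, $\|\vy_s-\vx_s\|$ and $\|\vy_{s+1}-\vy_s\|$ is at most $1/L_q$, hence the $(L_0,L_q)$-smoothness inequality \eqref{eq:general_smooth_1} may be invoked on every pair of points occurring below, and on every point of the segment joining $\vy_s$ to $\vy_{s+1}$.

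First I would establish the two gradient-norm comparisons. For the first, the triangle inequality gives $\|\nabla f(\vy_s)\|\le\|\nabla f(\vx_s)\|+\|\nabla f(\vy_s)-\nabla f(\vx_s)\|$, and since $\|\vy_s-\vx_s\|\le1/L_q$, applying \eqref{eq:general_smooth_1} with base point $\vx_s$ yields $\|\nabla f(\vy_s)-\nabla f(\vx_s)\|\le(L_0+L_q\|\nabla f(\vx_s)\|^q)/L_q=L_0/L_q+\|\nabla f(\vx_s)\|^q$, which is the claim. The second comparison is symmetric: invoke \eqref{eq:general_smooth_1} instead with base point $\vy_s$, using $\|\vx_s-\vy_s\|\le1/L_q$. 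The one point to be careful about is that \eqref{eq:general_smooth_1} is asymmetric — the gradient norm entering the bound is the one at the base point — so in each case the base point must be chosen to be the point whose gradient norm is permitted on the right-hand side.

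Next I would read off the three consequences. The first half of \eqref{eq:consequence_1} is immediate from \eqref{eq:general_smooth_1} with base point $\vx_s$ together with $\|\nabla f(\vx_s)\|\le G_s$ (definition of $G_s$ in \eqref{eq:define_G_t}), which produces the prefactor $L_0+L_qG_s^q=\mLx_s$. For the second half, \eqref{eq:general_smooth_1} with base point $\vy_s$ gives the prefactor $L_0+L_q\|\nabla f(\vy_s)\|^q$; then the first gradient-norm comparison together with $\|\nabla f(\vx_s)\|\le G_s$ bounds $\|\nabla f(\vy_s)\|\le L_0/L_q+G_s^q+G_s$, so this prefactor is at most $L_0+L_q(L_0/L_q+G_s^q+G_s)^q=\mLy_s$. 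For \eqref{eq:consequence} I would use the integral identity $f(\vy_{s+1})-f(\vy_s)-\la\nabla f(\vy_s),\vy_{s+1}-\vy_s\ra=\int_0^1\la\nabla f(\vy_s+t(\vy_{s+1}-\vy_s))-\nabla f(\vy_s),\,\vy_{s+1}-\vy_s\ra\,\mathrm{d}t$; since $t\|\vy_{s+1}-\vy_s\|\le\|\vy_{s+1}-\vy_s\|\le1/L_q$ for all $t\in[0,1]$, Cauchy--Schwarz and \eqref{eq:general_smooth_1} with base point $\vy_s$ bound the integrand by $\mLy_s\,t\|\vy_{s+1}-\vy_s\|^2$ (reusing $\|\nabla f(\vy_s)\|\le L_0/L_q+G_s^q+G_s$), and integrating in $t$ gives \eqref{eq:consequence}, the factor $1/2$ coming from $\int_0^1 t\,\mathrm{d}t$.

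None of the steps is genuinely delicate; the bookkeeping around the asymmetry of \eqref{eq:general_smooth_1} — picking the right base point each time — is the only place one can slip, and the verification that the \emph{whole} segment from $\vy_s$ to $\vy_{s+1}$, not just its endpoints, stays in the region where \eqref{eq:general_smooth_1} applies is what legitimises the integral argument for \eqref{eq:consequence}. That neighbourhood control is supplied precisely by the restriction $\eta\le1/(L_qF)$ in \eqref{eq:eta_local_smooth}, so the step that logically carries the weight is the appeal to Lemma~\ref{lem:gap_xs_ys}.
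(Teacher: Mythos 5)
Your proposal is correct and follows the paper's own argument: \eqref{eq:general_smooth_1} applied with base point $\vx_s$ and then with base point $\vy_s$ yields the two gradient-norm comparisons, the smoothness prefactors are bounded by $\mLx_s$ and $\mLy_s$ via $\|\nabla f(\vx_s)\|\le G_s$, and the descent inequality \eqref{eq:consequence} is obtained from the standard integral computation (for which the paper simply cites Lemma A.3 of \cite{zhang2020improved}; your write-out is that same computation). Note that your integral correctly produces $\frac{\mLy_s}{2}\|\vy_{s+1}-\vy_s\|^2$, which is what the paper actually uses later in \eqref{eq:general_A+B+C}; the unsquared norm in the displayed statement \eqref{eq:consequence} is a typo.
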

In the generalized smooth case, Lemma \ref{lem:gradient_delta_s} does not hold. In contrast, we provide a generalized smooth version of \cite[Lemma A.5]{zhang2020improved}, which establishes a different relationship between the gradient's norm and the function value gap. Noting that when $q=1$, Lemma \ref{lem:general_gradient_value} reduces to \cite[Lemma A.5]{zhang2020improved}.
\begin{lemma}\label{lem:general_gradient_value}
    Suppose that $f$ is $(L_0,L_{q})$-smooth and Assumption (A1) holds. Then for any $\vx \in \mR^d$,
    \begin{align*}
        \|\nabla f(\vx) \| \le \max \left\{ 4L_{q}(f(\vx)- f^*), \left[4L_{q}(f(\vx)- f^*)\right]^{\frac{1}{2-q}},\sqrt{4L_{0}(f(\vx)- f^*)}  \right\}.
    \end{align*}
\end{lemma}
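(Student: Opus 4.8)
The plan is to follow the classical argument behind Lemma~A.5 of \cite{zhang2020improved}: from a point $\vx$, a single gradient step of a carefully chosen length must decrease $f$ by a controlled amount, and since $f\ge f^*$ this forces $f(\vx)-f^*$ to be at least of that order. The only new ingredient compared with the $L$-smooth (or $q=1$) case is that the effective smoothness constant $L_0+L_{q}\|\nabla f(\vx)\|^{q}$ depends on $\vx$ and the trial step can only be taken inside the ball of radius $1/L_{q}$, which is precisely where \eqref{eq:general_smooth_1} is valid; this dictates a short case analysis.

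First I would record a local descent inequality. Fix $\vx$, write $V:=f(\vx)-f^*\ge 0$ and $A:=L_0+L_{q}\|\nabla f(\vx)\|^{q}$. For any $\vy$ with $\|\vy-\vx\|\le 1/L_{q}$, the fundamental theorem of calculus gives $f(\vy)=f(\vx)+\la\nabla f(\vx),\vy-\vx\ra+\int_0^1\la\nabla f(\vx+\theta(\vy-\vx))-\nabla f(\vx),\vy-\vx\ra\,d\theta$, and since each point $\vx+\theta(\vy-\vx)$ lies within distance $1/L_{q}$ of $\vx$, applying \eqref{eq:general_smooth_1} to the integrand yields
\begin{align*}
f(\vy)\le f(\vx)+\la\nabla f(\vx),\vy-\vx\ra+\frac{A}{2}\|\vy-\vx\|^2.
\end{align*}
If $\|\nabla f(\vx)\|=0$ the claim is trivial, so assume $\|\nabla f(\vx)\|>0$; choosing $\vy=\vx-t\,\nabla f(\vx)/\|\nabla f(\vx)\|$ with $t\in(0,1/L_{q}]$ and using $f(\vy)\ge f^*$ gives
\begin{align*}
t\|\nabla f(\vx)\|-\frac{A}{2}t^2\le V .
\end{align*}

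Then I would split on whether the unconstrained maximizer $t^\star=\|\nabla f(\vx)\|/A$ of the left-hand side satisfies $t^\star\le 1/L_{q}$. If it does, taking $t=t^\star$ gives $V\ge\|\nabla f(\vx)\|^2/(2A)$; a further sub-split on $L_0$ versus $L_{q}\|\nabla f(\vx)\|^{q}$ finishes this case: if $L_0\ge L_{q}\|\nabla f(\vx)\|^{q}$ then $A\le 2L_0$ and $\|\nabla f(\vx)\|\le\sqrt{4L_0V}$, while if $L_0< L_{q}\|\nabla f(\vx)\|^{q}$ then $A<2L_{q}\|\nabla f(\vx)\|^{q}$, so $V>\|\nabla f(\vx)\|^{2-q}/(4L_{q})$ and, since $2-q>0$, $\|\nabla f(\vx)\|<(4L_{q}V)^{1/(2-q)}$. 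If instead $t^\star>1/L_{q}$, equivalently $L_{q}\|\nabla f(\vx)\|>A$, then taking $t=1/L_{q}$ gives $V\ge\|\nabla f(\vx)\|/L_{q}-A/(2L_{q}^2)>\|\nabla f(\vx)\|/L_{q}-\|\nabla f(\vx)\|/(2L_{q})=\|\nabla f(\vx)\|/(2L_{q})$, hence $\|\nabla f(\vx)\|<2L_{q}V\le 4L_{q}V$. In every case $\|\nabla f(\vx)\|$ is bounded by one of the three quantities inside the asserted maximum.

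I do not expect a genuine obstacle here; the only points requiring care are keeping the trial step length $t$ inside $(0,1/L_{q}]$ so that the local descent inequality is legitimately applicable, and observing that the exponent $1/(2-q)$ arises exactly from inverting $\|\nabla f(\vx)\|^{2-q}\lesssim L_{q}V$ — both are handled by the case split above.
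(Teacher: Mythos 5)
Your proof is correct and follows essentially the same strategy as the paper's: a descent step of controlled length inside the ball of radius $1/L_q$, the resulting lower bound on $f(\vx)-f^*$, and a case split to invert the inequality for $\|\nabla f(\vx)\|$. The only cosmetic differences are that you derive the local descent inequality from scratch rather than citing Lemma A.3 of \cite{zhang2020improved}, and you organize the cases by whether the unconstrained optimal step $t^\star=\|\nabla f(\vx)\|/A$ stays within $1/L_q$ (capping at $1/L_q$ otherwise), whereas the paper builds the cap into the step length $\tau=1/(L_0+L_q\max\{\|\nabla f(\vx)\|^q,\|\nabla f(\vx)\|\})$ up front and then splits on $\|\nabla f(\vx)\|^q$ versus $\|\nabla f(\vx)\|$; both partitions lead to the identical three bounds.
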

\subsection{Probabilistic estimations}
The probabilistic inequalities in \eqref{eq:highpro_affine_noise} and \eqref{eq:1} remain unchanged since they do not rely on any smooth-related conditions. However, we shall rely on a different setting of $\lambda$ in \eqref{eq:1} as follows.
\begin{lemma}\label{lem:3_bounded}
    Given $T \ge 1$ and $\delta \in (0,1)$. Under the same conditions of Lemma \ref{lem:1_bounded}, if we set $\lambda = (1-\beta_1)\sqrt{1-\beta_2}/(3\eta \mH)$ where $\mH$ is as in \eqref{eq:define_H_L}, then with probability at least $1-\delta$, 
    \begin{align}
        \sum_{s=1}^t-\eta_s\left\langle \bar{\vg}_s, \frac{\vxi_s}{\va_s}\right\rangle\le \frac{\mG_T(t)}{4\mH} \sum_{s=1}^t \eta_s \left\|\frac{\bar{\vg}_s}{\sqrt{\va_s}}\right\|^2+ D_1 \mH , \quad \forall t \in [T], \label{eq:3_bounded}
    \end{align}
    where $D_1$ is given in Lemma \ref{lem:1_bounded}.
\end{lemma}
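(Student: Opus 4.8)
# Proof Proposal for Lemma 4.13

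The plan is to reduce Lemma~\ref{lem:3_bounded} directly to the general inequality already established in Lemma~\ref{lem:1_bounded}, specialized to a particular choice of the free parameter $\lambda$. Recall that Lemma~\ref{lem:1_bounded} gives, with probability at least $1-\delta$, the bound
\[
-\sum_{s=1}^t\eta_s\left\langle \bar{\vg}_s, \frac{\vxi_s}{\va_s}\right\rangle
\le \frac{3\lambda\eta\mG_T(t)}{4(1-\beta_1)\sqrt{1-\beta_2}} \sum_{s=1}^t \eta_s \left\|\frac{\bar{\vg}_s}{\sqrt{\va_s}}\right\|^2+ \frac{d}{\lambda} \log\left( \frac{dT}{\delta} \right)
\]
for all $t \in [T]$, valid for every fixed $\lambda > 0$. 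The entire content of Lemma~\ref{lem:3_bounded} is to plug in $\lambda = (1-\beta_1)\sqrt{1-\beta_2}/(3\eta \mH)$ and simplify both terms.

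First I would handle the coefficient of the descent-like sum: substituting the chosen $\lambda$ into $\frac{3\lambda\eta}{4(1-\beta_1)\sqrt{1-\beta_2}}$ cancels the factors $3$, $\eta$, $(1-\beta_1)$ and $\sqrt{1-\beta_2}$, leaving exactly $\frac{1}{4\mH}$, so the first term becomes $\frac{\mG_T(t)}{4\mH}\sum_{s=1}^t \eta_s \|\bar{\vg}_s/\sqrt{\va_s}\|^2$ as claimed. Second, for the additive term $\frac{d}{\lambda}\log(dT/\delta)$, substituting $\lambda$ gives $\frac{3\eta d \mH}{(1-\beta_1)\sqrt{1-\beta_2}}\log(dT/\delta)$, which is precisely $D_1 \mH$ with $D_1 = \frac{3\eta d}{(1-\beta_1)\sqrt{1-\beta_2}}\log(dT/\delta)$ as defined in Lemma~\ref{lem:1_bounded}. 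Combining the two gives the stated inequality, and since it holds for all $t\in[T]$ on the same event of probability at least $1-\delta$, the lemma follows. I would also note in passing that $\mH > 0$ (it is a square root of a strictly positive quantity, using $\sigma_0 > 0$ from Assumption (A3)), so this choice of $\lambda$ is admissible.

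There is essentially no obstacle here: this is a bookkeeping lemma whose only purpose is to repackage Lemma~\ref{lem:1_bounded} with the proxy-scale $\mH$ from the generalized-smooth analysis in place of $\mG_T$, so that later in Appendix~\ref{sec:appendix_thm2} the martingale term can be absorbed against the descent term using the induction bound $G_s \le H$. The only point requiring any care is confirming that the algebraic cancellations yielding $\frac{1}{4\mH}$ and $D_1\mH$ are exact rather than up to constants, which they are by the explicit form of $\lambda$. If anything, one should double-check that the definition of $D_1$ used here matches the one in Lemma~\ref{lem:1_bounded} verbatim (it does), so that no hidden constant is swept under the rug.
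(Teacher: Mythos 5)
Your proposal is correct and matches the paper's (implicit) derivation exactly: Lemma~\ref{lem:3_bounded} is stated in the paper as an immediate specialization of inequality \eqref{eq:1} from Lemma~\ref{lem:1_bounded} with $\lambda = (1-\beta_1)\sqrt{1-\beta_2}/(3\eta\mH)$, and your algebraic verification that the two terms simplify to $\frac{\mG_T(t)}{4\mH}\sum_s \eta_s\|\bar{\vg}_s/\sqrt{\va_s}\|^2$ and $D_1\mH$ is exactly the required bookkeeping. The one point worth stating explicitly (which you gesture at by noting $\mH > 0$) is that $\mH$, built from $\hat{H}$ and $\tilde{\mathcal{J}}(T)$ in \eqref{eq:define_H}--\eqref{eq:poly_H_t}, is a deterministic constant depending only on $T,\delta$ and the problem parameters, so $\lambda$ is indeed a fixed (non-random) admissible choice as Lemma~\ref{lem:1_bounded} requires.
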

The bounds of the four summations in Lemma \ref{lem:sum_1} also remain unchanged. However, the upper bound for $\mathcal{F}_i(t)$ should be revised by the following lemma. The detailed proof could be found in Appendix \ref{sec:appendix_thm2_lemma}.
\begin{lemma}\label{lem:log_H_t}
    Given $T \ge 1$. Under the conditions and notations of Lemma \ref{lem:sum_1}, if $f$ is $(L_0,L_{q})$-smooth, $\eta = \tC_0\sqrt{1-\beta_2}$, \eqref{eq:highpro_affine_noise} and \eqref{eq:eta_local_smooth} hold, then the following inequalities hold,
    \begin{align}
         \mathcal{F}_i(t)  \le  \mathcal{J}(t)  , \quad  
         \forall t \in [T], i \in [d],
    \end{align}
    where $\mathcal{J}(t)$ is defined as
    \begin{align}\label{eq:define_H_t}
        \mathcal{J}(t):= 1+ \frac{2\mM_T^2}{\ep^2}\left[ \sigma_0^2 t + \sigma_1^2  t  \left(\|\bar{\vg}_1\|+ t \tilde{M}_t\right)^p  + t \left(\|\bar{\vg}_1\|+ t \tilde{M}_t \right)^2 \right],
    \end{align}
    and $\tilde{M}_t : =\tC_0 \mLx_t\sqrt{\frac{d}{1-\beta_1/\beta_2}}$.
\end{lemma}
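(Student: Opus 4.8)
\textbf{Proof plan for Lemma~\ref{lem:log_H_t}.}
The plan is to mimic the proof of Lemma~\ref{lem:sum_2}, but with the $L$-smoothness-based gradient bound from Lemma~\ref{lem:OrderT_bound_gradient} replaced by the generalized-smoothness bound induced by $\mLx_t$. First I would recall that under the conditions of Lemma~\ref{lem:sum_1}, each $\mathcal{F}_i(t) = 1 + \ep^{-2}\sum_{s=1}^t g_{s,i}^2$ is non-decreasing in $t$, so it suffices to bound $\mathcal{F}_i(T)$, or more conveniently to bound $\sum_{s=1}^t \|\vg_s\|^2 = \sum_{s=1}^t\sum_{i} g_{s,i}^2$ by something of the stated order and then distribute coordinatewise. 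Next I would use the probabilistic noise bound \eqref{eq:highpro_affine_noise}: since $\vg_s = \bar{\vg}_s + \vxi_s$, we have $\|\vg_s\|^2 \le 2\|\bar{\vg}_s\|^2 + 2\|\vxi_s\|^2 \le 2\|\bar{\vg}_s\|^2 + 2\mM_T^2(\sigma_0^2 + \sigma_1^2\|\bar{\vg}_s\|^p)$, so everything reduces to controlling $\|\bar{\vg}_s\| = \|\nabla f(\vx_s)\|$ for $s \le t$.

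The key new ingredient is the gradient growth estimate under $(L_0,L_q)$-smoothness. I would argue as in Lemma~\ref{lem:OrderT_bound_gradient}: by Lemma~\ref{lem:gap_xs_ys} (applicable since $\eqref{eq:eta_local_smooth}$ holds, which is implied by $\eta = \tC_0\sqrt{1-\beta_2}$ with $\tC_0$ as in \eqref{eq:general_parameter}), the consecutive iterate gaps $\|\vx_{s+1}-\vx_s\|$ stay below $1/L_q$, so \eqref{eq:general_smooth_1} applies along the trajectory. Then telescoping $\|\nabla f(\vx_{s+1})\| - \|\nabla f(\vx_s)\| \le \|\nabla f(\vx_{s+1}) - \nabla f(\vx_s)\| \le (L_0 + L_q\|\nabla f(\vx_s)\|^q)\|\vx_{s+1}-\vx_s\|$; bounding $\|\vx_{s+1}-\vx_s\| = \eta_s\|\vm_s/\vb_s\| \le \eta\,\|\vm_s/\vb_s\|/(1-\beta_1)$ via Lemma~\ref{lem:OrderT_bound_gradient}'s coordinatewise estimate $\|\vm_s/\vb_s\|_\infty \le \sqrt{(1-\beta_1)/((1-\beta_2)(1-\beta_1/\beta_2))}$, hence $\|\vm_s/\vb_s\| \le \sqrt{d(1-\beta_1)/((1-\beta_2)(1-\beta_1/\beta_2))}$, and using $\eta = \tC_0\sqrt{1-\beta_2}$, gives $\|\vx_{s+1}-\vx_s\| \le \tC_0\sqrt{d/(1-\beta_1/\beta_2)}$. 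Combining with $L_0 + L_q\|\nabla f(\vx_j)\|^q \le L_0 + L_q G_s^q = \mLx_s$ for all $j \le s \le t$ (using that $\mLx$ is non-decreasing and $G_s$ is the running max of $\|\bar{\vg}_j\|$), the telescoped sum yields $\|\nabla f(\vx_s)\| \le \|\bar{\vg}_1\| + s\,\tC_0\mLx_t\sqrt{d/(1-\beta_1/\beta_2)} = \|\bar{\vg}_1\| + s\tilde{M}_t \le \|\bar{\vg}_1\| + t\tilde{M}_t$ for all $s \le t$.

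Finally I would plug this uniform bound back in: $\sum_{s=1}^t\|\vg_s\|^2 \le 2t\mM_T^2\sigma_0^2 + 2t\mM_T^2\sigma_1^2(\|\bar{\vg}_1\|+t\tilde{M}_t)^p + 2t(\|\bar{\vg}_1\|+t\tilde{M}_t)^2$ (absorbing the deterministic $\|\bar{\vg}_s\|^2$ term into the last bracket and noting $\mM_T \ge 1$ so the constant $2$ is harmless on the $\sigma_0$ and $\|\bar{\vg}_1\|$ pieces), then divide by $\ep^2$ and add $1$ to recover exactly $\mathcal{J}(t)$ as in \eqref{eq:define_H_t}; since each $g_{s,i}^2 \le \|\vg_s\|^2$, the coordinatewise inequality $\mathcal{F}_i(t) \le \mathcal{J}(t)$ follows. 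The main obstacle is making sure the circular-looking dependence is actually benign: $\tilde{M}_t$ depends on $\mLx_t = L_0 + L_q G_t^q$, which depends on the gradient norms we are trying to bound — but this is fine because the telescoping argument bounds $\|\nabla f(\vx_s)\|$ in terms of $G_t$ (the running max up to $t$) via a \emph{self-consistent} inequality that holds for every $s \le t$, so no induction-style fixed point is needed here; $\mLx_t$ is simply carried as a symbol. One should just be careful that Lemma~\ref{lem:gap_xs_ys}'s hypothesis \eqref{eq:eta_local_smooth} is genuinely met under \eqref{eq:general_parameter}, which is why the constraint $\tC_0 \le \sqrt{\beta_2(1-\beta_1)^2(1-\beta_1/\beta_2)/(4L_q^2 d)}$ appears.
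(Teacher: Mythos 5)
Your proof is correct and follows essentially the same route as the paper's: telescope the gradient norm along iterates using the $(L_0,L_q)$-smoothness bound (valid because Lemma~\ref{lem:gap_xs_ys} keeps the step gaps below $1/L_q$), bound the local smoothness parameter by the running $\mLx_t$, then feed the resulting estimates for $\sum_s\|\bar{\vg}_s\|^p$ and $\sum_s\|\bar{\vg}_s\|^2$ into the noise-controlled bound $\sum_s\|\vg_s\|^2 \le 2\mM_T^2(\sigma_0^2 t + \sigma_1^2\sum_s\|\bar{\vg}_s\|^p + \sum_s\|\bar{\vg}_s\|^2)$ from \eqref{eq:sum_2_1}. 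Your parenthetical observation that the dependence on $\mLx_t$ is non-circular (it is simply carried as a random symbol, with the paper itself remarking afterward that $\mathcal{J}(t)$ remains random) is exactly the right reading of the lemma.
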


%\begin{remark}
    It's worth noting that $\mathcal{J}(t)$ is still random relying on the random variable $\mLx_t$.
%\end{remark}

\subsection{Deterministic estimations} 
Note that \eqref{eq:2_bounded} in Appendix \ref{sub:deterministic_thm1} remains unchanged since it's independent from any smooth-related condition. In terms of {\bf A.1}, the only difference is using $\mH$ to replace $\mG$ in \eqref{eq:1.1_bounded} as we choose a different $\lambda$ in \eqref{eq:1}, leading to
\begin{align}
    {\bf A.1}
    &\le \left(\frac{\mG_T(t)}{4\mH} - \frac{3}{4} \right) \sum_{s=1}^t \eta_s \left\|\frac{\bar{\vg}_s}{\sqrt{\va_s}}\right\|^2 + D_1 \mH+  D_2 \mG_T(t)\sum_{s=1}^t \left\|\frac{\vg_s}{\vb_s} \right\|^2. \label{eq:4_bounded}
\end{align}
We also establish the following proposition which is a generalized smooth version of Proposition \ref{pro:determine}.

\begin{proposition}\label{pro:general_determine}
    Given $T \ge 1$. If $f$ is $(L_0,L_{q})$-smooth and \eqref{eq:eta_local_smooth} holds, then 
    \begin{align}\label{eq:general_bound_A+B+C}
        f(\vy_{t+1}) \le &f(\vx_1) + {\bf  A.1} + {\bf B.1}+\sum_{s=1}^{t-1} D_6(s) \left\|\frac{\hat{\vm}_{s}}{\vb_{s}} \right\|^2   + \sum_{s=1}^t D_7(s)\left\| \frac{\vg_s}{\vb_s} \right\|^2, \quad \forall t \in [T],
    \end{align}
    where $\Sigma_{\max}$ is as in Lemma \ref{lem:Sigmax} and $D_6(s),D_7(s)$ are defined as,\footnote{The notations are different from $D_6$ and $D_7$ defined in \eqref{eq:D6D7}.}
    \begin{align*}
        D_6(s) = \frac{\mLy_s \eta^2 (1+4\Sigma_{\max}^2) }{2(1-\beta_1)^2}, \quad D_7(s) = \frac{3\mLy_s\eta^2}{2(1-\beta_1)^2}.
    \end{align*}
\end{proposition}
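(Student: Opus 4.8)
\textbf{Proof plan for Proposition \ref{pro:general_determine}.}

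The plan is to mirror the proof of Proposition \ref{pro:determine}, replacing the global smoothness constant $L$ by the sequence-dependent surrogate $\mLy_s$ wherever the descent inequality is invoked, and to keep careful track of the fact that $\mLy_s$ now sits inside the summations over $s$. First I would start from the generalized descent inequality \eqref{eq:consequence} in Lemma \ref{lem:local_smooth}, which is applicable precisely because \eqref{eq:eta_local_smooth} holds (so that Lemma \ref{lem:gap_xs_ys} guarantees $\|\vy_{s+1}-\vy_s\|\le 1/L_q$). Summing \eqref{eq:consequence} over $s\in[t]$ and plugging in the $\vy$-iteration \eqref{eq:y_iterative} together with $\vy_1=\vx_1$, I obtain the same three-term split as in \eqref{eq:A+B+C_appendix}, but now the term $\textbf{C}$ carries the weight $\mLy_s/2$ inside its sum rather than $L/2$, and likewise the error terms $\textbf{A.2}$ and $\textbf{B.2}$ (which were bounded using $L$-smoothness) must be re-derived with $\mLy_s$ in place of $L$. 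The decomposition of $\textbf{A}$ into $\textbf{A.1}+\textbf{A.2}$ and $\textbf{B}$ into $\textbf{B.1}+\textbf{B.2}$ is unchanged.

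Next I would re-run the estimates of $\textbf{A.2}$, $\textbf{B.2}$ and $\textbf{C}$. For $\textbf{A.2}$: by Lemma \ref{lem:local_smooth}, $\|\nabla f(\vy_s)-\bar{\vg}_s\|\le \mLx_s\|\vy_s-\vx_s\|\le \mLy_s\|\vy_s-\vx_s\|$ since $\mLx_s\le\mLy_s$; using this in a Young's inequality with the conjugate weights $\tfrac{1}{2\mLy_s}$ and $\tfrac{\mLy_s}{2}$ exactly as in \eqref{eq:A.2_middle}, and then $\|\vx_s-\vx_{s-1}\|^2\le\eta^2\|\hat{\vm}_{s-1}/\vb_{s-1}\|^2$ from \eqref{eq:xs-xs-1} and $\beta_1\in[0,1)$, yields $\textbf{A.2}\le \sum_s \tfrac{\mLy_s\eta^2}{2(1-\beta_1)^2}(\|\hat{\vm}_{s-1}/\vb_{s-1}\|^2+\|\vg_s/\vb_s\|^2)$. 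For $\textbf{B.2}$: Cauchy--Schwarz, $\|\Delta_s\|_\infty\le\Sigma_{\max}$ from Lemma \ref{lem:Sigmax}, the bound $\|\nabla f(\vy_s)-\bar{\vg}_s\|\le\mLy_s\|\vy_s-\vx_s\|$, $\Sigma_{\max}\ge1$ and \eqref{eq:xs-xs-1} give $\textbf{B.2}\le\sum_s\tfrac{\mLy_s\Sigma_{\max}^2\eta^2}{(1-\beta_1)^2}\|\hat{\vm}_{s-1}/\vb_{s-1}\|^2$. For $\textbf{C}$: expanding the square, $\|a-b\|^2\le 2\|a\|^2+2\|b\|^2$, and again Lemma \ref{lem:Sigmax} and \eqref{eq:xs-xs-1}, $\textbf{C}\le\sum_s\mLy_s\eta^2\big(\|\vg_s/\vb_s\|^2+\Sigma_{\max}^2\|\hat{\vm}_{s-1}/\vb_{s-1}\|^2\big)/(1-\beta_1)^2$. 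Collecting the coefficients of $\|\hat{\vm}_{s-1}/\vb_{s-1}\|^2$ gives $\tfrac{\mLy_s\eta^2(1+4\Sigma_{\max}^2)}{2(1-\beta_1)^2}=D_6(s)$ and the coefficient of $\|\vg_s/\vb_s\|^2$ gives $\tfrac{3\mLy_s\eta^2}{2(1-\beta_1)^2}=D_7(s)$; after re-indexing $\sum_{s=1}^t\|\hat{\vm}_{s-1}/\vb_{s-1}\|^2=\sum_{s=0}^{t-1}\|\hat{\vm}_s/\vb_s\|^2$ (with $\vm_0=\bm 0_d$ so the $s=0$ term vanishes), substituting back into \eqref{eq:A+B+C_appendix} gives exactly \eqref{eq:general_bound_A+B+C}.

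I do not expect a serious obstacle here, since this is essentially a bookkeeping exercise: the one point requiring care is making sure that all applications of the smoothness/descent inequality use $\mLy_s$ (and not $\mLx_s$ or $L$), which is legitimate because $\mLx_s\le\mLy_s$ by \eqref{eq:define_L} and because \eqref{eq:consequence} in Lemma \ref{lem:local_smooth} is stated with $\mLy_s$; the other point is that \eqref{eq:eta_local_smooth} is a standing hypothesis of the proposition, so the local-smoothness reduction in Lemma \ref{lem:gap_xs_ys}/Lemma \ref{lem:local_smooth} applies at every step $s$. The proof of Proposition \ref{pro:determine} goes through verbatim with $L\rightsquigarrow\mLy_s$ kept inside the sums, and the constants $D_6(s),D_7(s)$ are read off in the same way $D_6,D_7$ were.
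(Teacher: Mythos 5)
Your plan is correct and mirrors the paper's proof step-for-step: start from the generalized descent inequality \eqref{eq:consequence}, sum over $s\in[t]$, re-derive the $\textbf{A.2}$, $\textbf{B.2}$, $\textbf{C}$ bounds with the step-dependent smoothness constant in place of $L$, and collect coefficients to read off $D_6(s),D_7(s)$. The only (cosmetic) difference is that the paper carries $\mLx_s$ through the $\textbf{A.2}$ and $\textbf{B.2}$ estimates and upgrades to $\mLy_s$ only at the very end via $\mLx_s\le\mLy_s$, whereas you upgrade at the outset; both yield the same constants.
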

\begin{proof}
The proof follows some same parts in proving Proposition \ref{pro:delta_s_1}. We start from the descent lemma \eqref{eq:consequence} in Lemma \ref{lem:local_smooth} and sum over $s \in [t]$ to obtain that
\begin{align}
    f(\vy_{t+1}) 
    &\le f(\vx_1) + \sum_{s=1}^t\langle \nabla f(\vy_s), \vy_{s+1}-\vy_s \rangle + \sum_{s=1}^t\frac{\mLy_s}{2}\|\vy_{s+1}-\vy_s\|^2 \nonumber \\
               &= f(\vx_1) + \textbf{A} + \textbf{B}+ \underbrace{\sum_{s=1}^t\frac{\mLy_s}{2} \left\| \eta_s \cdot \frac{\vg_s}{\vb_s} - \frac{\beta_1}{1-\beta_1}  \left( \frac{\eta_{s}\vb_{s-1}}{\eta_{s-1}\vb_s} - \bm{1} \right) \odot (\vx_s - \vx_{s-1})\right\|^2}_{\textbf{C'}},  \label{eq:general_A+B+C}
\end{align}
where \textbf{A} and \textbf{B} follow the same definitions in \eqref{eq:A+B+C_appendix}. We also follow the decompositions in \eqref{eq:A_decomp} and \eqref{eq:B_decomp}. We could also rely on the same analysis for the smooth case in \eqref{eq:A.2} but the smooth parameter is replaced by $\mLx_s$.
Hence, we obtain that
\begin{align}
    \textbf{A.2} \le \sum_{s=1}^t \frac{  \mLx_s \eta^2  }{2(1-\beta_1)^2}\left\|\frac{\hat{\vm}_{s-1}}{\vb_{s-1}} \right\|^2  + \sum_{s=1}^t\frac{\mLx_s\eta^2}{2(1-\beta_1)^2}\left\|\frac{\vg_s}{\vb_s} \right\|^2. \label{eq:general_A.2}
\end{align}
Similarly, 
\begin{align}
    \textbf{B.2 } \le \sum_{s=1}^t\frac{    \Sigma_{\max}^2 \mLx_s \eta^2  }{(1-\beta_1)^2}\left\|\frac{\hat{\vm}_{s-1}}{\vb_{s-1}} \right\|^2. \label{eq:general_B.2}
\end{align}
Noting that \textbf{C'} differs from \textbf{C} with $L$ replaced by $\mLy_s$. Hence, relying on a similar analysis in \eqref{eq:C}, we obtain that 
\begin{align}
    \textbf{C'} 
    &\le \sum_{s=1}^t \frac{\mLy_s \eta^2 }{(1-\beta_1)^2} \left\| \frac{\vg_s}{\vb_s} \right\|^2+ \sum_{s=1}^t \frac{ \Sigma_{\max}^2\mLy_s \eta^2 }{(1-\beta_1)^2} \left\|\frac{\hat{\vm}_{s-1}}{\vb_{s-1}} \right\|^2 \label{eq:general_C}.
\end{align}
Combining \eqref{eq:general_A+B+C} with \eqref{eq:general_A.2}, \eqref{eq:general_B.2} and \eqref{eq:general_C}, and noting that $\mLx_s \le \mLy_s$ from \eqref{eq:define_L}, we thereby obtain the desired result.
\end{proof}

\subsection{Bounding gradients}
Based on the unchanged parts in Appendix \ref{sub:probability_thm1} and Appendix \ref{sub:deterministic_thm1} and the new estimations in \eqref{eq:4_bounded} and \eqref{eq:general_bound_A+B+C}, we are now ready to provide the uniform gradients' bound in the following proposition.
\begin{proposition}\label{pro:general_delta_s}
    Under the same conditions in Theorem \ref{thm:general_smooth}, for any given $\delta \in (0,1/2)$, it holds that with probability at least $1-2\delta$,
    \begin{align}
        \|\bar{\vg}_t\| \le H, \quad \mG_T(t)\le \mH, \quad \mLx_t \le \mLy_t \le \mL,\quad \forall t \in [T+1],\label{eq:general_delta_s_1}
    \end{align}
    and
    \begin{align}
         f(\vy_{t+1}) - f^* \le - \frac{1}{4}\sum_{s=1}^t \eta_s \left\| \frac{\bar{\vg}_s}{\sqrt{\va_s}} \right\|^2+ \hat{H}, \quad \forall t \in [T],\label{eq:general_delta_s}
    \end{align}
    where $H, \mH,\mL$ are given in \eqref{eq:define_H_L} and $\hat{H}$ is given in \eqref{eq:define_H}.
\end{proposition}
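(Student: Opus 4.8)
The plan is to mirror the proof of Proposition \ref{pro:delta_s_1}, running a single induction on $t$ that simultaneously yields the gradient bound \eqref{eq:general_delta_s_1} and the descent bound \eqref{eq:general_delta_s}, with two changes: wherever the $L$-smoothness constant appeared we now carry the gradient-dependent quantity $\mLy_s$ (bounded by $\mL$ under the induction hypothesis via \eqref{eq:define_L} and \eqref{eq:define_H_L}), and the inequality $\|\nabla f(\vx)\|^2\le 2L(f(\vx)-f^*)$ is replaced by Lemma \ref{lem:general_gradient_value}. First I would fix the probabilistic event: by Lemma \ref{lem:highpro_affine_noise} and Lemma \ref{lem:3_bounded}, the inequalities \eqref{eq:highpro_affine_noise} and \eqref{eq:3_bounded} hold together with probability at least $1-2\delta$, and all subsequent reasoning is deterministic on this event. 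The constraint on $\tC_0$ in \eqref{eq:general_parameter} implies \eqref{eq:eta_local_smooth}, so Lemmas \ref{lem:gap_xs_ys} and \ref{lem:local_smooth} apply and all consecutive iterate gaps stay below $1/L_q$.

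Next I would assemble the master inequality by substituting \eqref{eq:4_bounded} (for \textbf{A.1}) and \eqref{eq:2_bounded} (for \textbf{B.1}) into Proposition \ref{pro:general_determine}, obtaining for each $t\in[T]$ a bound of the shape
\begin{align*}
 f(\vy_{t+1})-f^* &\le f(\vx_1)-f^* + \Big(\tfrac{\mG_T(t)}{4\mH}-\tfrac{1}{2}\Big)\sum_{s=1}^t\eta_s\Big\|\tfrac{\bar{\vg}_s}{\sqrt{\va_s}}\Big\|^2 + D_1\mH + D_5 G_t \\
 &\quad + \sum_{s=1}^t\big(D_2\mG_T(t)+D_7(s)\big)\Big\|\tfrac{\vg_s}{\vb_s}\Big\|^2 + \big(D_3\mG_T(t)+D_4\big)\sum_{s=1}^t\Big(\Big\|\tfrac{\vm_{s-1}}{\vb_s}\Big\|^2+\Big\|\tfrac{\vm_{s-1}}{\vb_{s-1}}\Big\|^2\Big) + \sum_{s=1}^{t-1}D_6(s)\Big\|\tfrac{\hat{\vm}_s}{\vb_s}\Big\|^2 .
\end{align*}
The base case $\|\bar{\vg}_1\|\le H$ then follows from Lemma \ref{lem:general_gradient_value} at $\vx_1$, since $\hat{H}$ (defined in \eqref{eq:simple_H}, precise form in \eqref{eq:define_H}) dominates $\Gamma_1=f(\vx_1)-f^*$, while the summands of $H$ in \eqref{eq:define_H_L} dominate $\max\{4L_q\Gamma_1,(4L_q\Gamma_1)^{1/(2-q)},\sqrt{4L_0\Gamma_1}\}$ together with its $q$-th power. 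Assuming $G_s\le H$ for all $s\in[t]$, we get $\mG_T(s)\le\mH$ and $\mLx_s\le\mLy_s\le\mL$ on $[t]$, so the descent coefficient above is $\le-\tfrac14$ and $D_6(s),D_7(s)$ are controlled by $\mL\eta^2$. Bounding the four summations via Lemma \ref{lem:sum_1} and then $\mathcal{F}_i(t)\le\mathcal{J}(t)$ via Lemma \ref{lem:log_H_t} (here $\mLx_t\le\mL$ keeps $\mathcal{J}(t)$ polynomial in $T,d,1/\delta,\beta_2^{-T}$), every residual term carries either $\eta^2/(1-\beta_2)=\tC_0^2$ or $\eta=\tC_0\sqrt{1-\beta_2}$ multiplied by one of $\mH,\mL$; the constraints $\tC_0\le E_0$, $\tC_0\le E_0/\mH$, $\tC_0\le E_0/\mL$ in \eqref{eq:general_parameter} reduce $\mL\tC_0^2$, $\mH\tC_0$, etc.\ to quantities no larger than $E_0^2$ or $E_0$, so the right-hand side collapses to $-\tfrac14\sum_{s=1}^t\eta_s\|\bar{\vg}_s/\sqrt{\va_s}\|^2+\hat{H}$ with $\hat{H}$ of the order in \eqref{eq:simple_H}, which is \eqref{eq:general_delta_s}.

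To close the induction, drop the nonpositive sum and apply Lemma \ref{lem:local_smooth} followed by Lemma \ref{lem:general_gradient_value} at $\vy_{t+1}$: this gives $\|\bar{\vg}_{t+1}\|\le L_0/L_q+\|\nabla f(\vy_{t+1})\|^q+\|\nabla f(\vy_{t+1})\|$ with $\|\nabla f(\vy_{t+1})\|\le\max\{4L_q\hat{H},(4L_q\hat{H})^{1/(2-q)},\sqrt{4L_0\hat{H}}\}$, and expanding the maximum using $\max\{a,b,c\}^r=\max\{a^r,b^r,c^r\}\le a^r+b^r+c^r$ reproduces term-by-term the definition of $H$ in \eqref{eq:define_H_L}; hence $\|\bar{\vg}_{t+1}\|\le H$ and $G_{t+1}=\max\{G_t,\|\bar{\vg}_{t+1}\|\}\le H$, which proves \eqref{eq:general_delta_s_1} and also $\mG_T(t)\le\mH$, $\mLx_t\le\mLy_t\le\mL$. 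The main obstacle is the third-paragraph bookkeeping: one must verify that the explicit $\hat{H}$ in \eqref{eq:define_H} genuinely dominates $f(\vx_1)-f^*+D_1\mH+D_5H$ plus all the $\log$-summation contributions after the substitution $\eta=\tC_0\sqrt{1-\beta_2}$, which is exactly why the four-way minimum on $\tC_0$ is imposed — in particular $\tC_0\le E_0/\mL$ tames the high-order term $\tfrac{\mLy_s\eta^2}{2}\|\vg_s/\vb_s\|^2$ flagged in Section \ref{sec:proof_sketch_2}, and $\tC_0\le\sqrt{\beta_2(1-\beta_1)^2(1-\beta_1/\beta_2)/(4L_q^2 d)}$ secures \eqref{eq:eta_local_smooth}. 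There is no circularity in the chain $\hat{H}\to H\to(\mH,\mL)$, since $\hat{H}$ is defined from the problem parameters alone, $H$ from $\hat{H}$, and $\mH,\mL$ from $H$.
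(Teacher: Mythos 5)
Your proposal follows the paper's own proof of this proposition essentially line by line: same probabilistic event from Lemmas \ref{lem:highpro_affine_noise} and \ref{lem:3_bounded}, same verification that the $\tC_0$ constraint secures \eqref{eq:eta_local_smooth}, same assembly of the master inequality by substituting \eqref{eq:4_bounded} and \eqref{eq:2_bounded} into Proposition \ref{pro:general_determine}, the same induction on $G_t\le H$ (base case via Lemma \ref{lem:general_gradient_value} at $\vx_1$, closing via Lemma \ref{lem:local_smooth} then Lemma \ref{lem:general_gradient_value} at $\vy_{t+1}$), and the same use of $\mathcal{F}_i(t)\le\mathcal{J}(t)\le\tilde{\mathcal{J}}(T)$ and the four-way minimum on $\tC_0$ to collapse the residuals into $\hat{H}$. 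The argument is correct and is not a genuinely different route; the remaining work you flag (checking that \eqref{eq:define_H} absorbs all residual constants) is exactly the bookkeeping the paper carries out between \eqref{eq:general_final_1.1} and \eqref{eq:general_final_3}.
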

\begin{proof} 
Based on the two inequalities \eqref{eq:highpro_affine_noise} and \eqref{eq:3_bounded}, we could deduce the final results in \eqref{eq:general_delta_s_1} and \eqref{eq:general_delta_s}. Since \eqref{eq:highpro_affine_noise} and \eqref{eq:3_bounded} hold with probability at least $1-2\delta$, we thereby deduce the desired result holding with probability at least $1-2\delta$.
To start with, we shall verify that \eqref{eq:eta_local_smooth} always holds.
Recalling $\eta$ in \eqref{eq:general_parameter} and $F$ in Lemma \ref{lem:gap_xs_ys},
\begin{align*}
    \eta F = \tC_0 \sqrt{1-\beta_2}F \le \sqrt{\frac{\beta_2(1-\beta_1)^2(1-\beta_2)(1-\beta_1/\beta_2)}{4L_q^2d}}\cdot F \le \frac{1}{L_{q}}.
\end{align*}
Hence, we make sure that the distance requirement in \eqref{eq:general_smooth_1} always holds according to Lemma \ref{lem:gap_xs_ys}.
Second, plugging \eqref{eq:4_bounded} and \eqref{eq:2_bounded} into the result in \eqref{eq:general_bound_A+B+C}, 
\begin{align}
    f(\vy_{t+1})\le& f(\vx_1) + \left( \frac{\mG_T(t)}{4\mH} - \frac{1}{2}\right)\sum_{s=1}^t\eta_s\left\|\frac{\bar{\vg}_s}{\sqrt{\va_s}} \right\|^2 +D_1\mH +D_2\mG_T(t)\sum_{s=1}^t \left\|\frac{\vg_s}{\vb_s}\right\|^2  \nonumber \\
    &+\sum_{s=1}^t D_7(s)\left\|\frac{\vg_s}{\vb_s}\right\|^2  + (D_3\mG_T(t)+D_4)\sum_{s=1}^t \left(\left\|\frac{\vm_{s-1}}{\vb_s}\right\|^2 + \left\|\frac{\vm_{s-1}}{\vb_{s-1}}\right\|^2\right)\nonumber\\
    &+D_5G_t+\sum_{s=1}^{t-1}D_6(s) \left\|\frac{\hat{\vm}_{s}}{\vb_{s}} \right\|^2. \label{eq:general_final_1}
\end{align}
We still rely on an induction argument to deduce the result. First, we provide the detail expressions of $\hat{H}, H$ as follows which is determined by hyper-parameters $\beta_1,\beta_2$ and constants $E_0,d,T,\delta,\mM_T$,
\begin{align}
    \hat{H}&:=   f(\vx_1) - f^* + \frac{3E_0 }{1-\beta_1}\log\left( \frac{ T}{\delta} \right)  + \frac{E_0 d}{1-\beta_1}\log\left(\frac{\tilde{\mathcal{J}}(T)}{\beta_2^T} \right)  
    \nonumber \\
    & + \frac{4E_0(1+\ep_0) d}{\beta_2(1-\beta_1)^2(1-\beta_1/\beta_2)}\log\left(\frac{\tilde{\mathcal{J}}(T)}{\beta_2^T} \right)+\frac{2 E_0 d}{\sqrt{(1-\beta_1)^3(1-\beta_1/\beta_2)}} \nonumber\\
    &+\frac{3E_0^2 d}{2(1-\beta_1)^2} \log\left(\frac{\tilde{\mathcal{J}}(T)}{\beta_2^T} \right) +\frac{5E_0^2 d }{2\beta_2(1-\beta_1)^2(1-\beta_1/\beta_2)}\log\left(\frac{\tilde{\mathcal{J}}(T)}{\beta_2^T} \right),\label{eq:define_hH}\\
    &H:=  L_0/L_{q} + \left(4L_q\hat{H}\right)^q+ \left(4L_{q}\hat{H}\right)^{\frac{q}{2-q}} +\left(4L_0\hat{H}\right)^{\frac{q}{2}} +4L_q\hat{H} + \left(4L_{q}\hat{H}\right)^{\frac{1}{2-q}} + \sqrt{4L_0\hat{H}}, \label{eq:define_H}
\end{align}
where $E_0 > 0$ is a constant and $\tilde{\mathcal{J}}(T)$ is a polynomial of $T$ given as
\begin{align}
    \tilde{\mathcal{J}}(T):=1+ \frac{2\mM_T^2}{\ep^2}\left[ \sigma_0^2 T + \sigma_1^2 T \left(\|\bar{\vg}_1\|+ T\tilde{M} \right)^p + T \left(\|\bar{\vg}_1\|+ T\tilde{M}\right)^2 \right], \label{eq:poly_H_t}  
\end{align}
and $\tilde{M}:= E_0\sqrt{\frac{d}{1-\beta_1/\beta_2}}$. The induction then begins by noting that from Lemma \ref{lem:general_gradient_value} and $H$ in \eqref{eq:define_H},
\begin{align*}
    G_1 = \|\bar{\vg}_1\| \le 4L_{q}(f(\vx_1)-f^*)+ (4L_{q}(f(\vx_1)-f^*))^{\frac{1}{2-q}} + \sqrt{4L_{0}(f(\vx_1)-f^*)} \le H.
\end{align*}
Suppose that for some $t \in [T]$, 
\begin{align}\label{eq:general_induction_1}
    G_s \le H, \quad \forall s \in [t].
\end{align} 
Consequently, recalling $\mG_T(s)$ in \eqref{eq:define_G_t}, $\mLx_s,\mLy_s$ in \eqref{eq:define_L} and $\mH,\mL$ in \eqref{eq:define_H_L},
\begin{align}\label{eq:general_induction_2}
    &\mG_T(s) \le \mH, \quad \mLx_s \le \mLy_s \le \mL,\quad \forall s \in [t].
\end{align}
We thus apply \eqref{eq:general_induction_2} to \eqref{eq:general_final_1},
\begin{align}
    f(\vy_{t+1})\le& f(\vx_1) - \frac{1}{4}\sum_{s=1}^t\eta_s\left\|\frac{\bar{\vg}_s}{\sqrt{\va_s}} \right\|^2 +D_1\mH +D_2\mH\sum_{s=1}^t \left\|\frac{\vg_s}{\vb_s}\right\|^2 +\sum_{s=1}^t D_7(s)\left\|\frac{\vg_s}{\vb_s}\right\|^2 \nonumber \\
    &  + (D_3\mH+D_4)\sum_{s=1}^t \left(\left\|\frac{\vm_{s-1}}{\vb_s}\right\|^2 + \left\|\frac{\vm_{s-1}}{\vb_{s-1}}\right\|^2\right)+D_5H+\sum_{s=1}^{t-1}D_6(s) \left\|\frac{\hat{\vm}_{s}}{\vb_{s}} \right\|^2. \label{eq:general_final_1.1}
\end{align}
Further recalling the setting of $\tC_0$ in \eqref{eq:general_parameter}, with a simple calculation it holds that, 
\begin{align}
    &\tC_0 H  \le   
    \tC_0 \mH \le E_0,\quad \tC_0 \mL \le E_0, \quad \tC_0^2 \mL \le E_0^2, \quad \tC_0\ep_0 \le E_0 \ep_0. \label{eq:verify_tC_0}
\end{align}
Therefore, combining with \eqref{eq:general_induction_2}, \eqref{eq:verify_tC_0} and  $\tilde{M}_t$ in \eqref{eq:define_H_t}, we could use the deterministic polynomial $\tilde{\mathcal{J}}(t)$ to further control $\mathcal{J}(t)$ in \eqref{eq:define_H_t},
\begin{align*}
    &\tilde{M}_t \le \tC_0 \mL\sqrt{\frac{d}{1-\beta_1/\beta_2}} \le E_0\sqrt{\frac{d}{1-\beta_1/\beta_2}}  = \tilde{M}, \quad \mathcal{J}(t) \le  \tilde{\mathcal{J}}(t) \le \tilde{\mathcal{J}}(T),\\
    &\log\left(\frac{\mathcal{F}_i(t)}{\beta_2^t} \right) 
    \le \log\left(\frac{\mathcal{J}(t)}{\beta_2^t} \right) \le \log\left(\frac{\tilde{\mathcal{J}}(T)}{\beta_2^T} \right),  \quad \forall t \le T, i\in [d]. 
\end{align*}
Then, we could use  $\tilde{\mathcal{J}}(T)$ to control the four summations in Lemma \ref{lem:sum_1} which emerge in \eqref{eq:general_final_1.1}. In addition, we rely on $\eta=\tC_0\sqrt{1-\beta_2}$ and the induction assumptions of \eqref{eq:general_induction_1} and \eqref{eq:general_induction_2} to further upper bound the RHS of \eqref{eq:general_final_1.1}, leading to
\begin{align}
    f(\vy_{t+1})-f^*
    \le& f(\vx_1) -f^*- \frac{1}{4}\sum_{s=1}^t \eta_s \left\| \frac{\bar{\vg}_s}{\sqrt{\va_s}} \right\|^2+ \frac{3\tC_0\mH  }{1-\beta_1}\log\left( \frac{ T}{\delta} \right)  + \frac{\tC_0\mH d}{1-\beta_1}\log\left(\frac{\tilde{\mathcal{J}}(T)}{\beta_2^T} \right) \nonumber \\
    &+ \frac{4\tC_0(\mH+\ep_0) d}{\beta_2(1-\beta_1)^2(1-\beta_1/\beta_2)}\log\left(\frac{\tilde{\mathcal{J}}(T)}{\beta_2^T} \right)  +\frac{2 \tC_0 H d}{\sqrt{(1-\beta_1)^3(1-\beta_1/\beta_2)}}  \nonumber  \\
    &+\frac{3\tC_0^2 \mL d}{2(1-\beta_1)^2} \log\left(\frac{\tilde{\mathcal{J}}(T)}{\beta_2^T} \right) +\frac{5\tC_0^2 \mL d }{2\beta_2(1-\beta_1)^2(1-\beta_1/\beta_2)}\log\left(\frac{\tilde{\mathcal{J}}(T)}{\beta_2^T} \right). \label{eq:general_final_2}
\end{align}
Then, combining with \eqref{eq:verify_tC_0} and the definition of $\hat{H}$ in \eqref{eq:define_hH}, we obtain that 
\begin{align}
    \Delta_{t+1} =f(\vy_{t+1}) - f^*
    &\le  - \frac{1}{4}\sum_{s=1}^t \eta_s \left\| \frac{\bar{\vg}_s}{\sqrt{\va_s}} \right\|^2 + \hat{H} \le \hat{H}. \label{eq:general_final_3}
\end{align}
Then, further using Lemma \ref{lem:local_smooth}, Lemma \ref{lem:general_gradient_value} and $H$ in \eqref{eq:define_H},
\begin{align*}
    \|\bar{\vg}_{t+1}\| &\le L_0/L_{q} + \|\nabla f(\vy_{t+1})\|^{q}+\|\nabla f(\vy_{t+1})\|\nonumber  \\
    &\le L_0/L_{q}+ \left(4L_{q}\Delta_{t+1}\right)^{q}+ \left(4L_{q}\Delta_{t+1}\right)^{\frac{q}{2-q}}\nonumber\\
    &+\left(4L_0\Delta_{t+1}\right)^{\frac{q}{2}}    
    +4L_{q}\Delta_{t+1}   + \left(4L_{q}\Delta_{t+1}\right)^{\frac{1}{2-q}}+ \sqrt{4L_0\Delta_{t+1}} \le H.
\end{align*}
We then deduce that $G_{t+1} = \max\{G_t, \|\bar{\vg}_{t+1}\|\} \le H$.
The induction is then complete and we obtain the desired result in \eqref{eq:general_delta_s_1}. Finally, as an intermediate result of the proof, we obtain that \eqref{eq:general_delta_s} holds as well. 

\end{proof}
\subsection{Proof of the main result}
\begin{proof}[Proof of Theorem \ref{thm:general_smooth}]
    The proof for the final convergence rate follows a similar idea and some same estimations in the proof of Theorem \ref{thm:1}.
    Setting $t =T$ in \eqref{eq:general_delta_s}, it holds that with probability at least $1-2\delta$,
    \begin{align}
         \frac{1}{4} \sum_{s=1}^t \frac{\eta_s}{\|\va_s\|_{\infty}}\cdot \left\| \bar{\vg}_s \right\|^2 \le \frac{1}{4} \sum_{s=1}^t \eta_s \left\| \frac{\bar{\vg}_s}{\sqrt{\va_s}} \right\|^2 \le \hat{H}. \label{eq:general_final_5}
    \end{align}
    Then, in what follows, we would assume that \eqref{eq:general_delta_s_1} and \eqref{eq:general_final_5} always hold.
    Relying on the two inequalities, we thereby deduce the final convergence result. Furthermore, since \eqref{eq:general_delta_s_1} and \eqref{eq:general_final_5} hold with probability at least $1-2\delta$, the final convergence result also holds with probability at least $1-2\delta$.
    Using \eqref{eq:general_delta_s_1} and following the same analysis in \eqref{eq:upper_bound_asi},
    \begin{align*}
        \|\va_s\|_{\infty} \le \max_{i \in [d]} \sqrt{(1-\beta_2)\left(\sum_{j=1}^{s-1} \beta_2^{s-j} g_{j,i}^2 + (\mG_T(j))^2 \right)} + \ep_s \le (\mH+\ep)\sqrt{1-\beta_2^s},\quad \forall s \in [T].
    \end{align*}
    Combining with the parameter setting in \eqref{eq:general_parameter},
    \begin{align*}
        \frac{\eta_s}{\|\va_s\|_{\infty}} \ge \frac{\eta\sqrt{1-\beta_2^s}}{(1-\beta_1^s)\|\va_s\|_{\infty}} \ge \frac{\tC_0\sqrt{1-\beta_2}}{\mH+\ep_0\sqrt{1-\beta_2}}.
    \end{align*}
    We then combine with \eqref{eq:general_final_5} and $\mH$ in \eqref{eq:define_H_L} to obtain that with probability at least $1-2\delta$,
    \begin{align*}
        \frac{1}{T}\sum_{s=1}^T \|\bar{\vg}_s\|^2 \le \frac{4\hat{H}}{T\tC_0}\left(\frac{\sqrt{2(\sigma_0^2+\sigma_1^2 H^p + H^2)}}{\sqrt{1-\beta_2}} + \ep_0 \right)\sqrt{\log\left( \frac{\mathrm{e}T}{\delta}\right)}.
    \end{align*}
    Finally, following the same deduction in \eqref{eq:logbeta2_T} and \eqref{eq:coro_1}, we could derive that $\hat{H} \sim \mathcal{O}\left(\log \left(\frac{ T}{\ep_0\delta  } \right)\right)$ from \eqref{eq:define_hH} and 
    the desired results in Theorem \ref{thm:general_smooth}.
\end{proof}

\section{Convergence of Adam without corrective terms and RMSProp under generalized smoothness}\label{appen:no_correct_general}
In this section, we will provide the detailed result and proof for the convergence of \Cref{alg:Adam_no_correct} under generalized smoothness. First, we will provide the formal version of \Cref{thm:inform_no_correct_general}.
\begin{theorem}\label{thm:tgeneral_smooth}
    Let $T \ge 1$ and $\delta \in (0,1/2)$. Suppose that $\{\vx_s\}_{s \in [T]}$ is a sequence generated by Algorithm \ref{alg:Adam_no_correct}, $f$ is $(L_0,L_{q})$-smooth satisfying \eqref{eq:general_smooth_1}, Assumptions (A1)-(A3) hold, and 
    \begin{align}\label{eq:tgeneral_parameter}
        &0 \le \beta_1 < \beta_2< 1,\quad \beta_2 = 1-c/T, \quad \ep=\ep_0\sqrt{1-\beta_2}, \quad \eta = \tC_{0}\sqrt{1-\beta_2},  \nonumber \\
        & \tC_{0} \le \min\left\{E_0, \frac{E_0}{\tilde{\mH}}, \frac{E_0}{\tilde{\mL}}, \sqrt{\frac{\beta_2(1-\beta_1)^2(1-\beta_1/\beta_2)}{4L_{q}^2d}}\right\},
    \end{align}
    where $c,\ep_0,E_0,\tC_0 > 0$ are constants, $\bar{H}$ is controlled by $\mathcal{O}\left(\frac{1}{1-\beta_1}\log\left(\frac{ T}{\ep_0\delta  } \right)\right)$ \footnote{The specific definition of $\bar{H}$ can be found in \eqref{eq:tdefine_bH}.},
    and $\tilde{H},\tilde{\mH},\tilde{\mL}$ are defined as 
    \begin{align}\label{eq:tdefine_H_L}
        &\tilde{H}:=  L_0/L_{q} + \left(4L_q\bar{H}\right)^q+ \left(4L_{q}\bar{H}\right)^{\frac{q}{2-q}} +\left(4L_0\bar{H}\right)^{\frac{q}{2}} +4L_q\bar{H} + \left(4L_{q}\bar{H}\right)^{\frac{1}{2-q}} + \sqrt{4L_0\bar{H}}, \nonumber \\
        & \tilde{\mH}:= \sqrt{2(\sigma_0^2+\sigma_1^2 \tilde{H}^p + \tilde{H}^2)\log\left(\frac{\mathrm{e}T}{\delta} \right)},  \quad \tilde{\mL}:=L_0 + L_{q} \left(\tilde{H}^{q}+\tilde{H}+\frac{L_0}{L_q}\right)^q.
    \end{align}
    Then, it holds that with probability at least $1-2\delta$,
    \begin{align}
        & \frac{1}{T}\sum_{s=1}^T \|\nabla f(\vx_s)\|^2 \le 
        \tilde{\mathcal{O}}\left\{\frac{\bar{H}^2(1+\sigma_1^2)+\bar{H}\left(\tilde{\mH}+\ep_0 \right)}{T}+\frac{\bar{H}\sigma_0}{\sqrt{T}} \right\}. \label{eq:tpaper_general_rate}
    \end{align}
\end{theorem}
\begin{remark}
    (1). The convergence rate in \Cref{thm:no_corrective} is of order $\tilde{\mathcal{O}}\left(1/T+\sigma_0/\sqrt{T}\right)$, which can be accelerated to $\tilde{\mathcal{O}}(1/T)$ rate when $\sigma_0$ is sufficiently low. \\
    (2). Setting $\beta_1=0$, \Cref{thm:tgeneral_smooth} shows that RMSProp with the hyper-parameter setup in \eqref{eq:tgeneral_parameter} can also find a stationary point with the convergence rate of $\tilde{\mathcal{O}}\left(1/T+\sigma_0/\sqrt{T}\right)$ order.
\end{remark}
\subsection{Proof detail}
To start with, we follow all the notations defined in \Cref{appen:no_correct_smooth} and \Cref{sec:appendix_thm2}, particularly $G_s,\mG_T(s)$ in \eqref{eq:notation_appendix} and $\mLx_s,\mLy_s$ in \eqref{eq:define_L}. 
Then, we have the following claims.
\begin{proposition}
    Setting $\eta_s=\eta$ and $\eta=\tC_0\sqrt{1-\beta_2}$, the results in \Cref{lem:gap_xs_ys}, \Cref{lem:local_smooth} and \Cref{lem:general_gradient_value} remain unchanged.
\end{proposition}
\begin{proof}
    Using $\eta_s=\eta$ and following the proof for the above lemmas, it's easy to verify that these lemmas still hold.
\end{proof}
Based on these results, we are able to show that the gradient norm is bounded along the training process generated by \Cref{alg:Adam_no_correct} under generalized smoothness.
\begin{proposition}\label{pro:tbound_grad_general}
    Under the same conditions in \Cref{thm:no_corrective}, for any given $\delta \in (0,1/2)$, it holds that with probability at least $1-2\delta$, 
    \begin{align}
        \|\bar{\vg}_t\| \le \tilde{H}, \quad \mG_T(t)\le \tilde{\mH}, \quad \mLx_t \le \mLy_t \le \tilde{\mL},\quad \forall t \in [T+1],\label{eq:tgeneral_delta_s_1}
    \end{align}
    and
    \begin{align}
         \Delta_{t+1} \le - \frac{\eta}{4}\sum_{s=1}^t   \left\| \frac{\bar{\vg}_s}{\sqrt{\tva_s}} \right\|^2+ \bar{H}, \quad \forall t \in [T],\label{eq:tgeneral_delta_s}
    \end{align}
    where $\bar{H}$ is as in \eqref{eq:tdefine_bH}, and $\tilde{H},\tilde{\mH}$ and $\tilde{\mL}$ are as in \Cref{thm:inform_no_correct_general}.
\end{proposition}
\begin{proof}
To start with, we shall verify that \eqref{eq:eta_local_smooth} always holds.
Recalling $\eta$ in \eqref{eq:tgeneral_parameter} and $F$ in Lemma \ref{lem:gap_xs_ys},
\begin{align*}
    \eta F = \tC_0 \sqrt{1-\beta_2}F \le \sqrt{\frac{\beta_2(1-\beta_1)^2(1-\beta_2)(1-\beta_1/\beta_2)}{4L_q^2d}}\cdot F \le \frac{1}{L_{q}}.
\end{align*}
Hence, we make sure that the distance requirement in \eqref{eq:general_smooth_1} always holds according to Lemma \ref{lem:gap_xs_ys}. Then, using the descent lemma in \eqref{eq:consequence} and \eqref{eq:ty_iterative}, it leads to for any $t \ge 1$,
\begin{align}\label{eq:tA+B+C_appendix_general}
    f(\vy_{t+1})
    &\le f(\vx_1)+  \underbrace{ \sum_{s=1}^t -\eta \left\la \nabla f(\vy_s), \frac{\vg_s}{\tvb_s} \right\ra }_{\bf \tilde{A}} + \underbrace{\frac{\beta_1}{1-\beta_1}\sum_{s=1}^t \left\langle \tilde{\Delta}_s \odot (\vx_s-\vx_{s-1}),  \nabla f(\vy_s) \right\rangle}_{\bf \tilde{B}}  \nonumber \\
    &\quad+ \underbrace{\frac{\mLy_s}{2}\sum_{s=1}^t\left\| \eta \cdot \frac{\vg_s}{\tvb_s} - \frac{\beta_1}{1-\beta_1} (\tilde{\Delta}_s  \odot (\vx_s-\vx_{s-1})) \right\|^2}_{\bf \tilde{C}'}.
\end{align}
The following estimation is based on the probability event in \Cref{lem:highpro_affine_noise}.
\paragraph{Bounding ${\bf \tilde{A}}$.} We first have the same decomposition as in \eqref{eq:tA_decomp}. Then, 
setting $\lambda = \sqrt{1-\beta_2}/(3\eta \tilde{\mH})$ in \eqref{eq:tA.1.1_middle}, we obtain that with probability at least $1-\delta$,
\begin{align}\label{eq:tA.1.1_general}
    {\bf \tilde{A}.1.1}
     \le \frac{ \eta \mG_T(t)}{4\tilde{\mH}}\sum_{s=1}^t  \left\|\frac{\bar{\vg}_s}{\sqrt{\tva_s}} \right\|^2 + \frac{3\eta\tilde{\mH}}{\sqrt{1-\beta_2}} \log\left( \frac{T}{\delta} \right),\forall t \in [T].
\end{align}
Using \Cref{lem:tgap_as_bs_bounded} and following the similar deduction for bounding ${\bf \tilde{A}.1.2}$ in \eqref{eq:tA.1.2}, we have 
 \begin{align}\label{eq:tA.1.2_general}
    {\bf \tilde{A}.1.2} \le  \frac{\eta}{4}\sum_{s=1}^t  \left\|\frac{\bar{\vg}_s}{\sqrt{\tva_s}}\right\|^2  + \eta\sqrt{1-\beta_2} \mG_T(t)   \sum_{s=1}^t  \left\|\frac{\vg_s}{\tvb_s}\right\|^2. 
\end{align}
Combining with \eqref{eq:tA.1.1_general} and \eqref{eq:tA.1.2_general}, we obtain that 
\begin{align}
    {\bf \tilde{A}.1}
    &\le \left(\frac{\mG_T(t)}{4\tilde{\mH}} - \frac{3}{4} \right) \sum_{s=1}^t \eta \left\|\frac{\bar{\vg}_s}{\sqrt{\tva_s}}\right\|^2 + \frac{3\eta\tilde{\mH}}{\sqrt{1-\beta_2}} \log\left( \frac{T}{\delta} \right)+  \eta\sqrt{1-\beta_2}\mG_T(t)   \sum_{s=1}^t  \left\|\frac{\vg_s}{\tvb_s}\right\|^2. \label{eq:tA.1_general}
\end{align}
The estimation for ${\bf \tilde{A}.2}$ is similar to \eqref{eq:general_A.2} and \eqref{eq:tA.2}. With $L$ replaced by $\mLx_s$ in \eqref{eq:tA.2}, 
\begin{align} 
    {\bf \tilde{A}.2}
    &\le  \sum_{s=1}^t\frac{\mLx_s\eta^2 }{2(1-\beta_1)^2 } \left\|\frac{\vm_{s-1}}{\tvb_{s-1}} \right\|^2 +   \sum_{s=1}^t\frac{\mLx_s\eta^2}{2}\left\|\frac{\vg_s}{\tvb_s} \right\|^2 . \label{eq:tA.2_general}
\end{align}
\paragraph{Bounding ${\bf \tilde{B},\tilde{C}'}$.} Following the same decomposition in \eqref{eq:tB_decomp} and \eqref{eq:tB.1_decom_bounded}, we first note that the estimation for ${\bf \tilde{B}.1}$ in \eqref{eq:tB.1} remains unchanged
\begin{align}\label{eq:tB.1_general}
    {\bf \tilde{B}.1} \le \frac{\eta }{4}\sum_{s=1}^t \left\|\frac{\bar{\vg}_s}{\sqrt{\tva_s}}\right\|^2 +\frac{2\eta  \mG_T(t) \sqrt{1-\beta_2}}{(1-\beta_1)^2}\sum_{s=1}^t \left(\left\|\frac{\vm_{s-1}}{\tvb_s}\right\|^2 + \left\|\frac{\vm_{s-1}}{\tvb_{s-1}}\right\|^2 \right).
\end{align}
Following the similar deduction in \eqref{eq:tB.2} and using \Cref{lem:Sigmax}, we can replace $L$ with $\mLx_s$ to obtain that 
\begin{align}
    {\bf \tilde{B}.2} 
    \le \sum_{s=1}^t\frac{    \Sigma_{\max}^2 \mLx_s \eta^2  }{(1-\beta_1)^2}\left\|\frac{\hat{\vm}_{s-1}}{\vb_{s-1}} \right\|^2 \le \sum_{s=1}^t\frac{   \mLx_s \eta^2  }{\beta_2(1-\beta_1)^2}\left\|\frac{\hat{\vm}_{s-1}}{\tvb_{s-1}} \right\|^2. \label{eq:tB.2_general}
\end{align}
Following the similar deduction in \eqref{eq:general_C}, we indeed replace $L$ with $\mLy_s$ in \eqref{eq:tC}
\begin{align}
    {\bf \tilde{C'}} \le 
     \eta^2 \sum_{s=1}^t \mLy_s \left\| \frac{\vg_s}{\tvb_s} \right\|^2+ \frac{ \eta^2   }{\beta_2(1-\beta_1)^2}\sum_{s=1}^t \mLy_s \left\|\frac{\vm_{s-1}}{\tvb_{s-1}} \right\|^2.  \label{eq:tC_general} 
\end{align}
\paragraph{Putting together.} Note that the above results are established based on probability events in \Cref{lem:highpro_affine_noise} and \eqref{eq:tA.1.1_general}. Hence, plugging \eqref{eq:tA.1_general}, \eqref{eq:tA.2_general}, \eqref{eq:tB.1_general}, \eqref{eq:tB.2_general} and \eqref{eq:tC_general} into \eqref{eq:tA+B+C_appendix_general} and using $\beta_2 < 1$ and $\mLx_s\le\mLy_s$ from \eqref{eq:define_L}, it holds that with probability at least $1-2\delta$, for all $t\in [T]$,
\begin{align}\label{eq:tfinl_1_general}
    f(\vy_{t+1})  &\le f(\vx_1)  + \eta \left(\frac{\mG_T(t)}{ 4\tilde{\mH} } - \frac{1}{2} \right)\sum_{s=1}^t\left\|\frac{\bar{\vg}_s}{\sqrt{\tva_s}} \right\|^2 + \frac{3\eta\tilde{\mH} }{\sqrt{1-\beta_2}} \log\left( \frac{T}{\delta} \right)   \nonumber \\
    & + \sum_{s=1}^t   \left(\frac{2\eta\sqrt{1-\beta_2}\mG_T(t)}{(1-\beta_1)^2}+\frac{5\mLy_s\eta^2}{2\beta_2(1-\beta_1)^2} \right) \left\|\frac{\vm_{s-1}}{\tvb_{s-1}}\right\|^2 \nn \\
    &+  \frac{2\eta\sqrt{1-\beta_2}\mG_T(t)}{(1-\beta_1)^2}\sum_{s=1}^t \left\|\frac{\vm_{s-1}}{\tvb_{s }}\right\|^2 + \sum_{s=1}^t\left(\eta\sqrt{1-\beta_2}\mG_T(t)+\frac{3\mLy_s\eta^2}{2}\right) \left\|\frac{\vg_s}{\tvb_s}\right\|^2  .  
\end{align}
\paragraph{An induction argument.}
We first provide the detailed expression of $\bar{H}$:
\begin{align}\label{eq:tdefine_bH}
    \bar{H}&:= f(\vx_1)-f^*   + 3E_0 \log\left( \frac{T}{\delta} \right)   \nonumber \\
    & +   \left(\frac{2E_0}{ (1-\beta_1)(1-\beta_1/\beta_2)}+\frac{5 E_0^2}{2\beta_2(1-\beta_1) (1-\beta_1/\beta_2)} \right)  d\log\left(\frac{\mathcal{I}(T)}{\beta_2^T} \right)\nn \\
    &+  \frac{2E_0d}{\beta_2 (1-\beta_1)(1-\beta_1/\beta_2)} \log\left(\frac{\mathcal{I}(T)}{\beta_2^T} \right) + d\left(E_0+\frac{3E_0^2}{2}\right)  \log\left(\frac{\mathcal{I}(T )}{\beta_2^T} \right) \nn \\
    \mathcal{I}(t)&= 1+ \frac{2\mM_T^2}{\ep^2}\left[ \sigma_0^2 t + \sigma_1^2  t  \left(\|\bar{\vg}_1\|+ t \tilde{M}'\right)^p  + t \left(\|\bar{\vg}_1\|+ t \tilde{M}'\right)^2 \right],\tilde{M}'=E_0\sqrt{\frac{d}{1-\beta_1/\beta_2}}.
\end{align}
The induction then begins by noting that from Lemma \ref{lem:general_gradient_value} and $\tilde{H}$ in \eqref{eq:tdefine_H_L},
\begin{align*}
     G_1=\|\bar{\vg}_1\| \le 4L_{q}(f(\vx_1)-f^*)+ (4L_{q}(f(\vx_1)-f^*))^{\frac{1}{2-q}} + \sqrt{4L_{0}(f(\vx_1)-f^*)} \le \tilde{H}.
\end{align*}
Suppose that for some $t \in [T]$, 
\begin{align}\label{eq:tgeneral_induction_1}
    G_s \le \tilde{H}, \quad \forall s \in [t].
\end{align} 
Consequently, recalling $\mG_T(s)$ in \eqref{eq:define_G_t}, $\mLx_s,\mLy_s$ in \eqref{eq:define_L} and $\mH,\mL$ in \eqref{eq:define_H_L},
\begin{align}\label{eq:tgeneral_induction_2}
    &\mG_T(s) \le \tilde{\mH}, \quad \mLx_s \le \mLy_s \le\tilde{\mL},\quad \forall s \in [t].
\end{align}
Further recalling the setting of $\tC_0$ in \eqref{eq:general_parameter}, with a simple calculation it holds that, 
\begin{align}
    &\tC_0 H  \le 
    \tC_0 \tilde{\mH}\le E_0,\quad \tC_0 \tilde{\mL}\le E_0, \quad \tC_0^2 \tilde{\mL} \le E_0^2, \quad \tC_0\ep_0 \le E_0 \ep_0. \label{eq:tverify_tC_0}
\end{align}
Using \eqref{eq:tgeneral_induction_2}, \eqref{eq:tverify_tC_0}, $\mathcal{J}(t),\tilde{M}_t'$ defined in \Cref{lem:log_H_t} and $\mathcal{I}(t),\tilde{M}'$ defined in \eqref{eq:tdefine_bH},
\begin{align*}
    \tilde{M}_t \le \tilde{M}', \quad 
     \log\left(\frac{\mathcal{F}_i(t)}{\beta_2^t} \right) 
    \le \log\left(\frac{\mathcal{J}(t)}{\beta_2^t} \right) \le \log\left(\frac{\tilde{\mathcal{I}}(T)}{\beta_2^T} \right),  \quad \forall t \le T, i\in [d].
\end{align*}
Using \Cref{lem:sum_1}, \Cref{lem:sum_2} and \Cref{lem:log_H_t}, we have
\begin{align}\label{eq:tlog_sum_general}
    &\sum_{s=1}^t \left\|\frac{\vm_{s-1}}{\tvb_{s-1}}\right\|^2  \le \frac{d(1-\beta_1)}{(1-\beta_2)(1-\beta_1/\beta_2)}\log\left(\frac{\mathcal{I}(T)}{\beta_2^T} \right),\nn \\
    &\sum_{s=1}^t \left\|\frac{\vm_{s-1}}{\tvb_{s}}\right\|^2  \le \frac{d(1-\beta_1)}{\beta_2(1-\beta_2)(1-\beta_1/\beta_2)}\log\left(\frac{\mathcal{I}(T)}{\beta_2^T} \right), \nn\\
    &\sum_{s=1}^t \left\|\frac{\vg_{s}}{\tvb_{s}}\right\|^2 \le \frac{d }{1-\beta_2}\log\left(\frac{\mathcal{I}(T)}{\beta_2^T} \right)  . 
\end{align}
Then, plugging \eqref{eq:tgeneral_induction_2} and \eqref{eq:tlog_sum_general} into \eqref{eq:tfinl_1_general}, and using $\eta=\tC_0\sqrt{1-\beta_2}$ and \eqref{eq:tverify_tC_0},
\begin{align}\label{eq:tfinl_2_general}
    \Delta_{t+1}&=f(\vy_{t+1})-f^*   \le \Delta_1  - \frac{\eta}{4}\sum_{s=1}^t\left\|\frac{\bar{\vg}_s}{\sqrt{\tva_s}} \right\|^2 + 3E_0 \log\left( \frac{T}{\delta} \right)   \nonumber \\
    & +   \left(\frac{2E_0}{ (1-\beta_1)(1-\beta_1/\beta_2)}+\frac{5 E_0^2}{2\beta_2(1-\beta_1) (1-\beta_1/\beta_2)} \right)  d\log\left(\frac{\mathcal{I}(T)}{\beta_2^T} \right)\nn \\
    &+  \frac{2E_0d}{\beta_2 (1-\beta_1)(1-\beta_1/\beta_2)} \log\left(\frac{\mathcal{I}(T)}{\beta_2^T} \right) + d\left(E_0+\frac{3E_0^2}{2}\right)  \log\left(\frac{\mathcal{I}(T)}{\beta_2^T} \right) \le \bar{H},  
\end{align}
where we use $\bar{H}$ defined in \eqref{eq:tdefine_bH}.
Further using Lemma \ref{lem:local_smooth}, Lemma \ref{lem:general_gradient_value} and $\tilde{H}$ in \eqref{eq:tdefine_H_L},
\begin{align*}
    \|\bar{\vg}_{t+1}\| 
    &\le L_0/L_{q}+ \left(4L_{q}\Delta_{t+1}\right)^{q}+ \left(4L_{q}\Delta_{t+1}\right)^{\frac{q}{2-q}}\nonumber\\
    &+\left(4L_0\Delta_{t+1}\right)^{\frac{q}{2}}    
    +4L_{q}\Delta_{t+1}   + \left(4L_{q}\Delta_{t+1}\right)^{\frac{1}{2-q}}+ \sqrt{4L_0\Delta_{t+1}} \le \tilde{H}.
\end{align*}
We then deduce that $G_{t+1} = \max\{G_t, \|\bar{\vg}_{t+1}\|\} \le \tilde{H}$.
The induction is then complete and we obtain the desired result in \eqref{eq:tgeneral_delta_s_1}. Finally, as an intermediate result of the proof, we obtain that \eqref{eq:tfinl_2_general} holds as well. 
\end{proof}
 \subsection{Proof of the main result}
Now we are ready to prove the main convergence result. 
\begin{proof}[Proof of Theorem \ref{thm:no_corrective}]
    In what follows, we will assume that \Cref{pro:tbound_grad_general} holds. Hence, the final convergence bound also holds with probability at least $1-2\delta$.
    We set $t=T$ in \eqref{eq:tgeneral_delta_s_1}, 
    \begin{align}\label{eq:tfinal_2_general}
        \frac{\eta}{4} \sum_{s=1}^T \frac{\left\| \bar{\vg}_s  \right\|^2}{\| \tva_s  \|_{\infty}} \le\frac{\eta}{4}\sum_{s=1}^T  \left\|\frac{\bar{\vg}_s}{\sqrt{\tva_s}} \right\|^2  \le  \bar{H}-\Delta_{t+1} \le \bar{H}.  
    \end{align}
    Following the similar deduction in \eqref{eq:tupper_bound_asi}, with $\eta=\tC_0\sqrt{1-\beta_2},\ep=\ep_0\sqrt{1-\beta_2}$, we have for any $ s\in [T]$,
    \begin{align}
        \frac{\|\tva_s\|_{\infty}}{\eta} 
        &\le \frac{\mM_T}{\tC_0}\sqrt{2 (1+\sigma_1^2)\sum_{j=1}^T  \|\bar{\vg}_j\|^2} + \frac{\mM_T\sigma_0\sqrt{2T}+\tilde{\mH}  + \ep_0}{\tC_0} . \label{eq:tupper_bound_asi_general}
    \end{align}
    We therefore combine with \eqref{eq:tfinal_2_general} and \eqref{eq:tupper_bound_asi_general} to obtain that  
    \begin{align}
         \sum_{s=1}^T\|\bar{\vg}_s\|^2 &\le  \frac{4\bar{H}}{\tilde{C}_0}\left( \mM_T\sqrt{2 (1+\sigma_1^2)\sum_{j=1}^T  \|\bar{\vg}_j\|^2} +  \mM_T\sigma_0\sqrt{2T}+  \tilde{\mH} + \ep_0 \right)\nn \\
         &\le \frac{1}{2}\sum_{s=1}^T \|\bar{\vg}_s\|^2 + \frac{16\bar{H}^2\mM_T(1+\sigma_1^2)}{\tilde{C}_0^2} + \frac{4\bar{H}}{\tilde{C}_0}\left(   \mM_T\sigma_0\sqrt{2T}+ \tilde{\mH}  + \ep_0\right),
    \end{align}
    which leads to 
    \begin{align}\label{eq:tfinal_3_general}
        \frac{1}{T}\sum_{s=1}^T\|\bar{\vg}_s\|^2 
         \le  \frac{1}{\tilde{C}_0T}\left(32\bar{H}^2\mM_T(1+\sigma_1^2)+8\bar{H}(\tilde{\mH}  + \ep_0)\right) + \frac{8\sqrt{2}\bar{H}\mM_T\sigma_0}{\tilde{C}_0\sqrt{T}}  .
    \end{align}
    Following the result in \eqref{eq:logbeta2_T} and \eqref{eq:coro_1}, we show that when $\beta_2 = 1-c/T$, $\bar{H},\tilde{\mH}\sim \mathcal{O}\left( {\rm poly}(\log T) \right)$ with respect to $T$. Finally, using the convergence result in \eqref{eq:tfinal_3_general}, we obtain the desired result.
    \end{proof}
\section{Omitted proof in Appendix \ref{sec:appendix_thm1}}\label{sec:appendix_thm1_lemma}
\subsection{Omitted proof in Appendix \ref{sub:pre_thm1}}\label{sec:appendix_thm1_lem1}
\begin{proof}[Proof of Lemma \ref{lem:Sigmax}]
    We fix arbitrary $i \in [d]$ and have the following two cases.
    When $\frac{\eta_s b_{s-1,i}}{\eta_{s-1} b_{s,i}} < 1$, we have 
    \begin{align*}
        \left|\frac{\eta_s b_{s-1,i}}{\eta_{s-1} \bsi}-1\right| = 1 - \frac{\eta_s b_{s-1,i}}{\eta_{s-1} \bsi} < 1.
    \end{align*}
    When $ \frac{\eta_s b_{s-1,i}}{\eta_{s-1}\bsi} \ge 1$, let $r = \beta_2^{s-1}$. Since $0 \le 1-\beta_1^{s-1} < 1-\beta_1^s, \forall s \ge 2$, we have
    \begin{align*}
        \frac{\eta_s}{\eta_{s-1}} = \sqrt{\frac{1-\beta_2^s}{1-\beta_2^{s-1}}}\cdot \frac{1-\beta_1^{s-1}}{1-\beta_1^s} \le  \sqrt{1+\frac{\beta_2^{s-1}(1-\beta_2)}{1-\beta_2^{s-1}}} = \sqrt{1+(1-\beta_2)\cdot \frac{r}{1-r}}.
    \end{align*}
    Since $h(r)=r/(1-r)$ is increasing as $r$ grows, $r$ takes the maximum value when $s = 2$. Hence, it holds that
    \begin{align}
         \frac{\eta_s}{\eta_{s-1}} \le \sqrt{1+(1-\beta_2)\cdot \frac{\beta_2}{1-\beta_2}} = \sqrt{1+\beta_2}. \label{eq:Sigma_max_1}
    \end{align}
    Then, since $\ep_{s-1} \le \ep_s$, we further have 
    \begin{align}
        \frac{b_{s-1,i}}{\bsi} = \frac{\ep_{s-1} + \sqrt{v_{s-1,i}} }{\ep_s + \sqrt{\beta_2v_{s-1,i}+(1-\beta_2)\gsi^2 }} \le \frac{\ep_{s} + \sqrt{v_{s-1,i}} }{\ep_s + \sqrt{\beta_2v_{s-1,i}}} \le \frac{1}{\sqrt{\beta_2}}. \label{eq:Sigma_max_2}
    \end{align}
    Combining with \eqref{eq:Sigma_max_1} and \eqref{eq:Sigma_max_2}, we have 
    \begin{align*}
         \left|\frac{\eta_s b_{s-1,i}}{\eta_{s-1} \bsi}-1\right| = \frac{\eta_s b_{s-1,i}}{\eta_{s-1} \bsi}-1 \le \sqrt{\frac{1+\beta_2}{\beta_2}} -1 .
    \end{align*}
    Combining the two cases and noting that the bound holds for any $i \in [d]$, we obtain the desired result.
\end{proof}

\begin{proof}[Proof of Lemma \ref{lem:OrderT_bound_gradient}]
    Denoting $\tilde{M}=\sum_{j=1}^{s-1} \beta_1^{s-1-j}$ and applying \eqref{eq:Jensen} with $\hat{M}$ and $\alpha_j$ replaced by $\tilde{M}$ and $g_{j,i}$ respectively,
    \begin{align}
        \left(\sum_{j=1}^{s-1} \beta_1^{s-1-j} g_{j,i} \right)^2\le \tilde{M} \cdot \sum_{j=1}^{s-1} \beta_1^{s-1-j}g_{j,i}^2. 
    \end{align} 
    Hence, combining with the definition of $\bsi$ in \eqref{eq:bsi}, we further have for any $i \in [d]$ and $s \ge 2$,
    \begin{align*}
         \left| \frac{m_{s-1,i}}{b_{s-1,i}} \right|  &\le \left| \frac{m_{s-1,i}}{\sqrt{v_{s-1,i}}} \right|  
         = \sqrt{\frac{(1-\beta_1)^2\left(\sum_{j=1}^{s-1} \beta_1^{s-1-j} g_{j,i} \right)^2 }{(1-\beta_2)\sum_{j=1}^{s-1} \beta_2^{s-1-j} g^2_{j,i}} }  \\
         &\le \frac{1-\beta_1}{\sqrt{1-\beta_2}}\sqrt{\tilde{M}\cdot \frac{ \sum_{j=1}^{s-1} \beta_1^{s-1-j} g_{j,i}^2 }{\sum_{j=1}^{s-1} \beta_2^{s-1-j} g^2_{j,i}} }
           \le  \frac{1-\beta_1}{\sqrt{1-\beta_2}}\sqrt{\tilde{M}\cdot \sum_{j=1}^{s-1} \left(\frac{\beta_1}{\beta_2}\right)^{s-1-j} } \\
           &=  \frac{1-\beta_1}{\sqrt{1-\beta_2}} \sqrt{\frac{1-\beta_1^{s-1}}{1-\beta_1} \cdot \frac{1-(\beta_1/\beta_2)^{s-1}}{1-\beta_1/\beta_2}} \le  \sqrt{\frac{(1-\beta_1)(1-\beta_1^{s-1})}{(1-\beta_2)(1-\beta_1/\beta_2)} },
    \end{align*}
    where the last inequality applies $0 \le \beta_1 < \beta_2 < 1$. We thus prove the first result.
    To prove the second result, from the smoothness of $f$,
    \begin{align}
        \|\bar{\vg}_s\| 
        &\le \|\bar{\vg}_{s-1} \| + \|\bar{\vg}_s - \bar{\vg}_{s-1}\| \le  \|\bar{\vg}_{s-1} \| + L \|\vx_s - \vx_{s-1}\|. \label{eq:update_1}
    \end{align}
    Combining with \eqref{eq:eta_s} and $\eta=C_0\sqrt{1-\beta_2}$, 
    \begin{align}
        \|\vx_s - \vx_{s-1}\|_{\infty} \le \eta_{s-1}\left\|\frac{\vm_{s-1}}{\vb_{s-1}} \right\|_{\infty} \le \eta\sqrt{\frac{1}{(1-\beta_2)(1-\beta_1/\beta_2)} }= C_0 \sqrt{\frac{1}{1-\beta_1/\beta_2}}. \label{eq:bound_ms-1/bs-1_middle}
    \end{align}
    Using $\|\vx_{s}-\vx_{s-1}\| \le \sqrt{d}\|\vx_{s} - \vx_{s-1}\|_{\infty}$ and \eqref{eq:update_1}, 
    \begin{align*}
        \|\bar{\vg}_{s}\| \le \|\bar{\vg}_{s-1} \| + LC_{0}\sqrt{\frac{d}{1-\beta_1/\beta_2} }   \le \|\bar{\vg}_{1}\| + LC_{0}s\sqrt{\frac{d}{1-\beta_1/\beta_2} } . 
    \end{align*}
\end{proof}

\begin{proof}[Proof of Lemma \ref{lem:sum_1}]
    Recalling the updated rule and the definition of $\bsi$ in \eqref{eq:bsi}, using $\ep_s^2 = \ep^2(1-\beta_2^s) \ge \ep^2(1-\beta_2)$, 
   \begin{align}
       b_{s,i}^2 \geq  v_{s,i}^2 + \ep_s^2 \geq (1-\beta_2) \left( \sum_{j=1}^s \beta_2^{s-j}g_{j,i}^2 + \ep^2\right), \quad \text{and} \quad m_{s,i} = \left(1-\beta_1\right)\sum_{j=1}^s \beta_1^{s-j} g_{j,i}. \label{eq:express_bsi}
   \end{align}
   \paragraph{Proof for the first summation.} Using \eqref{eq:express_bsi}, for any $i \in [d]$,
   \begin{align*}
       \sum_{s=1}^t\frac{\gsi^2}{\bsi^2} 
        \le \frac{1}{1-\beta_2}\sum_{s=1}^t\frac{\gsi^2}{\ep^2 + \sum_{j=1}^s \beta_2^{s-j}g_{j,i}^2}.
   \end{align*}
   Applying Lemma \ref{lem:logT_1} and recalling the definition of $\mathcal{F}_i(t)$,
   \begin{align*}
       \sum_{s=1}^t\frac{\gsi^2}{\bsi^2} 
       &\le \frac{1}{1-\beta_2} \left[  \log \left( 1+ \frac{1}{\ep^2} \sum_{s=1}^t \beta_2^{t-s}\gsi^2 \right) -t \log\beta_2 \right] \le \frac{1}{1-\beta_2} \log \left(\frac{\mathcal{F}_i(t)}{\beta_2^t} \right).
   \end{align*}
   Summing over $i\in [d]$, we obtain the first desired result.
   \paragraph{Proof for the second summation.} 
   Following from \eqref{eq:express_bsi},
   \begin{align*}
       \sum_{s=1}^{t}\frac{\msi^2}{\bsi^2} 
       \le \frac{(1-\beta_1)^2}{1-\beta_2}\cdot \sum_{s=1}^{t}\frac{\left(\sum_{j=1}^s \beta_1^{s-j} g_{j,i}\right)^2}{\ep^2 +  \sum_{j=1}^s \beta_2^{s-j}g_{j,i}^2} .
   \end{align*}
   Applying Lemma \ref{lem:logT_2} and $\beta_2 < 1$,
   \begin{align*}
       \sum_{s=1}^{t}\frac{\msi^2}{\bsi^2}
       &\le \frac{(1-\beta_1)^2}{1-\beta_2}\cdot \frac{1}{(1-\beta_1)(1-\beta_1/\beta_2)}\left[ \log\left( 1+ \frac{1}{\ep^2} \sum_{s=1}^{t} \beta_2^{t-s}g_{s,i}^2 \right) -t \log\beta_2 \right] \\
       &= \frac{1-\beta_1}{(1-\beta_2)(1-\beta_1/\beta_2)} \log \left(\frac{\mathcal{F}_i(t)}{\beta_2^t} \right).
   \end{align*}
   Summing over $i\in [d]$, we obtain the second desired result.
   \paragraph{Proof for the third summation.} Following from \eqref{eq:express_bsi},
   \begin{align*}
       \sum_{s=1}^{t}\frac{\msi^2}{b^2_{s+1,i}} 
       &\le \sum_{s=1}^{t}\frac{\left[(1-\beta_1)\sum_{j=1}^{s} \beta_1^{s-j} g_{j,i}\right]^2}{\ep^2(1-\beta_2) + (1-\beta_2) \sum_{j=1}^{s+1} \beta_2^{s+1-j}g_{j,i}^2} \\
       &\le \sum_{s=1}^{t}\frac{(1-\beta_1)^2 \left(\sum_{j=1}^{s} \beta_1^{s-j} g_{j,i}\right)^2}{\ep^2(1-\beta_2) + (1-\beta_2)\beta_2 \sum_{j=1}^{s} \beta_2^{s-j}g_{j,i}^2}.
   \end{align*}
   Applying Lemma \ref{lem:logT_2}, and using $\beta_2 < 1$,
   \begin{align*}
       \sum_{s=1}^{t}\frac{\msi^2}{b^2_{s+1,i}} 
       &\le \frac{(1-\beta_1)^2}{(1-\beta_2)\beta_2}\cdot \sum_{s=1}^{t}\frac{\left(\sum_{j=1}^{s} \beta_1^{s-j} g_{j,i} \right)^2}{\frac{\ep^2}{\beta_2} + \sum_{j=1}^{s} \beta_2^{s-j}g_{j,i}^2} \nonumber \\
       &\le \frac{(1-\beta_1)^2}{(1-\beta_2)\beta_2}\cdot \frac{1}{(1-\beta_1)(1-\beta_1/\beta_2)}\left[ \log\left( 1+ \frac{\beta_2}{\ep^2} \sum_{s=1}^{t} \beta_2^{t-s}g_{s,i}^2 \right) -t\log\beta_2 \right] \nonumber \\
       &\le \frac{1-\beta_1}{\beta_2(1-\beta_2)(1-\beta_1/\beta_2)}\log \left(\frac{\mathcal{F}_i(t)}{\beta_2^t} \right).
   \end{align*}
   Summing over $i\in [d]$, we obtain the third desired result.
   \paragraph{Proof for the fourth summation.}
    Following the definition of $\hmsi$ from \eqref{eq:hat_msi}, and combining with \eqref{eq:express_bsi},
   \begin{align*}
       \sum_{s=1}^{t}\frac{\hmsi^2}{\bsi^2} 
       &\le \frac{(1-\beta_1)^2}{1-\beta_2}\cdot  \sum_{s=1}^{t}\frac{\left( \frac{1}{1-\beta_1^s} \sum_{j=1}^s \beta_1^{s-j} g_{j,i}\right)^2}{\ep^2 + \sum_{j=1}^s \beta_2^{s-j}g_{j,i}^2}.
   \end{align*}
   Applying Lemma \ref{lem:logT_2} and using $\beta_2 \le 1$,
   \begin{align*}
       \sum_{s=1}^{t}\frac{\hmsi^2}{\bsi^2}
       &\le \frac{(1-\beta_1)^2}{1-\beta_2}\cdot \frac{1}{(1-\beta_1)^2(1-\beta_1/\beta_2)}\left[ \log\left( 1+ \frac{1}{\ep^2} \sum_{s=1}^{t} \beta_2^{t-s}g_{s,i}^2 \right) -t\log\beta_2 \right] \\
       &\le \frac{1}{(1-\beta_2)(1-\beta_1/\beta_2)} \log \left(\frac{\mathcal{F}_i(t)}{\beta_2^t} \right).
   \end{align*}
    Summing over $i\in [d]$, we obtain the fourth desired result.
\end{proof}

\begin{proof}[Proof of Lemma \ref{lem:gradient_delta_s}]
    Let $\hat{\vx} = \vx - \frac{1}{L} \nabla f(\vx)$. Then using the descent lemma of smoothness,
    \begin{align*}
        f(\hat{\vx}) 
        &\le f(\vx) + \la \nabla f(\vx), \hat{\vx} - \vx \ra + \frac{L}{2}\|\hat{\vx} - \vx\|^2\le f(\vx) - \frac{1}{2L}\|\nabla f(\vx)\|^2.
    \end{align*}
    Re-arranging the order, and noting that $f(\hat{\vx}) \ge  f^*$,
    \begin{align*}
        \|\nabla f(\vx)\|^2 \le 2L (f(\vx) - f(\hat{\vx})) \le 2L(f(\vx) - f^*). 
    \end{align*}
\end{proof}

\begin{proof}[Proof of Lemma \ref{lem:gradient_xs_ys}]
    Applying the norm inequality and the smoothness of $f$, 
    \begin{align*}
        \|\nabla f(\vx_s) \| &\le \|\nabla f(\vy_s) \| + \|\nabla f(\vx_s) - \nabla f(\vy_s) \|  \le \|\nabla f(\vy_s) \| + L\|\vy_s-\vx_s\|.
    \end{align*}
    Combining with the definition of $\vy_s$ in \eqref{eq:define_y_s} and \eqref{eq:bound_ms-1/bs-1_middle}, and using $\beta_1 \in [0,1)$ and $\eta=C_0\sqrt{1-\beta_2}$, we obtain the desired result that
    \begin{align*}
        \|\nabla f(\vx_s) \| \le \|\nabla f(\vy_s) \| + \frac{L\beta_1}{1-\beta_1}\|\vx_s - \vx_{s-1}\| \le \|\nabla f(\vy_s) \| + \frac{ LC_0 \sqrt{d}}{(1-\beta_1)\sqrt{1-\beta_1/\beta_2}}.
    \end{align*}
\end{proof}

\subsection{Omitted proof in Appendix \ref{sub:probability_thm1}}\label{sec:appendix_thm1_lem2}
\begin{proof}[Proof of Lemma \ref{lem:highpro_affine_noise}]
    Let us denote $\gamma_{s} =  \frac{\|\vxi_{s}\|^2 }{ \sigma_{0}^2 + \sigma_{1}^2 \|\bar{\vg}_s\|^p }, \forall s \in [T]$. Then, from Assumption (A3), we first have $\mE_{\vz_s}[\exp\left( \gamma_s \right)] \le \exp(1)$. Taking full expectation,
    \begin{align*}
        \mE \left[\exp(\gamma_s) \right] \le \exp(1).
    \end{align*}
    By Markov's inequality, for any $A \in \mR$,
    \begin{align}
        \mathbb{P} \left( 
        \max_{s\in [T]} \gamma_s \geq A    
        \right)
        &= \mathbb{P} \left( 
        \exp\left(\max_{s\in [T]} \gamma_s \right) \geq \exp(A)    
        \right)  \leq \exp(-A) \mE\left[ \exp\left(\max_{s\in [T]} \gamma_s \right) \right] \nonumber \\
        &\le  \exp(-A) \mE\left[ \sum_{s=1}^T\exp\left( \gamma_s \right) \right] \leq \exp(-A) T \exp(1), \nonumber  
    \end{align}
    which leads to that with probability at least $1-\delta$, 
    \begin{align*}
        \|\vxi_{s}\|^2 \leq\log{\left(\mathrm{e}T \over \delta \right)} \left( \sigma_{0}^2 + \sigma_{1}^2 \|\bar{\vg}_s\|^p  \right), \quad \forall s \in [T].  
    \end{align*}
\end{proof}

\begin{proof}[Proof of Lemma \ref{lem:1_bounded}]
    Recalling the definitions of $\va_s$ in \eqref{eq:proxy_stepsize} and $\vep_s$ in Algorithm \ref{alg:Adam}, we have for any $s \in [T], i \in [d]$,
    \begin{align}
        \frac{1}{\asi} &\le \frac{1}{\mG_T(s) \sqrt{1-\beta_2}+\ep\sqrt{1-\beta_2^s}} \le \frac{1}{(\mG_T(s) + \ep)\sqrt{1-\beta_2}} \nonumber\\
        &\le \frac{1}{ \mG_T(s) \sqrt{1-\beta_2}} \le \frac{1}{ \sqrt{\sigma_0^2+\sigma_1^2\|\bar{\vg}_s\|^p} \sqrt{1-\beta_2}}. \label{eq:bound_asi}
    \end{align}
    We define
    \begin{align*}
        X_{s} = -\eta_s \left\langle \bar{\vg}_s, \frac{\vxi_s}{\va_s} \right\rangle,\quad \omega_s = \eta_s\left\|\frac{\bar{\vg}_s}{\va_s}\right\|\sqrt{\sigma_0^2+\sigma_1^2\|\bar{\vg}_s\|^p}, \quad \forall s \in [T]. 
    \end{align*}
    Note that $\bar{\vg}_s, \va_s$ and $\eta_s$ are random variables dependent by ${\bm z}_1,\cdots,{\bm z}_{s-1}$ and $\vxi_s$ is dependent by ${\bm z}_1,\cdots,{\bm z}_s$. We then verify that $X_{s}$ is a martingale difference sequence since
    \begin{align*}
        \mE \left[X_{s} \mid {\bm z_1}, \cdots ,{\bm z}_{s-1} \right] = \mE_{{\bm z}_s}  \left[-\eta_s \left\langle \bar{\vg}_s, \frac{\vxi_s}{\va_s} \right\rangle \right] =- \eta_s \left\langle \bar{\vg}_s, \frac{\mE_{\vz_s}[\vxi_s]}{\va_s} \right\rangle= 0.
    \end{align*}
    Noting that $\omega_{s}$ is a random variable only dependent by ${\bm z}_1,\cdots,{\bm z}_{s-1}$ and applying Assumption (A3) and Cauchy-Schwarz inequality, we have
    \begin{align*}
        \mE\left[ \exp\left(\frac{X_{s }^2 }{\omega_{s }^2}  \right) \mid {\bm z_1}, \cdots, {\bm z}_{s-1}  \right] & \le \mE\left[ \exp\left(\frac{\|\vxi_{s }\|^2}{\sigma_0^2+\sigma_1^2\|\bar{\vg}_s\|^p} \right) \mid {\bm z_1}, \cdots, {\bm z}_{s-1} \right] \\
        &= \mE_{{\bm z}_s}\left[ \exp\left(\frac{\|\vxi_s\|^2}{\sigma_0^2+\sigma_1^2\|\bar{\vg}_s\|^p} \right)  \right] \le \exp(1), \quad \forall s \in [T].
    \end{align*}
    Applying Lemma \ref{lem:Azuma} and \eqref{eq:bound_asi}, we have that for any $\lambda > 0$, with probability at least $1-\delta$, 
    \begin{align}
          \sum_{s=1}^t X_{s } &\le \frac{3\lambda}{4} \sum_{s=1}^t \omega_{s }^2 + \frac{1}{\lambda}\log\left( \frac{1}{\delta} \right)
          \nonumber \\
           &\le \frac{3\lambda}{4 }\sum_{s=1}^t  \eta_s^2\left\|\frac{\bar{\vg}_s}{\va_s} \right\|^2(\sigma_0^2+\sigma_1^2\|\bar{\vg}_s\|^p) + \frac{1}{\lambda} \log\left( \frac{1}{\delta} \right)\nonumber \\
           & \le \frac{3\lambda}{4\sqrt{1-\beta_2}}\sum_{s=1}^t  \eta_s^2\left\|\frac{\bar{\vg}_s}{\sqrt{\va_s}} \right\|^2\sqrt{\sigma_0^2+\sigma_1^2\|\bar{\vg}_s\|^p} + \frac{1}{\lambda} \log\left( \frac{1}{\delta} \right).\label{eq:xsi_event}
    \end{align}
    Note that for any $t \in [T]$, \eqref{eq:xsi_event} holds with probability at least $1-\delta$. Then for any fixed $\lambda >0$, we could re-scale $\delta$ to obtain that with probability at least $1-\delta$, for all $t \in [T] $,
    \begin{align*}
        \sum_{s=1}^t X_{s } 
         \le \frac{3\lambda}{4\sqrt{1-\beta_2}}\sum_{s=1}^t  \eta_s^2\left\|\frac{\bar{\vg}_s}{\sqrt{\va_s}} \right\|^2\sqrt{\sigma_0^2+\sigma_1^2\|\bar{\vg}_s\|^p} + \frac{1}{\lambda} \log\left( \frac{T}{\delta} \right).
  \end{align*}
    Using $\sqrt{\sigma_0^2+\sigma_1^2\|\bar{\vg}_s\|^p} \le \mG_T(t), s \le t$ from \eqref{eq:define_G_t}, together with \eqref{eq:eta_s}, we have that with probability at least $1-\delta$,
    \begin{align*}
        -  \sum_{s=1}^t \eta_s \left\la  \bar{\vg}_s, \frac{\vxi_s}{\va_s} \right\ra
        &\le \frac{3\lambda\eta\mG_T(t)}{4(1-\beta_1)\sqrt{1-\beta_2}} \sum_{s=1}^t \eta_s \left\|\frac{\bar{\vg}_s}{\sqrt{\va_s}}\right\|^2+ \frac{1}{\lambda} \log\left( \frac{ T}{\delta} \right), \quad \forall t \in [T]. 
    \end{align*}
    Finally setting $\lambda = (1-\beta_1)\sqrt{1-\beta_2}/\left(3\eta\mG_T \right)$, we then have the desired result in \eqref{eq:1_bounded}.
\end{proof}

\subsection{Omitted proof of Appendix \ref{sub:deterministic_thm1}}\label{sec:appendix_thm1_lem3}
\begin{proof}[Proof of Lemma \ref{lem:bound_para_term}]
    First directly applying \eqref{eq:highpro_affine_noise} and $G_s$ in \eqref{eq:define_G_t}, for any $j \in [s]$,
    \begin{align*}
        \|\vxi_j\| \le  \mM_T \sqrt{\sigma_0^2 + \sigma_1^2 \|\bar{\vg}_j\|^p}\le \mM_T \sqrt{\sigma_0^2 + \sigma_1^2 G_j^p} \le \mM_T \sqrt{\sigma_0^2 + \sigma_1^2 G_s^p} \le \mG_T(s) .
    \end{align*}
    Applying the basic inequality, \eqref{eq:highpro_affine_noise} and $\mM_T  \ge 1$, for any $j \in [s]$,
    \begin{align*}
        \|\vg_j\|^2 &\le 2 \|\bar{\vg}_j\|^2 + 2\|\vxi_j\|^2
        \le 2\mM_T^2\left( \sigma_{0}^2 + \sigma_{1}^2 \|\bar{\vg}_j\|^p +  \|\bar{\vg}_j\|^2  \right)  \le (\mG_T(s))^2.
    \end{align*}
    Finally, we would use an induction argument to prove the last result. Given any $ i \in [d]$, noting that $v_{1,i} = (1-\beta_2)g_{1,i}^2 \le (\mG_T(s))^2$. Suppose that for some $s' \in [s]$,  $v_{j,i} \le (\mG_T(s))^2,    \forall j \in [s']$, 
    \begin{align*}
        v_{s'+1,i} = \beta_2 v_{s',i} + (1-\beta_2) g_{s',i}^2 \le  \beta_2   (\mG_T(s))^2 +  (1-\beta_2) (\mG_T(s))^2 \le (\mG_T(s))^2.
    \end{align*}
    We then obtain that $v_{j,i} \le (\mG_T(s))^2, \forall j \in [s]$.
    Noting that the above inequality holds for all $i \in [d]$, we therefore obtain the desired result.
\end{proof}
\begin{proof}[Proof of Lemma \ref{lem:gap_as_bs_bounded}]
    Recalling the definition of $\bsi$ in \eqref{eq:bsi} and letting $\asi=\sqrt{\tilde{v}_{s,i}}+\ep_s$ in \eqref{eq:proxy_stepsize}, 
    \begin{align*}
        \left| \frac{1}{\asi} - \frac{1}{\bsi} \right|  
        &= \frac{\left|\sqrt{\vsi}-  \sqrt{\tilde{v}_{s,i}}\right|}{\asi \bsi} = \frac{1-\beta_2}{\asi \bsi}\frac{\left|\gsi^2 - (\mG_T(s))^2  \right|  }{\sqrt{v_{s,i}}+ \sqrt{\tilde{v}_{s,i}}}  \\
        &\le \frac{1-\beta_2}{\asi \bsi} \cdot \frac{(\mG_T(s))^2}{\sqrt{v_{s,i}}+\sqrt{\beta_2v_{s-1,i}+(1-\beta_2)(\mG_T(s))^2}} \le \frac{\mG_T(s)\sqrt{1-\beta_2} }{\asi \bsi},
    \end{align*}
    where we apply $\gsi^2 \le \|\vg_s\|^2 \le (\mG_T(s))^2$ from Lemma \ref{lem:bound_para_term} in the first inequality since \eqref{eq:highpro_affine_noise} holds.
    The second result also follows from the same analysis. We first combine with $\ep_s = \ep\sqrt{1-\beta_2^s}$ to obtain that
    \begin{align}
        \left|\ep_s - \ep_{s-1}\right| \le \ep\left(\sqrt{1-\beta_2^s}-\sqrt{1-\beta_2^{s-1}}\right)  \le \ep\sqrt{\beta_2^{s-1}(1-\beta_2)} \le \ep\sqrt{1-\beta_2},
    \end{align}
    where we apply $\sqrt{a}-\sqrt{b} \le \sqrt{a-b}, \forall 0 \le b \le a$. Applying the definition of $b_{s-1,i}$ and $\asi$,
    \begin{align*}
        \left|\frac{1}{b_{s-1,i}} - \frac{1}{\asi} \right| &= \frac{\left|\sqrt{\tilde{v}_{s,i}} - \sqrt{v_{s-1,i}}+ (\ep_s - \ep_{s-1})\right|}{b_{s-1,i}\asi}  \nonumber\\
        &\le \frac{1}{b_{s-1,i}\asi} \frac{(1-\beta_2)\left| (\mG_T(s))^2 - v_{s-1,i} \right| }{\sqrt{\tilde{v}_{s,i}} + \sqrt{v_{s-1,i}}}  + \frac{\left|\ep_s - \ep_{s-1}\right|}{b_{s-1,i}\asi}  \\
        &\le \frac{1}{b_{s-1,i}\asi} \cdot \frac{(1-\beta_2) (\mG_T(s))^2 }{\sqrt{\tilde{v}_{s,i}} +  \sqrt{v_{s-1,i}}}  + \frac{\ep\sqrt{1-\beta_2}}{b_{s-1,i}\asi} \le \frac{ ( \mG_T(s) + \ep)\sqrt{1-\beta_2}}{b_{s-1,i}\asi},
    \end{align*}
    where the second inequality applies $v_{s-1,i} \le (\mG_T(s))^2$ in Lemma \ref{lem:bound_para_term} and the last inequality comes from $\sqrt{1-\beta_2}\mG_T(s) \le \tilde{v}_{s,i}$.
\end{proof}
\begin{proof}[Proof of Lemma \ref{lem:sum_2}]
    Applying the basic inequality and \eqref{eq:highpro_affine_noise}, for all $t \in [T], i \in [d]$, 
    \begin{align}\label{eq:sum_2_1}
        \sum_{s=1}^t \gsi^2 \le \sum_{s=1}^t \|\vg_s\|^2 \le 2\sum_{s=1}^t \left( \|\bar{\vg}_s\|^2 + \|\vxi_s\|^2 \right) \le 2\mM_T^2 \left( \sigma_0^2t + \sigma_1^2\sum_{s=1}^t \|\bar{\vg}_s\|^p+ \sum_{s=1}^t \|\bar{\vg}_s\|^2\right).
    \end{align}
    Combining with Lemma \ref{lem:OrderT_bound_gradient}, we have
    \begin{align*}
        &\sum_{s=1}^t \|\bar{\vg}_s\|^p \le \sum_{s=1}^t \left( \|\bar{\vg}_1\| + \frac{L C_0\sqrt{d}s}{\sqrt{1-\beta_1/\beta_2}}  \right)^p \le   t \cdot \left( \|\bar{\vg}_1\| + \frac{L C_0\sqrt{d}t}{\sqrt{1-\beta_1/\beta_2}}   \right)^p  \\
        &\sum_{s=1}^t \|\bar{\vg}_s\|^2  \le t \cdot \left( \|\bar{\vg}_1\| + \frac{L C_0\sqrt{d}t}{\sqrt{1-\beta_1/\beta_2}}  \right)^2  .
    \end{align*}
    Further applying the definition of $\mathcal{F}_i(t)$ in Lemma \ref{lem:sum_1}, it leads to $\mathcal{F}_i(t)\le \mathcal{F}(t),\forall i \in [d]$. Finally, since $\mathcal{F}(t) $ is increasing with $t$, we obtain the desired result.
\end{proof}

\begin{proof}[Proof of Lemma \ref{lem:1.1_bounded}]
    First, we have the following decomposition,
    \begin{align}
        \textbf{A.1} =  - \sum_{s=1}^t \eta_s \left\|\frac{\bar{\vg}_s}{\sqrt{\va_s}}\right\|^2 \underbrace{-  \sum_{s=1}^t \eta_s \left\la  \bar{\vg}_s, \frac{\vxi_s}{\va_s} \right\ra }_{{\bf A.1.1}} + \underbrace{\sum_{s=1}^t \eta_s \left\la \bar{\vg}_s,  \left( \frac{1}{\va_s} - \frac{1}{\vb_s} \right)\vg_s 
        \right\ra}_{\textbf{A.1.2}}. \label{eq:(1)_decom_bounded}
    \end{align}
    Since \eqref{eq:highpro_affine_noise} holds, we could apply Cauchy-Schwarz inequality, Lemma \ref{lem:gap_as_bs_bounded}, and $\mG_T(s) \le \mG_T(t),\forall s \le t$ from \eqref{eq:define_G_t} to obtain that for all $t \in [T]$,
    \begin{align*}
        \textbf{A.1.2} &\le \sum_{i=1}^d \sum_{s=1}^t\eta_s \left| \frac{1}{\asi} - \frac{1}{\bsi} \right|\cdot  |\bgsi \gsi |\le \sum_{i=1}^d\sum_{s=1}^t \eta_s\cdot\frac{\mG_T(s)\sqrt{1-\beta_2}}{\asi\bsi}\cdot|\bgsi \gsi |\nonumber  \\
        &\le  \frac{1}{4}\sum_{i=1}^d\sum_{s=1}^t  \frac{\eta_s \bgsi^2}{\asi}  + (1-\beta_2)\sum_{i=1}^d\sum_{s=1}^t \frac{\left(\mG_T(s)\right)^2}{\asi}\cdot \frac{\eta_s \gsi^2}{\bsi^2} \nonumber \\
         &\overset{\eqref{eq:eta_s}, \eqref{eq:bound_asi}}{\le}  \frac{1}{4}\sum_{s=1}^t  \eta_s \left\|\frac{\bar{\vg}_s}{\sqrt{\va_s}}\right\|^2  + \frac{\eta \mG_T(t) \sqrt{1-\beta_2}}{1-\beta_1}  \sum_{s=1}^t  \left\|\frac{\vg_s}{\vb_s}\right\|^2. 
    \end{align*}
    Finally, combining with \eqref{eq:1_bounded} for estimating {\bf A.1.1}, we deduce the desired result in \eqref{eq:1.1_bounded}.
\end{proof}

\begin{proof}[Proof of Lemma \ref{lem:2_bounded}]
Let us denote $\Sigma:=
\frac{\beta_1}{1-\beta_1} \left\langle  \Delta_s \odot (\vx_s - \vx_{s-1}),  \bar{\vg}_s \right\rangle $ where $\Delta_s$ is defined in \eqref{eq:A+B+C}. We have
\begin{align}
    \Sigma \le&   \frac{\beta_1}{1-\beta_1} \cdot \left|\left\langle\Delta_s \odot \frac{\eta_{s-1}\vm_{s-1}}{\vb_{s-1}},  \bar{\vg}_s \right\rangle \right| =   \frac{\beta_1}{1-\beta_1} \cdot \left|\left\langle \left( \frac{\eta_{s}}{\vb_s} - \frac{\eta_{s-1}}{\vb_{s-1}} \right) \odot \vm_{s-1},  \bar{\vg}_s \right\rangle \right|  \nonumber \\
    \le& \underbrace{\frac{\beta_1}{1-\beta_1} \cdot \left|\left\langle \left( \frac{\eta_{s}}{\vb_s} - \frac{\eta_{s}}{\va_{s}} \right) \odot \vm_{s-1},  \bar{\vg}_s \right\rangle  \right|}_{\Sigma_{1}} + \underbrace{\frac{\beta_1}{1-\beta_1} \cdot \left|\left\langle \left( \frac{\eta_{s}}{\va_s} - \frac{\eta_{s}}{\vb_{s-1}} \right) \odot \vm_{s-1},  \bar{\vg}_s \right\rangle  \right|}_{\Sigma_{2}} \nonumber \\
    &+ \underbrace{\frac{\beta_1}{1-\beta_1} \cdot \left|(\eta_{s-1}-\eta_s)\left\la \frac{\vm_{s-1}}{\vb_{s-1}}, \bar{\vg}_s \right \ra \right|}_{\Sigma_{3}}. \label{eq:B.1_decom_bounded}
\end{align}
Since \eqref{eq:highpro_affine_noise} holds, we could apply Lemma \ref{lem:gap_as_bs_bounded} and Young's inequality and then use \eqref{eq:bound_asi}, \eqref{eq:eta_s}, $\beta_1 \in [0,1)$ and $\mG_T(s) \le \mG_T(t) \le \mG_T(t)+\ep, \forall s \le t$,
\begin{align}
    \Sigma_{1} 
    & \le \sum_{i=1}^d\frac{\beta_1}{1-\beta_1}\cdot \frac{\mG_T(s) \eta_{s} \sqrt{1-\beta_2}}{\asi \bsi} \cdot|\bgsi   m_{s-1,i}| \nonumber\\
    &\le \sum_{i=1}^d\frac{\eta_s}{8} \cdot \frac{\bgsi^2}{\asi} +  \frac{2 \eta_s \beta_1^2 (1-\beta_2)}{(1-\beta_1)^2} \sum_{i=1}^d\frac{\left(\mG_T(s)\right)^2}{\asi} \cdot \frac{m_{s-1,i}^2}{\bsi^2}\nonumber  \\
    &\le  \frac{\eta_s}{8}\left\|\frac{\bar{\vg}_s}{\sqrt{\va_s}}\right\|^2 + \frac{2 (\mG_T(t) + \ep) \eta \sqrt{1-\beta_2}}{(1-\beta_1)^3}  \left\|\frac{\vm_{s-1}}{\vb_s}\right\|^2.\label{eq:B.1.1_bounded}
\end{align}
Using the similar analysis for $\Sigma_{1}$, we also have
\begin{align}
    \Sigma_{2}
    &\le \sum_{i=1}^d\frac{\eta_{s}\beta_1}{1-\beta_1}\frac{\sqrt{1-\beta_2}}{\asi b_{s-1,i}}\cdot \left(\mG_T(s) + \ep \right)\cdot |\bgsi \cdot m_{s-1,i}| \nonumber\\
    &\le \frac{\eta_s}{8}\left\|\frac{\bar{\vg}_s}{\sqrt{\va_s}}\right\|^2 + \frac{2\left(\mG_T(t) + \ep \right)\eta \sqrt{1-\beta_2}}{(1-\beta_1)^3}  \left\|\frac{\vm_{s-1}}{\vb_{s-1}}\right\|^2. \label{eq:B.1.2_bounded}
\end{align}
Then we move to bound the summation of $\Sigma_{3}$ over $s \in \{2,\cdots, t \}$ since $\vm_{0}=0$. Recalling $\eta_s$ in \eqref{eq:eta_s}, we have the following decomposition,
\begin{align}
    \Sigma_{3}
    &\le \underbrace{ \frac{\eta \beta_1\sqrt{1-\beta_2^s}}{1-\beta_1}  \left|\left(\frac{1}{1-\beta_1^{s-1}}-\frac{1}{1-\beta_1^{s}}\right)  \left\la \bar{\vg}_s ,\frac{\vm_{s-1}}{\vb_{s-1}} \right\ra \right|}_{\Sigma_{3.1}} \nonumber\\
    &\quad+ \underbrace{\frac{\eta \beta_1}{(1-\beta_1)(1-\beta_1^{s-1})}  \left|\left(\sqrt{1-\beta_2^{s-1}} - \sqrt{1-\beta_2^{s}}\right)   \left\la \bar{\vg}_s ,\frac{\vm_{s-1}}{\vb_{s-1}} \right\ra \right|}_{\Sigma_{3.2}}. \label{eq:B.1.3_decomp_bounded}
\end{align}
Note that $\|\bar{\vg}_s\| \le G_s \le G_t,\forall s\le t$. Then, further applying Cauchy-Schwarz inequality and Lemma \ref{lem:OrderT_bound_gradient}, 
\begin{align*}
    \sqrt{1-\beta_2^s}\left|\left\la \bar{\vg}_s ,\frac{\vm_{s-1}}{\vb_{s-1}} \right\ra \right|\le \sqrt{1-\beta_2^s}\|\bar{\vg}_s\|  \left\|\frac{\vm_{s-1}}{\vb_{s-1}} \right\| \le \sqrt{d}G_t \sqrt{\frac{(1-\beta_1)(1-\beta_1^{s-1})}{(1-\beta_2)(1-\beta_1/\beta_2)}}. 
\end{align*}
Hence, summing $\Sigma_{3.1}$ up over $s \in [t]$, applying $\beta_1 \in [0,1)$ and noting that $\Sigma_{3.1}$ vanishes when $s = 1$, 
\begin{align}
    \sum_{s=1}^t \Sigma_{3.1}
    &\le \frac{\sqrt{d}\eta G_t}{1-\beta_1}\cdot \sqrt{\frac{1-\beta_1}{(1-\beta_2)(1-\beta_1/\beta_2)}}\sum_{s=2}^t \left(\frac{1}{1-\beta_1^{s-1}}-\frac{1}{1-\beta_1^{s}}\right) \nonumber\\
    &\le \frac{\sqrt{d}\eta   G_t }{\sqrt{(1-\beta_1)^3(1-\beta_2)(1-\beta_1/\beta_2)}}.
    \label{eq:sum_B.1.3.1_bounded}
\end{align}
Similarly, using $\|\bar{\vg}_s\| \le G_s \le G_t, \forall s \le t $ and $1-\beta_1^{s-1} \ge 1-\beta_1$, 
\begin{align*}
    \frac{1}{1-\beta_1^{s-1}}\left|\left\la \bar{\vg}_s ,\frac{\vm_{s-1}}{\vb_{s-1}} \right\ra \right| \le  \frac{1}{1-\beta_1^{s-1}} \|\bar{\vg}_s\|\left\|\frac{\vm_{s-1}}{\vb_{s-1}} \right\|
     \le \sqrt{d} G_t \sqrt{\frac{1}{(1-\beta_2)(1-\beta_1/\beta_2)}} .
\end{align*}
Hence, summing $\Sigma_{3.2}$ up over $s \in [t]$ and still applying $\beta_1 \in [0,1)$, 
\begin{align}
    \sum_{s=1}^t \Sigma_{3.2}
    &\le \frac{\sqrt{d}\eta G_t}{1-\beta_1}\cdot \sqrt{\frac{1}{(1-\beta_2)(1-\beta_1/\beta_2)}} \sum_{s=2}^t \left(\sqrt{1-\beta_2^{s}} - \sqrt{1-\beta_2^{s-1}}\right) \nonumber \\
    &\le  \frac{\sqrt{d}\eta   G_t }{(1-\beta_1)\sqrt{(1-\beta_2)(1-\beta_1/\beta_2)}} \le  \frac{\sqrt{d}\eta   G_t }{\sqrt{(1-\beta_1)^3(1-\beta_2)(1-\beta_1/\beta_2)}}.\label{eq:sum_B.1.3.2_bounded}
\end{align}
Combining with \eqref{eq:B.1.3_decomp_bounded}, \eqref{eq:sum_B.1.3.1_bounded} and \eqref{eq:sum_B.1.3.2_bounded}, we obtain an upper bound for $\sum_{s=1}^t\Sigma_{3}$. Summing \eqref{eq:B.1_decom_bounded}, \eqref{eq:B.1.1_bounded} and \eqref{eq:B.1.2_bounded} up over $s \in [t]$, and combining with the estimation for $\sum_{s=1}^t\Sigma_{3}$, we obtain the desired inequality in \eqref{eq:2_bounded}. 
\end{proof}

\section{Omitted proof in Appendix \ref{sec:appendix_thm2}}\label{sec:appendix_thm2_lemma}

\begin{proof}[Proof of Lemma \ref{lem:gap_xs_ys}]
    Recalling in \eqref{eq:bound_ms-1/bs-1_middle}, we have already shown that 
    \begin{align}\label{eq:xs_xs+1}
        \|\vx_{s+1} - \vx_s \| \le \sqrt{d}\|\vx_{s+1} - \vx_s \|_{\infty}  \le \eta \sqrt{\frac{d}{(1-\beta_2)(1-\beta_1/\beta_2)}},\quad \forall s \ge 1.
    \end{align}
    Applying the definition of $\vy_s$ in \eqref{eq:define_y_s}, an 
    intermediate result in \eqref{eq:xs_xs+1} and $\beta_1 \in [0,1)$,\footnote{The inequality still holds for $s =1$ since $\vx_1 = \vy_1$.}
    \begin{align}\label{eq:ys_xs}
        \|\vy_s - \vx_s \| = \frac{\beta_1}{1-\beta_1}\|\vx_s - \vx_{s-1}\| \le \frac{\eta}{1-\beta_1} \sqrt{\frac{d}{(1-\beta_2)(1-\beta_1/\beta_2)}},\quad \forall s \ge 1.
    \end{align}
    Recalling the iteration of $\vy_s$ in \eqref{eq:y_iterative} and then using Young's inequality
    \begin{align*}
        \|\vy_{s+1}-\vy_s\|^2 \le
        \underbrace{2\eta_s^2 \left\| \frac{\vg_s}{\vb_s} \right\|^2}_{(*)} + \underbrace{\frac{2\beta_1^2}{(1-\beta_1)^2}\left\|  \frac{\eta_{s}\vb_{s-1}}{\eta_{s-1}\vb_s} - \bm{1}  \right\|_{\infty}^2 \|\vx_s - \vx_{s-1}\|^2}_{(**)}.
    \end{align*}
    Noting that $\gsi / \bsi \le 1/\sqrt{1-\beta_2}$ from \eqref{eq:bsi}, we then combine with \eqref{eq:eta_s} to have
    \begin{align*}
        (*) \le 2 \eta_s^2 \cdot  \frac{d}{1-\beta_2} \le \frac{2\eta^2d}{(1-\beta_1)^2(1-\beta_2)}.
    \end{align*}
    Applying Lemma \ref{lem:Sigmax} where $\Sigma_{\max}^2 \le  1/\beta_2$ and \eqref{eq:xs_xs+1}, 
    \begin{align*}
        (**) \le \frac{2\eta^2\beta_1^2 \Sigma_{\max}^2 d}{(1-\beta_1)^2(1-\beta_2)(1-\beta_1/\beta_2)} \le \frac{2\eta^2  d}{\beta_2(1-\beta_1)^2(1-\beta_2)(1-\beta_1/\beta_2)}.
    \end{align*}
    Summing up two estimations and using $0 \le \beta_1 < \beta_2 < 1$, we finally have
    \begin{align}\label{eq:ys+1_ys}
        \|\vy_{s+1}-\vy_s\| \le \eta\sqrt{\frac{4d}{\beta_2(1-\beta_1)^2(1-\beta_2)(1-\beta_1/\beta_2)}}.
    \end{align}
    Combining with \eqref{eq:xs_xs+1}, \eqref{eq:ys_xs} and \eqref{eq:ys+1_ys}, and using $0 \le \beta_1 < \beta_2 < 1$, we then deduce a uniform bound for all the three gaps.
\end{proof}

\begin{proof}[Proof of Lemma \ref{lem:local_smooth}]
    Under the same conditions in Lemma \ref{lem:gap_xs_ys}, we have
    \begin{align*}
        \|\vy_s - \vx_s \| \le \frac{1}{L_{q}}, \quad \|\vy_{s+1} - \vy_s \| \le \frac{1}{L_{q}}.
    \end{align*}
    Then, using the generalized smoothness in \eqref{eq:general_smooth_1},
    \begin{align*}
        \|\nabla f(\vy_s) \| 
        &\le \|\nabla f(\vx_s) \| + \|\nabla f(\vy_s)-\nabla f(\vx_s) \| \\
        &\le \|\nabla f(\vx_s) \| + (L_0+L_{q}\|\nabla f(\vx_s) \|^{q} )\|\vy_s - \vx_s\| \\
        & \le  \|\nabla f(\vx_s) \| + \|\nabla f(\vx_s) \|^{q} + L_0/L_{q}. 
    \end{align*}
    We could use a similar argument to deduce the bound for $\|\nabla f(\vx_s)\|$. Further, combining with $\mLx_s$ and $\mLy_s$ in \eqref{eq:define_L}, we could bound the generalized smooth parameters as
    \begin{align}\label{eq:Lys_local_smooth}
        &L_0 + L_{q} \|\nabla f(\vx_s) \|^{q} \le L_0 + L_{q} G_s^{q} = \mLx_s, \nonumber \\ 
        &L_0 + L_{q} \|\nabla f(\vy_s) \|^q \le L_0 +L_{q} (\|\nabla f(\vx_s) \| + \|\nabla f(\vx_s) \|^{q} + L_0/L_q )^q=  \mLy_s.
    \end{align}
    We could then deduce the first two inequalities in \eqref{eq:consequence_1}. Finally, \eqref{eq:consequence} could be deduced by using the same argument in the proof of \cite[Lemma A.3]{zhang2020improved}.
\end{proof}

\begin{proof}[Proof of Lemma \ref{lem:general_gradient_value}]
    Given any $\vx \in \mR^d$, we let 
    \begin{align*}
        \tau = \frac{1}{L_0 + L_{q} \max\{\|\nabla f(\vx)\|^{q},\|\nabla f(\vx)\|\}},\quad \hat{\vx} = \vx - \tau \nabla f(\vx).
    \end{align*}
    From the definition of $\tau$, we could easily verify that $\|\hat{\vx} - \vx \| = \tau \|\nabla f(\vx)\| \le 1/L_{q}$. Since $f$ is $(L_0,L_q)$-smooth, we could thereby use the descent lemma in  \cite[Lemma A.3]{zhang2020improved} such that
    \begin{align*}
        f(\hat{\vx}) 
        &\le f(\vx) + \la \nabla f(\vx), \hat{\vx} - \vx \ra + \frac{L_0 + L_{q} \|\nabla f(\vx)\|^{q}}{2} \|\hat{\vx} - \vx \|^2  \\
        &=  f(\vx) - \tau \|\nabla f(\vx) \|^2 + \frac{(L_0 + L_{q} \|\nabla f(\vx)\|^{q})\tau^2}{2} \|\nabla f(\vx)\|^2 \le f(\vx) - \frac{\tau }{2}\|\nabla f(\vx)\|^2. 
    \end{align*}  
    Since $f(\hat{\vx}) \ge f^*$, when $\|\nabla f(\vx)\|=0$, the desired result is trivial. Let us suppose $\|\nabla f(\vx)\| > 0$. 
    \paragraph{Case 1 $\|\nabla f(\vx)\|^{q} > \|\nabla f(\vx)\|$} 
    \begin{align*}
        \frac{\tau }{2}\|\nabla f(\vx) \|^2 = \frac{\|\nabla f(\vx) \|^{2-q}}{2L_0/\|\nabla f(\vx)\|^q + 2L_{q}  }  \le f(\vx) - f(\hat{\vx}) \le f(\vx) - f^*.
    \end{align*}
    When $\|\nabla f(\vx)\|^q < L_0/L_{q} $, it leads to 
    \begin{align*}
        \frac{\|\nabla f(\vx)\|^2}{4L_0}= \frac{\|\nabla f(\vx) \|^{2-q}}{4L_0/\|\nabla f(\vx)\|^q   } \le \frac{\|\nabla f(\vx) \|^{2-q}}{2L_0/\|\nabla f(\vx)\|^q + 2L_{q}  } \le f(\vx) - f^*.
    \end{align*}
    When $\|\nabla f(\vx)\|^q \ge L_0/L_{q}$, it leads to 
    \begin{align*}
         \frac{\|\nabla f(\vx) \|^{2-q}}{4L_{q}} \le \frac{\|\nabla f(\vx) \|^{2-q}}{2L_0/\|\nabla f(\vx)\|^q + 2L_{q}  } \le f(\vx) - f^*.
    \end{align*}
    We then deduce that 
    \begin{align}\label{eq:case1}
        \|\nabla f(\vx) \| \le \max \left\{ \left[4L_{q}(f(\vx)- f^*)\right]^{\frac{1}{2-q}},\sqrt{4L_{0}(f(\vx)- f^*)} \right\}.
    \end{align}
    \paragraph{Case 2 $\|\nabla f(\vx)\|^{q} \le \|\nabla f(\vx)\|$}  We could rely on the similar analysis to obtain that\footnote{We refer readers to see \cite[ Lemma A.5]{zhang2020why} for a detailed proof under this case.}
    \begin{align}\label{eq:case2}
        \|\nabla f(\vx) \| \le \max \left\{ 4L_{q}(f(\vx)- f^*),\sqrt{4L_{0}(f(\vx)- f^*)} \right\}.
    \end{align}
    Combining \eqref{eq:case1} and \eqref{eq:case2}, we then deduce the desired result.
\end{proof}
\begin{proof}[Proof of Lemma \ref{lem:log_H_t}]
Recalling \eqref{eq:bound_ms-1/bs-1_middle}, we then obtained that when $\eta = \tC_0 \sqrt{1-\beta_2}$, 
\begin{align}
    \|\vx_s - \vx_{s-1}\| \le \sqrt{d}\|\vx_s - \vx_{s-1}\|_{\infty}  \le \tC_0 \sqrt{\frac{d}{1-\beta_1/\beta_2}}. 
\end{align}
Noting that when \eqref{eq:eta_local_smooth} holds, we have
\begin{align*}
    \|\bar{\vg}_s\| &\le \|\bar{\vg}_{s-1}\| + \|\bar{\vg}_s - \bar{\vg}_{s-1}\| \le \|\bar{\vg}_{s-1}\| + (L_0+ L_{q} \|\bar{\vg}_{s-1}\|^q)\|\vx_{s} - \vx_{s-1}\| \\
    &\le \|\bar{\vg}_{s-1}\| + \tC_0 \mLx_{s-1}\sqrt{\frac{d}{1-\beta_1/\beta_2}} \le \|\bar{\vg}_1\| + \tC_0 \sqrt{\frac{d}{1-\beta_1/\beta_2}} \sum_{j=1}^{s-1} \mLx_j.
\end{align*}
Using $\mLx_j \le \mLx_t, \forall j \le t$, we have   
\begin{align*}
    &\sum_{s=1}^t \|\bar{\vg}_s\|^p \le \sum_{s=1}^t \left(\|\bar{\vg}_1\|+\tC_0 \sqrt{\frac{d}{1-\beta_1/\beta_2}}(s-1)\mLx_t \right)^p \le t  \left(\|\bar{\vg}_1\|+ t\tC_0 \mLx_t\sqrt{\frac{d}{1-\beta_1/\beta_2}} \right)^p.
\end{align*}
Similarly, we also have
\begin{align*}
    \sum_{s=1}^t \|\bar{\vg}_s\|^2 \le t  \left(\|\bar{\vg}_1\|+ t\tC_0 \mLx_t\sqrt{\frac{d}{1-\beta_1/\beta_2}} \right)^2. 
\end{align*}
Further combining with $\mathcal{F}_i(t)$ in Lemma \ref{lem:sum_1} and $\mathcal{J}(t)$ in \eqref{eq:define_H_t},
\begin{align*}
    \mathcal{F}_i(t) \le 1+ \frac{1}{\ep^2}\sum_{s=1}^t \|\vg_s\|^2 \le \mathcal{J}(t), \quad \forall t \in [T], i \in [d].
\end{align*}
\end{proof}

\end{document}